\newcommand{\supp}[0]{\mathrm{supp}}
\newcommand{\conv}[0]{\mathrm{conv}}
\newcommand{\height}[0]{\mathrm{ht}}
\newcommand{\wt}[0]{\mathrm{wt}}
\newcommand{\ad}[0]{\mathrm{ad}}
\newcommand{\gr}[0]{\mathrm{gr}}
\theoremstyle{plain}
\newtheorem{theorem}{Theorem}[section]
\newtheorem{lemma}[theorem]{Lemma}
\newtheorem{prop}[theorem]{Proposition}
\newtheorem{cor}[theorem]{Corollary}
\newtheorem{thmx}{Theorem}[section]
\theoremstyle{definition}
\newtheorem{definition}[theorem]{Definition}
\newtheorem{question}{Question}
\newtheorem*{p-psp}{parabolic-PSP}
\newtheorem{mxlp}[theorem]{Maximal property}
\newtheorem{remark}[theorem]{Remark}
\newtheorem{observation}[theorem]{Observation}
\numberwithin{equation}{section}
\newtheoremstyle{problem}{5pt}{5pt}{}{}{\normalfont}{\textbf{:}}{.5em}{}
\theoremstyle{problem}
\newtheorem*{problem}{\textbf{Problem}}
\newtheorem{note}{\textbf{Note}}
\newtheorem*{fact}{\textbf{Fact}}
\begin{document}
\title{Moving between weights of weight modules}
\author{G Krishna Teja}
\date{\today}
\subjclass[2010]{Primary: 17B10; Secondary: 17B20, 17B22, 17B67, 17B70, 52B20, 52B99.}
\keywords{Root system, parabolic partial sum property, parabolic Verma module.}
\begin{abstract} In Lie theory the partial sum property says that for a root system in any Kac--Moody algebra, every positive root is an ordered sum of simple roots whose partial sums are all roots. In this paper, we present two generalizations of this property:\\
1) ``\textit{Parabolic}'' generalization: If $I$ is any nonempty subset of simple roots, then every root with positive $I$-height is an ordered sum of roots of $I$-height 1, whose partial sums are all roots. In fact, we show this on the Lie algebra level, by showing that every root space is spanned by the Lie words formed from root vectors of $I$-height 1. As an application, we provide a ``minimal'' description for the set of weights of every (non-integrable) simple highest weight module over any Kac--Moody algebra. This seems to be novel even in finite type.\\
2) Generalization to the set of weights of weight modules: The partial sum property gives a chain of roots between 0 (fixed) and any positive root. We generalize this phenomenon to the set of weights of weight modules to get a chain of weights between any two comparable weights. This was shown by S. Kumar, for any finite-dimensional simple module over a semisimple Lie algebra. In this paper, we extend this result to (i) a large class of highest weight modules over a general Kac--Moody algebra $\mathfrak{g}$, which includes all the simple highest weight modules over $\mathfrak{g}$; (ii) more generally, for non-highest weight modules such as $\mathfrak{g}$ itself (adjoint representation) and arbitrary submodules of parabolic Verma modules over $\mathfrak{g}$; (iii) arbitrary integrable modules (not necessarily highest weight) over semisimple $\mathfrak{g}$.\\
Khare and Dhillon studied some cases in which the sets of weights of parabolic Verma modules give the sets of weights of all the highest weight modules (over Kac--Moody $\mathfrak{g}$) with specified highest weight and integrability. Motivated by this, in this paper we find all the highest weight modules which have their sets of weights same as those of parabolic Verma modules. We also provide a Minkowski difference formula for weights of arbitrary highest weight modules over $\mathfrak{g}$, extending the known results for simple highest weight modules. 
\end{abstract}
\maketitle
\tableofcontents
\allowdisplaybreaks
\section{Introduction}
In the root system $\Delta$ of a Kac--Moody or Borcherds Kac--Moody algebra, one of the standard results one studies is the \textit{partial sum property} (PSP): every positive root is an ordered sum of simple roots such that each partial sum is also a root.\\ In this paper, we present two generalizations of this property:
\begin{itemize}
\item[A)] Structure theory: We call this property as \textit{parabolic partial sum  property} (\textit{parabolic-PSP}). This strengthens the usual PSP in two ways:
\begin{itemize} 
	\item[1)] We show that the parabolic-PSP holds over any Kac--Moody/ Borcherds Kac--Moody/ more general Lie algebras $\mathcal{G}$ graded over free abelian semigroups.
	\item[2)] Moreover, the parabolic-PSP holds at the level of Lie words in $\mathcal{G}$, not just on the level of roots/grades.
\end{itemize}	
\item[B)] Representation theory: We show that an analogous version of the PSP, see Question \ref{Q2} of Khare below, holds in the set of weights of (i) the adjoint representation, (ii) all the simple highest weight modules, and more generally (iii) all the submodules of parabolic Verma modules over any Kac--Moody algebra $\mathfrak{g}$.
\end{itemize}   
 We prove the first generalization (parabolic-PSP) to a ``best possible extent'' and level of generality. In this we were motivated by the following problem.
\begin{problem}
Let $\mathfrak{g}$ be a Kac--Moody algebra, with Cartan subalgebra $\mathfrak{h}$. Suppose $L(\lambda)$ denotes the simple highest weight $\mathfrak{g}$-module, with highest weight $\lambda \in \mathfrak{h}^*$. Find a ``minimal'' description of the set of weights of $L(\lambda)$. (Recall that the weights of non-integrable simple highest weight module $L(\lambda)$, were computed in \cite{Dhillon_arXiv}.)
\end{problem}
In this paper, we use the parabolic-PSP to obtain such a minimal description. To our knowledge, this result is novel even in finite type.

For the rest of this section, we assume that $\mathfrak{g}$ is a Kac--Moody algebra over $\mathbb{C}$. Observe that the partial sum property gives a chain of roots between 0 (fixed) and a positive root, or equivalently the weight 0 and any weight in $\wt \mathfrak{g}=\Delta\sqcup\{0\}$ for the adjoint representation. Our second generalization of the PSP involves ``moving between weights'' of arbitrary representations to get a chain of weights between two comparable weights, and is natural in view of the previous line. This property was proved for all finite-dimensional simple highest weight modules and parabolic Verma modules  over semisimple $\mathfrak{g}$ by S. Kumar and A. Khare respectively, and to our knowledge there has not been further progress towards this problem beyond the semisimple case in the literature. In this paper, we solve this problem for: (i) a large class of highest weight modules over Kac--Moody $\mathfrak{g}$, including all simple highest weight modules over $\mathfrak{g}$; (ii) more generally, for non-highest weight modules such as $\mathfrak{g}$ itself (adjoint representation) and arbitrary submodules of parabolic Verma modules over $\mathfrak{g}$; (iii) arbitrary integrable modules (not necessarily highest weight) over semisimple $\mathfrak{g}$. Additionally, we also prove/discuss in various remarks the further generalizations and limitations of the above two generalizations of the PSP.

 As one application of the above two generalizations of the PSP, we provide a different proof, than the ones in \cite{Dhillon_arXiv, Khare_Ad}, of the extremal rays to the convex hull of every highest weight module. See Proposition \ref{P2.11} below. These two generalizations of the PSP also have additional applications to representation theory and combinatorics. In a forthcoming paper, we make use of these two generalizations and obtain interesting results about certain combinatorial subsets of the sets of weights of arbitrary highest weights modules over $\mathfrak{g}$ studied in the literature. Namely, weak faces and weak-$\mathbb{A}$-faces (for arbitrary additive subgroups $\mathbb{A}$ of $\mathbb{R}$ under addition). These subsets were introduced and studied by Chari and her co-authors in \cite{Chari_contm, Chari_Adv, Chari_JGeom, Chari_JPAA, Khare_JA, Khare_AR}.
 
 In this paper, we also find all the highest weight modules having their sets of weights equal to those of parabolic Verma modules. In this, we are motivated by the works of Khare and Dhillon in \cite{Dhillon_arXiv, Khare_Ad, Khare_JA, Khare_Trans}, in which some cases where the sets of weights of parabolic Verma modules give the sets of weights of almost all (and in few cases, all) highest weight modules were studied. For instance, see Theorem \ref{T2.3} (a) proved by Dhillon and Khare below. To our knowledge, this problem has not been fully addressed, except for these cases. 
 
 Interestingly, our analysis in this paper has been fruitful in additionally enabling us to obtain a Minkowski difference formula for the sets of weights of arbitrary highest weight modules over Kac--Moody $\mathfrak{g}$. Our inspiration in this are the Minkowski difference formulas for the weights of simple highest weight modules over Kac--Moody algebras in \cite{Dhillon_arXiv, Khare_Ad, Khare_JA, Khare_Trans, Khare_AR}. 
\section{Preliminaries and main results}
In order to state and prove the results of this paper, we need the following notation.
\subsection{Notation}\label{S2.1}
 In this paper, we denote the set of non-negative, positive, non-positive and negative real numbers by $\mathbb{R}_{\geq0}, \mathbb{R}_{>0}$, $\mathbb{R}_{\leq 0}$ and $\mathbb{R}_{<0}$ respectively. Similarly, for any $S \subseteq \mathbb{R}$ we define $S_*:=S\cap \mathbb{R}_*$ for $*$ any of $\geq 0,>0,\leq0$ and $<0$. For $n\in\mathbb{N}$, we denote the set $\{1,\ldots,n\}$ by $[n]$. For $B$ a subset of an $\mathbb{R}$-vector space, we define
 \[
 \conv_{\mathbb{\mathbb{R}}}B:=\Big\{\sum\limits_{j=1}^{n}c_j b_j \text{ }\big|\text{ } n\in\mathbb{N}, b_j \in B, c_j \in \mathbb{R}_{\geq 0}\text{ } \forall\text{ } 1\leq j\leq n \text{ and } \sum\limits_{j=1}^{n}c_j=1\Big\}
 \]
 to be the convex hull of $B$ over $\mathbb{R}$. Similarly, we define $\conv_{F}B$ over any subfield $F$ of $\mathbb{R}$ (with $F_{\geq 0}$ in the place of $\mathbb{R}_{\geq 0}$ in the above equation). For any two subsets $C$ and $D$ of a real or complex vector space, $C\pm D:=\{c\pm d$ $|$ $c \in C ,d\in D\}$ denotes the Minkowski sum of $C$ and $\pm D$ respectively. When $C=\{x\}$ is singleton, we denote $C\pm D$ by $x\pm D$ for simplicity. \\
 Throughout the paper, we denote by $\mathcal{I}$ an indexing set and by $\Pi=\{\alpha_i$ $|$ $i\in\mathcal{I}\}$ the free generating set for the semigroup $
 \mathbb{Z}_{\geq0}\Pi$. For $I\subset\mathcal{I}$, we denote $\mathcal{I}\setminus  I$ by $I^c$.
 
 For every Lie algebra $\mathcal{G}$ in this paper, we denote by [.,.] its Lie bracket and by $U(\mathcal{G})$ its universal enveloping algebra. Let $\hat{\xi}:=(\xi_{i})_{i=1}^{n}=(\xi_1,\ldots,\xi_n)$, $n\in \mathbb{N}$, be an ordered (formal) sequence. Let $x_{\xi_1},\ldots,x_{\xi_{n}}\in \mathcal{G}$ be a sequence of vectors indexed by $\xi_i$, $1\leq i\leq n$. We define
 \begin{equation}\label{E2.1} [[x_{\xi_i}]]_{i=1}^{n}=[[x_{\xi}]]_{\xi\in\hat{\xi}}:=\Big[x_{\xi_1},\big[\cdots ,[x_{\xi_{n-1}},x_{\xi_n}]\cdots\big]\Big]
 \end{equation}
 to be the right normed Lie word on $x_{\xi_1},\ldots,x_{\xi_n}\in\mathcal{G}$--i.e. the iterated Lie bracket of $x_{\xi_1},\ldots, x_{\xi_n}\in \mathcal{G}$ in the same order. In this notation, when $n=1$, we define $[[x_{\xi_i}]]_{i=1}^{1}=[[x_{\xi}]]_{\xi\in\{\xi_1\}}:= x_{\xi_1}$.\newline \newline
\textbf{Notation for the Kac--Moody setting.} Let $\mathfrak{g}=\mathfrak{g}(A)$ denote the Kac--Moody algebra over $\mathbb{C}$ corresponding to a generalized Cartan matrix $A$ with the realisation $(\mathfrak{h},\Pi ,\Pi^{\vee})$, triangular decomposition $\mathfrak{n}^+\oplus\mathfrak{h}\oplus\mathfrak{n}^-$, and the root system $\Delta$. Whenever we make additional assumptions, such as $\mathfrak{g}$ being symmetrizable or semisimple or of finite/affine type, we will clearly mention it. Let $\Pi =\{\alpha_i $ $|$ $i\in \mathcal{I} \}$ be the simple system and $\Pi^{\vee}=\{\alpha_i^{\vee}$ $|$ $i\in\mathcal{I}\}$ be the simple co-root system, where $\mathcal{I}$ is a  fixed indexing set for the simple roots. $\mathcal{I}$ also stands for the set of vertices/nodes in the Dynkin diagram for $A$ or $\mathfrak{g}$. Throughout the paper, unless otherwise stated, assume that $\mathcal{I}$ is finite. Let $e_i ,f_i,\alpha_i^{\vee}$, $\forall$ $ i\in\mathcal{I}$ be the Chevalley generators for $\mathfrak{g}$, and $W$ denote the Weyl group of $\mathfrak{g}$ generated by simple reflections $\{s_i $ $|$ $i\in \mathcal{I}\}$. Let $\mathfrak{g}':=[\mathfrak{g},\mathfrak{g}]$ be the derived subalgebra of $\mathfrak{g}$, which is generated by $e_i,f_i,\alpha_i^{\vee}$, $\forall$ $i\in\mathcal{I}$. When $\mathfrak{g}$ is symmetrizable, we fix a standard non-degenerate symmetric invariant bilinear form on $\mathfrak{h}^*$ and denote it by (.,.).

For $\emptyset \neq I\subseteq \mathcal{I}$, we define $\Pi_{I}:=\{\alpha_i$ $|$ $i\in I\}$ and $\Pi_I^{\vee}:=\{\alpha_i^{\vee}$ $|$ $i\in I\}$. We define $\mathfrak{g}_I:=\mathfrak{g}(A_{I\times I})$ to be the Kac--Moody algebra corresponding to the submatrix $A_{I\times I}$ of $A$ with realisation $(\mathfrak{h}_I,\Pi_I,\Pi_I^{\vee})$, where $\mathfrak{h}_I \subset \mathfrak{h}$, and the Chevalley generators $e_i ,f_i ,\alpha^{\vee}_i$, $\forall$ $i\in I$. By \cite[Exercise 1.2]{Kac}, $\mathfrak{g}_I$ can be thought of as a subalgebra of $\mathfrak{g}$, and the subroot system $\Delta_{I}:=\Delta\cap\mathbb{Z}\Pi_I\subset\Delta$ coincides with the root system of $\mathfrak{g}_I$. Let $\mathfrak{l}_I= \mathfrak{g}_{I}+\mathfrak{h}$ and $\mathfrak{p}_I=\mathfrak{g}_I +\mathfrak{h}+\mathfrak{n}^+$ be the standard Levi and the parabolic Lie subalgebras of $\mathfrak{g}$ corresponding to $I$, respectively. Let $W_I$ denote the parabolic subgroup of $W$ generated by the simple reflections $\{s_i $ $|$ $i \in I\}$. When $I=\emptyset$, for completeness we define (i) $\Pi_I,\Pi_I^{\vee}$ and $\Delta_{I}$ to be $\emptyset$, (ii) $\mathfrak{g}_I$ and $\mathfrak{h}_I$ to be $\{0\}$, and (iii) $W_I$ to be the trivial subgroup $\{e\}$ of $W$. For $\alpha\in\Pi$, we define $\mathfrak{sl}_{\alpha}:=\mathfrak{g}_{-\alpha}\oplus \mathbb{C}\alpha^{\vee}\oplus\mathfrak{g}_{\alpha}\simeq \mathfrak{sl}_2(\mathbb{C})$. We occasionally use $s_{\alpha}$ for the simple reflection about the hyperplane perpendicular to $\alpha$.

Let $\prec$ be the usual partial order on $\mathfrak{h}^*$, under which $x\prec y \in \mathfrak{h}^* \iff y -x \in \mathbb{Z}_{\geq 0}\Pi$. Fix $\emptyset\neq I\subset\mathcal{I}$, $J\subset\mathcal{I}$, $\alpha\in\Delta^+$, and a vector $x=\sum\limits_{i\in \mathcal{I}}c_i \alpha_i \in \mathbb{C}\Pi$ for some $c_i \in \mathbb{C}$. We define 
\begin{equation}\label{E2.2}
\begin{split}
&\supp(x):=\{i \in \mathcal{I} \text{ }|\text{ } c_i \neq 0\},\\
&\supp_I(x):=\{i\in I \text{ }|\text{ } c_i \neq 0\},\\
&\Delta_{\alpha ,J}:=\{\beta \in \Delta^+ \text{ }|\text{ } \supp(\beta -\alpha)\subset J\},\\
&\height(x):=\sum_{i\in \mathcal{I}}c_i,\quad \height_{I}(x):=\sum_{i\in I}c_i,
\end{split}
\qquad\qquad
\begin{split}
&\text{for }n\in\mathbb{Z}\text{ }\text{ }\Delta_{I,n}:=\{\beta \in \Delta \text{ }|\text{ } \height_I(\beta)=n\},\\
&\mathfrak{g}_{I,n}:=\bigoplus_{\beta\in\Delta_{I,n}}\mathfrak{g}_{\beta}, \text{ and }\text{ for convenience}\\
&\supp(0):=\emptyset,\text{ }\Delta_{\alpha,\emptyset}:=\{\alpha\},\Delta_{\emptyset,n }:=\emptyset,\\
&\mathbb{Z}\Delta_{\emptyset,n}=\mathbb{R}\Delta_{\emptyset,n}:=\{0\}.\\
\end{split}
\end{equation}
In the notation as in equation \eqref{E2.2}, note that
\begin{equation}\label{E2.3}
\begin{split} \Delta_{I,0}=\Delta_{I^c}=\Delta\cap\mathbb{Z}\Pi_{I^c}
\end{split},\quad
\begin{split}  \alpha\in\Delta_{\alpha,J}\text{ } \forall \text{ }J\subset\mathcal{I},
\end{split}
\quad
\begin{split}
\Delta_{\alpha_i ,\{i\}^c}= \Delta_{\{i\},1}\text{ }\forall\text{ } i\in\mathcal{I},
\end{split}\quad
\begin{split}
\mathfrak{g}_{I,0}=\mathfrak{g}_{I^c}.    
\end{split}
\end{equation}

Given an $\mathfrak{h}$-module $M$ and
$\mu\in\mathfrak{h}^*$, denote the $\mu$-weight space and the set of
weights of $M$ by
\[ 
M_{\mu}=\{v\in M\text{ }|\text{ }h\cdot v= \mu(h)v\text{ }\forall\text{ }h\in \mathfrak{h}\}\quad\text{and}\quad \wt M=\{\mu\in\mathfrak{h}^*\text{ }|\text{ }M_{\mu}\neq\{0\}\}.
\]
We say that $M$ is a weight module if $M=\bigoplus\limits_{\mu\in\wt M}M_{\mu}$. When each weight space of a weight module $M$ is finite-dimensional, we define $char M:=\sum_{\mu\in\wt M}\dim(M_{\mu})e^{\mu}$ to be the formal character of $M$.\\
For $\lambda \in \mathfrak{h}^*$, let $M(\lambda)$ and $L(\lambda)$ denote the Verma module over $\mathfrak{g}$ with highest weight $\lambda$ and its unique simple quotient respectively. By $M(\lambda)\twoheadrightarrow V$ ($M(\lambda)$ surjecting onto $V$), we denote a non-trivial highest weight $\mathfrak{g}$-module $V$ with highest weight $\lambda$.

 For $h\in\mathfrak{h}$ and $\mu\in\mathfrak{h}^*$, we define $\langle \mu ,h \rangle$ to be the evaluation of $\mu$ at $h$, which we also denote by $\mu(h)$ occasionally. We define $P^+:=\{\mu\in\mathfrak{h}^*$ $|$ $\langle\mu,\alpha_i^{\vee}\rangle\in\mathbb{Z}_{\geq0}$ $\forall$ $i\in\mathcal{I}\}$ to be the set of dominant integral weights. For $\lambda\in\mathfrak{h}^*$, $M(\lambda)\twoheadrightarrow V$ and $I\subset\mathcal{I}$, we define
\begin{equation}\label{E2.4}
 J_{\lambda}:=\{i \in \mathcal{I} \text{ }|\text{ } \langle \lambda ,\alpha^{\vee}_i\rangle \in \mathbb{Z}_{\geq0}\}\quad
 J_{\lambda}':=\{i \in \mathcal{I} \text{ }|\text{ } \langle \lambda ,\alpha_{i}^{\vee}\rangle\in \mathbb{R}_{\geq 0}\}\quad
 \wt_I V:=\wt V\cap (\lambda-\mathbb{Z}_{\geq0}\Pi_I).
 \end{equation}
  Fix $I\subset\mathcal{I}$, and suppose $V$ is a highest weight $\mathfrak{p}_I$ or $\mathfrak{l}_I$-module with highest weight $\lambda\in\mathfrak{h}^*$. Then $V$ becomes a highest weight $\mathfrak{g}_I$-module with highest weight $\lambda\big|_{\mathfrak{h}_I}$ the restriction of $\lambda$ to $\mathfrak{h}_I$. But for simplicity, throughout the paper we also denote the highest weight of $V$ for the $\mathfrak{g}_I$-action by $\lambda$.
  
For $\mu\in\mathfrak{h}^*$, $\alpha\in\Pi$ and $k\in\mathbb{Z}_{\geq0}$, we define   \begin{equation}\label{E2.5}
  \big[\mu-k\alpha,\text{ }\mu\big]:=\{\mu-j\alpha\text{ }\big|\text{ }0\leq j\leq k\}.\quad \text{ (Not to be confused with the Lie bracket.)}
  \end{equation}\newline
   \textbf{Notation for the graded setting.}
 In section \ref{S3}, we fix an arbitrary field $\mathbb{F}$, an indexing set $\mathcal{I}$, and the abelian semigroup $\mathbb{Z}_{\geq0}\Pi$ freely generated by $\Pi=\{\alpha_i$ $|$ $i\in \mathcal{I}\}$. We work with a more general $\mathbb{Z}_{\geq0}\Pi$-graded $\mathbb{F}$-Lie algebra $\mathcal{G}$ (that is, $\mathcal{G}=\bigoplus\limits_{\gamma\in\mathbb{Z}_{\geq0}\Pi}\mathcal{G}_{\gamma}$, and $[\mathcal{G}_{\alpha},\mathcal{G}_{\beta}]\subset\mathcal{G}_{\alpha+\beta}$ $\forall$ $\alpha,\beta\in\mathbb{Z}_{\geq0}\Pi$), which is generated by its subspaces $\mathcal{G}_{\alpha_i}$, $\forall$ $i\in \mathcal{I}$. We assume that each subspace $\mathcal{G}_{\alpha_i}\subset\mathcal{G}$ is non-zero, $\forall$ $i\in\mathcal{I}$. We define $\mathcal{A}:=\{\alpha \in \mathbb{Z}_{\geq0}\Pi$ $|$ $\mathcal{G}_{\alpha}\neq\{0\}\}$ (the analogous candidate for the set of positive roots $\Delta^+$ of the root system $\Delta$). We do not assume the subspaces $\mathcal{G}_{\alpha}$, $\alpha\in\mathcal{A}$, (in particular $\mathcal{G}_{\alpha_i}$, $i\in\mathcal{I}$) to be finite-dimensional. We define the partial order $\prec$, and functions $\supp(.),\height(.)$, $\supp_I(.)$ and $\height_{I}(.)$, for $\emptyset\neq I\subset\mathcal{I}$, analogously on $\mathbb{Z}\Pi$. For $\emptyset\neq I\subset\mathcal{I}$, and $n\in \mathbb{Z}$, we analogously define 
 \begin{equation}\label{E2.6}
 \mathcal{A}_{I,n}:=\{\beta \in \mathcal{A}\text{ } |\text{ }\height_{I}(\beta)=n\},\qquad\mathcal{G}_{I,n}:=\bigoplus_{\beta\in\mathcal{A}_{I,n}}\mathcal{G}_{\beta}.
 \end{equation}
 \begin{note}\label{N1}
Throughout the paper, for $\lambda_1,\ldots,\lambda_n\in\mathbb{R}_{\geq0}\Pi$, $n\geq 2$, and $U\subset\mathbb{R}_{\geq0}\Pi$ (or respectively $\mathfrak{h}^*$ in the place of $\mathbb{R}_{\geq0}\Pi$ for the Kac--Moody setting), the notation
\[
\lambda_1\prec\cdots\prec\lambda_i\prec\cdots\prec\lambda_n\in\text{ }U\text{ }\forall i
\]
denotes: (1) $\lambda_{i-1}\prec\lambda_{i}$ $\forall$ $i>1$, and (2) $\lambda_i\in U$ $\forall$ $i\in [n]$.
\end{note}
\subsection{Preliminaries and useful results}
We begin by noting the following basic facts about Lie algebras and highest weight modules over Kac--Moody algebras, which we use without mention. Throughout this subsection, $\mathfrak{g}$ stands for a Kac--Moody algebra.
\begin{itemize}
	\item[(F1)]\label{(F1)} Let $L$ be a Lie algebra generated by $X\subset L$. Then $L$ is spanned by the right normed Lie words on the elements in $X$. More precisely, there exists a basis of $L$ consisting of elements of the form $\Big[x_1,\big[\cdots, [x_{n-1},x_n]\cdots\big]\Big]$ ($=[[x_i]]_{i=1}^{n}$ in the notation as in equation \eqref{E2.1}), $n\in\mathbb{N}$, such that $x_1,\ldots,x_n\in X$. 
	\item[(F2)]\label{(F2)} Let $\mathfrak{g}$ be a Kac--Moody algebra, $\lambda\in\mathfrak{h}^*$, $M(\lambda)\twoheadrightarrow V$ and $0\neq v\in V_{\lambda}$ be a highest weight vector. Then for $\mu\in\wt V$ the weight space $V_{\mu}$ is spanned by the weight vectors of the form $f_{i_1}\cdots f_{i_n}v$ such that $f_{i_j}\in\mathfrak{g}_{-\alpha_{i_j}}$, $i_j\in\mathcal{I}$, and $\alpha_{i_j}\in\Pi$ for each $j\in [n]$, and $\sum\limits_{j=1}^{n}\alpha_{i_j}=\lambda-\mu$.
	\item[(F3)] With the notation as in (F2), suppose $\mu\in\wt V$ and $\alpha_i\in\Pi$ such that $\langle\mu,\alpha_i^{\vee}\rangle>0$. Then $\big[\mu-\lceil\langle\mu,\alpha_i^{\vee}\rangle\rceil\alpha_i,\text{ }\mu\big]\subset\wt V$ by the $\mathfrak{sl}_{\alpha_i}$-action on $V_{\mu}$ and $\mathfrak{sl}_2$-theory, where $\lceil \cdot\rceil$ denotes the ceiling function. Recall, $s_i(\mu)=\mu-\langle\mu,\alpha_i^{\vee}\rangle\alpha_i$ when $\langle\mu,\alpha_i^{\vee}\rangle\in\mathbb{Z}_{\geq 0}$.
\end{itemize}  

 Most of the results of this paper hold true for a large class of highest weight modules -- namely, the class of highest weight modules whose set of weights coincides with that of a parabolic Verma module. This class contains all Verma, parabolic Verma, simple highest weight modules and many more; by Theorem \ref{T2.3} below, proved by Dhillon and Khare \cite{Dhillon_arXiv}. Parabolic Verma modules, also known as generalized Verma modules, were introduced and studied by Lepowsky in a series of papers; see \cite{L_JA} and the references therein. For a detailed list of the properties of parabolic Verma modules, we refer the reader to \cite[Chapter 9]{Hump_BGG}. 
\begin{definition}\label{D2.1}
\begin{itemize}
    \item[(1)] Given a generalized Cartan matrix $A$, let $\bar{\mathfrak{g}}(A)$ denote the Lie algebra generated by $e_i, f_i, \mathfrak{h}$ modulo only the Serre relations, and let the Kac--Moody Lie algebra $\mathfrak{g}=\mathfrak{g}(A)$ be the further quotient of $\overline{\mathfrak{g}}(A)$ by the largest ideal intersecting $\mathfrak{h}$ trivially.
	\item[(2)] Let $\lambda\in\mathfrak{h}^*$, $\emptyset\neq J\subset \mathcal{I}$ and $M(\lambda)$ denote the Verma module over $\mathfrak{g}$ with highest weight $\lambda$. Throughout the paper, let $0\neq m_{\lambda}\in M(\lambda)_{\lambda}$ denote a highest weight vector of $M(\lambda)$. Recall that the parabolic Lie subalgebra of $\mathfrak{g}$ corresponding to $J$ is denoted by $\mathfrak{p}_J$. Assume $J\subset J_{\lambda}$ for the rest of this point. Let $L^{\max}_J(\lambda)$ denote the largest integrable highest weight module over $\mathfrak{g}_J$ (or equivalently $\mathfrak{p}_J$, via the natural action of $\mathfrak{h}$ and the trivial action of $\bigoplus\limits_{\alpha\in\Delta^+\setminus\Delta^+_J} \mathfrak{n}^+_{\alpha}$) with highest weight $\lambda$. $L^{\max}_J(\lambda)$ has the property that $L^{\max}_J(\lambda)\twoheadrightarrow L'_J(\lambda)$ for any integrable highest weight $\mathfrak{g}_J$-module $L'_J(\lambda)$ with highest weight $\lambda$. Let $L_J(\lambda)$ denote the simple highest weight module over $\mathfrak{g}_J$ (or equivalently $\mathfrak{p}_J$) with highest weight $\lambda$. Note that $L_J(\lambda)$ is integrable over $\mathfrak{g}_J$. Recall that when $\mathfrak{g}_J$ is symmetrizable, by \cite[Corollary 10.4]{Kac} $L^{\max}_J(\lambda)$ is simple. 

	\item[(3)] For $J\subset J_{\lambda}$, denote the
	parabolic Verma module corresponding to $J$ (and $\lambda$) by
		\[
		M(\lambda,J):=U(\mathfrak{g})\otimes_{U(\mathfrak{p}_J)}L^{\max}_J(\lambda)
		\simeq M(\lambda)\big/\big(\sum_{j\in
		J}U(\mathfrak{n}^-)f_{j}^{\langle\lambda,\alpha_j^{\vee}\rangle+1}m_{\lambda}\big),\]
		where $L^{\max}_J(\lambda)$ is the
		parabolic Verma module over $\mathfrak{g}_J$
		corresponding to $J$ (and $\lambda$).
		\end{itemize} 
\end{definition}
 Given $J\subset \mathcal{I}$, recall that a weight module $M$ of $\mathfrak{g}$ is said to be $\mathfrak{g}_J$-integrable if $e_j$ and $f_j$ act locally nilpotently on $M$ $\forall$ $j\in J$. If $M$ is $\mathfrak{g}_J$-integrable, then $\wt M$ is $W_J$-invariant and every submodule of $M$ is also $\mathfrak{g}_J$-integrable.\\
Let $M(\lambda)\twoheadrightarrow V$ and $J\subset J_{\lambda}$. Then $V$ is $\mathfrak{g}_J$-integrable if $f_j$ acts locally nilpotently on $V$ $\forall$ $j\in J$. 
\begin{lemma}\label{L2.2}
\begin{itemize}
\item[(1)] Let $j\in \mathcal{I}$. Then $f_j$ acts locally nilpotently on $V$ if and only if $f_j$ acts nilpotently on the highest weight space $V_{\lambda}$.
\item[(2)] $V$ is $\mathfrak{g}_J$-integrable if and only if $char V$ is $W_J$-invariant.
\end{itemize}
\end{lemma}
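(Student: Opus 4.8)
I would treat the two parts separately; both reduce to standard $\mathfrak{sl}_2$- and Kac--Moody-theoretic facts once the right object is in place. \textbf{For Part (1):} the forward implication is trivial, so suppose $f_j$ acts nilpotently on $V_\lambda$; since $\dim V_\lambda=1$, writing $V_\lambda=\mathbb{C}m_\lambda$ this just says $f_j^{N}m_\lambda=0$ for some $N$. I would recall that $\ad(f_j)$ is locally nilpotent on $\mathfrak{g}$ (a basic structural property of Kac--Moody algebras), so for each $x\in\mathfrak{g}$ there is $B_x\in\mathbb{N}$ with $(\ad f_j)^{B_x}(x)=0$. Combining this with the identity $f_j^{m}x=\sum_{b\ge0}\binom{m}{b}\big((\ad f_j)^{b}(x)\big)\,f_j^{m-b}$ in $U(\mathfrak{g})$, I would show that $F:=\{w\in V\mid f_j^{k}w=0\text{ for some }k\ge0\}$ is a $\mathfrak{g}$-submodule of $V$: if $f_j^{k}w=0$ then $f_j^{m}(xw)=0$ for all $m\ge k+B_x$. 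Since $m_\lambda\in F$ and $V=U(\mathfrak{g})m_\lambda=U(\mathfrak{n}^-)m_\lambda$, we conclude $F=V$, i.e.\ $f_j$ acts locally nilpotently on $V$.

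\textbf{For Part (2), the direction ``integrable $\Rightarrow$ $W_J$-invariant character'':} since $V$ is a highest weight module over $\mathfrak{g}$ and $\mathcal{I}$ is finite, its weight spaces are finite-dimensional, so $char V$ is defined. If $V$ is $\mathfrak{g}_J$-integrable then for each $j\in J$ the operators $e_j,f_j$ act locally nilpotently on $V$, so $\exp(f_j),\exp(-e_j)\in\mathrm{GL}(V)$ are well defined, and I would invoke the standard fact that the lift $\widetilde{s}_j:=\exp(f_j)\exp(-e_j)\exp(f_j)$ of the simple reflection $s_j$ restricts to a bijection $V_\mu\to V_{s_j(\mu)}$ for every $\mu\in\mathfrak{h}^*$ (see, e.g., \cite[\S3]{Kac}). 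Hence $\dim V_\mu=\dim V_{s_j(\mu)}$ for all $j\in J$ and all $\mu$, and iterating along a word in the $s_j$'s gives $\dim V_\mu=\dim V_{w(\mu)}$ for every $w\in W_J$; that is, $char V$ is $W_J$-invariant.

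\textbf{For Part (2), the converse:} assume $char V$ is $W_J$-invariant and fix $j\in J$. Because $J\subset J_\lambda$, we have $n_j:=\langle\lambda,\alpha_j^{\vee}\rangle\in\mathbb{Z}_{\ge0}$, and a direct computation gives $s_j\big(\lambda-(n_j+1)\alpha_j\big)=\lambda+\alpha_j$. Since $\lambda+\alpha_j\notin\wt V$ (the weights of $V$ lie in $\lambda-\mathbb{Z}_{\ge0}\Pi$), $W_J$-invariance forces $\dim V_{\lambda-(n_j+1)\alpha_j}=\dim V_{\lambda+\alpha_j}=0$; as $f_j^{\,n_j+1}m_\lambda\in V_{\lambda-(n_j+1)\alpha_j}$, it vanishes. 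Thus $f_j$ acts nilpotently on $V_\lambda$, hence locally nilpotently on $V$ by Part (1). Doing this for every $j\in J$ — and noting that $e_j$ automatically acts locally nilpotently on the highest weight module $V$ (its weights are bounded above by $\lambda$) — shows $V$ is $\mathfrak{g}_J$-integrable.

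\textbf{Expected obstacle.} Little of this is delicate. The two inputs carrying content are the local nilpotence of $\ad f_j$ on $\mathfrak{g}$ combined with the commutator expansion in Part (1), and the weight-permuting lift $\widetilde{s}_j$ in Part (2); both are classical. The step I would be most careful with is checking that $F$ really is a $\mathfrak{g}$-submodule in Part (1) (closure under the action of every $x\in\mathfrak{g}$ via the commutator expansion), and the conceptual crux is that $W_J$-invariance of the character, in the converse of Part (2), already yields the single vanishing $f_j^{\,n_j+1}m_\lambda=0$, which the submodule argument of Part (1) then propagates to all of $V$.
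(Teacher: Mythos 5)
Your proof is correct and is exactly the standard argument (Kac, \S 3.4--3.8) that the paper implicitly relies on: the paper states Lemma \ref{L2.2} without proof, treating it as known. Both halves check out --- the submodule $F=\{w\in V\mid f_j^kw=0 \text{ for some } k\}$ argument via local nilpotence of $\ad f_j$ and the commutator expansion for Part (1), and the lift $\widetilde{s}_j=\exp(f_j)\exp(-e_j)\exp(f_j)$ together with the observation $s_j(\lambda-(n_j+1)\alpha_j)=\lambda+\alpha_j\notin\wt V$ for Part (2) --- so there is nothing to correct.
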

We define $I_V:=\big\{i\in \mathcal{I}$ $|$ $\langle\lambda,\alpha_i^{\vee}\rangle\in\mathbb{Z}_{\geq0}$ and $f_i^{\langle\lambda,\alpha_i^{\vee}\rangle+1}V_{\lambda}=\{0\}\big\}$ to be the integrability of $V$. Note that $V$ is $\mathfrak{g}_{I_V}$-integrable.

In our results on the parabolic Verma module $M(\lambda,J)$ or precisely its weights, we make use of the $\mathfrak{g}_{J}$-integrability of $M(\lambda,J)$ and the following formulas for its set of weights. See e.g. \cite[Proposition 3.7 and Section 4]{Dhillon_arXiv} for the proofs and consequences of these formulas.
\begin{eqnarray}
\text{Minkowski decomposition:}&\quad&\wt M(\lambda,J)=\wt L_J(\lambda)-\mathbb{Z}_{\geq0}(\Delta^+\setminus\Delta^+_J).\label{E2.7}\\
\qquad\text{Integrable slice decomposition:}&\quad&\wt M(\lambda,J)= \bigsqcup\limits_{\xi\in\mathbb{Z}_{\geq0}\Pi_{J^c}}\wt L_J(\lambda-\xi).\label{E2.8}
\end{eqnarray}
The results of this paper on $\wt M(\lambda,J)$, respectively on $\conv_{\mathbb{R}}\wt M(\lambda,J)$ hold true for almost all, respectively all the highest weight modules by parts (a) and (b) of the following Theorem. See \cite[Theorem 2.9]{Dhillon_arXiv} and \cite[Theorem 3.13]{Khare_Ad} for the proofs.
\begin{theorem}[Dhillon--Khare]\label{T2.3}
Fix $\lambda\in\mathfrak{h}^*$ and $J \subset I_{L(\lambda)}$ the integrability of $L(\lambda)$. (In the notation of this paper, note that $I_{L(\lambda)}=J_{\lambda}$.)
\begin{itemize}
\item[(a)] Every highest weight module V of highest weight $\lambda$ and integrability $J$ has the same weights if and only if the Dynkin subdiagram on $I_{L(\lambda)}\setminus J$ is complete, i.e. $\langle\alpha_i,\alpha_j^{\vee}\rangle \neq0,$ $\forall$ $i, j \in I_{L(\lambda)}\setminus J$.\\
In particular,
\begin{equation}\label{E2.9}
    \wt L(\lambda)=\wt M(\lambda,I_{L(\lambda)}).
\end{equation}
\item[(b)] For $M(\lambda)\twoheadrightarrow V$, the following data are equivalent:
\begin{itemize}
\item[(1)] $I_V$, the integrability of $V$.
\item[(2)] $\conv_{\mathbb{R}}\wt V$, the convex hull of the set of weights of V.
\item[(3)] The stabilizer of $\conv_{\mathbb{R}}\wt V$ in $W$.
\end{itemize}
In particular, the convex hull in (2) is always that of the parabolic Verma module
$M(\lambda, I_V )$, and the stabilizer in (3) is always the parabolic subgroup $W_{I_V}$.
\end{itemize}
\end{theorem}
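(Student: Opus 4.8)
The common engine is a two-sided comparison. For any module $V$ with $M(\lambda)\twoheadrightarrow V$, write $J=I_V$ for its integrability. The relations $f_i^{\langle\lambda,\alpha_i^\vee\rangle+1}V_\lambda=\{0\}$ ($i\in J$) are exactly the defining relations of the parabolic Verma module $M(\lambda,J)$, so $M(\lambda,J)\twoheadrightarrow V$, and always $V\twoheadrightarrow L(\lambda)$. Since a surjection of weight modules can only shrink weight sets,
\[
\wt L(\lambda)\ \subseteq\ \wt V\ \subseteq\ \wt M(\lambda,J),\qquad J=I_V.
\]
Recall that for $J\subseteq J_\lambda=I_{L(\lambda)}$ the module $M(\lambda,J)$ has integrability exactly $J$, so among highest weight modules of highest weight $\lambda$ and integrability $J$ it is the one with the largest weight set. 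Both parts amount to deciding when the two ends of this sandwich, or their convex hulls, are pinned together; I would treat part (b) first, as it needs only the convexified sandwich.

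\emph{Part (b).} I would prove directly that $\conv_{\mathbb R}\wt V=\conv_{\mathbb R}\wt M(\lambda,I_V)$ for every $V$; since the right-hand side depends only on $I_V$, this gives $(1)\Rightarrow(2)$ and both ``in particular'' claims about convex hulls. The inclusion ``$\subseteq$'' is the convexified sandwich. For ``$\supseteq$'': $V$ is $\mathfrak g_{I_V}$-integrable (Lemma \ref{L2.2} and the remark after it), so $\wt V$ is $W_{I_V}$-stable and contains $W_{I_V}\lambda$; and for $i\notin I_V$, $f_i$ is not locally nilpotent on the line $V_\lambda$, so by $\mathfrak{sl}_2$-theory (cf.\ (F3)) $\lambda-\mathbb Z_{\ge0}\alpha_i\subseteq\wt V$, whence $w\lambda-\mathbb Z_{\ge0}w\alpha_i\subseteq\wt V$ for all $w\in W_{I_V}$. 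Hence $\conv_{\mathbb R}\wt V\supseteq\conv_{\mathbb R}(W_{I_V}\lambda)-\mathbb R_{\ge0}W_{I_V}\Pi_{I_V^c}$, and it remains to check, feeding the Minkowski decomposition \eqref{E2.7} into standard polyhedral geometry, that this lower bound already equals $\conv_{\mathbb R}\wt M(\lambda,I_V)$ — the point being that the recession cone $\mathbb R_{\ge0}(\Delta^+\setminus\Delta^+_{I_V})$ and the real-root cone $\mathbb R_{\ge0}W_{I_V}\Pi_{I_V^c}$ have the same closure relative to the polytope $\conv_{\mathbb R}(W_{I_V}\lambda)$. Then $(2)\Rightarrow(3)$ is formal, and $(3)\Rightarrow(1)$ follows because the explicit description of $\conv_{\mathbb R}\wt M(\lambda,J)$ yields $\operatorname{Stab}_W(\conv_{\mathbb R}\wt M(\lambda,J))=W_J$ and $J\mapsto W_J$ is injective on subsets of $\mathcal I$, so the stabilizer recovers $I_V$.

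\emph{Part (a).} The ``in particular'' clause is the case $J=I_{L(\lambda)}$: then $I_{L(\lambda)}\setminus J=\emptyset$, the Dynkin condition is vacuous, and $L(\lambda)$ and $M(\lambda,I_{L(\lambda)})$ are both highest weight modules of highest weight $\lambda$ and integrability $I_{L(\lambda)}$, so they share weights by the main equivalence. For the equivalence, by the opening remarks ``all highest weight modules of highest weight $\lambda$ and integrability $J$ have the same weights'' is the same as ``$\wt M(\lambda,J)\subseteq\wt V$ for every such $V$'', equivalently: for every $\mu\in\wt M(\lambda,J)$, the $\mathfrak g$-submodule of $M(\lambda,J)$ generated by its $\mu$-weight space contains some $f_i^{\langle\lambda,\alpha_i^\vee\rangle+1}m_\lambda$ with $i\in I_{L(\lambda)}\setminus J$ (so that no quotient of integrability $J$ can discard $\mu$). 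For $(\Leftarrow)$, assuming the subdiagram on $I_{L(\lambda)}\setminus J$ is complete, I would verify this reachability statement through the integrable slice decomposition \eqref{E2.8}: induct on $\height(\xi)$, at each step using that $f_i$ ($i\in I_{L(\lambda)}\setminus J$) is not locally nilpotent \emph{together with} the completeness hypothesis to connect a $W_J$-dominant weight of the slice $\wt L_J(\lambda-\xi)$ to a weight already known to lie in $\wt V$, and then letting $\mathfrak g_J$-integrability spread it across the whole slice. For $(\Rightarrow)$, contrapositively, pick $i\ne j$ in $I_{L(\lambda)}\setminus J$ with $\langle\alpha_i,\alpha_j^\vee\rangle=0$: then $\mathfrak{sl}_{\alpha_i}$ and $\mathfrak{sl}_{\alpha_j}$ span a copy of $\mathfrak{sl}_2\oplus\mathfrak{sl}_2$ in $\mathfrak g$, and exploiting this decoupling I would produce a weight $\mu\in\wt M(\lambda,J)$ whose generated submodule misses every $f_k^{\langle\lambda,\alpha_k^\vee\rangle+1}m_\lambda$ ($k\in I_{L(\lambda)}\setminus J$); extending it to a maximal submodule with that property gives a quotient $V$ of $M(\lambda,J)$, still of integrability $J$, with $\mu\notin\wt V$.

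The main obstacle is the inductive step of $(\Leftarrow)$ in part (a): forcing \emph{every} weight of the parabolic Verma module into an arbitrary highest weight module of the prescribed integrability, where completeness of the subdiagram on $I_{L(\lambda)}\setminus J$ is precisely what keeps the connecting weight from being lost under passage to such a quotient. The convex-geometry step in (b) is more routine; its one delicate point is that when $\mathfrak g_{I_V}$ is of infinite type one must check that the cone of \emph{real} roots outside $\Delta^+_{I_V}$ already has the correct closure, so that the imaginary directions get absorbed for free.
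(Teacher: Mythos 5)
This statement is quoted by the paper as a known result of Dhillon--Khare, with explicit pointers to \cite[Theorem 2.9]{Dhillon_arXiv} and \cite[Theorem 3.13]{Khare_Ad}; the paper supplies no proof of its own, so there is nothing internal to compare your argument against. Judged on its own terms, your sandwich $\wt L(\lambda)\subseteq\wt V\subseteq\wt M(\lambda,I_V)$ and the overall architecture are the right starting point, and your part (b) is essentially sound modulo the cone identity $\mathbb{R}_{\geq0}(\Delta^+\setminus\Delta^+_{J})=\mathbb{R}_{\geq0}W_{J}\Pi_{J^c}$, which is exactly Lemma \ref{L6.1}(c) of this paper (the two cones are equal, not merely equal in closure).

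The genuine gap is in part (a). Your reformulation --- ``for every $\mu\in\wt M(\lambda,J)$ the submodule generated by $M(\lambda,J)_\mu$ contains some $f_i^{\langle\lambda,\alpha_i^{\vee}\rangle+1}m_{\lambda}$ with $i\in I_{L(\lambda)}\setminus J$'' --- is vacuously false when $J=I_{L(\lambda)}$, yet that is precisely the case you then invoke to deduce the ``in particular'' clause $\wt L(\lambda)=\wt M(\lambda,I_{L(\lambda)})$. The correct reformulation must allow the generated submodule to be all of $M(\lambda,J)$, and when $J=I_{L(\lambda)}$ the $(\Leftarrow)$ direction reduces to showing that \emph{no proper} submodule of $M(\lambda,J_{\lambda})$ swallows an entire weight space --- equivalently, the weight formula for the non-integrable simple module $L(\lambda)$. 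This is the deepest part of the Dhillon--Khare theorem, and your proposed induction does not reach it: your inductive step only uses non-local-nilpotence of $f_i$ for $i\in I_{L(\lambda)}\setminus J$, whereas the slices of \eqref{E2.8} are indexed by all of $\Pi_{J^c}$, so you also need the (easier, Lemma \ref{L4.2}-type) descent along $\Pi_{J_{\lambda}^c}$ \emph{and}, for $i,j\in I_{L(\lambda)}\setminus J$ joined by an edge, an actual mechanism letting the two non-integrable directions interact --- this is where all the work lies and where your sketch only declares intent. Your $(\Rightarrow)$ construction (the maximal vector $f_i^{\langle\lambda,\alpha_i^{\vee}\rangle+1}f_j^{\langle\lambda,\alpha_j^{\vee}\rangle+1}m_{\lambda}$ for orthogonal $i,j\in I_{L(\lambda)}\setminus J$, whose weight space in $M(\lambda,J)$ is one-dimensional) does work and is the standard counterexample.
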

\begin{definition}\label{D2.4}
	Let $C$ be a convex subset of a real vector space, and fix a point $x\in C$ and a vector $y$. Then $x+\mathbb{R}_{\geq0}y$ denotes the ray at/through the point $x$ in the direction of $y$. We say $x+\mathbb{R}_{\geq0}y$ is an extremal ray of $C$ if whenever $x+ry=\sum\limits_{i=1}^{n}d_ix_i$ for some $r,d_i\in\mathbb{R}_{\geq0}$ such that $\sum\limits_{i=1}^{n}d_i=1$, and $x_i\in C$, $1\leq i\leq n$, we have $x_i\in x+\mathbb{R}_{\geq0}y$ whenever $d_i>0$.  
\end{definition}

\subsection{Main results}
 We now begin stating the main results of this paper.
Throughout this subsection, unless specified, assume $\mathfrak{g}$ to be
a Kac--Moody algebra over $\mathbb{C}$. Let $\mathcal{I}$
denote a fixed indexing set for the set of simple roots. We first state
the \textit{parabolic partial sum property} (\textit{parabolic-PSP)}.
\begin{definition}[\textbf{Parabolic-PSP}]
	Let $\Delta$ be the root system of a Kac--Moody algebra $\mathfrak{g}$. We say $\Delta$ has the \textit{parabolic partial sum property} if given $\emptyset\neq I\subset\mathcal{I}$ and a root $\beta\in\Delta^+$ with $\height_I(\beta)>1$, there exists a root $\gamma\in \Delta_{I,1}:=\{\alpha \in \Delta \text{ }|\text{ } \height_I(\alpha)=1\}$ such that $\beta -\gamma \in \Delta$; equivalently, there exists a sequence of roots $\gamma_i\in \Delta_{I,1}$, $1\leq i\leq n=\height_{I}(\beta)$, such that
	\begin{equation*}
		\gamma_1 \prec \cdots \prec \sum\limits_{j=1}^{i}\gamma_j\prec \cdots \prec \sum\limits_{j=1}^n \gamma_j=\beta\in\Delta\text{ }\forall i.\quad\text{ (See Note \ref{N1} for the notation.)}
	\end{equation*}
\end{definition}
\begin{question}[Khare]
	Does the parabolic-PSP hold in the root system of a Kac--Moody algebra?
\end{question}
The first main result of this paper positively answers this question, together with applications to the sets of weights of non-integrable highest weight modules over $\mathfrak{g}$. For $\lambda\in\mathfrak{h}^*$, recall from equation \eqref{E2.4} that $J_{\lambda}:=\{j\in \mathcal{I}$ $|$ $\langle \lambda ,\alpha_j^{\vee}\rangle \in \mathbb{Z}_{\geq 0}\}$. Also, recall from equation \eqref{E2.2} that when $I=\emptyset$, $\Delta_{I,1}:=\emptyset$ and $\mathbb{Z}_{\geq 0}\Delta_{I,1}=\mathbb{Z}_{\geq0}\Pi_I:=\{0\}$.

\begin{thmx}\label{thmA}
	Let $\mathfrak{g}$ be a Kac--Moody algebra with root system $\Delta$. Then:
	\begin{itemize}
	\item[(A1)] The parabolic-PSP holds true in $\Delta$ for any $\emptyset \neq I\subset \mathcal{I}$.
\item[(A2)] For $\lambda \in \mathfrak{h}^*$, $J \subset J_{\lambda}$ and for any $M(\lambda)\twoheadrightarrow V$ such that $\wt V=\wt M(\lambda,J)$, we have the following description:
\end{itemize}
	\begin{equation}\label{E2.10}
	\begin{aligned}[t] \mathbb{Z}_{\geq 0}(\Delta^+ \setminus \Delta_{J}^+)=\mathbb{Z}_{\geq 0}\Delta_{J^c,1},
	\end{aligned}
	 \qquad
	 \begin{aligned}[t]
	 &\wt V=\wt L_{J}(\lambda)-\mathbb{Z}_{\geq0}\Delta_{J^c,1} = \wt L_J(\lambda)+\mathbb{Z}_{\geq0}\Delta_{J^c,-1}.
	 \end{aligned}
	\end{equation}
	\[\text{In particular,}\quad\wt L(\lambda)=\wt L_{J_{\lambda}}(\lambda)-\mathbb{Z}_{\geq0}\Delta_{J_{\lambda}^c,1}= \wt L_{J_{\lambda}}(\lambda)+\mathbb{Z}_{\geq0}\Delta_{J_{\lambda}^c,-1}.
	\]
	The description of $\wt V$ in equation \eqref{E2.10} is minimal, in that $\Delta_{J^c,1}$ cannot be reduced.
   \end{thmx}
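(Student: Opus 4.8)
The plan is to establish (A1) in its stronger, Lie-algebra-theoretic form first, read off the combinatorial parabolic-PSP from it, and then deduce (A2) and the minimality assertion. Fix $\emptyset\neq I\subset\mathcal{I}$ and set $\mathfrak{u}_I:=\bigoplus_{n\geq1}\mathfrak{g}_{I,n}$, the nilradical of the parabolic $\mathfrak{p}_{I^c}$, so that $\mathfrak{n}^+=\mathfrak{n}^+_{I^c}\oplus\mathfrak{u}_I$ is a decomposition into $I$-graded subalgebras, $\mathfrak{u}_I$ is an ideal of $\mathfrak{n}^+$, and $e_i\in\mathfrak{g}_{I,1}$ for every $i\in I$. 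The heart of the proof is the assertion that $\mathfrak{u}_I$ is generated, as a Lie algebra, by its subspace $\mathfrak{g}_{I,1}$. Granting this, fix a basis of $\mathfrak{g}_{I,1}$ consisting of root vectors; by Fact (F1) the algebra $\mathfrak{u}_I$ is spanned by the right-normed Lie words on these, and since such a word $[[x_{\gamma_j}]]_{j=1}^{m}$ with $x_{\gamma_j}\in\mathfrak{g}_{\gamma_j}$, $\gamma_j\in\Delta_{I,1}$, lies in $\mathfrak{g}_{\gamma_1+\cdots+\gamma_m}$ and has $\height_I(\gamma_1+\cdots+\gamma_m)=m$, we conclude: for every $\beta\in\Delta^+$ with $\height_I(\beta)=n\geq1$, the root space $\mathfrak{g}_\beta$ is spanned by the right-normed Lie words $[[x_{\gamma_j}]]_{j=1}^{n}$ with $x_{\gamma_j}\in\mathfrak{g}_{\gamma_j}$, $\gamma_j\in\Delta_{I,1}$ and $\sum_{j}\gamma_j=\beta$. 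This is the Lie-word strengthening of (A1).

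To prove that $\mathfrak{u}_I=\langle\mathfrak{g}_{I,1}\rangle$, let $\mathfrak{v}$ be the Lie subalgebra of $\mathfrak{n}^+$ generated by $\mathfrak{g}_{I,1}$; clearly $\mathfrak{v}\subseteq\mathfrak{u}_I$. For $j\in I^c$ the derivation $\ad(e_j)$ of $\mathfrak{n}^+$ preserves $I$-degree, hence $[e_j,\mathfrak{g}_{I,1}]\subseteq\mathfrak{g}_{I,1}\subseteq\mathfrak{v}$; as a derivation sending a generating subspace of a subalgebra into that subalgebra also sends the whole subalgebra into itself, $\ad(e_j)(\mathfrak{v})\subseteq\mathfrak{v}$. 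Since also $\ad(x)(\mathfrak{v})\subseteq\mathfrak{v}$ for $x\in\mathfrak{g}_{I,1}$ ($\mathfrak{v}$ being a subalgebra), and since $\{e_j:j\in I^c\}\cup\mathfrak{g}_{I,1}$ generates $\mathfrak{n}^+$ (here one uses $e_i\in\mathfrak{g}_{I,1}$ for $i\in I$), the normaliser of $\mathfrak{v}$ in $\mathfrak{n}^+$ is all of $\mathfrak{n}^+$; that is, $\mathfrak{v}$ is an ideal of $\mathfrak{n}^+$. Now $\mathfrak{n}^+/\mathfrak{v}$ is generated by the images of the $e_j$, those with $j\in I$ being $0$, so $\mathfrak{n}^+/\mathfrak{v}$ is generated by elements of $I$-degree $0$; as $\mathfrak{v}$ is $I$-graded the $I$-grading descends, forcing $\mathfrak{n}^+/\mathfrak{v}$ to be concentrated in $I$-degree $0$. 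Hence the image of $\mathfrak{u}_I$ in $\mathfrak{n}^+/\mathfrak{v}$ is $0$, i.e. $\mathfrak{u}_I\subseteq\mathfrak{v}$, so $\mathfrak{v}=\mathfrak{u}_I$. This argument uses only that $\mathfrak{n}^+$ is generated by the $e_i$ and is $\mathbb{Z}_{\geq0}\Pi$-graded, so it applies verbatim to the graded Lie algebras of the later graded setting, yielding the parabolic-PSP in that generality too.

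For the combinatorial parabolic-PSP: given $\beta\in\Delta^+$ with $\height_I(\beta)=n\geq2$, choose a \emph{nonzero} right-normed word $w=[x_{\delta_1},[x_{\delta_2},[\cdots,x_{\delta_n}]\cdots]]$ as above. Non-vanishing of $w$ forces every nested sub-bracket to be nonzero, hence each suffix sum $\delta_k+\delta_{k+1}+\cdots+\delta_n$ is a root; in particular $\beta-\delta_1=\delta_2+\cdots+\delta_n\in\Delta$ with $\delta_1\in\Delta_{I,1}$, and re-indexing $\gamma_i:=\delta_{n+1-i}$ gives $\gamma_i\in\Delta_{I,1}$ with $\gamma_1\prec\gamma_1+\gamma_2\prec\cdots\prec\sum_{j=1}^{n}\gamma_j=\beta$, all partial sums being roots. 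This proves (A1). For (A2), fix $\lambda\in\mathfrak{h}^*$ and $J\subseteq J_\lambda$; the case $J=\mathcal{I}$ being trivial, assume $J^c\neq\emptyset$ and apply (A1) with $I=J^c$: every $\beta\in\Delta^+\setminus\Delta^+_J$ has $\height_{J^c}(\beta)\geq1$ and hence lies in $\mathbb{Z}_{\geq0}\Delta_{J^c,1}$, while $\Delta_{J^c,1}\subseteq\Delta^+\setminus\Delta^+_J$ trivially, so $\mathbb{Z}_{\geq0}(\Delta^+\setminus\Delta^+_J)=\mathbb{Z}_{\geq0}\Delta_{J^c,1}$. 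Substituting into the Minkowski decomposition \eqref{E2.7} for $\wt M(\lambda,J)$ and using the hypothesis $\wt V=\wt M(\lambda,J)$ yields $\wt V=\wt L_J(\lambda)-\mathbb{Z}_{\geq0}\Delta_{J^c,1}=\wt L_J(\lambda)+\mathbb{Z}_{\geq0}\Delta_{J^c,-1}$, the second equality since $\Delta_{J^c,-1}=-\Delta_{J^c,1}$. The displayed ``in particular'' follows on taking $J=J_\lambda=I_{L(\lambda)}$ and $V=L(\lambda)$, which is legitimate by \eqref{E2.9} of Theorem~\ref{T2.3}(a).

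For minimality I will show that $\Delta_{J^c,1}$ is exactly the set of irreducible elements of the additive monoid $\mathbb{Z}_{\geq0}(\Delta^+\setminus\Delta^+_J)$, hence its unique minimal generating set. Every element of that monoid is a finite $\mathbb{Z}_{\geq0}$-sum of its irreducibles (induction on $\height$), and the irreducibles lie in every generating set; since $\Delta_{J^c,1}$ generates (previous paragraph), it suffices to check each $\gamma\in\Delta_{J^c,1}$ is irreducible. If $\gamma=a+b$ with $a,b$ nonzero monoid elements, then $\height_{J^c}(a),\height_{J^c}(b)\geq1$ — each being a nonzero $\mathbb{Z}_{\geq0}$-sum of elements of $\Delta^+\setminus\Delta^+_J$ — so $\height_{J^c}(\gamma)\geq2$, contradicting $\height_{J^c}(\gamma)=1$. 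Thus $\Delta_{J^c,1}$ cannot be reduced: for any $S\subsetneq\Delta_{J^c,1}$ one has $\mathbb{Z}_{\geq0}S\subsetneq\mathbb{Z}_{\geq0}\Delta_{J^c,1}$, and already for $\lambda=0$ (where $\wt L_J(0)=\{0\}$ and $V=M(0,J)$ is an admissible choice) the set $\wt L_J(0)-\mathbb{Z}_{\geq0}S$ is then a proper subset of $\wt V$. I expect the one genuinely substantive step to be the identity $\mathfrak{u}_I=\langle\mathfrak{g}_{I,1}\rangle$: the key point is that discarding the Chevalley generators $e_j$ with $j\in I^c$ is harmless precisely because $\ad(e_j)$ stabilises $\mathfrak{g}_{I,1}$; everything else — extracting an increasing chain of roots from a nonzero right-normed word (with the index reversal), the substitution into \eqref{E2.7}, and the monoid-theoretic minimality — should then be routine.
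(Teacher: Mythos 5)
Your proof is correct, and its core step takes a genuinely different route from the paper's. The paper proves the Lie-word statement (its Theorem \ref{T3.1}) by induction on $\height(\beta)$: it takes a right-normed word $[e_{i_1},[\cdots[e_{i_{k-1}},e_{i_k}]\cdots]]$, applies the inductive hypothesis to the inner word, and when $i_1\notin I$ uses the Jacobi identity to push $e_{i_1}$ into the $\Delta_{I,1}$-letters, re-expanding $[e_{i_1},\theta_{a_1}]$ and $[e_{i_1},\theta_{\hat b}]$ in the homogeneous basis of $\mathcal{G}_{I,1}$. You instead prove the single structural identity $\mathfrak{u}_I=\langle\mathfrak{g}_{I,1}\rangle$ by showing the subalgebra $\mathfrak{v}$ generated by $\mathfrak{g}_{I,1}$ is $\ad(e_j)$-stable for $j\in I^c$ (a derivation preserving a generating subspace preserves the subalgebra), hence an ideal of $\mathfrak{n}^+$, and that the graded quotient $\mathfrak{n}^+/\mathfrak{v}$ is generated in $I$-degree $0$ and so kills $\mathfrak{u}_I$; Fact (F1) then delivers the same spanning by right-normed words in $\Delta_{I,1}$-letters, and the grading isolates the words landing in $\mathfrak{g}_\beta$. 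Your argument is shorter and more conceptual, avoids the explicit Jacobi bookkeeping, and, as you note, applies verbatim in the general $\mathbb{Z}_{\geq0}\Pi$-graded setting of Section \ref{S3}; the paper's induction has the mild advantage of being a direct rewriting algorithm for a given word. The remaining steps --- extracting the chain of partial sums from a nonzero right-normed word, substituting $\mathbb{Z}_{\geq0}(\Delta^+\setminus\Delta^+_J)=\mathbb{Z}_{\geq0}\Delta_{J^c,1}$ into the Minkowski decomposition \eqref{E2.7}, and the irreducibility argument for minimality (each $\gamma\in\Delta_{J^c,1}$ cannot split as a sum of two elements of the monoid for $\height_{J^c}$ reasons) --- coincide with the paper's.
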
     
\begin{remark}
To our knowledge, neither the parabolic-PSP, nor its applications in representation theory to $\wt V$---for example any simple non-integrable highest weight module $V=L(\lambda)$---was known even for $\mathfrak{g}$ of finite type. 
\end{remark}
In fact, we prove a more general version of the parabolic-PSP for any $\mathbb{Z}_{\geq0}\Pi$-graded $\mathbb{F}$-Lie algebra $\mathcal{G}$ generated by $\mathcal{G}_{\Pi}$. We also prove a ``going up'' version of the parabolic-PSP for Kac--Moody $\mathfrak{g}$, see Theorem \ref{T3.6} below. This also generalizes the well-known basic property \cite[Proposition 4.9]{Kac}.

Given the minimal description for $\wt M(\lambda,J)$ or in particular for $\wt L(\lambda)$ as above, and also in view of Theorem \ref{T2.3} by Dhillon and Khare, it is natural to seek to find all the highest weight modules $V$ such that $\wt V=\wt M(\lambda,J)$. The second main result of this paper, Theorem \ref{thmB}, completely solves this problem. For stating it, we first define the following submodules of $M(\lambda)$. 

For $\lambda\in\mathfrak{h}^*$ and $\emptyset\neq J\subset J_{\lambda}$, we define $N(\lambda,J)$ to be the largest proper submodule of $M(\lambda)$ with respect to the property: 
\begin{align*} 
&\text{ If }\mu\in\wt N(\lambda,J) \text{ is such that the Dynkin subdiagram on }\supp(\lambda-\mu) \text{ has no edge,}\tag{\textbf{P1}}\\&\text{ then }\supp(\lambda-\mu)\cap J\neq \emptyset.
\end{align*}
Similarly, we define $N(\lambda)$ to be the largest proper submodule of $M(\lambda)$ with respect to the property:
\begin{align*}\tag{\textbf{P2}} \text{If }\mu\in\wt N(\lambda),\text{ then the Dynkin subdiagram on }\supp(\lambda-\mu)\text{ has at least one edge.}\end{align*}
The existence of $N(\lambda, J)$ and $N(\lambda)$, and some useful observations about them, will be discussed in Section \ref{S4}. For a quick and brief understanding of $N(\lambda,J)$, check that the submodule $\sum\limits_{j\in J}U(\mathfrak{g})f_j^{\langle\lambda,\alpha_j^{\vee}\rangle+1}$ $m_{\lambda}\subset M(\lambda)$ satisfies the property (P1), and hence is contained in $N(\lambda,J)$. Similarly, check that the integrability of $\frac{M(\lambda)}{N(\lambda)}$ is empty. With these definitions, we now state our next main result.
\begin{thmx}\label{thmB}
Let $\lambda\in\mathfrak{h}^*$, $\emptyset \neq J\subset J_{\lambda}$,
and $0\rightarrow N_V\rightarrow M(\lambda)\rightarrow
V\rightarrow 0$ be an exact sequence. Then
\[\wt V=\wt M(\lambda,J)\quad\text{ if and only if }\quad \sum\limits_{j\in J}U(\mathfrak{g})f_j^{\langle\lambda,\alpha_j^{\vee}\rangle+1}m_{\lambda}\text{ }\subseteq\text{ }N_V\text{ }\subseteq \text{ }N(\lambda,J).
\]\end{thmx}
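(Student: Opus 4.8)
Write $N_{\min}:=\sum_{j\in J}U(\mathfrak g)\,f_j^{\langle\lambda,\alpha_j^\vee\rangle+1}m_\lambda=\sum_{j\in J}U(\mathfrak n^-)\,f_j^{\langle\lambda,\alpha_j^\vee\rangle+1}m_\lambda$, so that $M(\lambda,J)=M(\lambda)/N_{\min}$; as noted just before Theorem \ref{thmB}, $N_{\min}$ satisfies (P1), hence $N_{\min}\subseteq N(\lambda,J)$ and $M(\lambda)/N(\lambda,J)$ is a quotient of $M(\lambda,J)$. The plan is to reduce the asserted equivalence to the single identity
\begin{equation}\tag{$\star$}
\wt\big(M(\lambda)/N(\lambda,J)\big)=\wt M(\lambda,J),
\end{equation}
which settles the ``if'' direction immediately: if $N_{\min}\subseteq N_V\subseteq N(\lambda,J)$ then $V$ is sandwiched, as a quotient of $M(\lambda)$, between $M(\lambda,J)$ and $M(\lambda)/N(\lambda,J)$, so $\wt M(\lambda,J)=\wt\big(M(\lambda)/N(\lambda,J)\big)\subseteq\wt V\subseteq\wt M(\lambda,J)$, forcing $\wt V=\wt M(\lambda,J)$.

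For the ``only if'' direction I would argue directly, without $(\star)$. Assume $\wt V=\wt M(\lambda,J)$. To see $N_{\min}\subseteq N_V$, fix $j\in J$ and put $N_j:=\langle\lambda,\alpha_j^\vee\rangle\in\mathbb Z_{\ge0}$; the weight space $M(\lambda)_{\lambda-(N_j+1)\alpha_j}$ is one-dimensional, spanned by $f_j^{N_j+1}m_\lambda\in N_{\min}$, so $\lambda-(N_j+1)\alpha_j\notin\wt M(\lambda,J)=\wt V$, which forces $f_j^{N_j+1}m_\lambda\in N_V$; since $N_V$ is a submodule this gives $N_{\min}\subseteq N_V$ (and $V$ is $\mathfrak g_J$-integrable by Lemma \ref{L2.2}(1)). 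To see $N_V\subseteq N(\lambda,J)$, note $N_V$ is proper because $\lambda\in\wt V$, so by the maximality in the definition of $N(\lambda,J)$ it suffices to check that $N_V$ has property (P1). So let $\mu\prec\lambda$ with $\supp(\lambda-\mu)$ edgeless and $\supp(\lambda-\mu)\cap J=\emptyset$; I claim $(N_V)_\mu=\{0\}$. Writing $\lambda-\mu=\sum_i c_i\alpha_i$, edgelessness of $\supp(\lambda-\mu)$ and the Serre relations make the root vectors $f_i$ with $i\in\supp(\lambda-\mu)$ pairwise commute, so by fact (F2) the space $M(\lambda)_\mu$ is spanned by the single vector $\prod_{i\in\supp(\lambda-\mu)}f_i^{c_i}m_\lambda$; hence $\dim M(\lambda)_\mu=1$. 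Since $\supp(\lambda-\mu)\subseteq J^c$, no nonzero weight vector of $N_{\min}$ has weight $\mu$, so $M(\lambda,J)_\mu=M(\lambda)_\mu$ is one-dimensional and in particular $\mu\in\wt M(\lambda,J)=\wt V$; therefore $1\le\dim V_\mu\le\dim M(\lambda)_\mu=1$, whence $\dim V_\mu=\dim M(\lambda)_\mu$ and $(N_V)_\mu=\{0\}$. Thus $\mu\notin\wt N_V$, which is exactly (P1) for $N_V$.

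It remains to prove $(\star)$, which is the main obstacle. The inclusion ``$\subseteq$'' is immediate from $N_{\min}\subseteq N(\lambda,J)$, so the content is $\wt M(\lambda,J)\subseteq\wt\big(M(\lambda)/N(\lambda,J)\big)$, i.e.\ $M(\lambda)_\mu\not\subseteq N(\lambda,J)$ for every $\mu\in\wt M(\lambda,J)$. Here is the reduction I would use. The module $M(\lambda)/N(\lambda,J)$ is $\mathfrak g_J$-integrable, being a quotient of $M(\lambda,J)$. Moreover, for $\xi\in\mathbb Z_{\ge0}\Pi_{J^c}$ the weight $\lambda-\xi$ is $\mathfrak n^+_J$-maximal in $M(\lambda)$ (for $j\in J$ the weight $\lambda-\xi+\alpha_j$ is not $\preceq\lambda$), hence also in the quotient; so any nonzero vector of weight $\lambda-\xi$ in $M(\lambda)/N(\lambda,J)$ is a $\mathfrak g_J$-highest weight vector and therefore generates an integrable highest weight $\mathfrak g_J$-module with weight set containing $\wt L_J(\lambda-\xi)$. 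By the integrable slice decomposition \eqref{E2.8}, $(\star)$ is thus equivalent to: $\lambda-\xi\in\wt\big(M(\lambda)/N(\lambda,J)\big)$, i.e.\ $M(\lambda)_{\lambda-\xi}\not\subseteq N(\lambda,J)$, for every $\xi\in\mathbb Z_{\ge0}\Pi_{J^c}$. I would prove this by induction on $\height(\xi)$, the case $\xi=0$ holding since $N(\lambda,J)$ is proper. For the inductive step one argues by contradiction: if $M(\lambda)_{\lambda-\xi}\subseteq N(\lambda,J)$ for some minimal such $\xi\ne0$, then applying $U(\mathfrak n^+)$ produces inside $N(\lambda,J)$ a maximal vector of $M(\lambda)$ whose weight $\lambda-\xi'$ has $0\ne\xi'\preceq\xi$, so $\supp\xi'\subseteq J^c$; the goal is then to extract from this maximal vector, using the parabolic-PSP of Theorem \ref{thmA}(A1) in its word-level form together with the structure of $N(\lambda,J)$ developed in Section \ref{S4}, a weight of the shape $\lambda-\alpha_i$ with $i\in J^c$ lying in $\wt N(\lambda,J)$ --- which is forbidden by (P1), since $\{i\}$ is edgeless and disjoint from $J$. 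I expect the delicate heart of the whole argument to be exactly this last extraction: keeping the supports inside $J^c$ all the way down to a single node while remaining attained in $N(\lambda,J)$ is where the finer properties of $N(\lambda,J)$ from Section \ref{S4} and the word-level parabolic-PSP are genuinely needed.
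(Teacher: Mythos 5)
Your ``only if'' direction is correct and matches the paper (you make explicit the inclusion $\sum_{j\in J}U(\mathfrak g)f_j^{\langle\lambda,\alpha_j^\vee\rangle+1}m_\lambda\subseteq N_V$, which the paper leaves implicit), and your reduction of the ``if'' direction to $(\star)$, and then via the integrable slice decomposition and the $\mathfrak g_J$-maximal-vector observation to the single claim
\[
M(\lambda)_{\lambda-\xi}\not\subseteq N(\lambda,J)\quad\text{for all }\xi\in\mathbb Z_{\geq0}\Pi_{J^c},
\]
is exactly the paper's reduction. But that last claim is the entire substance of the reverse implication, and you have not proved it; you only sketch a contradiction argument and yourself flag it as the ``delicate heart.'' Worse, the sketch as stated cannot work: from a maximal vector of weight $\lambda-\xi'$ inside $N(\lambda,J)$ one can only reach weights $\preceq\lambda-\xi'$, so you can never ``extract'' a weight $\lambda-\alpha_i$ with $i\in J^c$ unless $\xi'$ is itself a simple root. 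And $N(\lambda,J)$ genuinely does contain maximal vectors of weight $\lambda-\xi'$ with $\supp(\xi')\subseteq J^c$ whenever the Dynkin subdiagram on $\supp(\xi')$ has an edge (Observation \ref{O4.1}(6)), so the mere existence of such a maximal vector yields no contradiction with (P1). The point to be proved is finer: that all such maximal vectors together never exhaust a full weight space $M(\lambda)_{\lambda-\xi}=\big(U(\mathfrak g_{J^c})m_\lambda\big)_{\lambda-\xi}$.

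The paper closes this gap by a mechanism absent from your proposal: it first establishes Proposition \ref{P2.7} (the $J$-free statement $\wt\big(M(\lambda)/N(\lambda)\big)=\wt M(\lambda)$, proved by induction on $\height(\lambda-\mu)$ using Lemma \ref{L4.2} and $\mathfrak{sl}_2$-theory), and then applies it to the Levi subalgebra $\mathfrak g_{J^c}$. Concretely, $U(\mathfrak g_{J^c})m_\lambda$ is a Verma module over $\mathfrak g_{J^c}$; property (P1) of $N(\lambda,J)$ forces $N(\lambda,J)\cap U(\mathfrak g_{J^c})m_\lambda$ to satisfy (P2) over $\mathfrak g_{J^c}$, hence to sit inside $N_{J^c}(\lambda)$; and Proposition \ref{P2.7} for $\mathfrak g_{J^c}$ gives $\wt\big(U(\mathfrak g_{J^c})m_\lambda/N_{J^c}(\lambda)\big)=\lambda-\mathbb Z_{\geq0}\Pi_{J^c}$, whence no weight space $\big(U(\mathfrak g_{J^c})m_\lambda\big)_{\lambda-\xi}$ is contained in $N(\lambda,J)$. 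You should either reproduce this Levi-restriction argument or supply a genuinely different proof of the displayed claim; as it stands the ``if'' direction is incomplete.
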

Recall that $M(\lambda,J)\simeq M(\lambda)\big/ \big(\sum_{j\in J}
U(\mathfrak{g})f_j^{\langle\lambda,\alpha_j^{\vee}\rangle+1}m_{\lambda}\big)$.
The key result used in the proof of Theorem \ref{thmB} is the following
proposition, which gives all the highest weight modules $V$
such that $\wt V=\wt M(\lambda)$. Theorem \ref{thmB} and Proposition
\ref{P2.7} seem to be novel even in finite type.
\begin{prop}\label{P2.7}
Let $\lambda\in\mathfrak{h}^*$, and let $0\rightarrow
N_V\rightarrow M(\lambda)\rightarrow V\rightarrow 0$ be an exact
sequence. Then \[\wt V=\wt M(\lambda)\quad\text{ if and only if }\quad
N_V\text{ }\subseteq \text{ }N(\lambda).\] \end{prop}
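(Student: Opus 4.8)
The plan is to prove the two implications separately, using throughout that $M(\lambda)$ is a Verma module, so $\wt M(\lambda)=\lambda-\mathbb{Z}_{\geq0}\Pi$ and $M(\lambda)_\mu\neq\{0\}$ for every $\mu\preceq\lambda$; hence the condition $\wt V=\wt M(\lambda)$ is equivalent to $(N_V)_\mu\subsetneq M(\lambda)_\mu$ for all $\mu\preceq\lambda$. The one structural input I reuse is that $\dim M(\lambda)_\mu=1$ whenever the Dynkin subdiagram on $S:=\supp(\lambda-\mu)$ has no edge: edgelessness of $S$ forces $\langle\alpha_i,\alpha_j^\vee\rangle=0$ for $i\neq j$ in $S$, so $\Delta\cap\mathbb{Z}\Pi_S=\{\pm\alpha_i\mid i\in S\}$, whence by PBW the only monomial of $U(\mathfrak{n}^-)$ of weight $\mu-\lambda$ is $\prod_{i\in S}f_i^{c_i}$ (where $\lambda-\mu=\sum_{i\in S}c_i\alpha_i$), and each $\mathfrak{g}_{-\alpha_i}$ is one-dimensional.

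For ``$\Longrightarrow$'': if $\wt V=\wt M(\lambda)$ then $N_V$ is proper (as $V_\lambda\neq\{0\}$), and $N_V$ satisfies (P2). Indeed, were there $\mu\in\wt N_V$ with $\supp(\lambda-\mu)$ edgeless, then $M(\lambda)_\mu$ would be one-dimensional by the observation above, so $\{0\}\neq(N_V)_\mu=M(\lambda)_\mu$, forcing $V_\mu=\{0\}$ and contradicting $\mu\in\wt M(\lambda)=\wt V$. Thus $N_V$ is a proper submodule of $M(\lambda)$ with (P2), so $N_V\subseteq N(\lambda)$ by the maximality clause defining $N(\lambda)$.

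For ``$\Longleftarrow$'': since $N_V\subseteq N(\lambda)$ makes $M(\lambda)/N(\lambda)$ a quotient of $V$, it suffices to show $M(\lambda)_\mu\not\subseteq N(\lambda)$ for every $\mu\preceq\lambda$. If $\supp(\lambda-\mu)$ is edgeless, (P2) gives $\mu\notin\wt N(\lambda)$, so $N(\lambda)_\mu=\{0\}\subsetneq M(\lambda)_\mu$. Otherwise put $S:=\supp(\lambda-\mu)$, which now carries an edge, and suppose for contradiction $M(\lambda)_\mu\subseteq N(\lambda)$; then $U(\mathfrak{n}^+)M(\lambda)_\mu\subseteq N(\lambda)$. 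As $\lambda-\mu\in\mathbb{Z}_{\geq0}\Pi_S$, every weight $\nu$ with $\mu\preceq\nu\preceq\lambda$ has $\supp(\lambda-\nu)\subseteq S$, and raising by any $\alpha_i$ with $i\notin S$ kills such weight spaces; hence $U(\mathfrak{n}^+)M(\lambda)_\mu=U(\mathfrak{n}^+_S)M(\lambda)_\mu$, where $\mathfrak{n}^+_S:=\mathfrak{n}^+\cap\mathfrak{g}_S$. It now suffices to prove the claim $(\ast)$: the space $U(\mathfrak{n}^+_S)M(\lambda)_\mu$ contains a nonzero vector of some weight $\nu$ with $\mu\prec\nu\preceq\lambda$ and $\supp(\lambda-\nu)$ edgeless. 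Granting $(\ast)$, $M(\lambda)_\nu$ is one-dimensional, so $\big(U(\mathfrak{n}^+_S)M(\lambda)_\mu\big)_\nu\neq\{0\}$ forces $M(\lambda)_\nu\subseteq N(\lambda)$, i.e.\ $\nu\in\wt N(\lambda)$ with edgeless support, contradicting (P2); this contradiction proves $M(\lambda)_\mu\not\subseteq N(\lambda)$. (Here we use the existence and properness of $N(\lambda)$, from Section~\ref{S4}.)

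The proof of $(\ast)$ is the technical heart, and the main obstacle. If $\mu\in\wt L(\lambda)$, the contravariant (Shapovalov) form of $M(\lambda)$ is nonzero on $M(\lambda)_\mu$ (its radical being the maximal proper submodule of $M(\lambda)$), so contravariance gives $U(\mathfrak{n}^+)_{\lambda-\mu}M(\lambda)_\mu\neq\{0\}$ and $(\ast)$ holds with $\nu=\lambda$; the same argument run inside the Verma $\mathfrak{g}_S$-module $U(\mathfrak{g}_S)m_\lambda$ (note $M(\lambda)_\mu=\big(U(\mathfrak{g}_S)m_\lambda\big)_\mu$ since $\lambda-\mu\in\mathbb{Z}_{\geq0}\Pi_S$) disposes of the case where $\mu$ is a weight of the simple quotient of $U(\mathfrak{g}_S)m_\lambda$, again with $\nu=\lambda$. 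In the remaining, genuinely degenerate, case the contravariant form vanishes on $M(\lambda)_\mu$, so one cannot climb all the way to $m_\lambda$, and one must instead reach a weight of edgeless support strictly above $\mu$; this is where the parabolic partial sum property of Theorem~\ref{thmA}(A1) enters. Applied to $\lambda-\mu$ --- and, after discarding a suitable tail, to $\lambda-\nu$ for an edgeless-support $\nu$ --- it yields an ordering $\alpha_{i_1},\alpha_{i_2},\dots$ of simple roots building that element with every partial sum a root; one then tracks the $\mathfrak{sl}_2$-strings of the corresponding raising operators along this order, applied to an appropriately ordered monomial representing $M(\lambda)_\mu$, and shows they do not all collapse. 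The delicacy to be overcome is exactly this: raising one simple root at a time can annihilate everything --- already $e\cdot f m_\lambda=0$ in $\mathfrak{sl}_2$ when $\langle\lambda,\alpha^\vee\rangle=0$ --- so the partial sum property is needed precisely to choose an order in which the successive $\mathfrak{sl}_2$-strings remain non-collapsing until an edgeless-support weight is hit.
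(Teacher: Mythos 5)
Your forward implication is exactly the paper's argument and is correct: edgeless support of $\lambda-\mu'$ forces $\dim M(\lambda)_{\mu'}=1$, so $\wt V=\wt M(\lambda)$ prevents such $\mu'$ from lying in $\wt N_V$, and maximality of $N(\lambda)$ with respect to (P2) gives $N_V\subseteq N(\lambda)$. The reduction of the converse to showing $M(\lambda)_\mu\not\subseteq N(\lambda)$ for all $\mu\preceq\lambda$, and the further reduction to $U(\mathfrak{n}^+_S)M(\lambda)_\mu$ with $S=\supp(\lambda-\mu)$, are also fine.

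However, the converse as written has a genuine gap: everything rests on your claim $(\ast)$, and $(\ast)$ is only proved when $\mu$ is a weight of the simple quotient of the $\mathfrak{g}_S$-Verma $U(\mathfrak{g}_S)m_\lambda$ (via nondegeneracy of the contravariant form, taking $\nu=\lambda$). In the remaining case --- which is the only case where anything nontrivial happens, since it is exactly there that $M(\lambda)_\mu$ can sit inside large submodules --- you say one "tracks the $\mathfrak{sl}_2$-strings \ldots and shows they do not all collapse," but no argument is given for why some string survives down to a weight of edgeless support; you yourself note that a single raising operator can annihilate a whole weight space when $\langle\lambda,\alpha^\vee\rangle=0$, and the parabolic-PSP by itself only supplies an ordering of simple roots, not the nonvanishing of the corresponding iterated raising operators on $M(\lambda)_\mu$. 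So the technical heart of the proof is asserted, not proved. For contrast, the paper avoids raising operators entirely: it proves $\wt\frac{M(\lambda)}{N(\lambda)}=\wt M(\lambda)$ by downward induction on $\height(\lambda-\mu)$, using Lemma \ref{L4.2} to dispose of the $J_\lambda^c$-directions when $\supp(\lambda-\mu)\cap J_\lambda$ is edgeless, and otherwise choosing an edge $\{i,j\}\subset\supp(\lambda-\mu)\cap J_\lambda$ with $c_i\geq c_j$, applying the inductive hypothesis to $\mu+c_j\alpha_j$, and checking the elementary inequality $\langle\mu+c_j\alpha_j,\alpha_j^\vee\rangle\geq c_i\geq c_j$ so that the $\mathfrak{sl}_{\alpha_j}$-string through $\mu+c_j\alpha_j$ (fact (F3)) reaches $\mu$. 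If you want to salvage your route, you would need to supply a genuine proof of $(\ast)$ in the degenerate case; as it stands the inductive string argument of the paper is both shorter and complete.
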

 
 Our third main result, Theorem \ref{thmC}, extends the Minkowski difference formulas for the weights of simple highest weight modules in \cite{Dhillon_arXiv, Khare_Ad, Khare_JA, Khare_Trans, Khare_AR} to the weights of any highest weight module over a general Kac--Moody algebra $\mathfrak{g}$. For $I\subset\mathcal{I}$, recall from equation \eqref{E2.4} that $\wt_{I}V:=\wt V\cap (\lambda-\mathbb{Z}_{\geq 0}\Pi_{I})$, and note that this set is precisely the set of weights of the $\mathfrak{g}_I$-module $U(\mathfrak{g}_I)V_{\lambda}$.
\begin{thmx}\label{thmC}
Let $\lambda\in\mathfrak{h}^*$, and $M(\lambda)\twoheadrightarrow{ }V$. Then
\begin{equation}\label{E2.11}
\wt V= \wt _{J_{\lambda}}V-\mathbb{Z}_{\geq 0}\Delta_{J_{\lambda}^c,1}=\wt _{J_{\lambda}}V-\mathbb{Z}_{\geq 0}(\Delta^+\setminus \Delta_{J_{\lambda}}^+).
\end{equation}
More strongly, we have for arbitrary $J\subset\mathcal{I}$
  \begin{equation}\label{E2.12}
  \wt_JV-\mathbb{Z}_{\geq 0}\Pi_{J^c}\subset \wt V\quad \iff \quad \wt V=\wt_JV-\mathbb{Z}_{\geq 0}\Delta_{J^c,1}.
  \end{equation}
\end{thmx}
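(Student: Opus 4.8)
The plan is to deduce both identities from four steps: a root-system reduction via the parabolic-PSP; an unconditional inclusion $\wt V\subseteq\wt_JV-\mathbb{Z}_{\geq0}\Delta_{J^c,1}$ obtained from PBW; a conditional reverse inclusion, which is the crux; and a verification, when $J=J_\lambda$, of the hypothesis appearing in \eqref{E2.12}. For the reduction I would first prove that $\mathbb{Z}_{\geq0}(\Delta^+\setminus\Delta_J^+)=\mathbb{Z}_{\geq0}\Delta_{J^c,1}$ for every $J\subseteq\mathcal{I}$: the inclusion $\Delta_{J^c,1}\subseteq\Delta^+\setminus\Delta_J^+$ is immediate, and conversely every $\beta\in\Delta^+\setminus\Delta_J^+$ has $\height_{J^c}(\beta)=n\geq1$, so Theorem~\ref{thmA}(A1) writes $\beta=\gamma_1+\cdots+\gamma_n$ with all $\gamma_k\in\Delta_{J^c,1}$. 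This makes the two right-hand sides of \eqref{E2.11} literally equal, and since $\Pi_{J^c}\subseteq\Delta_{J^c,1}$ it makes the implication ``$\Leftarrow$'' of \eqref{E2.12} trivial and reduces \eqref{E2.11} to the forward implication of \eqref{E2.12} in the case $J=J_\lambda$.

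Next, $\wt V\subseteq\wt_JV-\mathbb{Z}_{\geq0}\Delta_{J^c,1}$ holds for every $J$ with no hypothesis, by PBW. Write $\mathfrak{n}^-$ as the direct sum of the subalgebra $\bigoplus_{n\geq1}\mathfrak{g}_{J^c,-n}$ and of $\mathfrak{n}^-_J$, so that $U(\mathfrak{n}^-)=U\big(\bigoplus_{n\geq1}\mathfrak{g}_{J^c,-n}\big)\cdot U(\mathfrak{n}^-_J)$. Given $\mu\in\wt V$, pick a nonzero $u\cdot v\in V_\mu$ (with $v$ a highest weight vector) and expand $u=\sum_\ell a_\ell b_\ell$ into weight-homogeneous pieces with $a_\ell\in U\big(\bigoplus_{n\geq1}\mathfrak{g}_{J^c,-n}\big)$ and $b_\ell\in U(\mathfrak{n}^-_J)$; some term survives, $a_\ell(b_\ell v)\neq0$, and then $b_\ell v\neq0$ lies in $V_{\lambda-\eta_\ell}$ with $\eta_\ell\in\mathbb{Z}_{\geq0}\Pi_J$, so $\lambda-\eta_\ell\in\wt_JV$, while $a_\ell(b_\ell v)$, of weight $\mu=(\lambda-\eta_\ell)-\delta_\ell$, has $\delta_\ell\in\mathbb{Z}_{\geq0}(\Delta^+\setminus\Delta_J^+)=\mathbb{Z}_{\geq0}\Delta_{J^c,1}$. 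This settles the ``$\subseteq$'' halves of both displays.

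The heart of the proof is the reverse inclusion $\wt_JV-\mathbb{Z}_{\geq0}\Delta_{J^c,1}\subseteq\wt V$ under the hypothesis $\wt_JV-\mathbb{Z}_{\geq0}\Pi_{J^c}\subseteq\wt V$. By induction on the $J^c$-height of the element being subtracted this reduces to: for $\nu\in\wt_JV$ and $\gamma\in\Delta_{J^c,1}$, show $\nu-\gamma\in\wt V$. Let $i_0$ be the unique element of $J^c$ lying in $\supp(\gamma)$ (it has coefficient $1$ in $\gamma$); using the partial sum property inside the Levi $\mathfrak{g}_{\supp(\gamma)}$ I would write $\gamma=\alpha_{i_0}+\alpha_{j_1}+\cdots+\alpha_{j_r}$ with $j_\ell\in J$, $\alpha_{i_0}$ occurring first, and all partial sums $\gamma_0=\alpha_{i_0}\prec\gamma_1\prec\cdots\prec\gamma_r=\gamma$ roots (possible since $\alpha_{i_0}$ has coefficient $1$ in the connected root $\gamma$). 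The hypothesis gives $\nu-k\alpha_{i_0}\in\wt V$ for every $k\geq0$; one then peels off $\alpha_{j_1},\dots,\alpha_{j_r}$ one at a time using the $\mathfrak{sl}_2$-string fact (F3), the point being that after removing enough copies of $\alpha_{i_0}$ and the $\alpha_j$'s already removed, the successive co-root pairings $\langle\,\cdot\,,\alpha_{j_\ell}^\vee\rangle$ become strictly positive along a suitable traversal of the Dynkin subdiagram on $\supp(\gamma)$ based at $i_0$, so (F3) applies; a final adjustment along the $\alpha_{i_0}$-string of $\wt V$ removes the surplus copies of $\alpha_{i_0}$ and lands exactly on $\nu-\gamma$. (Alternatively one may induct on $|J^c|$, pass to the Levi $\mathfrak{g}_{J\cup\{i_0\}}$, and apply \eqref{E2.12} to the highest weight $\mathfrak{g}_{J\cup\{i_0\}}$-module $U(\mathfrak{g}_{J\cup\{i_0\}})V_\lambda$; but this requires propagating the hypothesis to the enlarged parabolic, which rests on the same $\mathfrak{sl}_2$-string estimates.) Choosing the order of the $\alpha_{j_\ell}$ and the number of surplus $\alpha_{i_0}$'s so that every intermediate weight is genuinely in $\wt V$ and the chain terminates at $\nu-\gamma$ is the step I expect to be the main obstacle.

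Finally, for \eqref{E2.11} it remains to check $\wt_{J_\lambda}V-\mathbb{Z}_{\geq0}\Pi_{J_\lambda^c}\subseteq\wt V$ for every $M(\lambda)\twoheadrightarrow V$. It suffices that $f_i$ act injectively on $V$ for each $i\in J_\lambda^c$: then $\prod_{i\in J_\lambda^c}f_i^{k_i}$ is injective and hence nonzero on $V_\nu$ for any $\nu\in\wt V$, so $\nu-\sum_i k_i\alpha_i\in\wt V$. This injectivity I would record as a lemma: $f_i$ is injective on $M(\lambda)$ since $U(\mathfrak{n}^-)$ is a domain, and for $i\in J_\lambda^c$ (that is, $\langle\lambda,\alpha_i^\vee\rangle\notin\mathbb{Z}_{\geq0}$) every submodule of $M(\lambda)$ is $f_i$-saturated, so $f_i$ stays injective on every highest weight quotient. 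With all four steps in place, \eqref{E2.12} holds for arbitrary $J$, and its instance $J=J_\lambda$ gives \eqref{E2.11}.
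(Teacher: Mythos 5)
Your first two steps are sound and essentially match the paper: the identity $\mathbb{Z}_{\geq 0}(\Delta^+\setminus\Delta_J^+)=\mathbb{Z}_{\geq 0}\Delta_{J^c,1}$ is Theorem \ref{thmA}, and the unconditional inclusion $\wt V\subseteq\wt_JV-\mathbb{Z}_{\geq 0}\Delta_{J^c,1}$ is exactly Corollary \ref{C3.2}(b). The two remaining steps, however, each contain a genuine gap.

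The lemma underpinning your final step is false. It is not true that $f_i$ acts injectively on every highest weight quotient of $M(\lambda)$ for $i\in J_{\lambda}^c$; the assertion that every submodule of $M(\lambda)$ is ``$f_i$-saturated'' is stated without proof and fails whenever $\langle\lambda,\alpha_i^{\vee}\rangle\in\mathbb{Z}_{<0}$. Concretely, take $\mathfrak{g}=\mathfrak{sl}_3$ and $\lambda$ with $\langle\lambda,\alpha_1^{\vee}\rangle=-2$, $\langle\lambda,\alpha_2^{\vee}\rangle=3$, so $1\in J_{\lambda}^c$. Then $w=f_2^4m_{\lambda}$ is a maximal vector of weight $\lambda-4\alpha_2$ with $\langle\lambda-4\alpha_2,\alpha_1^{\vee}\rangle=2$, hence $f_1^3w$ is again maximal; in $V=M(\lambda)\big/U(\mathfrak{g})f_1^3w$ the image of $f_1^2w$ is nonzero (all weights of $U(\mathfrak{g})f_1^3w$ lie below $\lambda-3\alpha_1-4\alpha_2$) yet is killed by $f_1$. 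What is true --- and is exactly the paper's Lemma \ref{L4.2} --- is the weaker statement that $F\prod_{i\in J_{\lambda}^c}f_i^{c_i}v_{\lambda}\neq 0$ for any $F\in U(\mathfrak{n}^-_{J_{\lambda}})$ with $Fv_{\lambda}\neq 0$: one applies the $f_i^{c_i}$ to the highest weight vector \emph{first} and detects non-vanishing with $\prod e_i^{c_i}$, which commutes past $F$ and acts on $\prod f_i^{c_i}v_{\lambda}$ by a nonzero scalar precisely because $\langle\lambda,\alpha_i^{\vee}\rangle\notin\mathbb{Z}_{\geq 0}$. (In the example, $f_2^4f_1^3v_{\lambda}\neq 0$ even though $f_1^3f_2^4v_{\lambda}=0$: the order matters.) This suffices for $\wt_{J_{\lambda}}V-\mathbb{Z}_{\geq 0}\Pi_{J_{\lambda}^c}\subseteq\wt V$ without any injectivity.

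The crux step also does not go through as sketched, for two reasons. First, the reduction to a single $\gamma\in\Delta_{J^c,1}$ does not iterate: $\nu-\gamma$ no longer lies in $\wt_JV$, so knowing that one may subtract one element of $\Delta_{J^c,1}$ from a weight in $\wt_JV$ does not yield $\nu-\gamma_1-\gamma_2\in\wt V$. Second, your peeling scheme ends by ``removing the surplus copies of $\alpha_{i_0}$,'' i.e.\ moving \emph{up} by multiples of $\alpha_{i_0}$ with $i_0\in J^c$; there is no integrability in that direction and no $\mathfrak{sl}_2$-string fact available, so the surplus cannot be removed. You rightly flag this as the main obstacle, but it is where the actual content lies. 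The paper's proof avoids both problems: it inducts on $\height_{J_{\lambda}}\big(\sum_{t=1}^n\gamma_t\big)$ keeping all the $\gamma_t$ together, and in the obstructed case $f_jV_{\mu}=\{0\}$ (which cannot be ruled out) it replaces the entire datum $(\mu,\gamma_1,\ldots,\gamma_n)$ by reflected data $(\tilde{\mu},\tilde{\gamma_1},\ldots,\tilde{\gamma_n})$ of strictly smaller $J_{\lambda}$-height, then recovers $\mu-\sum_t\gamma_t$ from $\tilde{\mu}-\sum_t\tilde{\gamma_t}$ along a single $\mathfrak{sl}_{\alpha_j}$-string with $j\in J_{\lambda}$. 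That reflection mechanism, rather than (F3)-peeling, is the missing idea.
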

\begin{remark}
Notice that the second equation in Theorem \ref{thmC} improves on the first when more information is available on the structure of $V$. For instance, it applies when $V$ is a parabolic Verma module $M(\lambda,J')$ or a simple module $L(\lambda)$, with $J = J', J_\lambda$ respectively.
\end{remark}
\begin{remark}
In view of Theorem \ref{thmC}, to understand the sets of weights of highest weight module $M(\lambda)\twoheadrightarrow{ }V$ over Kac--Moody $\mathfrak{g}$, it suffices to work with dominant integral highest weights $\lambda$. 
\end{remark}
We now state the fourth main result of this paper, Theorem \ref{thmD}, which is the second generalization of the PSP (in representation theory) as mentioned in the introduction. Theorem \ref{thmD} positively answers the following question in several prominent cases. 

\begin{question}[Khare]\label{Q2}
   Let $\mathfrak{g}$ be a Kac--Moody algebra, $\lambda\in\mathfrak{h}^*$, and $M(\lambda)\twoheadrightarrow V$. Suppose $\mu_0 \precneqq \mu \in \wt V$. Then does there exist a sequence of weights $\mu_i \in \wt V$, $1\leq i\leq n=\height(\mu -\mu_0)$, such that
  \[
   	\mu_0 \prec \cdots \prec \mu_i \prec \cdots \prec \mu_n=\mu \in \wt V\quad \text{ and }\quad
   	\mu_i-\mu_{i-1}\in \Pi\text{ } \forall i ?
   \]
    See Note \ref{N1} for the notation.
\end{question}
This question was answered positively in the cases when $V$ is a finite-dimensional simple highest module or a parabolic Verma module over semisimple $\mathfrak{g}$, by S. Kumar and Khare respectively. See Appendix in \cite{Khare_JA}.
\begin{thmx}\label{thmD}
Let $\mathfrak{g}$ be a Kac--Moody algebra and $V$ be a $\mathfrak{g}$-module. Suppose 
\begin{itemize}
\item[(D1)] $V=\mathfrak{g}$ (adjoint representation) or
\item[(D2)] $\lambda\in\mathfrak{h}^*$, $J\subset J_{\lambda}$ and $V$ is a submodule of $M(\lambda,J)$,
\end{itemize}
and $\mu_0\precneqq \mu \in \wt V$. There exists a sequence of weights $\mu_i \in \wt V, 1\leq i\leq n=\height(\mu-\mu_0)$, such that
 \begin{equation*}
   	\mu_0 \prec \cdots \prec \mu_i \prec \cdots \prec \mu_n=\mu \in \wt V\quad and \quad 
   	\mu_i-\mu_{i-1}\in \Pi\text{ } \forall i.
   \end{equation*} 
\end{thmx}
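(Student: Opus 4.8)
The plan is to reduce everything to the adjoint case (D1) plus the structure of weights of submodules of parabolic Verma modules (D2), handling these two by somewhat different arguments but with a common "descent" strategy. For (D1), note $\wt\mathfrak g=\Delta\sqcup\{0\}$ and $\prec$ comparability means $\mu-\mu_0\in\mathbb Z_{\geq0}\Pi$; I want a chain in $\Delta\sqcup\{0\}$ from $\mu_0$ to $\mu$ with consecutive differences simple. There are three regimes: both $\mu_0,\mu$ are roots; one of them is $0$; the chain passes through $0$. The key observation is that if $\mu_0\in\Delta^+$ (say, after possibly applying a Weyl group element — but one must be careful, since $W$ need not preserve comparability, so instead argue directly) and $\mu-\mu_0\in\mathbb Z_{\geq0}\Pi$, then I must descend from $\mu$ to $\mu_0$ by subtracting simple roots one at a time, staying inside $\Delta\sqcup\{0\}$. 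If $\mu_0=0$, this is exactly the classical PSP applied to the positive root $\mu$ (its reverse). If $\mu_0\neq 0$ is a root, the claim is that whenever $\beta\in\Delta^+$, $\mu_0\in\Delta^+$, $\beta-\mu_0\in\mathbb Z_{\geq0}\Pi\setminus\{0\}$, there is $\alpha_i\in\Pi$ with $\alpha_i\preceq\beta-\mu_0$ and $\beta-\alpha_i\in\Delta$; iterating lands at $\mu_0$. For the negative-root region one reflects via $\beta\mapsto-\beta$. The genuinely delicate point is crossing $0$: if $\mu_0\in-\Delta^+$ and $\mu\in\Delta^+$, I want to route the chain through $0$, i.e.\ find a chain $\mu_0\prec\cdots\prec 0$ and $0\prec\cdots\prec\mu$; the first is the reverse-PSP for $-\mu_0$, the second is PSP for $\mu$, and they concatenate since $\height(\mu-\mu_0)=\height(\mu)+\height(-\mu_0)$. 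This reduces (D1) entirely to: (i) classical PSP, and (ii) the "root-to-root descent" lemma above.

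For (ii), and simultaneously for (D2), the cleanest route is via Theorem~\ref{thmA}(A2) and the weight-space spanning fact (F2). Given $M(\lambda)\twoheadrightarrow V$ a submodule $V\subset M(\lambda,J)$ with $\wt V$ satisfying the Minkowski/slice decompositions \eqref{E2.7},\eqref{E2.8}, and $\mu_0\precneqq\mu$ in $\wt V$, I proceed by downward induction on $\height(\mu-\mu_0)$: it suffices to find $\alpha_i\in\Pi$ with $\mu-\alpha_i\in\wt V$ and still $\mu_0\preceq\mu-\alpha_i$, i.e.\ $\alpha_i\preceq\mu-\mu_0$. Write $\mu-\mu_0=\sum c_i\alpha_i$ with $c_i\in\mathbb Z_{\geq0}$, so the candidate simple roots are exactly those $\alpha_i$ with $c_i>0$; I must show at least one of them can be subtracted from $\mu$ while staying in $\wt V$. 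Using (F2), pick a weight vector $f_{i_1}\cdots f_{i_n}v_\lambda\neq0$ in $V_\mu$ with $\sum\alpha_{i_j}=\lambda-\mu$; this realises $\mu$ as reached from $\lambda$ by a chain of simple-root steps through $\wt V$, but that chain descends from $\lambda$, not toward $\mu_0$. To pivot, I instead apply (F2) relative to $\mu$ itself: since $V_\mu\neq0$ and $\mu_0\in\wt V$ with $\mu-\mu_0\in\mathbb Z_{\geq0}\Pi$, consider the $\mathfrak g_{\supp(\mu-\mu_0)}$-submodule generated by $V_\mu$ and show $\mu_0$ lies in its weights only if some $\mu-\alpha_i\in\wt V$. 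This is where the integrable slice decomposition is the right tool: in coordinates where $J$ is the integrability direction, $\wt L_J(\lambda-\xi)$ is $W_J$-stable and well-understood, and subtracting a simple root $\alpha_i$ with $i\in J^c$ just moves to the adjacent slice $\xi+\alpha_i$, while $i\in J$ moves inside a fixed integrable slice where I can invoke (F3) / $\mathfrak{sl}_2$-theory to guarantee the descent.

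Concretely, the main steps in order: (1) establish the "one-step descent" lemma — for $\mu_0\precneqq\mu$ both in $\wt V$ (with $V$ as in (D1) or (D2)) there is $\alpha_i\preceq\mu-\mu_0$ with $\mu-\alpha_i\in\wt V$ — then iterate downward on height to get the whole chain; (2) prove this lemma for (D2) by combining \eqref{E2.7}–\eqref{E2.8}, Theorem~\ref{thmA}(A1) (the parabolic-PSP, which handles the $J^c$-directions: it produces a root $\gamma\in\Delta_{J^c,1}$ with $\gamma$ subtractible, refining the step into simple roots), and (F2)/(F3) for the $J$-directions inside a single integrable $\mathfrak g_J$-slice; (3) deduce (ii) of (D1) as the special case $\lambda$ generic / $V$ a Verma-type module, or more directly observe $\mathfrak g$ itself is not a highest weight module but $\wt\mathfrak g^+:=\Delta^+\sqcup\{0\}$ is the weight set of the (parabolic) Verma-like object $\mathfrak n^+\oplus\mathfrak h$-piece — cleanest is to prove the root-to-root descent lemma for $\Delta$ by hand using the parabolic-PSP with $I=\supp(\beta-\mu_0)$, then assemble the full adjoint statement via the across-zero concatenation described above. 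The hard part will be step~(2): making the $J$-direction descent work when $\mu-\mu_0$ has support meeting $J$, because there $\wt L_J(\lambda-\xi)$ is an integrable $\mathfrak g_J$-weight set and one must rule out the pathology that every simple step $\mu\mapsto\mu-\alpha_i$ with $\alpha_i\preceq\mu-\mu_0$ exits $\wt L_J(\lambda-\xi)$ — this is precisely where $W_J$-invariance and the convexity/saturation properties of integrable weight sets (equivalently, Theorem~\ref{thmA} with $\mathfrak g$ replaced by $\mathfrak g_J$, or Kumar's semisimple-integrable result applied slice-wise) must be invoked carefully, and keeping track that the descent never overshoots $\mu_0$ in \emph{some} coordinate is the technical crux.
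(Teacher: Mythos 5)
Your global strategy for (D1) --- reduce to a ``one-step'' lemma, iterate by induction on $\height(\mu-\mu_0)$, and concatenate through $0$ when the signs are mixed --- matches the paper's first reduction. But the one-step lemma you actually state is both too strong and unproved. You claim that for $\mu_0\precneqq\beta$ both positive roots one can always find a simple $\alpha_i\preceq\beta-\mu_0$ with $\beta-\alpha_i\in\Delta$ (descent from the top), and you propose to get it from the parabolic-PSP with $I=\supp(\beta-\mu_0)$. That does not work: the parabolic-PSP produces a root $\gamma$ of $I$-height $1$, which is in general neither simple nor $\preceq\beta-\mu_0$ (its support may leave $\supp(\beta-\mu_0)$), so subtracting it neither gives a simple-root step nor stays above $\mu_0$. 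The paper proves only the two-sided statement --- some simple $\alpha\preceq\beta-\beta_0$ has $\beta-\alpha\in\Delta$ \emph{or} $\beta_0+\alpha\in\Delta$ --- and the proof genuinely needs Kac's trichotomy (\cite[Theorem 4.3]{Kac}) applied to the indecomposable blocks of $A_{J\times J}$ for $J=\supp(\beta-\beta_0)$: a finite-type block yields a positive coroot combination $Y$ with $\langle\beta-\beta_0,Y\rangle>0$ and hence a usable $\alpha_{k_0}$, while an affine/indefinite block yields $X$ with $\langle\alpha_k,X\rangle\le 0$, and connectivity of $\supp(\beta)$ then forces $\langle\beta_0,X\rangle<0$, i.e.\ one must \emph{ascend} from $\beta_0$. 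Your outline contains no substitute for this dichotomy, and it is exactly the case your ``always descend'' claim would have to rule out.

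For (D2) there is a more basic problem: $V$ is an \emph{arbitrary submodule} of $M(\lambda,J)$, typically not a highest weight module, so it is not of the form $M(\lambda)\twoheadrightarrow V$, its weights are not given by the Minkowski/slice decompositions \eqref{E2.7}--\eqref{E2.8}, and (F2) relative to a highest weight vector is unavailable --- your setup silently replaces $\wt V$ by $\wt M(\lambda,J)$, which would only recover the already-known highest-weight cases. The paper instead works directly with $V$ using three inputs: the factorization $M(\lambda,J)\simeq U\big(\bigoplus_{\beta\in\Delta^-\setminus\Delta_J^-}\mathfrak g_\beta\big)\otimes L_J^{\max}(\lambda)$ together with torsion-freeness, which handles any $i\in\supp(\mu-\mu_0)\cap J^c$ by giving $f_iV_\mu\neq\{0\}$; the $\mathfrak g_J$-integrability inherited by $V$, which handles any $j'$ with $\langle\mu-\mu_0,\alpha_{j'}^\vee\rangle>0$ via injectivity of $f_{j'}$ or $e_{j'}$; and, in the remaining case, once again the Kac trichotomy on a block of $A_{J_1\times J_1}$ to manufacture $j_0\in\supp(\mu-\mu_0)$ with $\langle\mu,\alpha_{j_0}^\vee\rangle>0$. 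You correctly flag ``ruling out that every simple step exits the weight set'' as the technical crux, but the proposal stops there; that crux is precisely what the above case analysis resolves, so the argument as outlined is incomplete.
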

Theorem \ref{thmD} applies to the above and other class of modules:
\begin{itemize}
\item On one hand, note that the result holds for the set of weights of parabolic Verma modules. Hence, Theorem \ref{thmD} holds true for a bigger class of modules, by Theorem \ref{T2.3} (a).
\item In particular, this proves the theorem for all simple highest weight modules. Also, more generally, this proves the theorem for all the highest weight modules $V$ of the form on the right hand side of the (if and only if) implication in Theorem \ref{thmB}.  
\item On the other hand, observe that many of the submodules of parabolic Verma modules need not be highest weight modules. Thus, we are able to prove Theorem \ref{thmD} not only for the above highest weight modules but also for certain ``non-highest weight'' modules.
\item Motivated by the previous point, we also extend Theorem \ref{thmD}
for any integrable module (not necessarily highest weight or
with finite-dimensional weight spaces) over semisimple $\mathfrak{g}$, see Lemma \ref{L5.4} below.
\end{itemize}
\begin{remark}\label{R2.10}
A perhaps surprising observation is that Theorem \ref{thmD} holds true for the sets $\mathbb{Z}_{\geq 0}\Delta_{I,1}$ for $\emptyset \neq I \subset\mathcal{I}$ (with $\mathbb{Z}_{\geq0}\Delta_{I,1}$ in the place of $\wt V$ in Theorem \ref{thmD}). The previous line holds true by Theorem \ref{thmD} applied to $\wt M(\hat{\lambda},I^c)$ where $\langle\hat{\lambda},\alpha_{i}^{\vee}\rangle=0$ $\forall$ $i\in I^c$.
\end{remark}
In this paper, we also prove additional (parabolic) generalizations of Theorem \ref{thmD} to the ``best possible extent'', see subsection \ref{S5.2}.

For $\lambda \in \mathfrak{h}^*$, $J\subseteq J_{\lambda}':=\{j \in \mathcal{I}$ $|$ $\langle 
\lambda ,\alpha_j^{\vee}\rangle \in \mathbb{R}_{\geq 0}\}$ we define
\begin{equation}\label{E2.13}
   \mathcal{P}(\lambda ,J):= \conv_{\mathbb{R}}W_J\lambda-\mathbb{R}_{\geq 0}(\Delta^+ \setminus \Delta_{J}^+),
\end{equation}
where $W_J$ is the parabolic subgroup of $W$ generated by $\{ s_j$ $|$ $j \in J\}$. The sets $\mathcal{P}(\lambda,J)$ were introduced, and the faces and inclusion relations among its faces studied, in \cite{Khare_Ad, Khare_Trans}. As one application of Theorems \ref{thmA} and \ref{thmD} in this paper, we identify the extremal rays to the shape $\mathcal{P}(\lambda,J)$ defined over Kac--Moody $\mathfrak{g}$, in the Appendix of this paper; see Proposition \ref{P2.11} below. This result was proved in \cite{Dhillon_arXiv, Khare_Ad}, and our proof is different to the proofs therein. The shape $\mathcal{P}(\lambda,J)$ generalizes the convex hull of the weights of parabolic Verma module $M(\lambda,J)$ when $J\subset J_{\lambda}$. In view of Theorem \ref{T2.3} (b), this gives the extremal rays of $\conv_{\mathbb{R}}\wt V$ for any $M(\lambda)\twoheadrightarrow V$ such that $I_V=J$. Closedness, polyhedrality, and the faces/weak faces of the convex hull of the set of weights of parabolic Verma modules, and hence of the simple highest weight modules, are well studied in \cite{Khare_Ad,Khare_JA,Khare_AR}. This builds on previous work for finite-dimensional modules by Borel--Tits \cite{Borel}, Casselman \cite{Casselman}, Cellini--Marietti \cite{Cellini}, Satake \cite{Satake}, Vinberg \cite{Vinberg}. Additionally, we also discuss a maximal property of $\mathcal{P}(\lambda,J)$, see Maximal property \ref{MA.2} in the Appendix. This was studied and proved in \cite{Dhillon_arXiv, Khare_Ad, Khare_Trans}, and again we provide a different proof. 
\begin{prop}\label{P2.11}
 	Let $\mathfrak{g}$ be a Kac--Moody algebra, $\lambda \in \mathfrak{h}^*$ and $J\subseteq J'_{\lambda}$. Define $J_0:= \{j \in J$ $|$ $s_{j}(\lambda)=\lambda \}$. Then $\mathcal{P}(\lambda ,J)$ is a $W_J$-invariant convex subset of $\lambda -\mathbb{R}_{\geq0}\Pi$, with the set of extremal rays $\bigsqcup\limits_{i\in J^c}W_J(\lambda -\mathbb{R}_{\geq 0}\alpha_i)$. In particular, the extremal rays at $\lambda$ are $\bigsqcup\limits_{i\in J^c}W_{J_0}(\lambda-\mathbb{R}_{\geq 0}\alpha_i)$.
 \end{prop}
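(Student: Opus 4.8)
The plan is to deduce the statement from Theorem \ref{thmA} (parabolic-PSP and its representation-theoretic corollary) together with Theorem \ref{thmD} (chains of weights), by reducing the study of the convex set $\mathcal{P}(\lambda,J)$ to the study of the set of weights of a suitable parabolic Verma module. First I would record the easy structural facts: $\mathcal{P}(\lambda,J)=\conv_\mathbb{R}W_J\lambda-\mathbb{R}_{\geq0}(\Delta^+\setminus\Delta^+_J)$ is manifestly $W_J$-invariant (since $W_J$ stabilizes $\conv_\mathbb{R}W_J\lambda$ and permutes $\Delta^+\setminus\Delta^+_J$, as $W_J$ preserves $\Delta_J$ and the $\mathbb{Z}\Pi_{J^c}$-grading modulo $\mathbb{Z}\Pi_J$), and that it is contained in $\lambda-\mathbb{R}_{\geq0}\Pi$ because every element of $W_J\lambda$ lies in $\lambda-\mathbb{Z}_{\geq0}\Pi_J$ and $\Delta^+\setminus\Delta^+_J\subset\mathbb{Z}_{\geq0}\Pi$. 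Next, using part (A2) of Theorem \ref{thmA}, rewrite $\mathbb{R}_{\geq0}(\Delta^+\setminus\Delta^+_J)=\mathbb{R}_{\geq0}\Delta_{J^c,1}$, so the ``generators'' of the recession cone of $\mathcal{P}(\lambda,J)$ are exactly the rays $\mathbb{R}_{\geq0}\gamma$ for $\gamma\in\Delta_{J^c,1}$.

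The heart of the argument is the determination of the extremal rays. One inclusion is the statement that each $w(\lambda-\mathbb{R}_{\geq0}\alpha_i)$, $w\in W_J$, $i\in J^c$, is extremal; by $W_J$-invariance it suffices to treat $w=e$. For this I would argue as in the classical finite-type picture: express $\mathcal{P}(\lambda,J)$ as a Minkowski sum $\conv_\mathbb{R}W_J\lambda+(-\mathbb{R}_{\geq0}\Delta_{J^c,1})$, observe that $\lambda$ is a vertex of the polytope $\conv_\mathbb{R}W_J\lambda$ (it is the unique $W_J$-translate that is $\prec$-maximal, hence exposed by a dominant functional), and that $\alpha_i$ for $i\in J^c$ is an extremal ray of the cone $\mathbb{R}_{\geq0}\Delta_{J^c,1}$ — indeed $\alpha_i\in\Delta_{J^c,1}$ has $I$-support $\{i\}$ in $J^c$, so it cannot be written as a nonnegative combination of other elements of $\Delta_{J^c,1}$ (each of which has nonnegative, and at total-$J^c$-height-one, coordinates) unless those elements are proportional to $\alpha_i$; since $\Delta$ contains no positive multiple of $\alpha_i$ other than $\alpha_i$ itself (real simple root), $\alpha_i$ spans an extremal ray. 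Combining a vertex of a polytope with an extremal ray of a cone in a Minkowski sum yields an extremal ray of the sum, giving $\lambda-\mathbb{R}_{\geq0}\alpha_i$ extremal.

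For the reverse inclusion — every extremal ray of $\mathcal{P}(\lambda,J)$ is one of the listed rays — I would use Theorem \ref{thmD} (or Remark \ref{R2.10}) to rule out any other candidate. Suppose $x+\mathbb{R}_{\geq0}v$ is an extremal ray; by $W_J$-invariance we may translate so that the base point $x$ has $\prec$-maximal ``$J$-part'', and then an extremal base point must be a vertex of $\conv_\mathbb{R}W_J\lambda$, i.e. lies in $W_J\lambda$, so WLOG $x=\lambda$; the direction $v$ must then be a ray of the recession cone $\mathbb{R}_{\geq0}\Delta_{J^c,1}$. If $v$ were not proportional to any $\alpha_i$, write $\gamma\in\Delta_{J^c,1}$ in its direction; since $\height_{J^c}(\gamma)=1$ and $\supp_{J^c}(\gamma)=\{i\}$ forces $\gamma-\alpha_i\in\mathbb{Z}_{\geq0}\Pi_J$, and one checks $\gamma-\alpha_i$ is a (possibly zero) sum of roots in $\Delta_J$, the parabolic-PSP applied within the Levi gives a chain showing $\lambda-\mathbb{R}_{\geq0}\gamma$ lies in the convex hull of $\lambda-\mathbb{R}_{\geq0}\alpha_i$ and (images under $W_J$ of) other listed rays together with interior points — contradicting extremality unless $\gamma$ itself is a simple root. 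Finally the ``in particular'' claim follows by intersecting with the condition that the base point stay at $\lambda$: the rays at $\lambda$ are those $w(\lambda-\mathbb{R}_{\geq0}\alpha_i)$ with $w\lambda=\lambda$, i.e. $w\in\mathrm{Stab}_{W_J}(\lambda)=W_{J_0}$ (the equality $\mathrm{Stab}_{W_J}(\lambda)=W_{J_0}$ being the standard fact that the stabilizer of $\lambda$ in a Coxeter group is the parabolic subgroup generated by the simple reflections fixing $\lambda$, valid here since $J\subseteq J'_\lambda$). The main obstacle I anticipate is making the Minkowski-sum/extremal-ray bookkeeping rigorous in the Kac--Moody setting, where $\conv_\mathbb{R}W_J\lambda$ need not be a polytope (if $W_J$ is infinite) and the cone $\mathbb{R}_{\geq0}\Delta_{J^c,1}$ need not be finitely generated or closed; handling this carefully — e.g. by testing extremality against arbitrary finite convex combinations as in Definition \ref{D2.4}, and using Theorem \ref{thmD} to ``fill in'' chains rather than appealing to finite-dimensional polyhedral geometry — is where the real work lies.
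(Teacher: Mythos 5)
There are two genuine gaps here, and both come from the same oversight: your argument never engages with the distinction between $J$ and $J_0$, which is precisely where the content of Proposition \ref{P2.11} lies. In the forward direction you invoke the principle that a vertex of $\conv_{\mathbb{R}}W_J\lambda$ plus an extremal ray of the recession cone yields an extremal ray of the Minkowski sum. That principle is false in general, and it fails inside this very proposition: by Lemma \ref{L6.1}(c) every $w\alpha_i$ with $w\in W_J$, $i\in J^c$ spans an extremal ray of $\mathbb{R}_{\geq0}\Delta_{J^c,1}$, and $\lambda$ is a vertex of $\conv_{\mathbb{R}}W_J\lambda$, yet for $w\in W_J\setminus W_{J_0}$ with $\height_{J\setminus J_0}(w\alpha_i)>0$ the ray $\lambda-\mathbb{R}_{\geq0}w\alpha_i$ is \emph{not} extremal (already visible for $\mathfrak{sl}_3$, $J=\{1\}$, $\langle\lambda,\alpha_1^{\vee}\rangle>0$: $\lambda-t(\alpha_1+\alpha_2)=\tfrac12(\lambda-2t\alpha_1)+\tfrac12(\lambda-2t\alpha_2)$ is a nontrivial decomposition in $\mathcal{P}(\lambda,J)$). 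What actually makes $\lambda-\mathbb{R}_{\geq0}\alpha_i$ extremal for $i\in J^c$ is that $\supp(\alpha_i)\cap J=\emptyset$ together with the linear independence of $\Pi$: given $\lambda-r\alpha_i=\sum_k d_k(\mu_k-\nu_k)$ with $\mu_k\in\conv_{\mathbb{R}}W_J\lambda$ and $\nu_k\in\mathbb{R}_{\geq0}\Delta_{J^c,1}$, comparing $\Pi_J$- and $\Pi_{J^c}$-coordinates forces $\mu_k=\lambda$ and $\nu_k\in\mathbb{R}_{\geq0}\alpha_i$. You need that coordinate argument (or the direct verification in Step 4 of the paper's proof); the Minkowski-sum slogan does not supply it.

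In the reverse direction your argument concludes that an extremal ray based at $\lambda$ must point in a \emph{simple} root direction (``contradicting extremality unless $\gamma$ itself is a simple root''). This is false and is inconsistent with the ``in particular'' clause you assert afterwards: take $\mathfrak{g}=\mathfrak{sl}_3$ with $\langle\lambda,\alpha_1^{\vee}\rangle=0$ and $J=J_0=\{1\}$; then $\mathcal{P}(\lambda,J)=\lambda-\mathbb{R}_{\geq0}\{\alpha_2,\alpha_1+\alpha_2\}$ and $\lambda-\mathbb{R}_{\geq0}(\alpha_1+\alpha_2)=\lambda-\mathbb{R}_{\geq0}(s_1\alpha_2)$ is a bona fide extremal ray at $\lambda$ in a non-simple direction. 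The ``fill in by a chain'' step only produces a genuine convex decomposition when $\height_{J\setminus J_0}(\gamma)>0$: one must perturb $\lambda$ by $\delta\,\omega\alpha_j$ with $0<\delta<\langle\lambda,\alpha_j^{\vee}\rangle$ to stay inside $\conv_{\mathbb{R}}W_J\lambda$, and such a $\delta$ exists exactly because the simple roots $\alpha_j$ occurring in the chain lie in $\Pi_{J\setminus J_0}$ (this is Step 2 of the paper's proof). When $\gamma\in W_{J_0}\Pi_{J^c}$ no such decomposition exists and the ray is extremal. So the correct dichotomy for directions $\gamma$ in the recession cone based at $\lambda$ is $\height_{J\setminus J_0}(\gamma)>0$ versus $\gamma\in W_{J_0}\Pi_{J^c}$ (Steps 2 and 3 of the paper's proof), not simple versus non-simple; your sketch as written would both exclude genuine extremal rays and, symmetrically, has no mechanism to rule out the rays $\lambda-\mathbb{R}_{\geq0}w\alpha_i$ with $w\in W_J\setminus W_{J_0}$.
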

The following remark is important to be observed, as it will be invoked in most of the results of this paper without  mention.
\begin{remark}[Results over related Kac--Moody algebras]
  While the above results---and results in the later sections---are stated and proved over the Kac--Moody algebra $\mathfrak{g}(A)$, they hold equally well over the Kac--Moody algebra $\overline{\mathfrak{g}}(A)$ (see Definition \ref{D2.1} (1)) and hence uniformly over any `intermediate' algebra $\overline{\mathfrak{g}}(A)\twoheadrightarrow\tilde{\mathfrak{g}}\twoheadrightarrow\mathfrak{g}=\mathfrak{g}(A)$. This is because as was clarified in \cite{Dhillon_arXiv}, the results there hold over all such Lie algebras $\tilde{\mathfrak{g}}$; similarly, the root system and the weights of highest weight modules (of a fixed highest weight $\lambda$) remain unchanged over all $\tilde{\mathfrak{g}}$. \end{remark}
\section{Parabolic-PSP and analogues}\label{S3}
\subsection{Proof of Theorem \ref{thmA}}
We begin this subsection with the proof of parabolic-PSP in the more general setting of a $\mathbb{Z}_{\geq0}\Pi$-graded $\mathbb{F}$-Lie algebra $\mathcal{G}$ generated by its non-zero subspaces $\mathcal{G}_{\alpha_i}$, $\forall$ $i\in \mathcal{I}$. Here, $\mathbb{F}$ is an arbitrary field, $\Pi$ freely generates the abelian semigroup $\mathbb{Z}_{\geq 0}\Pi$ and $\mathcal{I}$ is a fixed indexing set for $\Pi$. Recall, $\mathcal{A}:=\{\alpha\in\mathbb{Z}_{\geq0}\Pi$ $|$ $\mathcal{G}_{\alpha}\neq \{0\}\}$. Throughout this subsection, we neither assume $\Pi$ to be finite, nor $\mathcal{G}_{\beta}$ for $\beta \in \mathcal{A}$ (in particular, $\mathcal{G}_{\alpha_i}$ $\forall$ $i\in\mathcal{I}$) to be finite-dimensional.\\ 
For $\eta\in\mathbb{Z}_{\geq0}\Pi$ and $0\neq x\in\mathcal{G}_{\eta}$, we define $\gr(x):=\eta$, the grade of $x$. Recall from equation \eqref{E2.6} that $\mathcal{A}_{I,1}:=\{\gamma \in \mathcal{A}$ $|$ $\height_{I}(\gamma)=1\}$. 
	\begin{theorem}\label{T3.1}
	Let $\mathcal{G}$ be a $\mathbb{Z}_{\geq0}\Pi$-graded $\mathbb{F}$-Lie algebra generated by non-zero subspaces $\mathcal{G}_{\alpha_i}$, $\forall$ $i \in \mathcal{I}$. Suppose $\emptyset \neq I\subset \mathcal{I}$, and $\beta \in \mathcal{A}$ such that $\height_{I}(\beta)>0$. Then $\mathcal{G}_{\beta}$ is spanned by the Lie words of the form $\Big[x_{\gamma_1},\big[\cdots ,[x_{\gamma_{n-1 }},x_{\gamma_n}]\cdots \big]\Big]$ such that $\gamma_j \in \mathcal{A}_{I ,1}$ and $0\neq x_{\gamma_j} \in \mathcal{G}_{\gamma_j}$ for each $1\leq j\leq n= \height_{I}(\beta)$, and $\sum\limits_{j=1}^{n}\gamma_j =\beta$. 
	\end{theorem}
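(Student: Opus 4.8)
The plan is to prove Theorem \ref{T3.1} by induction on $n = \height_I(\beta)$, combining the generation hypothesis (that $\mathcal{G}$ is generated by the $\mathcal{G}_{\alpha_i}$) with a careful "peeling off one $I$-height-1 piece" argument. The base case $n=1$ is immediate: if $\height_I(\beta) = 1$, then $\beta \in \mathcal{A}_{I,1}$ and $\mathcal{G}_\beta$ is spanned by the single vectors $x_\beta \in \mathcal{G}_\beta$, which are length-1 Lie words of the required form. For the inductive step, suppose $n = \height_I(\beta) \geq 2$. Since $\mathcal{G}$ is generated by the subspaces $\mathcal{G}_{\alpha_i}$, by fact (F1) (applied to the graded setting) every element of $\mathcal{G}_\beta$ is a linear combination of right-normed Lie words $[[x_{\alpha_{i_1}}, [\cdots, [x_{\alpha_{i_{m-1}}}, x_{\alpha_{i_m}}]\cdots]]]$ with $x_{\alpha_{i_j}} \in \mathcal{G}_{\alpha_{i_j}}$ and $\sum_j \alpha_{i_j} = \beta$. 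So it suffices to show that each such word lies in the span of the Lie words of the desired form.

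The heart of the argument is a normalization lemma for a single right-normed word $w = [[x_{\alpha_{i_j}}]]_{j=1}^m$ on simple-grade vectors: I want to rewrite $w$, modulo the span of words on $\mathcal{A}_{I,1}$-graded vectors of the required total length, so that the "outermost" bracket splits $\beta$ as $\gamma + (\beta - \gamma)$ with $\gamma \in \mathcal{A}_{I,1}$, after which induction applies to the inner word of $I$-height $n-1$. Concretely, since $\height_I(\beta) \geq 2 > 0$, at least one index $i_j$ lies in $I$; I reorder (using the Jacobi identity, which only produces linear combinations of right-normed words of the same multiset of simple grades, hence the same $\beta$) to bring exactly one $I$-index to a position where it, together with a suitable block of $I^c$-indices, forms a sub-word of $I$-height exactly $1$ that can be "factored out" as the first argument of the outermost bracket. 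More precisely: write $w$ up to sign and Jacobi-rearrangement as $[a, b]$ where $a$ is a right-normed word whose grade $\gamma$ has $\height_I(\gamma) = 1$ and $b$ is a right-normed word whose grade $\beta - \gamma$ has $\height_I(\beta-\gamma) = n-1$; then $a \in \mathcal{G}_\gamma$ and $b \in \mathcal{G}_{\beta-\gamma}$, and by the inductive hypothesis $b$ is a linear combination of Lie words of the required form on $\mathcal{A}_{I,1}$-graded vectors of length $n-1$, while $a \in \mathcal{G}_\gamma$ with $\gamma \in \mathcal{A}_{I,1}$ is already such a word of length $1$; bracketing gives words of length $n$ of the required form. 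The one subtlety is ensuring $\gamma \in \mathcal{A}$ and $\beta - \gamma \in \mathcal{A}$ — but these hold automatically since $a$ and $b$ are (nonzero contributions to) elements of $\mathcal{G}_\gamma$ and $\mathcal{G}_{\beta-\gamma}$ respectively, so those spaces are nonzero.

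The main obstacle I anticipate is the bookkeeping in the rearrangement step: a priori, when I try to move an $I$-indexed simple generator to the front of a right-normed word via Jacobi, I generate a sum of right-normed words, and I need to verify that in each summand I can identify an initial segment of $I$-height exactly $1$ — i.e., that the "first time the running $I$-height hits $1$" gives a clean split. This is really a combinatorial claim about sequences of $0$'s and $1$'s (the $\height_I$ values of the simple grades $\alpha_{i_j}$) summing to $n$: any such sequence has a prefix summing to $1$, and truncating there splits the word. The care is in matching this prefix-split of grades with an actual bracket-split of the Lie word, which is where Jacobi/reordering is needed and where signs and linear combinations must be tracked; but since every manipulation preserves the multiset of simple grades and stays within right-normed words (by (F1) / standard Lie-word combinatorics), no new grades appear and the induction closes. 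A cleaner alternative, which I would pursue if the direct rearrangement gets messy, is to induct instead on the word length $m$: take $w = [x_{\alpha_{i_1}}, w']$ with $w'$ of length $m-1$; if $i_1 \in I$ then $\alpha_{i_1} \in \mathcal{A}_{I,1}$ and I recurse on $w'$ (whose grade has $I$-height $n-1$) — done; if $i_1 \in I^c$, use Jacobi to write $[x_{\alpha_{i_1}}, [y, z]] = [[x_{\alpha_{i_1}}, y], z] + [y, [x_{\alpha_{i_1}}, z]]$ and push the $I^c$-generator inward, strictly increasing the length of the "$I^c$-prefix absorbed into an $I$-block," until an $I$-index surfaces at the front; the termination is guaranteed because $\height_I(\beta) \geq 1$ forces an $I$-index to exist somewhere in the word.
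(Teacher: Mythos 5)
Your overall strategy (use (F1) to reduce to right-normed words on simple-grade vectors, then use the Jacobi identity and induction to peel off $\mathcal{A}_{I,1}$-blocks) is the same as the paper's, but both of your sketched routes leave the crucial step unproved, and it is exactly the step you yourself flag as the ``main obstacle.'' In your primary route, the claim that a right-normed word can be rewritten, via Jacobi, as $[a,b]$ where $a$ is the sub-word on a prefix of the letters with $\height_I(\gr(a))=1$ is not automatic: knowing that some prefix of the grade sequence sums to $I$-height $1$ does not by itself produce a bracket-split of the word along that prefix, and no rewriting procedure is exhibited. In your alternative route (push a leading $I^c$-generator inward), the termination measure is not well defined: after one Jacobi step $[x_{\alpha_{i_1}},[y,z]]=[[x_{\alpha_{i_1}},y],z]+[y,[x_{\alpha_{i_1}},z]]$, the first term's leading letter $[x_{\alpha_{i_1}},y]$ has $I$-height $0$ (not $1$) whenever $y$ also has grade in $\Pi_{I^c}$, so it is \emph{not} ``absorbed into an $I$-block,'' and the second term is again a word of the same length with the offending letter merely moved one slot inward and a possibly new $I^c$-letter now in front. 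Without a precise induction statement covering words whose letters are non-simple homogeneous vectors, the process as described does not visibly terminate.

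The paper closes this gap with one organizational choice you did not make: induct on the \emph{total} height $\height(\beta)$, and normalize the \emph{inner} word first. Writing $X=[e_{i_1},[[e_{i_j}]]_{j=2}^{k}]$, the induction hypothesis applies to $[[e_{i_j}]]_{j=2}^{k}\in\mathcal{G}_{\beta-\alpha_{i_1}}$ (since $\height(\beta-\alpha_{i_1})<\height(\beta)$ and $\height_I(\beta-\alpha_{i_1})\geq 1$), so $X=\sum_{\hat u}d_{\hat u}[e_{i_1},\theta_{\hat u}]$ with each $\theta_{\hat u}$ already a right-normed word on a fixed homogeneous basis $\Theta$ of $\mathcal{G}_{I,1}$. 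If $i_1\in I$ one is done. If $i_1\in I^c$, a \emph{single} Jacobi application gives $[e_{i_1},[\theta_{a_1},\theta_{\hat b}]]=[[e_{i_1},\theta_{a_1}],\theta_{\hat b}]+[\theta_{a_1},[e_{i_1},\theta_{\hat b}]]$; the first term is already of the required form because $[e_{i_1},\theta_{a_1}]\in\mathcal{G}_{I,1}$ (here $\height_I(\alpha_{i_1})=0$ and $\height_I(\gr\theta_{a_1})=1$, which is precisely where your alternative breaks down for a raw simple-grade letter $y$), and the second term is handled by a further application of the induction hypothesis to $[e_{i_1},\theta_{\hat b}]\in\mathcal{G}_{\beta-\gr(\theta_{a_1})}$, whose grade has strictly smaller total height and positive $I$-height. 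I recommend you restructure your argument along these lines; as written, the proposal is a correct plan with the decisive step missing.
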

	\begin{proof}
		We use the following notation for convenience (only) in this proof: let $\mathfrak{I}$ be an indexing set and $\Theta=\{\theta_t $ $|$ $ t\in \mathfrak{I}\}$ be a fixed basis of $\mathcal{G}_{I,1}:=\bigoplus\limits_{\eta \in \mathcal{A}_{I,1}}\mathcal{G}_{\eta}$ consisting of homogeneous elements---i.e.  $\gr(\theta_t)\in \mathcal{A}_{I,1}$ for each $t \in \mathfrak{I}$. For a finite ordered sequence $\hat{a}$ with terms in $\mathfrak{I}$, we define $\theta_{\hat{a}} := [[\theta_{a}]]_{a\in\hat{a}}$, see equation \eqref{E2.1} where this notation was introduced.
		
		We prove this theorem by induction on $\height(\beta)\geq 1$. In the base step $\height(\beta)=1$, $\beta$ must belong to $\Pi_I$, and so the theorem is immediate.\\
		Induction step: Assume $\height(\beta)>1$, observe that the result is trivial if $\height_{I}(\beta)=1$. So, we assume for the rest of the proof that $\height_{I}(\beta)>1$. Pick 
		\begin{align*} 0\neq X =\Big[e_{i_1}, \big[\cdots [e_{i_{k-1}}&, e_{i_k}] \cdots\big]\Big]=[[e_{i_j}]]_{j=1}^{k}\in \mathcal{G}_{\beta}\\
		\text{ such that }i_j \in \mathcal{I}\text{ and }0\neq e_{i_j}\in \mathcal{G}_{\alpha_{i_j}}
		\text{ for }&\text{each }1\leq j\leq k:=\height(\beta)\text{, and }\sum\limits_{j=1}^{k}\alpha_{i_j}=\beta.
		\end{align*}
		Note that $\mathcal{G}_{\beta}$ is spanned by the Lie words of the form $X$, as $\mathcal{G}$ is $\mathbb{Z}_{\geq0}\Pi$-graded and generated by $\mathcal{G}_{\Pi}$. We will show that $X$ is a linear combination of the Lie words as in the statement, which implies the proof of the theorem. Note that $\height_I(\beta-\alpha_i)\geq 1$, so by the induction hypothesis we have
		\begin{equation}\label{E3.1} [[e_{i_j}]]_{j=2}^{k}= \sum\limits_{\hat{u}}d_{\hat{u}}\theta_{\hat{u}} \quad\text{and}\quad X = \sum\limits_{\hat{u}}d_{\hat{u}}[e_{i_1} , \theta_{\hat{u}}],
		\end{equation}
		where both the sums are over some finitely many finite
		sequences $\hat{u}$, each with terms in $\mathfrak{I}$,
		and $d_{\hat{u}}\in \mathbb{F}$. If $i_1\in I$, then we
		are done, as each term in the summation for $X$ in
		equation \eqref{E3.1} is in the form of the Lie words as
		in the statement. Else if $i_1\notin I$, consider a
		non-zero summand $[e_{i_1} ,\theta_{\hat{a}}]$ of $X$ in
		equation \eqref{E3.1} for some ordered sequence
		$\hat{a}=(a_{l})_{l=1}^{\small{\height_{I}(\beta)}}$.
		Let $\hat{b}=(a_{l})_{l=2}^{\small{\height_{I}(\beta)}}$,
		and recall that we assumed $\height_I(\beta)\geq 2$. By
		the Jacobi identity,
		\begin{align*}
		\big[e_{i_1}, [\theta_{a_{1}}, \theta_{\hat{b}}]\big]&=\big[[e_{i_1} ,\theta_{a_1}], \theta_{\hat{b}}\big]+\big[\theta_{a_1},[e_{i_1}, \theta_{\hat{b}}]\big].
		\end{align*}
		Note that $[e_{i_1},\theta_{a_1}]\in\mathcal{G}_{I,1}$. So, we can express $[e_{i_1},\theta_{a_1}]$ as $\sum\limits_{t} p_t \theta_t$, where the sum is over some finitely many $\theta_t \in \Theta$, $t\in \mathfrak{I}$, such that $\gr(\theta_t)= \alpha_{i_1} + \gr( \theta_{a_1})$ and $p_{t} \in \mathbb{F}$. By the induction hypothesis, we can express $[e_{i_1}, \theta_{\hat{b}}]$ as $\sum\limits_{\hat{c}} q_{\hat{c}}\theta_{\hat{c}}$, where the sum is over some finitely many finite sequences $\hat{c}$ each with terms in $\mathfrak{I}$ such that $\gr( \theta_{\hat{c}}) =\alpha_{i_1} + \gr(\theta_{\hat{b}})$ and $q_{\hat{c}}\in \mathbb{F}$. By the previous two lines we get
		\begin{equation*}
		[e_{i_1},\theta_{\hat{a}}]=\big[e_{i_1}, [\theta_{a_1}, \theta_{\hat{b}}]\big]= \sum\limits_{t} p_t [\theta_t ,\theta_{\hat{b}}] + \sum\limits_{\hat{c}} q_{\hat{c}}[\theta_{a_1}, \theta_{\hat{c}}].
		\end{equation*}
		Note that each summand on the right hand side of the equation just above is a Lie word of the form similar to that of the ones in the statement of the theorem. 
		So, every non-zero summand of $X$, and hence $X$ can be expressed as a linear combination of the Lie words as in the statement. Hence, the proof is complete.
		\end{proof}
	Observe that if $\Pi$ is not free, then we cannot define the functions $\height(.)$ and $\height_I(.)$. In view of this, Theorem \ref{T3.1} cannot be further extended to the Lie algebras graded over arbitrary semigroups. We now prove a corollary of Theorem \ref{T3.1}, part (b) of this corollary proves Theorem \ref{thmC} in one direction.\begin{cor}\label{C3.2}
		\begin{itemize}
		\item[(a)] Let $\mathcal{G}$ be as in Theorem \ref{T3.1} and $\emptyset\neq I\subsetneq\mathcal{I}$. Then $U(\mathcal{G})$ is spanned by the monomials all of the form either $\prod\limits_{j=1}^{n}x_{\beta_j}^{p_j}\cdot\prod\limits_{i=1}^{m}x_{\gamma_i}^{q_i}$ or $\prod\limits_{i=1}^{m}x_{\gamma_i}^{q_i}\cdot\prod\limits_{j=1}^{n}x_{\beta_j}^{p_j}$, where $\beta_j\in\mathcal{A}_{I^c}=\mathcal{A}_{I,0}$, $\gamma_i\in\mathcal{A}_{I,1}$, $x_{\beta_j}\in\mathcal{G}_{\beta_j}$, $x_{\gamma_i}\in\mathcal{G}_{\gamma_i}$, $p_j,q_i\in\mathbb{Z}_{\geq0}$ $\forall$ $1\leq j\leq n,1\leq i\leq m$.
		\item[(b)] Let $\mathfrak{g}$ be a Kac--Moody algebra, $\lambda\in\mathfrak{h}^*$, $J\subset \mathcal{I}$, and $M(\lambda)\twoheadrightarrow{ }V$ be a highest weight $\mathfrak{g}$-module. (Recall from equation \eqref{E2.4} that $\wt_JV:=\wt V\cap (\lambda-\mathbb{Z}_{\geq 0}\Pi_{J})$.) Then 
		\[\wt V\subset \wt_J V-\mathbb{Z}_{\geq 0}\Delta_{J^c,1}.\] 
	\end{itemize}
	\end{cor}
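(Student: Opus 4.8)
For part (a), the plan is to combine Theorem \ref{T3.1} with the PBW theorem. Set $\mathfrak{k}:=\mathcal{G}_{I,0}=\mathcal{G}_{I^c}$; since $\height_I$ is additive and nonnegative on $\mathbb{Z}_{\geq0}\Pi$, this is a graded subalgebra, while $\mathfrak{m}:=\bigoplus_{n\geq 1}\mathcal{G}_{I,n}$ is a graded ideal, and $\mathcal{G}=\mathfrak{k}\oplus\mathfrak{m}$ as vector spaces. Choosing a homogeneous basis of $\mathcal{G}$ adapted to this decomposition and ordering it so that all basis vectors of $\mathfrak{m}$ precede those of $\mathfrak{k}$, PBW gives $U(\mathcal{G})=U(\mathfrak{m})\,U(\mathfrak{k})$, and the opposite ordering gives $U(\mathcal{G})=U(\mathfrak{k})\,U(\mathfrak{m})$. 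Now $U(\mathfrak{k})$ is spanned by products of homogeneous elements of $\mathfrak{k}$, i.e.\ of $\mathcal{A}_{I,0}$-homogeneous elements; and by Theorem \ref{T3.1} every $\mathcal{G}_\beta$ with $\height_I(\beta)\geq 1$ is spanned by right-normed Lie words in $\mathcal{A}_{I,1}$-homogeneous elements, each of which, expanded in $U(\mathcal{G})$ via $[a,b]=ab-ba$, is a linear combination of products of those same $\mathcal{A}_{I,1}$-homogeneous elements; hence $U(\mathfrak{m})$ is spanned by products of $\mathcal{A}_{I,1}$-homogeneous elements. Multiplying these two spanning families yields the second displayed form, and the opposite PBW ordering yields the first; in fact each of the two families already spans $U(\mathcal{G})$ on its own, a point I shall use in part (b).

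For part (b), I would first realise $\mathfrak{n}^-$ as a $\mathbb{Z}_{\geq0}\Pi$-graded $\mathbb{C}$-Lie algebra by setting $\mathcal{G}_\gamma:=\mathfrak{g}_{-\gamma}$ for $\gamma\in\mathbb{Z}_{\geq0}\Pi$; then $\mathcal{A}=\Delta^+$, each $\mathcal{G}_{\alpha_i}=\mathbb{C}f_i\neq\{0\}$, and $\mathcal{G}=\mathfrak{n}^-$ is generated by the $\mathcal{G}_{\alpha_i}$. The cases $J=\mathcal{I}$ (where $\Delta_{J^c,1}=\emptyset$ and $\wt_J V=\wt V$) and $J=\emptyset$ (where $\Delta_{J^c,1}=\Pi$, $\wt_\emptyset V=\{\lambda\}$, and $\wt V\subset\lambda-\mathbb{Z}_{\geq0}\Pi$) are immediate from the conventions in \eqref{E2.2}, so I may assume $\emptyset\neq J\subsetneq\mathcal{I}$ and apply part (a) with $I=J^c$, noting $\mathcal{A}_{I,0}=\Delta^+\cap\mathbb{Z}_{\geq0}\Pi_J=\Delta^+_J$ and $\mathcal{A}_{I,1}=\Delta_{J^c,1}$ (the latter already lying in $\Delta^+$, being of positive $J^c$-height).

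Now fix $\mu\in\wt V$ and a highest weight vector $v_\lambda$, so that $V_\mu=U(\mathfrak{n}^-)_{\mu-\lambda}\,v_\lambda$. By part (a) in its all-second-form version, $V_\mu$ is spanned by vectors $\big(\prod_{i} f^{(i)}\big)\big(\prod_{j} g^{(j)}\big)\,v_\lambda$ with $f^{(i)}\in\mathfrak{g}_{-\gamma_i}$, $\gamma_i\in\Delta_{J^c,1}$, $g^{(j)}\in\mathfrak{g}_{-\beta_j}$, $\beta_j\in\Delta^+_J$, and $\sum_i\gamma_i+\sum_j\beta_j=\lambda-\mu$. Since $V_\mu\neq\{0\}$, pick such a vector that is nonzero; then the intermediate vector $w:=\big(\prod_j g^{(j)}\big)v_\lambda$ is also nonzero, for otherwise the whole vector would vanish. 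As $\sum_j\beta_j\in\mathbb{Z}_{\geq0}\Pi_J$, the weight $\nu:=\lambda-\sum_j\beta_j$ of $w$ lies in $\wt V\cap(\lambda-\mathbb{Z}_{\geq0}\Pi_J)=\wt_J V$, and $\mu=\nu-\sum_i\gamma_i\in\wt_J V-\mathbb{Z}_{\geq0}\Delta_{J^c,1}$. Since $\mu$ was arbitrary, this proves the asserted inclusion.

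The substantive input is Theorem \ref{T3.1}, already established; given it, part (a) is PBW bookkeeping and part (b) rests on the elementary observation that if a composite of lowering operators does not annihilate $v_\lambda$, then neither does the inner block $\prod_j g^{(j)}$ --- which is precisely what places the intermediate weight $\nu$ in $\wt_J V$. The only points demanding care are ensuring the all-second-form family from part (a) already spans $U(\mathfrak{n}^-)$, so that the $\Delta^+_J$-block may be taken innermost, and disposing of the boundary cases $J=\emptyset,\mathcal{I}$ via the conventions of \eqref{E2.2}. I anticipate no genuine obstacle.
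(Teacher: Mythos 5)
Your proposal is correct and follows essentially the same route as the paper: part (a) is the PBW theorem with an ordered homogeneous basis separating the $\mathcal{A}_{I,0}$-part from the rest, combined with Theorem \ref{T3.1} to rewrite the higher-$I$-height basis elements as polynomials in $\mathcal{A}_{I,1}$-homogeneous elements, and part (b) applies the "second form" of (a) (with $\mathcal{G}=\mathfrak{n}^-$, $I=J^c$) to a spanning set of $V_\mu$ and reads off the intermediate weight $\nu\in\wt_JV$ from the nonvanishing inner block. Your packaging of (a) via the decomposition $\mathcal{G}=\mathfrak{k}\oplus\mathfrak{m}$ and $U(\mathcal{G})=U(\mathfrak{m})U(\mathfrak{k})$ is just a cleaner phrasing of the same PBW bookkeeping, and your explicit handling of the boundary cases $J=\emptyset,\mathcal{I}$ matches the paper's.
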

	\begin{proof}
	Observe that in the extreme cases where $J=\emptyset$ or $J=\mathcal{I}$ (b) trivially holds true. When $\emptyset\neq J\subsetneq \mathcal{I}$ observe that (b) just follows from (a), as the weight spaces of $V$ are spanned by the vectors obtained when the monomials of the second form in (a) act on $V_{\lambda}$ (with $\mathcal{G}$, $\mathcal{A}$, $I$, $\mathcal{A}_{I,1}$ in (a) replaced by $\mathfrak{n}^-$, $-\Delta$, $J^c$, $\Delta_{J^c,-1}$ respectively). So, we only prove (a). Fix an ordered basis $\mathcal{B}$ for $\mathcal{G}$ consisting of homogeneous elements such that the elements corresponding to $\mathcal{A}_{I^c}$ always occur either before or after those corresponding to $\mathcal{A}\setminus\mathcal{A}_{I^c}$ in $\mathcal{B}$. By the PBW theorem, (ordered) monomials on the elements of $\mathcal{B}$ span $U(\mathcal{G})$. Apply Theorem \ref{T3.1} to the elements of $\mathcal{B}$ corresponding to $\{\alpha\in\mathcal{A}$ $|$ $\height_{I}(\alpha)>1\}$. Upon re-writing the Lie brackets in terms of commutators in $U(\mathcal{G})$, observe that each element in $\mathcal{B}$ corresponding to $\{\alpha\in\mathcal{A}$ $|$ $\height_{I}(\alpha)>1\}$ is further a ``polynomial'' on the elements of $\mathcal{B}$ corresponding to $\mathcal{A}_{I,1}$. Now, observe that the previous line finishes the proof.
	\end{proof}
	\begin{proof}[\textnormal{\textbf{Proof of Theorem \ref{thmA}:}}]
		The proof of (\ref{thmA}1) follows by Theorem \ref{T3.1}
		applied to $\mathcal{G}=\mathfrak{n}^+$ with $\Pi$ as the
		base of $\Delta$. By the Minkowski difference formula for
		$\wt M(\lambda,J)$ in equation \eqref{E2.7}, and by
		applying (\ref{thmA}1) for $I=J^c$, the proof of
		(\ref{thmA}2) follows. Observe that $\Delta_{I ,1}$ is
		the minimal generating set for the semigroup
		$\mathbb{Z}_{\geq 0}(\Delta^+ \setminus \Delta_{I^c}^+)$,
		as a root in $\Delta_{I,1}$ cannot be further written as
		a sum of roots in $\Delta_{I,1}$. This justifies the term
		``minimal'' description, completing the
		proof of Theorem \ref{thmA}.
	\end{proof}
	The following remark addresses some questions related to the ``free-ness'' of the subset $\Delta_{I,1}$ in generating the cones/semigroups $\mathbb{Z}_{\geq0}(\Delta^+\setminus\Delta^+_{I^c})$  and $\mathbb{R}_{\geq0}(\Delta^+\setminus\Delta^+_{I^c})$.
	\begin{remark}\label{R3.3}
		\begin{itemize}
		\item[(1)] Given $\emptyset \neq I \subset \mathcal{I}$, $\Delta_{I,1}$ need not ``freely'' generate the semigroup $\mathbb{Z}_{\geq0}(\Delta^+\setminus\Delta^+_{I^c})$, as the following example shows. Let $\mathfrak{g} =\mathfrak{sl}_4(\mathbb{C})$  and $\mathcal{I}=\{1, 2,3\}$ where the node $2$ is not a leaf in the Dynkin diagram, and suppose $I=\{2\}$. Then we have $(\alpha_1 +\alpha_2 ) + (\alpha_2 +\alpha_3 ) = (\alpha_1 +\alpha_2 +\alpha_3 )+ (\alpha_2)$.
		\item[(2)] Note that $\Delta_{I,1}$ is a generating set for the cone $\mathbb{R}_{\geq 0}(\Delta^+ \setminus \Delta_{I^c}^+)$, but it need not be minimal, as the following example shows. Let $\mathfrak{g}$ be of type $B_2$ and $\mathcal{I}=\{1,2\}$ where the node 2 corresponds to the long simple root, and suppose $I=\{2\}$. Then we have $\frac{1}{2}(\alpha_2)+\frac{1}{2}(\alpha_2+2\alpha_1)=\alpha_2+\alpha_1$.
		\item[(3)] In the above spirit, we find the minimal generating (over $\mathbb{R}_{\geq0}$) set for the cone $\mathbb{R}_{\geq 0}(\Delta^+ \setminus \Delta_{I^c}^+)$ in Lemma \ref{L6.1} in subsection \ref{S6.1}.
		\end{itemize} 
	\end{remark}
	Let $\mathcal{G}$ be as in Theorem \ref{T3.1} and $I\subset\mathcal{I}$. We end this subsection by exhibiting (1) a spanning set for $\mathcal{G}_{I,1}$, and thereby (2) a lower bound on the size of $\mathcal{A}_{I,1}$, see Lemma \ref{L3.4}. Lemma \ref{L3.4} was proved when $\mathcal{G}$ is a Borcherds Kac--Moody algebra by Arunkumar et al in \cite[Lemma 4.6]{Venkatesh}. We now prove it in the more general graded setting of this paper.
	\begin{lemma}\label{L3.4}
		Let $\mathcal{G}$ be as in Theorem \ref{T3.1}. Fix an $i \in \mathcal{I}$, and suppose $\beta \in \mathcal{A}$ such that $\height_{\{i\}}(\beta)>0$. Then $\mathcal{G}_{\beta}$ is spanned by the Lie words of the form $\Big[e_{i_n}, \big[\cdots ,[e_{i_1},e_{i}]\cdots\big] \Big]$ such that $i_j, i\in \mathcal{I}$, $0\neq e_{i_j} \in \mathcal{G}_{\alpha_{i_j}}$, $0\neq e_i \in\mathcal{G}_{\alpha_i}$ for each $1\leq j\leq n$, and $\alpha_i+\sum_{j=1}^{n}\alpha_{i_j}=\beta$.
	\end{lemma}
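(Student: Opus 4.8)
The plan is to reduce the statement to Theorem \ref{T3.1} applied with $I = \{i\}$, and then to manipulate the resulting right-normed Lie words (in which each letter has $\{i\}$-height $1$) into the very specific shape demanded here, where all but one letter is a simple-root vector $e_{i_j}$ with $i_j \in \mathcal{I}$, and exactly one letter is $e_i$, sitting at the innermost position. First I would invoke Theorem \ref{T3.1} with $I = \{i\}$: since $\height_{\{i\}}(\beta) > 0$, we have $n := \height_{\{i\}}(\beta) \geq 1$ (in our case actually $n = 1$, since $\height_{\{i\}}$ counts the coefficient of $\alpha_i$, so a single letter of $\{i\}$-height $1$ already exhausts it), and $\mathcal{G}_\beta$ is spanned by right-normed words $[[x_{\gamma_j}]]_{j=1}^{n}$ with each $\gamma_j \in \mathcal{A}_{\{i\},1}$. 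When $n=1$ this says $\mathcal{G}_\beta$ is spanned by a single $\mathcal{G}_{\beta}$-element of $\{i\}$-height-$1$ grade, which is circular; so the genuine content is that, applying the grading/generation hypothesis, $\mathcal{G}_\beta$ is spanned by right-normed words $[[e_{i_j}]]_{j=1}^{k}$ on simple-root vectors with $\sum \alpha_{i_j} = \beta$, exactly one of which (say the one in slot $\ell$) equals $e_i$ and the rest have indices $\neq i$. The task is to push that distinguished letter $e_i$ to the innermost slot.

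The key step is a Jacobi-identity sliding argument. Consider a right-normed word $w = [[e_{i_j}]]_{j=1}^{k}$ where $e_i$ occurs in slot $\ell$ with $\ell < k$ (if $\ell = k$ we are already done). Write $w = \big[\cdots,[e_i,[e_{i_{\ell+1}}, u]]\cdots\big]$ where $u = [[e_{i_j}]]_{j=\ell+2}^{k}$ is the right-normed tail (possibly a single letter, or empty meaning $\ell+1 = k$). By the Jacobi identity,
\[
[e_i,[e_{i_{\ell+1}}, u]] = [[e_i,e_{i_{\ell+1}}], u] + [e_{i_{\ell+1}}, [e_i, u]].
\]
In the first term, $[e_i, e_{i_{\ell+1}}]$ is a homogeneous element of grade $\alpha_i + \alpha_{i_{\ell+1}}$; this grade has $\{i\}$-height $1$, but it need not be a simple root nor lie in $\mathcal{G}_{\alpha_{i'}}$ for any single $i'$. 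However, by the base hypothesis $\mathcal{G}$ is generated by the $\mathcal{G}_{\alpha_i}$, so $\mathcal{G}_{\alpha_i + \alpha_{i_{\ell+1}}}$ is spanned by right-normed words on simple-root vectors with coefficient sum $\alpha_i + \alpha_{i_{\ell+1}}$, exactly one of which is (up to scalar) $e_i$, and we may then recursively apply the lemma's own induction to this strictly smaller grade: so $[[e_i, e_{i_{\ell+1}}], u]$ is a combination of words of the target shape tensored with $u$, and iterating the whole argument on $[\cdots, u]$-part handles the remaining letters. In the second term $[e_{i_{\ell+1}}, [e_i, u]]$, the distinguished letter $e_i$ has been slid one position deeper (into the word $[e_i, u]$); by descending induction on $k - \ell$, this term is a combination of words of the desired form. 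So I would run a double induction: outer on $\height(\beta)$ (to license the recursive use on the smaller grade $\alpha_i + \alpha_{i_{\ell+1}}$), inner on the distance $k - \ell$ of the $e_i$-letter from the innermost slot.

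The main obstacle is precisely the appearance of $[e_i, e_{i_{\ell+1}}]$ in the Jacobi expansion: this bracket is a root vector of $\{i\}$-height $1$ but is \emph{not} one of the allowed letters in the target words (which must be $e_j$'s and a single $e_i$), so one cannot simply treat it as a new simple-root letter. The resolution — invoking the generation hypothesis plus the outer induction on $\height(\beta)$ to re-expand $[e_i, e_{i_{\ell+1}}]$ (whose grade is strictly smaller than $\beta$, as long as $k \geq 3$; the cases $k \leq 2$ being trivial or immediate) into words already known to be of the desired form — is the crux, and care is needed to ensure the induction is well-founded, i.e. that the grade $\alpha_i + \alpha_{i_{\ell+1}}$ genuinely has smaller height than $\beta$ so the hypothesis applies. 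A secondary, more bookkeeping-level point is tracking that throughout all the Jacobi manipulations exactly one copy of $e_i$ is preserved in each resulting word (the total $\{i\}$-height being $1$ forces this, which is reassuring), and that all other letters remain simple-root vectors $e_{i_j}$; this follows from homogeneity of every term produced.
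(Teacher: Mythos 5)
Your overall strategy --- induction on $\height(\beta)$ combined with a Jacobi-identity ``sliding'' of a distinguished $\mathcal{G}_{\alpha_i}$-letter toward the innermost slot --- is the same as the paper's, but the step you yourself flag as the crux is not actually resolved. After the Jacobi step
\[
[e_i,[e_{i_{\ell+1}},u]]=[[e_i,e_{i_{\ell+1}}],u]+[e_{i_{\ell+1}},[e_i,u]],
\]
you propose to handle the first term by re-expanding $[e_i,e_{i_{\ell+1}}]\in\mathcal{G}_{\alpha_i+\alpha_{i_{\ell+1}}}$ (via the generation hypothesis and the outer induction on height) as a combination of target-form words $v'$ of that smaller grade. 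But this leaves you with expressions $[v',u]$ that are \emph{still not} of the target form: the letter containing the $\mathcal{G}_{\alpha_i}$-vector sits in slot $\ell$, not in the innermost slot. If you then apply Jacobi again to restore right-normedness, say $[[e_{j},e_i'],u]=[e_{j},[e_i',u]]-[e_i',[e_{j},u]]$, the second term has the $\mathcal{G}_{\alpha_i}$-letter back at distance $k-\ell$ from the innermost slot, so your inner induction on $k-\ell$ does not decrease and the argument does not terminate as written; the vaguer alternative of ``iterating the whole argument on the $[\cdot,u]$-part'' is not covered by the induction you set up, because the composite letters that accumulate are no longer simple-root vectors. (The re-expansion is also an unnecessary detour: $[e_i,e_{i_{\ell+1}}]=-[e_{i_{\ell+1}},e_i]$ is already a target-form word for its grade.) What is missing is an auxiliary claim of the form: if $v$ is a target-form word and $u$ is a right-normed word on simple-root vectors, then $[v,u]$ is a combination of target-form words --- proved by induction on the length of $u$, using that $[e_a,v]$ is again target-form. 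The paper packages exactly this into the closed-form iterated-Jacobi identity \eqref{E3.2}: each term there either has a \emph{simple} letter extracted to the outermost position with the remaining bracket lying in the strictly smaller-height grade $\mathcal{G}_{\beta-\nu_j}$ (so the height induction applies to it as an element of that space, not as a word), or is already of the target form.

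A secondary error: the hypothesis is $\height_{\{i\}}(\beta)>0$, not $\height_{\{i\}}(\beta)=1$, so a spanning word may contain several letters from $\mathcal{G}_{\alpha_i}$; your repeated assertions that there is ``exactly one'' such letter are unwarranted. This does not derail the strategy, since the target form permits $i_j=i$, but the paper exploits the multiplicity cleanly: if stripping the outermost letter leaves a grade of positive $\{i\}$-height, the induction hypothesis applies directly to the tail, so the genuinely hard case is precisely a unique $\mathcal{G}_{\alpha_i}$-letter sitting in the outermost slot --- the case treated by \eqref{E3.2}.
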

	\begin{proof}
		We proceed by induction on $\height(\beta)\geq 1$. In the base step $\height(\beta)=1$, the lemma is trivial.\\
		Induction step: Let $\beta$ be as in the statement with $k:=\height(\beta )>1$. Pick a non-zero Lie word \begin{align*} &X=\Big[e_{\nu_{1}},\big[\cdots, [e_{\nu_{k-1}}, e_{\nu_k}]\cdots\big]\Big] =[[e_{\nu_j}]]_{j=1}^k\in \mathcal{G}_{\beta}\\
		\text{ such that }\nu_j \in& \Pi_{\supp(\beta)}\text{ and }0\neq e_{\nu_j}\in \mathcal{G}_{\nu_j}\text{ for each }1\leq j\leq  k,\text{ and }\sum_{j=1}^k\nu_j=\beta.\end{align*}
		Note that Lie words of the form $X$ span $\mathcal{G}_{\beta}$ as $\mathcal{G}$ is generated by $\mathcal{G}_{\Pi}$ and graded over $\mathbb{Z}_{\geq0}\Pi$. Therefore, it suffices to show that $X$ is a linear combination of Lie words of the form as in the statement. Fix $s\in [k]$ largest such that $\nu_s=\alpha_{i}$; such an $s$ exists as $\height_{\{i\}}(\beta)>0$. Note that $k=ht(\beta)\geq 2$. When $k=2$, check that the lemma trivially holds. So, we assume that $k\geq 3$. If $s>1$, then $[[e_{\nu_{j}}]]_{j=2}^{k}\in \mathcal{G}_{\beta-\nu_1}$ can be expressed as
		 a linear combination of the Lie words of the desired form by the induction hypothesis applied to $\beta -\nu_1$, and hence so can be $X$. Else if $s=1$, then observe by the Jacobi identity that
		\begin{equation}\label{E3.2}
			\begin{split}
			\big[e_{\nu_{1}},[\cdots [e_{\nu_{k-1}}, e_{\nu_k}]\cdots ]\big] =&\sum\limits_{j=2}^{k-1}(-1)^{j}\Big[e_{\nu_j},\big[\big([[e_{\nu_{j-p}}]]_{p=1}^{j-1}\big),\big([[e_{\nu_q}]]_{q=j+1}^{k}\big)\big]\Big]+\\&(-1)^{k}\Big[\big([[e_{\nu_{k-p}}]]_{p=1}^{k-1}\big),e_{\nu_k}\Big].
			\end{split}
		\end{equation}
		Each non-zero $\Big[\big([[e_{\nu_{j-p}}]]_{p=1}^{j-1}\big),\big([[e_{\nu_q}]]_{q=j+1}^{k}\big)\Big]\in \mathcal{G}_{\beta -\nu_j}$ can be expressed as the desired linear combination by the induction hypothesis applied to $\beta -\nu_j$ for each $2\leq j\leq k$. Hence, every term in the summation on the right hand side of equation \eqref{E3.2} above can be expressed as a linear combination of the Lie words of the desired form. Notice that the last term outside the summation on the right hand side of equation \eqref{E3.2} is already in the desired form (once we reverse the order of the outermost Lie bracket). Hence, the proof is complete.
	\end{proof}
	Lemma \ref{L3.4} immediately proves the following Corollary. Notice that Corollary \ref{C3.5} (1) is a special case of Theorem \ref{thmD} part (\ref{thmD}1), more generally, in the graded setting not just general Kac--Moody.
		\begin{cor}\label{C3.5}
			Let $\mathcal{G}$ and $\mathcal{A}$ be as in Lemma \ref{L3.4}.
			\begin{itemize}
			    \item[(1)] Fix an $i\in \mathcal{I}$. Suppose $\beta \in \mathcal{A}$ such that $\beta \succneqq \alpha_i$. Then there exists a sequence of grades $\beta_j \in \mathcal{A}$, $1\leq j\leq n=\height(\beta)$, such that
			\[
				\alpha_i=\beta_1\prec\cdots\prec\beta_j\prec\cdots\prec \beta_n=\beta
			\in\mathcal{A}\quad\text{ and }\quad
				\beta_{j+1}-\beta_{j}\in \Pi \text{ }\forall j. 	
			\]
			(See Note \ref{N1} for the notation.) Notice that $\height_{\{i\}}(\beta)>0 \implies \height_{\{i\}}(\beta_j)>0$ $\forall j$. In particular, $\height_{\{i\}}(\beta)=1\implies \height_{\{i\}}(\beta_j)=1$ $\forall j$, due to which we get a lower bound on the size of $\mathcal{A}_{I,1}$ as in the next part. 
			\item[(2)] Let $\emptyset\neq I\subset \mathcal{I}$. For each $i\in I$, fix a grade $\beta_i \in \mathcal{A}_{I,1}$ such that $\supp_I(\beta_i)=\{i\}$. Then
			\begin{equation*}
			\#\mathcal{A}_{I,1}\geq \sum_{i\in I}\height(\beta_i).
			\end{equation*}
            \end{itemize}		
		\end{cor}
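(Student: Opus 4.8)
The plan is to deduce Corollary \ref{C3.5} directly from Lemma \ref{L3.4}, treating the two parts in sequence. For part (1), fix $i\in\mathcal{I}$ and $\beta\in\mathcal{A}$ with $\beta\succneqq\alpha_i$. By Lemma \ref{L3.4} (applicable since $\supp(\beta-\alpha_i)$ together with $\alpha_i$ realizes $\beta$, and in particular $\height_{\{i\}}(\beta)\geq 1$), the space $\mathcal{G}_\beta$ is nonzero only because there is a nonzero right-normed Lie word $[[e_{i_n},[\cdots,[e_{i_1},e_i]\cdots]]]$ with each $e_{i_j}\in\mathcal{G}_{\alpha_{i_j}}$ nonzero and $\alpha_i+\sum_{j=1}^n\alpha_{i_j}=\beta$. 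Reading this word from the inside out, set $\beta_1:=\alpha_i$ and $\beta_{m}:=\alpha_i+\sum_{j=1}^{m-1}\alpha_{i_j}$ for $2\leq m\leq n+1$; then $\beta_{n+1}=\beta$, consecutive differences lie in $\Pi$, and — this is the only point that needs a short argument — each partial bracket $[[e_{i_{m-1}},[\cdots,[e_{i_1},e_i]\cdots]]]$ lies in $\mathcal{G}_{\beta_m}$ and must itself be nonzero, since otherwise the full word would vanish. Hence every $\beta_m\in\mathcal{A}$, giving the required chain (after re-indexing so that the length is exactly $\height(\beta)=n+1$, matching $n$ in the statement of Corollary \ref{C3.5}(1)). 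The parenthetical observations about $\height_{\{i\}}$ are immediate: each $\beta_m$ contains $\alpha_i$ with coefficient $1$ if $i_j\neq i$ for all $j$, which is forced when $\height_{\{i\}}(\beta)=1$.

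For part (2), fix $\emptyset\neq I\subset\mathcal{I}$ and, for each $i\in I$, a grade $\beta_i\in\mathcal{A}_{I,1}$ with $\supp_I(\beta_i)=\{i\}$. Apply part (1) to each such $\beta_i$: since $\beta_i\succneqq\alpha_i$ (it has $I$-height $1$ and support meeting $I$ only at $i$, but it is not equal to $\alpha_i$ unless $\beta_i=\alpha_i$, in which case the chain is trivial and contributes $\height(\alpha_i)=1$), we obtain a chain $\alpha_i=\beta_{i,1}\prec\cdots\prec\beta_{i,\height(\beta_i)}=\beta_i$ of elements of $\mathcal{A}$, all of $I$-height $1$ by the last remark of part (1). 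The chains for distinct $i,i'\in I$ are pairwise disjoint as subsets of $\mathcal{A}_{I,1}$, because every element of the $i$-chain has $\supp_I$ equal to $\{i\}$: indeed each $\beta_{i,m}$ lies between $\alpha_i$ and $\beta_i$ in $\prec$, so its $I$-support is contained in $\supp_I(\beta_i)=\{i\}$ and contains $i$ (as its $I$-height is $1$, the coefficient of $\alpha_i$ is nonzero). Summing the lengths of these disjoint chains inside $\mathcal{A}_{I,1}$ yields $\#\mathcal{A}_{I,1}\geq\sum_{i\in I}\height(\beta_i)$.

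The main (and essentially only) obstacle is the bookkeeping in part (1): one must verify that \emph{every} nested sub-bracket of a nonzero right-normed Lie word is itself nonzero. This is where the structure of Lemma \ref{L3.4} is used crucially — the word is built by successively bracketing on the \emph{outside}, so $[[e_{i_{m-1}},[\cdots,[e_{i_1},e_i]\cdots]]]$ is a sub-bracket of the full word, and if it were zero the full word would be zero by bilinearity of $[\cdot,\cdot]$, contradicting the choice of a nonzero word. Once this is observed, everything else is immediate from the definitions of $\supp_I$, $\height_I$, and $\prec$, together with the grading $[\mathcal{G}_\alpha,\mathcal{G}_\beta]\subset\mathcal{G}_{\alpha+\beta}$. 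No separate induction is needed for the corollary; all the inductive work has already been done inside Lemma \ref{L3.4}.
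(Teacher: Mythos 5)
Your proposal is correct and follows exactly the route the paper intends: the paper simply asserts that Lemma \ref{L3.4} "immediately proves" the corollary, and you have correctly spelled out that immediate deduction — pick a nonzero right-normed word with $e_i$ innermost, observe that every nested sub-bracket must be nonzero (else the full word vanishes), read off the chain of grades from the partial sums, and in part (2) use $\supp_I(\beta_{i,m})=\{i\}$ to see the chains for distinct $i$ are disjoint subsets of $\mathcal{A}_{I,1}$. No discrepancy with the paper's argument.
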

		\subsection{Parabolic-PSP-going up version} 
		In the rest of the paper, we work only over Kac--Moody algebras, which we denote by $\mathfrak{g}$, and also unless otherwise stated we assume that $\mathcal{I}$ is finite.
		
		In this subsection, we prove a ``going up'' version of the parabolic-PSP for an indecomposable Kac--Moody $\mathfrak{g}$, using Theorem \ref{T3.1}. This generalizes a basic result in the theory, see e.g. Proposition 4.9 in Kac's book \cite{Kac}. 
	\begin{theorem}\label{T3.6}
		Let $\mathfrak{g}$ be an indecomposable Kac--Moody algebra, and $\emptyset \neq I\subset \mathcal{I}$. Suppose $\beta \in \Delta^+$ is such that $\height_{I}(\beta)< \sup\{ \height_I(\alpha)$ $|$ $\alpha\in\Delta\}$ and $0\neq x\in\mathfrak{g}_{\beta}$. Then there exists a root $\gamma \in \Delta_{I,1}$ and $0\neq x_{\gamma} \in \mathfrak{g}_{\gamma}$ such that $[x_{\gamma}, x]\neq 0$, which implies $\beta +\gamma \in \Delta$. 
	\end{theorem}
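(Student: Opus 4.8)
The plan is to argue by contradiction, using the ``going down'' statement (Theorem \ref{T3.1}) together with indecomposability of $\mathfrak{g}$ to propagate nonvanishing of brackets across the whole Dynkin diagram. Suppose, for contradiction, that $[x_\gamma, x] = 0$ for every $\gamma \in \Delta_{I,1}$ and every $x_\gamma \in \mathfrak{g}_\gamma$, while $\height_I(\beta) =: n$ is strictly below $\sup\{\height_I(\alpha) \mid \alpha \in \Delta\}$. Since $\mathfrak{n}^+$ is generated by $\mathcal{G}_{\Pi}$, and since the grades of $I$-height $1$ generate (together with $\Delta_{I,0}$) everything of positive $I$-height by the going-down property, I first want to reduce the vanishing hypothesis to something local: if $x$ has zero bracket with all of $\mathfrak{g}_{I,1}$, I would like to conclude that $x$ also has zero bracket with all root spaces $\mathfrak{g}_\delta$ for which $\height_I(\delta) \geq 1$, hence (at least) with all of $\bigoplus_{m \geq 1}\mathfrak{g}_{I,m}$. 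This follows by induction on $\height_I(\delta)$ using the Jacobi identity and Theorem \ref{T3.1}: write $x_\delta$ as a linear combination of right-normed Lie words $[[x_{\gamma_j}]]_{j=1}^k$ with each $\gamma_j \in \Delta_{I,1}$, and expand $[x_\delta, x]$; the Jacobi identity lets one move $x$ inward so that each resulting term contains a bracket $[x_{\gamma_j}, (\text{something})]$ or ultimately a bracket of $x$ against an $I$-height $1$ vector, which vanishes — here I need to be a little careful and do the induction on the pair $(\height_I(\delta), \text{word length})$, but this is the standard ``ideal generated by''-type argument.

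Next I handle the $I$-height $0$ directions. The subalgebra $\bigoplus_{m \geq 0}\mathfrak{g}_{I,m}$ is exactly $\mathfrak{p}_{I^c}$ in the notation of the paper (parabolic with Levi $\mathfrak{g}_{I^c}$ plus $\mathfrak{n}^+$); its nilradical $\mathfrak{u} := \bigoplus_{m\geq 1}\mathfrak{g}_{I,m}$ is an ideal. Set $Z := \{y \in \mathfrak{n}^+ \mid [y,x]=0\}$, the centralizer of $x$ in $\mathfrak{n}^+$; by the previous paragraph $\mathfrak{u} \subseteq Z$. Now $\beta \in \Delta_{I,n}$ with $n \geq 1$, so $x \in \mathfrak{u}$, and since $\mathfrak{u}$ is nilpotent-ish we cannot immediately get a contradiction from $x$ centralizing $\mathfrak{u}$; instead I want to use the height-$0$ part. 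The point is that $\beta$ has some simple root $\alpha_i$ in its support with $i \in I^c$ whenever $\beta \notin \mathbb{Z}_{\geq 0}\Pi_I$, and more importantly indecomposability forces $\beta$ to ``connect'' to the rest of the diagram. The cleanest route: because $\mathfrak{g}$ is indecomposable, $\Delta$ is connected in the sense that for any root $\delta$ and any $i \in \mathcal{I}$ either $\delta + \alpha_i \in \Delta$ or $\delta - \alpha_i \in \Delta$ unless $\delta \in \{\pm\alpha_j\}$ with $j$ disconnected — and the assumption $\height_I(\beta) < \sup$ guarantees that $\beta$ is not ``$I$-maximal'', so there is a root $\delta$ with $\height_I(\delta) \geq \height_I(\beta)$ lying above $\beta$, reachable by adding simple roots.

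The key maneuver then is: since $\height_I(\beta)$ is not the supremum, by Theorem \ref{T3.6}'s own going-down companion applied in reverse — or more safely by an explicit chain — there exists $\delta \in \Delta$ with $\height_I(\delta) = \height_I(\beta)+1$, hence $\mathfrak{g}_\delta \neq 0$; pick such a $\delta$ \emph{of the form} $\beta + \gamma$ with $\gamma$ a sum of simple roots of total $I$-height $1$. Actually the honest argument I expect to carry out is: among all $0 \neq x \in \mathfrak{g}_\beta$ (all $\beta$ of $I$-height $n$) suppose some $x$ is killed by all of $\mathfrak{g}_{I,1}$; then by the Jacobi-propagation above, $x$ is killed by $\bigoplus_{m\geq 1}\mathfrak{g}_{I,m}$, and separately I must show $x$ is killed by $\mathfrak{g}_{I,0}$-brackets that raise into still-higher levels. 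For this last piece I invoke indecomposability: the $\mathfrak{g}_{I^c}$-module generated by $x$ inside $\mathfrak{g}$ together with $\mathfrak{u}$ forms an ideal of $\mathfrak{p}_{I^c}$, and pushing this up via $[\mathfrak{g}_{\alpha_i}, -]$ for $i \in I$ one builds a nonzero graded ideal of $\mathfrak{g}$ not containing any $\mathfrak{g}_{\alpha_j}$, contradicting simplicity/indecomposability of $\mathfrak{g}'$ (the derived algebra, whose only graded ideals containing no Cartan directions are $0$, by the defining property of $\mathfrak{g}(A)$ as the quotient of $\bar{\mathfrak{g}}(A)$ by the largest ideal meeting $\mathfrak{h}$ trivially). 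The contradiction is that this ideal would be a proper nonzero ideal intersecting $\mathfrak{h}$ trivially, while $\mathfrak{g}$ has none by Definition \ref{D2.1}(1) — and indecomposability is what makes it genuinely proper (it misses at least one node's root space).

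The main obstacle I anticipate is precisely this last step: making the ``ideal generated by $x$'' argument airtight in the Kac--Moody (non-symmetrizable, infinite-dimensional) setting, where one cannot use a nondegenerate form or complete reducibility. I expect to handle it by mimicking Kac's proof of \cite[Proposition 4.9]{Kac}: show that $\{y \in \mathfrak{n}^+ \mid [y,x]=0\}$ being ``too large'' (containing all sufficiently high graded pieces and stable under enough of the diagram) forces $x$ to generate an ideal avoiding $\mathfrak{h}$, then quote the defining property of $\mathfrak{g}(A)$. A secondary technical point is the bookkeeping in the Jacobi-identity propagation (paragraph one) — ensuring the double induction on $I$-height and word-length actually terminates and that every expanded term really does land on a vanishing innermost bracket — but this is routine and parallel to the computation already carried out in the proof of Theorem \ref{T3.1}.
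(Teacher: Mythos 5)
Your first step is sound: assuming $[x_\gamma,x]=0$ for all $\gamma\in\Delta_{I,1}$ and all $x_\gamma\in\mathfrak{g}_\gamma$, the Jacobi identity together with Theorem \ref{T3.1} does propagate this to $[\,\mathfrak{u},x]=0$ for $\mathfrak{u}:=\bigoplus_{m\geq1}\mathfrak{g}_{I,m}$, by induction on the length of a right-normed word in $I$-height-one vectors. The gap is in the concluding contradiction. The contradiction you name --- a nonzero ideal intersecting $\mathfrak{h}$ trivially, violating Definition \ref{D2.1}(1) --- is not available: the ideal $\mathcal{J}$ generated by $x$ \emph{does} meet $\mathfrak{h}$ (bracketing $x$ down by negative simple root vectors eventually produces nonzero elements of $\mathfrak{h}$, precisely because no nonzero ideal avoids $\mathfrak{h}$). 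The contradiction you actually need is Kac's Proposition 1.7: for indecomposable $\mathfrak{g}(A)$, every non-central ideal contains $\mathfrak{g}'$, hence contains $\mathfrak{g}_{\beta'}$ for a root $\beta'$ with $\height_I(\beta')>\height_I(\beta)$ (such $\beta'$ exists by the hypothesis on the supremum --- and note it need not lie above $\beta$ in $\prec$, so your ``reachable by adding simple roots'' reduction is also unjustified). To contradict this you must show $\mathcal{J}\cap\mathfrak{g}_{\beta'}=\{0\}$, and your propagation step does not give that: it only controls brackets of $x$ against $\mathfrak{u}$, whereas elements of $\mathcal{J}$ arise from words that may first dip down in $I$-height (through $\mathfrak{g}_{I^c}$ or $\bigoplus_{m\leq-1}\mathfrak{g}_{I,m}$) and then climb back up. Controlling exactly these words is the substantive content of the paper's proof, which it handles by choosing a Lie word of minimal length from $x$ into $\mathfrak{g}_{\beta'}$ and using the derivation rule to force $[x_{\gamma_i},x]\neq0$ for every letter, one of which has positive $I$-height; Theorem \ref{T3.1} then finishes.

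Your outline can be repaired along its own lines: using $[\,\mathfrak{u},x]=0$ and the Jacobi identity one checks that $S:=\sum_{k\geq0}\big(\ad(\mathfrak{l}_{I^c}\oplus\bigoplus_{m\leq-1}\mathfrak{g}_{I,m})\big)^k x$ is already $\ad\,\mathfrak{u}$-stable, hence equals $\mathcal{J}$; since none of these operators raises $I$-height, every element of $\mathcal{J}$ has $I$-height $\leq\height_I(\beta)$, so $\mathfrak{g}_{\beta'}\not\subset\mathcal{J}$, contradicting Proposition 1.7. But this stability check is a genuine additional argument, not the ``routine bookkeeping'' your proposal defers, and without it (or the paper's minimal-length device) the proof does not close.
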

	\begin{proof}
		Fix $\beta$ and $x$ as in the statement of the theorem, and also a root $\beta'$ such that $\height_{I}(\beta')>\height_{I}(\beta)$. Consider the ideal $L:=[\mathfrak{g}, x]\oplus \mathbb{C}x$. By \cite[Proposition 1.7]{Kac}, we must have either
		a) $L\subset Z(\mathfrak{g})$ the center of $\mathfrak{g}$, or b) $L\supset \mathfrak{g}'\supset\bigoplus\limits_{\alpha \in \Delta}\mathfrak{g}_{\alpha}$, where $\mathfrak{g}'$ is the subalgebra generated by the Chevalley generators of $\mathfrak{g}$. Note that a) is not possible, as $Z(\mathfrak{g})\subset \mathfrak{h}$ by \cite[Proposition 1.6]{Kac}. So, b) holds, and we must have a non-zero Lie word of the form 
		\begin{equation}\label{E3.3}
		\begin{split}
		\Big[x_{\gamma_k},\big [ \cdots ,[x_{\gamma_1}, x]\cdots \big] \Big]& \in \mathfrak{g}_{\beta'} \subset L \\ \text{such that} \text{ } \gamma_j \in \Delta \sqcup \{0\},\text{ } 0\neq x_{\gamma_j}\in \mathfrak{g}_{\gamma_j}\text{ }&\forall \text{ }j\in [k]\text{, } \text{and} \text{ } \sum\limits_{j=1}^{k}\gamma_j =\beta' -\beta.
		\end{split}
		\end{equation}
		Assume without loss of generality that the above Lie word has least ``length'' $k$---length here denotes the number of elements occurring in the iterated Lie bracket---among all the Lie words satisfying all the conditions and of the form in equation \eqref{E3.3}. Now we proceed in two cases below.\newline\newline
		(1) $k=1$: Note in this case that $0\neq[x_{\gamma_1},x]\in \mathfrak{g}_{\beta'}$ and $\height_I(\beta'-\beta)>0$ together imply that $\height_{I}(\gamma_1)>0$. So, $x_{\gamma_1}\in\mathfrak{g}_{\gamma_1}$ is some linear combination of the Lie words as in the statement of Theorem \ref{T3.1} (for Kac--Moody $\mathfrak{g}$). Observe then that  in such a linear combination for $x_{\gamma_1}$, there must exist a non-zero Lie word
		\[ Y= \Big[x_{\eta_t},\big[\cdots [x_{\eta_2},x_{\eta_1}]\cdots\big]\Big]\text{ such that }[Y,x]\neq 0,\]
		\[\text{ where }\eta_j \in \Delta_{I ,1}\text{ and }0\neq x_{\eta_{j}}\in\mathfrak{g}_{\eta_{j}}\text{ for each }1\leq j\leq t= \height_{I}(\gamma_1),\text{ and }\sum\limits_{j=1}^{t}\eta_j =\gamma_1.\]
		As `$\ad\text{ }x$' acts on $Y$ by the derivation rule, we must have $[x_{\eta_p},x]\neq 0$ for some $p\in [t]$. Hence, the proof is complete in this case.\newline\newline		
		(2) $k>1$: By the derivation rule, for each $2\leq i\leq k$ we have 
		\begin{align*}\label{E3.4}
		\tag{3.4}
		&\bigg[x_{\gamma_k},\Big[\cdots ,\big[x_{\gamma_i },\cdots ,[x_{\gamma_1},x]\cdots \big] \cdots\Big] \bigg]=\\ \Big(\prod\limits_{\ell=k}^{i+1}\ad x_{\gamma_{\ell}}\Big)\Big(\prod\limits_{\ell=i-1}^{1}\ad x_{\gamma_{\ell}}\Big)&([x_{\gamma_i},x]) +\sum\limits_{j=1}^{i-1}\Big[\big(\prod\limits_{\ell =k}^{i+1}\ad x_{\gamma_{\ell}}\big) \big(\prod\limits_{\ell =i-1}^{j+1}\ad  x_{\gamma_{\ell}}\big) \ad ([x_{\gamma_i},x_{\gamma_j}])\big(\prod\limits_{\ell =j-1}^{1}\ad x_{\gamma_{\ell}}\big)\Big](x).
		\end{align*}
		(Note that the products on the right hand side of equation \eqref{E3.4} are written in such a way that they run over non-increasing indices, for convenience. Treat a product that might run over strictly increasing indices for some values of $i\text{ and }j$---for instance $j=i-1$---as the identity map on $\mathfrak{g}$.) Recall, $k$ is assumed to be the least length of all the Lie words of the form in equation \eqref{E3.4}. So, each term appearing in the summation on the right hand side of equation \eqref{E3.4} is zero. By the previous line and the assumption that the Lie word on the left hand side of equation \eqref{E3.4} is non-zero, we therefore have $[x_{\gamma_i},x]\neq 0$ $\forall$ $i\in [k]$. The previous line and the assumption that $\height_I(\beta'-\beta)>0$ together imply that $[x_{\gamma_q}, x]\neq 0$ for some $q\in [k]$ such that $\height_{I}(\gamma_q)>0$. This brings us to case (1) (with $\gamma_q+\beta$ in the place of $\beta'$ in case (1)). Hence, the proof is complete.
	\end{proof}
	We end this subsection with the following Lemma which generalizes another fundamental result on Kac--Moody algebras---see e.g. Lemma 1.5 in Kac's book \cite{Kac}. Note that this also gives an alternate proof of (a relatively stronger version of) the parabolic-PSP for Kac--Moody algebra $\mathfrak{g}$.
		\begin{lemma}\label{L3.7}
		Let $\mathfrak{g}$ be a Kac--Moody algebra and $\emptyset\neq I\subset \mathcal{I}$. Suppose $x\in\bigoplus\limits_{n\in\mathbb{Z}_{>0}} \mathfrak{g}_{I,n}$ is such that \[ [x,f_{\eta}]=0\text{ }\forall\text{ }\eta\in\Delta_{I,-1},\text{ }f_{\eta} \in \mathfrak{g}_{\eta}.\]
		Then $x=0$.
		\end{lemma}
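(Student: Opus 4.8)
The statement is a "kernel-is-trivial" assertion: an element $x$ of strictly positive $I$-height that is killed by bracketing with every root vector of $I$-height $-1$ must vanish. The natural strategy is to argue by downward induction on the $I$-height, descending the grading using the $f_\eta$'s (the generators of $\mathfrak{g}_{I,-1}$) until we reach $I$-height $1$, and then to derive a contradiction there. Concretely, write $x = \sum_{n>0} x_n$ with $x_n \in \mathfrak{g}_{I,n}$; I would first reduce to the homogeneous case $x = x_n$ for a single $n$, since the grading is preserved by $\ad f_\eta$ (each $f_\eta$ has $I$-height $-1$, so $[x,f_\eta] = \sum_n [x_n, f_\eta]$ is a sum over strictly decreasing $I$-heights, and these pieces must vanish separately).

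The main engine should be Theorem \ref{T3.1}: since $n = \height_I(x_n) > 0$, the relevant root spaces are spanned by right-normed Lie words built from root vectors of $I$-height $1$. So it suffices to show that if $x$ has $I$-height $n \geq 1$ and $[x, f_\eta] = 0$ for all $\eta \in \Delta_{I,-1}$, then $n$ cannot be $\geq 1$ — i.e. to handle the base case $n=1$ and then run the induction. For the base case $n=1$: here $x \in \mathfrak{g}_{I,1} = \bigoplus_{\beta \in \Delta_{I,1}} \mathfrak{g}_\beta$; pick a nonzero homogeneous component $0 \neq x_\beta \in \mathfrak{g}_\beta$ with $\beta \in \Delta_{I,1}$, so $\beta = \alpha_i + (\text{something in } \mathbb{Z}_{\geq 0}\Pi_{I^c})$ for some $i \in I$. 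I would express $\beta = \gamma + \alpha_i$ or peel off simple roots from $I^c$; the cleanest route is to use that $x_\beta$ lies in $[\mathfrak{g}, \mathfrak{g}_{-\alpha_i}]$-type considerations or directly that for the simple coroot direction $\eta = \alpha_i \in \Delta_{I,1}$... wait — $\alpha_i$ has $I$-height $+1$, not $-1$. The point for the base case: $\mathfrak{g}_{I,-1}$ contains $f_i$ for $i \in I$, and more generally $f_\eta$ for $\eta \in \Delta_{I,-1} = -\Delta_{I,1}$. For $x_\beta \in \mathfrak{g}_\beta$ with $\beta \in \Delta_{I,1}$, consider $\eta = \beta \in \Delta_{I,1}$, i.e. take $f_\eta \in \mathfrak{g}_{-\beta}$: then $[x_\beta, f_\eta] \in \mathfrak{g}_0 = \mathfrak{h}$, and by standard $\mathfrak{sl}_2$ / invariant-form arguments (using nondegeneracy of the Killing-type pairing between $\mathfrak{g}_\beta$ and $\mathfrak{g}_{-\beta}$ guaranteed by the realisation), one can choose $f_\eta$ with $[x_\beta, f_\eta] \neq 0$. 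That contradicts the hypothesis, finishing the base case.

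For the inductive step, suppose $n \geq 2$. Using Theorem \ref{T3.1}, write the homogeneous piece $x_n$ as a linear combination of right-normed Lie words $\big[[x_{\gamma_j}]\big]_{j=1}^n$ with $\gamma_j \in \Delta_{I,1}$. Bracketing with a suitable $f_\eta$, $\eta \in \Delta_{I,1}$, and using the derivation rule, one produces from $x_n$ a nonzero element of $I$-height $n-1$ — here I would need to choose $\eta$ so that $[x_n, f_\eta] \neq 0$; this is where the assumption bites, giving $[x_n, f_\eta] = 0$ for all such $\eta$, and I must convert "annihilated by all $f_\eta$, $\eta \in \Delta_{I,-1}$" into the conclusion that $x_n$ itself is forced to be zero. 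The mechanism: if $x_n \neq 0$, then some Lie-word component is nonzero; bracketing the outermost generator down (the word $\big[x_{\gamma_1}, [\cdots]\big]$ against $f_{\gamma_1}$, using that $[x_{\gamma_1}, f_{\gamma_1}] \in \mathfrak{h}$ acts by a nonzero scalar on the tail provided the tail has nonzero weight-pairing) peels off one $I$-height-$1$ layer and yields a nonzero element of $I$-height $n-1$ still annihilated by all $f_\eta$ — then either invoke the induction hypothesis or iterate down to $n=1$ and contradict the base case. \textbf{The main obstacle} I anticipate is the bookkeeping in this peeling step: ensuring that bracketing by $f_\eta$ does not accidentally kill $x_n$ for \emph{every} admissible $\eta$ unless $x_n = 0$. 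This requires a careful use of nondegeneracy of the bilinear pairing $\mathfrak{g}_\beta \times \mathfrak{g}_{-\beta} \to \mathbb{C}$ for $\beta$ of $I$-height $1$ (so that $\ad f_\eta$ cannot uniformly annihilate a nonzero homogeneous element of higher $I$-height), plus the standard fact that $\mathfrak{n}^+$ contains no nonzero ideal-like subspace annihilated by all of $\mathfrak{n}^-$ in the relevant grades — essentially re-deriving Kac's Lemma 1.5 in the $I$-graded setting, which the lemma is explicitly billed as generalizing.
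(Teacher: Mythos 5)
Your reduction to a homogeneous component and your instinct that the lemma is an $I$-graded analogue of Kac's Lemma 1.5 are both right, but the core of the argument is missing. The inductive step is circular: to close the induction you must produce, from a nonzero $x_n$ of $I$-height $n$, a nonzero element of smaller $I$-height that is still annihilated by every $f_\eta$, $\eta\in\Delta_{I,-1}$ --- but the only canonical candidates are the brackets $[x_n,f_\eta]$, and the hypothesis declares all of these to be zero. Showing that they cannot \emph{all} vanish is precisely the content of the lemma, so the ``peeling'' step presupposes what is to be proved. Writing $x_n$ via Theorem \ref{T3.1} does not rescue this: the right-normed Lie words spanning a root space are far from linearly independent, so one cannot isolate a ``nonzero Lie-word component,'' and $\ad f_\eta$ acts by the derivation rule on every factor of every word simultaneously, so no single outermost factor can be bracketed down in isolation. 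The base case is also shaky: a root of $I$-height $1$ can have arbitrarily large total height, so the $n=1$ stratum already carries essentially the whole difficulty, and your proposed resolution --- nondegeneracy of the pairing $\mathfrak{g}_\beta\times\mathfrak{g}_{-\beta}\to\mathfrak{h}$ coming from the invariant form --- is only available when $\mathfrak{g}$ is symmetrizable, which the lemma does not assume; in any case it is asserted rather than proved.

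The paper's proof avoids both problems by choosing a different induction variable. It fixes $0\neq x\in\mathfrak{g}_\beta$ with $\height_I(\beta)>0$ and inducts on the \emph{total} height $\height(\beta)$, proving the stronger claim that some $\gamma\in\Delta_{I,-1}$ with $\beta+\gamma\in\Delta^+\sqcup\{0\}$ satisfies $[x,f_\gamma]\neq 0$. The only external input is the standard consequence of \cite[Lemma 1.5]{Kac} (valid for all Kac--Moody algebras, no bilinear form needed) that some simple $f_\alpha$ satisfies $[x,f_\alpha]\neq 0$ with $\beta-\alpha\in\Delta^+\sqcup\{0\}$. If $\alpha\in\Pi_I$ one is done; otherwise the inductive hypothesis applies to $[x,f_\alpha]\in\mathfrak{g}_{\beta-\alpha}$, whose $I$-height is unchanged but whose total height has dropped, and a single Jacobi identity $\big[[x,f_\alpha],f_\eta\big]=\big[x,[f_\alpha,f_\eta]\big]-\big[f_\alpha,[x,f_\eta]\big]$ transfers the nonvanishing back to $x$, against either $f_\eta$ or $[f_\alpha,f_\eta]\in\mathfrak{g}_{\eta-\alpha}$, both of $I$-height $-1$. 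If you want to salvage your outline, replace the induction on $\height_I$ by induction on $\height$ and drop both Theorem \ref{T3.1} and the invariant form.
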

		\begin{proof}
		Given $\beta \in \Delta^+$ with $\height_I(\beta)>0$ and $0\neq x\in \mathfrak{g}_{\beta}$, we prove by induction on $\height(\beta)\geq 1$ that there exists a root $\gamma \in \Delta_{I,-1}$ and a root vector $f_{\gamma} \in \mathfrak{g}_{\gamma}$ such that $\beta+\gamma \in\Delta^+\sqcup \{0\}$ and $[x,f_{\gamma}]\neq 0$. Observe that this proves the lemma. Fix an $\alpha \in \Pi$ and $f_{\alpha}\in \mathfrak{g}_{-\alpha}$ such that $\beta -\alpha \in \Delta^+\sqcup \{0\}$ and $[x, f_{\alpha}]\neq 0$, which exist by \cite[Lemma 1.5]{Kac}. Observe that in the base step $\height(\beta)=1$, $\beta\in\Pi_I$, and so the lemma is trivial by the previous sentence (with $\alpha=\beta$).\\
		 Induction step: Assume that $\height(\beta)>1$. Now, if $\alpha \in \Pi_{I}$, we are done. Otherwise, the induction hypothesis applied to $\beta -\alpha$ yields $\eta \in \Delta_{I,-1}$ and $f_{\eta}\in \mathfrak{g}_{\eta}$ such that $\big[[x,f_{\alpha}],f_{\eta}\big]\neq 0$ and $\beta -\alpha +\eta\in\Delta^+\sqcup \{0\}$. 
		 Now, by the Jacobi identity we have
		 \[ 		 	\big[[x,f_{\alpha}],f_{\eta}\big]= \big[x,[f_{\alpha} ,f_{\eta}]\big]-\big[f_{\alpha},[x,f_{\eta}]\big].
		 \]	  
		 Now, the result follows as one of the two terms on the
		 right hand side of the above equation must be non-zero.
		\end{proof}
		\subsection{Parabolic-PSP in the set of short roots in finite type}
		In this subsection, we assume that $\mathfrak{g}$ is of finite type---i.e. $\mathfrak{g}$ is a finite-dimensional simple Lie algebra---with root system $\Delta$. We prove the analogous parabolic-PSP and its ``going up'' version in the set of short roots of $\Delta$, see Proposition \ref{P3.9} below. We came across these phenomena while proving the parabolic-PSP for affine Kac--Moody algebras in a case by case manner. In view of Remarks \ref{R3.10} and \ref{R3.11} below, these phenomena cannot be further extended.
		
		In this subsection, we denote (1) the usual Killing form on $\mathfrak{h}^*$ by (.,.); (2) the length of a root $\alpha$ by $(\alpha,\alpha)$; (3) the set of short, short positive, long and long positive roots in $\Delta$ by $\Delta_{s}, \Delta^{+}_{s}, \Delta_{\ell}$ and $\Delta^+_{\ell}$ respectively; (4) the highest short root (which is also dominant) by $\theta_s$. When the Dynkin diagram of $\mathfrak{g}$ is simply laced---i.e $\mathfrak{g}$ is of type $A_n,D_n$ ($n\geq 2$), $E_6,E_7,E_8$---we assume $\Delta_s=\Delta_{\ell}=\Delta$ for convenience. Note that the simplicity of $\mathfrak{g}$ implies that $\Delta$ is irreducible. In finite type, recall:
		\begin{itemize} 
		\item[a)] The Killing form is positive definite, i.e. $(x,x)>0\text{ } \forall0\neq x\in\mathfrak{h}^*$.
		\item[b)] $\langle\beta,\alpha^{\vee}\rangle=2\frac{(\beta,\alpha)}{(\alpha,\alpha)}\text{ }\forall\alpha,\beta\in\Delta$.
		\item[c)] There can be at most two lengths in $\Delta$, by \cite[\S 10.4 Lemma C]{Hump}.
		\item[d)] Short roots in $\Delta$ are the roots of shortest length. If $\beta$ is short, then $\beta\prec\theta_s$.
		\end{itemize}
		In view of the fact that there can be at most two lengths in $\Delta$ and the results on the subroot system generated by any two roots in Subsection 9.4 of \cite{Hump}, one easily verifies the following observation.		\begin{observation}\label{O3.8}
		Let $\Delta$ be of finite type, and $\gamma\in\Delta$, $\alpha\in\Delta_s$ and $\beta\in\Delta_{\ell}$. Then both $\langle\alpha,\gamma^{\vee}\rangle$ and $\langle\gamma,\beta^{\vee}\rangle$ belong to $\{-1,0,1\}$. (As otherwise there would be more than two lengths in $\Delta$.)
		\end{observation}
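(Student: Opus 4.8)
The plan is to reduce the claim to the standard ``finiteness'' estimate on pairs of Cartan integers, combined with the fact, recalled in d) above, that a short root has the strictly smallest length among all roots. Write $(\cdot,\cdot)$ for the Killing form on $\mathfrak{h}^*$; by a) it is positive definite, so $(\sigma,\sigma)>0$ for every root $\sigma$, and by b) we have $\langle\rho,\sigma^\vee\rangle=2(\rho,\sigma)/(\sigma,\sigma)$ for all roots $\rho,\sigma$. First I would dispose of the proportional case: if $\gamma=\pm\alpha$ then $\langle\alpha,\gamma^\vee\rangle=\pm 2$, and similarly $\langle\gamma,\beta^\vee\rangle=\pm 2$ if $\gamma=\pm\beta$; the Observation is to be read with $\gamma$ not proportional to the relevant short, resp. long, root, and only this non-proportional case is used in the proof of Proposition \ref{P3.9}. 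So assume henceforth that $\gamma$ is not proportional to $\alpha$, resp. not to $\beta$.

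For such non-proportional pairs, the usual rank-two analysis of the subsystem they generate (see \cite[\S9.4]{Hump}) gives that $\langle\alpha,\gamma^\vee\rangle$ and $\langle\gamma,\alpha^\vee\rangle$ are integers of the same sign with $\langle\alpha,\gamma^\vee\rangle\,\langle\gamma,\alpha^\vee\rangle\in\{0,1,2,3\}$. If $(\alpha,\gamma)=0$ both vanish and we are done, so assume $(\alpha,\gamma)\neq 0$; then dividing the two formulas for the Cartan integers yields
\[
\langle\alpha,\gamma^\vee\rangle=\frac{(\alpha,\alpha)}{(\gamma,\gamma)}\,\langle\gamma,\alpha^\vee\rangle ,
\]
and since $\alpha\in\Delta_s$ has the smallest root length, the scalar $(\alpha,\alpha)/(\gamma,\gamma)$ lies in $(0,1]$. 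Hence $|\langle\alpha,\gamma^\vee\rangle|\le|\langle\gamma,\alpha^\vee\rangle|$, so $\langle\alpha,\gamma^\vee\rangle^2\le\langle\alpha,\gamma^\vee\rangle\,\langle\gamma,\alpha^\vee\rangle\le 3$, and being an integer $\langle\alpha,\gamma^\vee\rangle$ must lie in $\{-1,0,1\}$ — this is exactly the parenthetical remark, since $|\langle\alpha,\gamma^\vee\rangle|\ge 2$ would force $(\gamma,\gamma)<(\alpha,\alpha)$, producing a root strictly shorter than the short root $\alpha$. The claim for $\langle\gamma,\beta^\vee\rangle$ is the mirror image: using $\langle\gamma,\beta^\vee\rangle=\frac{(\gamma,\gamma)}{(\beta,\beta)}\langle\beta,\gamma^\vee\rangle$ and the fact that $\beta\in\Delta_\ell$ has the largest root length, the scalar is again in $(0,1]$, whence $\langle\gamma,\beta^\vee\rangle^2\le 3$ and $\langle\gamma,\beta^\vee\rangle\in\{-1,0,1\}$. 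In the simply-laced case the convention $\Delta_s=\Delta_\ell=\Delta$ makes all root lengths equal, and the same chain of inequalities goes through with the scalar equal to $1$.

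There is essentially no serious obstacle here — the argument is a few lines. The only points that need a moment's care are (i) the exclusion of the proportional case $\gamma=\pm\alpha$, resp. $\gamma=\pm\beta$, where the Cartan integer is $\pm 2$; and (ii) invoking the bound ``the product of the two Cartan integers is at most $3$'' in its correct form, namely for non-proportional roots (for proportional roots the product is $4$), which is precisely what makes the non-proportionality hypothesis necessary. Everything else follows directly from a)--d) recorded just before the Observation.
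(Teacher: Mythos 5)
Your proof is correct and is essentially the argument the paper intends: the paper itself only cites the rank-two analysis of \cite[\S 9.4]{Hump} together with the two-length bound, and you have simply written out that standard computation (dividing the two Cartan integers by the length ratio and using that their product is at most $3$ for non-proportional roots). Your explicit exclusion of the proportional case $\gamma=\pm\alpha$, resp.\ $\gamma=\pm\beta$, is a worthwhile precision that the statement of the Observation leaves implicit, and it is consistent with how the Observation is actually used in the proof of Proposition \ref{P3.9}.
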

		We now proceed to prove the main result of this subsection.
		\begin{prop}\label{P3.9}
			Let $\Delta$ be a finite type root system, $\emptyset \neq I\subset \mathcal{I}$, and fix $\beta \in \Delta^{+}_{s}$.
			\begin{itemize}
			\item[(a)] Suppose $\height_{I}(\beta)>1$. Then there exists a sequence of roots $\gamma_i \in \Delta_{I ,1}$, $1\leq i \leq n= \height_{I}(\beta)$, such that\[
			\gamma_1 \prec \cdots \prec \sum\limits_{j=1}^{i}\gamma_j \prec \cdots \prec \sum\limits_{j=1}^{n}\gamma_j =\beta \in \Delta_{s}\text{ }\forall i.\quad \text{(See Note \ref{N1} for the notation.)}
			\]
			\item[(b)] Suppose $\height_{I}(\beta) < \height_{I}(\theta_s)$. Then there exists a sequence of roots $\gamma'_i \in \Delta_{I ,1}$, $1\leq i \leq m=\height_{I}(\theta_s - \beta)$, such that	\[
			\beta \prec \cdots \prec \beta+ \sum\limits_{j=1}^{i}\gamma'_j \prec \cdots \prec \beta +\sum\limits_{j=1}^{m}\gamma'_j \in \Delta_s\text{ }\forall i.
			\]
			\end{itemize}
			\end{prop}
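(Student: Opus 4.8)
The plan is to prove both parts by induction on $\height_I(\theta_s-\beta)$ and $\height_I(\beta)$ respectively, reducing at each step to a statement about a single short root and using the finite-type dichotomy (Observation~\ref{O3.8}) that $\langle\alpha,\gamma^\vee\rangle,\langle\gamma,\beta^\vee\rangle\in\{-1,0,1\}$ whenever $\alpha$ is short and $\beta$ is long. For part (b), fix a short $\beta$ with $\height_I(\beta)<\height_I(\theta_s)$; since $\theta_s-\beta\in\mathbb{Z}_{\geq0}\Pi$ and $\height_I(\theta_s-\beta)>0$, some $\alpha_i\in\Pi$ with $\height_{\{i\}}$-contribution to $\theta_s-\beta$ positive appears in $\theta_s-\beta$, and I want to show one can choose such an $\alpha_i$ (possibly after moving within the $W_I$- or $\mathfrak{sl}_2$-orbit) so that $\beta+\alpha_i$ is again a root, necessarily short. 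The usual argument (as in \cite[Prop.~4.9]{Kac} or Theorem~\ref{T3.6}) produces \emph{some} simple root $\alpha$ with $\beta+\alpha\in\Delta$; the refinement needed here is twofold: that $\alpha$ can be taken to lie in $\Pi$ but with the resulting root still short, and — the crux — that the ``$I$-height'' bookkeeping can be made to increase by exactly $1$ at the $I$-relevant steps so that the final chain lands on $\theta_s$ with all intermediate terms short. I would handle ``short'' by noting that if $\beta$ is short and $\beta+\alpha\in\Delta$ then $\beta+\alpha$ is short unless $\langle\beta,\alpha^\vee\rangle=-1$ with $\alpha$ long and $\alpha$ itself short-or-long in a way forced by Observation~\ref{O3.8}; a short case-check (at most two root lengths) shows $\beta+\alpha$ is short whenever $\alpha$ is short, and when $\alpha$ is long one can instead route through $s_\alpha$ or pick a different simple root of support still inside $\supp(\theta_s-\beta)$.

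The key step is a ``local move'' lemma: if $\beta\in\Delta_s^+$ and $\eta\in\mathbb{Z}_{\geq0}\Pi$ is nonzero with $\beta+\eta\in\Delta_s^+$ (take $\eta$ of the form $\theta_s-\beta$, or a subsum thereof), then there is $\alpha_i\in\supp(\eta)$ with $\beta+\alpha_i\in\Delta_s^+$. To prove this I would use positive definiteness of the form: $(\beta+\eta,\beta+\eta)=(\beta,\beta)$ forces $2(\beta,\eta)+(\eta,\eta)=0$, so $(\beta,\eta)<0$, hence $(\beta,\alpha_i)<0$ for some $\alpha_i\in\supp(\eta)$, giving $\langle\beta,\alpha_i^\vee\rangle<0$ and therefore $\beta+\alpha_i\in\Delta$ by $\mathfrak{sl}_2$-theory; shortness of $\beta+\alpha_i$ then follows from Observation~\ref{O3.8} together with $(\beta+\alpha_i,\beta+\alpha_i)=(\beta,\beta)+\langle\beta,\alpha_i^\vee\rangle(\alpha_i,\alpha_i)+\dots$ and a two-lengths check. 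Iterating this and peeling off one simple root at a time gives a chain $\beta\prec\beta+\alpha_{i_1}\prec\cdots\prec\theta_s$ through short roots; reordering the increments so that the $I$-height-$1$ contributions $\gamma_j':=$ (accumulated block between consecutive $I$-height jumps) are grouped yields the $\gamma_j'\in\Delta_{I,1}$ of the statement, using Corollary~\ref{C3.5}(1) applied to the relevant rank-reduction to keep partial sums roots. For part (a): given short $\beta$ with $\height_I(\beta)>1$, apply part (b)'s local-move lemma in reverse — since $\theta_s-\beta\in\mathbb{Z}_{\geq0}\Pi$ and $\beta\prec\theta_s$, and since $\height_I(\beta)>1$ forces $(\beta,\cdot)$ to be somewhere negative on $\Pi_I$ after the $W_I$-action is used — to subtract a simple root $\alpha_i\in\Pi_I$ with $\beta-\alpha_i\in\Delta_s$, or when that fails, use the $W_I$-invariance of $\Delta_s$ to replace $\beta$ by a $W_I$-conjugate of strictly smaller $I$-height before subtracting; then recurse and regroup into $\gamma_j\in\Delta_{I,1}$.

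I expect the main obstacle to be exactly the regrouping into $I$-height-$1$ roots while staying inside $\Delta_s$: the naive chain produced by the local-move lemma adds one \emph{simple} root at a time, and the simple roots outside $I$ contribute $0$ to $I$-height, so consecutive blocks between $I$-height jumps must be shown to be single roots in $\Delta_{I,1}$ rather than mere sums. This is where I would lean on Corollary~\ref{C3.5}(1) (which gives, within a parabolic $\mathfrak{g}_{\supp(\cdot)}$, a chain of roots with simple-root increments whose $\{i\}$-heights are controlled) and on the observation that a short root of $I$-height $1$ whose non-$I$ support is connected to its unique $I$-node is automatically a root — again a finite-type, two-lengths check. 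A secondary subtlety is the case $\beta$ long-but-we-only-claim-short, i.e.\ ensuring the hypothesis ``$\beta\in\Delta_s^+$'' is genuinely used and the statement is sharp; Remarks~\ref{R3.10} and~\ref{R3.11} (cited as showing non-extendability) suggest the long-root analogue fails, so the proof must at some point invoke $(\beta,\beta)$ being \emph{minimal} (not just one of two values), which is precisely what makes ``$\beta+\alpha$ is short'' automatic and what the whole argument hinges on.
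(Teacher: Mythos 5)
Your ``local move'' lemma (from $\beta,\beta+\eta\in\Delta_s^+$ extract $\alpha_i\in\supp(\eta)$ with $\beta+\alpha_i\in\Delta_s$) is correct, and it does produce a chain of \emph{short} roots with \emph{simple}-root increments. But the proposition demands increments lying in $\Delta_{I,1}$, which are in general non-simple roots, and this is exactly where your argument has a genuine gap --- one you flag yourself but do not close. The block of simple roots accumulated between two consecutive $I$-height jumps is merely an element of $\mathbb{Z}_{\geq0}\Pi$ of $I$-height $1$; nothing in your argument shows it is a \emph{root}. Your two proposed fixes do not supply this: Corollary \ref{C3.5}(1) again only yields chains with simple-root steps (it says nothing about a prescribed sum of simple roots being a root), and the claim that ``a short element of $I$-height $1$ with connected support is automatically a root'' is false (already $2\alpha_1+\alpha_2$ in type $A_2$ with $I=\{2\}$ has connected support and $I$-height $1$ but is not a root). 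The sketch for part (a) is weaker still: replacing $\beta$ by a $W_I$-conjugate changes the endpoint of the required chain, so it cannot be used to ``recurse and regroup.''

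The paper avoids the regrouping problem entirely by moving in steps of whole roots $\gamma\in\Delta_{I,1}$ from the start, via reflections in these (generally non-simple) roots. For (a): write $\beta=\sum_{t=1}^m\gamma_t$ with $\gamma_t\in\Delta_{I,1}$ by the parabolic-PSP; since $\langle\beta,\beta^\vee\rangle=2$, some $\gamma_i$ has $\langle\beta,\gamma_i^\vee\rangle>0$, and Observation \ref{O3.8} (applied to the short root $\beta$) forces $\langle\beta,\gamma_i^\vee\rangle=1$, whence $s_{\gamma_i}\beta=\beta-\gamma_i\in\Delta_s$; iterate. For (b): an induction on $\height(\theta_s-\beta)$ first produces $\alpha\in\Pi$ with $\langle\beta,\alpha^\vee\rangle=-1$ (using that $\theta_s$ is the unique dominant short root), and if $\alpha\notin\Pi_I$ the inductive hypothesis gives $\eta\in\Delta_{I,1}$ with $\beta+\alpha+\eta\in\Delta_s$; the two-length constraint then forces $\langle\beta+\alpha,\eta^\vee\rangle=-1$, and either $\langle\beta,\eta^\vee\rangle=-1$ (so $s_\eta\beta=\beta+\eta\in\Delta_s$) or $\langle\alpha,\eta^\vee\rangle<0$ (so $\eta+\alpha$ is itself a root in $\Delta_{I,1}$ and $\beta+(\eta+\alpha)\in\Delta_s$). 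The essential device you are missing is precisely this use of Observation \ref{O3.8} to pin the pairing of a short root against any $\gamma\in\Delta_{I,1}$ to $\pm1$, so that the reflection $s_\gamma$ shifts by exactly one element of $\Delta_{I,1}$ while preserving shortness.
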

		\begin{proof}
		 We assume throughout the proof that the Dynkin diagram of $\mathfrak{g}$ is not simply laced, as otherwise the proposition obviously holds true by Theorems \ref{thmA} and \ref{T3.6}. We repeatedly use Observation \ref{O3.8} in the proof without mention.\\ We first prove (a) by showing that $\beta'\in\Delta_s$ with $\height_I(\beta')>1$ implies that $\beta'-\gamma'\in\Delta_s$ for some $\gamma'\in\Delta_{I,1}$. Assume $\beta'\in\Delta_s$ and $m:=ht_I(\beta')>1$. Then by the parabolic-PSP we can write $\beta'=\sum_{t=1}^{m}\gamma_t$ for some $\gamma_t\in\Delta_{I,1}$, $\forall$ $t\in[m]$. Now, observe the following implications. \[\langle\beta',\beta'^{\vee}\rangle=2\implies\langle\gamma_i,\beta'^{\vee}\rangle>0\text{ for some }i\in [m]\implies\langle\beta',\gamma_i^{\vee}\rangle>0\implies \langle\beta',\gamma_i^{\vee}\rangle=1.\]
		 The last implication in the above equation follows by Observation \ref{O3.8} as $\beta'$ is short.
		 The above equation yields $s_{\gamma_i}\beta'=\beta'-\gamma_i$. So, $\beta'-\gamma_i\in\Delta_s$, and therefore we are done.\\
		 We now prove (b) via showing by induction on $\height(\theta_s-\beta)\geq 1$ that $\beta\in\Delta_s$ with $ht_I(\theta_s-\beta)>0$ implies that $\beta+\gamma\in\Delta_s$ for some $\gamma\in\Delta_{I,1}$. In the base step $\height(\theta_s-\beta)=1$, the assumption $\height_I(\theta_s-\beta)>0$ forces $\theta_s-\beta\in\Pi_I$, and therefore we are done.\\
		 Induction step: Assume $\height(\theta_s-\beta)>1$. Recall that $\supp(\theta_s)=\mathcal{I}$. As $\beta\precneqq\theta_s$ and $\theta_s$ is the only dominant short root in $\Delta$, there must exist $\alpha\in\Pi$ such that $\langle\beta,\alpha^{\vee}\rangle<0$. As $\beta$ is short, we must have $\langle\beta,\alpha^{\vee}\rangle=-1$. So, $\beta+\alpha=s_{\alpha}\beta\in\Delta_s$, and also $\beta+\alpha\precneqq\theta_s$ as $\height(\theta_s-\beta)>1$. Now, if $\alpha\in\Pi_I$ we are done. Otherwise, by the induction hypothesis there exists $\eta\in\Delta_{I,1}$ such that $\beta+\alpha+\eta\in\Delta_s$. Now, observe that we must have $\langle\beta+\alpha+\eta,\eta^{\vee}\rangle>0$, as $\langle\beta+\alpha+\eta,\eta^{\vee}\rangle\leq0$ implies that $\langle\beta+\alpha,\eta^{\vee}\rangle\leq -2$ which cannot happen as $\beta+\alpha$ is short. Also, as $\beta+\alpha+\eta$ is short, the previous line implies $\langle\beta+\alpha+\eta,\eta^{\vee}\rangle$ $=1$, which further implies $\langle\beta+\alpha,\eta^{\vee}\rangle=-1$. This leads to two cases:  (i) $\langle\beta,\eta^{\vee}\rangle<0$ which implies $\langle\beta,\eta^{\vee}\rangle=-1$ (as $\beta$ is short), or (ii) $\langle\alpha,\eta^{\vee}\rangle<0$. If $\langle\beta,\eta^{\vee}\rangle=-1$, then $\beta+\eta=s_{\eta}\beta\in\Delta_s$, and therefore we are done. Else if $\langle\alpha,\eta^{\vee}\rangle<0$, then $\eta+\alpha\in\Delta_{I,1}$, and we are once again done as $\beta+(\eta+\alpha)\in\Delta_s$. Hence the proof of the proposition is complete. 
		 \end{proof}
		The following remarks discuss the limitations to further extend the above proposition.
		\begin{remark}\label{R3.10}
			\begin{itemize}
			\item[(1)] The analogous statements to those in Proposition \ref{P3.9} with $\beta \in \Delta^{+}_{\ell}$ are not true, as the following example shows. Let $\Delta$ be the root system of type $B_2$, $\mathcal{I}=\{1,2\}$ where the node 2 corresponds to the long simple root. Let $\beta' = \alpha_2 + 2\alpha_1$, $\beta = \alpha_2$ and $I = \mathcal{I}$. Then, we can neither come down from $\beta'$, nor go up from $\beta$, to a root in $\Delta_{\ell}^+$.
			\item[(2)] Note that when $\mathfrak{g}$ is semisimple, the above proposition works for each indecomposable component of $\Delta$.
		\end{itemize} 
		\end{remark}
		\begin{remark}\label{R3.11}
			Let $\Delta$ be an affine root system of type $C_{2}^{(1)}$, $\mathcal{I}=\{0,1,2\}$. Assume that the node $1$ is not a leaf in the Dynkin diagram and it corresponds to a short simple root (see \cite[Table Aff 1]{Kac}). Let $\beta=\alpha_0 +3\alpha_1 +\alpha_2 \in\Delta_{s}$, then observe that the only simple root that can be subtracted from $\beta$ to get again a root is $\alpha_1$. But then $\beta -\alpha_1$ is imaginary. In view of this, Proposition \ref{P3.9} cannot be extended beyond finite type. 
		\end{remark}		\section{Proofs of Theorems \ref{thmB} and \ref{thmC}}\label{S4}
	In this section, we prove Theorems \ref{thmB} and \ref{thmC}. We first recall the relevant notation. Throughout this section, $\mathfrak{g}$ stands for a general Kac--Moody algebra. For $\lambda\in\mathfrak{h}^*$, we denote by $m_{\lambda}$ a non-zero highest weight vector of $M(\lambda)_{\lambda}$, and $J_{\lambda}:=\{j\in \mathcal{I}$ $|$ $\langle\lambda,\alpha_j^{\vee}\rangle\in\mathbb{Z}_{\geq0}\}$. For $j\in J_{\lambda}$ we define $m_j:=\langle\lambda,\alpha_j^{\vee}\rangle+1\in\mathbb{Z}_{>0}$. Recall from equation \eqref{E2.2} that when $I=\emptyset$, we define $\Delta_{I,1}:=\emptyset$, and further we define both $\mathbb{Z}_{\geq 0}\Delta_{I,1}$ and $\mathbb{Z}_{\geq 0}\Pi_I$ in this case to be $\{0\}$ for convenience. For $\mu\in\mathfrak{h}^*$, $k\in\mathbb{Z}_{\geq 0}$ and $\alpha\in\Pi$, recall from equation \eqref{E2.5} that $\big[\mu-k\alpha,\text{ }\mu\big]:=\{\mu-j\alpha\text{ }\big|\text{ }j\in\mathbb{Z}\text{ and }0\leq j\leq k\}$. (This notation should not be confused with the Lie bracket.) We now begin by recalling the following fact about weight modules, and the definitions of $N(\lambda,J)$ and $N(\lambda)$. For $\lambda\in\mathfrak{h}^*$, until the end of subsection \ref{S4.1}, unless otherwise stated we will assume that $\emptyset\neq J\subset J_{\lambda}$. 
\begin{fact} 
If $S,T$ are two weight modules over $\mathfrak{g}$, then $\wt (S+T)=\wt S\cup \wt T$.
\end{fact}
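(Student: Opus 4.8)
The plan is to establish the two inclusions separately. The inclusion $\wt S \cup \wt T \subseteq \wt(S+T)$ is immediate: both $S$ and $T$ are $\mathfrak{g}$-submodules of $S+T$, so a nonzero weight vector of weight $\mu$ lying in $S$ or in $T$ is also a nonzero weight vector of weight $\mu$ in $S+T$, whence $\mu \in \wt(S+T)$.

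For the reverse inclusion, I would first note that $S+T$ is itself a weight module: writing $S=\bigoplus_{\nu}S_{\nu}$ and $T=\bigoplus_{\nu}T_{\nu}$, every element of $S+T$ is a (finite) sum of weight vectors, so $S+T$ is spanned by weight vectors and therefore decomposes as $\bigoplus_{\mu}(S+T)_{\mu}$. The key step is to identify $(S+T)_{\mu}$ with $S_{\mu}+T_{\mu}$. Given $x\in(S+T)_{\mu}$, write $x=s+t$ with $s\in S$, $t\in T$, and decompose $s=\sum_{\nu}s_{\nu}$ and $t=\sum_{\nu}t_{\nu}$ into their weight components. Then $x=\sum_{\nu}(s_{\nu}+t_{\nu})$, and comparing this with the weight-space decomposition of $x$ (which has only the single component $x$ in weight $\mu$) and invoking the uniqueness of that decomposition in the ambient weight module forces $s_{\nu}+t_{\nu}=0$ for $\nu\neq\mu$ and $s_{\mu}+t_{\mu}=x$. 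Hence $x=s_{\mu}+t_{\mu}\in S_{\mu}+T_{\mu}$, giving $(S+T)_{\mu}\subseteq S_{\mu}+T_{\mu}$; the reverse containment $S_{\mu}+T_{\mu}\subseteq(S+T)_{\mu}$ is clear.

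Finally, if $\mu\in\wt(S+T)$ then $(S+T)_{\mu}\neq\{0\}$, so by the previous step $S_{\mu}+T_{\mu}\neq\{0\}$, and therefore $S_{\mu}\neq\{0\}$ or $T_{\mu}\neq\{0\}$, i.e.\ $\mu\in\wt S\cup\wt T$. This proves $\wt(S+T)\subseteq\wt S\cup\wt T$ and completes the argument. There is no genuine obstacle here; the only point requiring a moment's care is that one must compare the weight components of $s$ and $t$ termwise, which is legitimate precisely because of the uniqueness of the weight-space decomposition in the common ambient weight module containing $S$ and $T$.
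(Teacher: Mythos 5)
Your proof is correct. The paper states this as a \textbf{Fact} without proof, and your argument --- the easy inclusion from $S,T\subseteq S+T$, plus the identification $(S+T)_{\mu}=S_{\mu}+T_{\mu}$ via linear independence of weight vectors of distinct weights in the common ambient module --- is exactly the standard verification the author is taking for granted.
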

For $\lambda\in\mathfrak{h}^*$ and $\emptyset\neq J\subset J_{\lambda}$, we define $N(\lambda,J)$ to be the largest proper submodule of $M(\lambda)$ with respect to the property: 
\begin{align*} 
&\text{ If }\mu\in\wt N(\lambda,J) \text{ is such that the Dynkin subdiagram on }\supp(\lambda-\mu) \text{ has no edge,}\tag{\textbf{P1}}\\&\text{ then }\supp(\lambda-\mu)\cap J\neq \emptyset.
\end{align*}
 In view of the above fact, observe that $N(\lambda,J)$ exists and is equal to the sum of all the submodules of $M(\lambda)$ each of whose set of weights (in the place of $\wt N(\lambda,J)$ in (P1)) have the property (P1). Also, check that the submodule $\sum\limits_{j\in J}U(\mathfrak{g})f_j^{\langle\lambda,\alpha_j^{\vee}\rangle+1}m_{\lambda}\subset M(\lambda)$ satisfies the property (P1). So, $\sum\limits_{j\in J}U(\mathfrak{g})f_j^{\langle\lambda,\alpha_j^{\vee}\rangle+1}m_{\lambda}\subset N(\lambda,J)$, and therefore $N(\lambda,J)$ is non-trivial.

Similarly, define $N(\lambda)$ to be the largest proper submodule of $M(\lambda)$ with respect to the property:
\begin{align*}\tag{\textbf{P2}} \text{If }\mu\in\wt N(\lambda),\text{ then the Dynkin subdiagram on }\supp(\lambda-\mu)\text{ has at least one edge.}\end{align*}
Note that $N(\lambda)$ also exists by the above fact (similar to $N(\lambda,J)$). Now, observe the following points about the submodules $N(\lambda)$ and $N(\lambda,J)$ of $M(\lambda)$. 
\begin{observation}\label{O4.1}
(1) If the Dynkin diagram of $\mathfrak{g}$ has no edges (isolated graph), then $N(\lambda)=\{0\}$.\newline
(2) If two nodes $i,j\in J_{\lambda}$ are adjacent in the Dynkin diagram, then $f_i^{\langle s_j\bullet\lambda,\alpha_i^{\vee}\rangle+1}f_j^{\langle\lambda,\alpha_j^{\vee}\rangle+1}m_{\lambda}\in M(\lambda)$ is a maximal vector, where $\bullet$ denotes the dot action of $W$ on $\mathfrak{h}^*$. Observe that $\{i,j\}\subset\supp(\lambda-\mu)$ $\forall$ $\mu\in\wt M((s_is_j)\bullet\lambda)$. So, for any $\mu\in\wt M((s_is_j)\bullet\lambda)$ the Dynkin subdiagram on $\supp(\lambda-\mu)$ is a non-isolated graph, as is the Dynkin subdiagram on $\{i,j\}$. Thus, $M((s_is_j)\bullet\lambda)\xhookrightarrow{ } N(\lambda)$, and hence $N(\lambda)$ is non-trivial in this case.\newline
(3) If $K$ is any submodule of $M(\lambda)$ such that $N(\lambda)\subsetneq K$, then observe by the definition of $N(\lambda)$ that there must exist a weight $\mu\in\wt K$ such that the Dynkin subdiagram on $\supp(\lambda-\mu)$ contains no edges. Note that $M(\lambda)_{\mu}$ is one-dimensional, so $K_{\mu}=M(\lambda)_{\mu}$ and $\mu\notin\wt\frac{M(\lambda)}{K}$. More generally, one can easily check by $\mathfrak{sl}_2$-theory that 
\[ \Big(\lambda-\sum_{i\in\supp(\lambda-\mu)}\big(\langle\lambda,\alpha_i^{\vee}\rangle+c_i\big)\alpha_i\Big)\notin \wt \frac{M(\lambda)}{K}\text{ }\forall \text{ }c_i\in\mathbb{Z}_{>0},\text{ }i\in \supp(\lambda-\mu),\]
as these weights belong to the $\mathfrak{g}_{\supp(\lambda-\mu)}$-module $U(\mathfrak{g}_{\supp(\lambda-\mu)})M(\lambda)_{\mu}\subset K$, and also their corresponding weight spaces in $M(\lambda)$ are all one-dimensional.\newline
(4) The integrability of $\frac{M(\lambda)}{N(\lambda,J)}$ is $J$, as for $j\in \mathcal{I}$, $f_j^{\langle\lambda,\alpha_j^{\vee}\rangle+1}m_{\lambda}\in N(\lambda,J)$ if and only if $j\in J$.\\
(5) If $\mu\in \wt N(\lambda,J)$ is such that the Dynkin subdiagram on $\supp(\lambda-\mu)$ has no edge, then, necessarily, there must exist $j\in J$ such that $\height_{\{j\}}(\lambda-\mu)\geq m_j$. This can be verified by checking via $\mathfrak{sl}_2$-theory that for $j'\in J_{\lambda}$ and $\mu$ as in the previous line, if $\height_{\{j'\}}(\lambda-\mu)<m_{j'}$, then $\mu+\big(\height_{\{j'\}}(\lambda-\mu)\big)\alpha_{j'}\in\wt N(\lambda,J)$.\\
(6) Suppose $x\in M(\lambda)_{\mu}$ is a maximal vector for some weight $\mu$ such that the Dynkin subdiagram on $\supp(\lambda-\mu)$ has an edge. Then $x\in N(\lambda)$. Similarly, for $J\subset J_{\lambda}$, suppose $y\in M(\lambda)_{\mu'}$ is a maximal vector for some weight $\mu'$ such that $\supp(\lambda-\mu')\subset J^c$ and the Dynkin subdiagram on $\supp(\lambda-\mu')$ has an edge. Then $y\in N(\lambda,J)$.   
\end{observation}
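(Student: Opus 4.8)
The plan is to handle all six parts through two recurring principles. The first is the \emph{edgeless-support principle}: if $\mu\in\wt M(\lambda)$ and the Dynkin subdiagram on $S:=\supp(\lambda-\mu)$ has no edge, then $\langle\alpha_i,\alpha_j^{\vee}\rangle=0$ for all $i,j\in S$, so the Serre relation gives $[f_i,f_j]=0$, whence $\mathfrak{g}_S=\bigoplus_{i\in S}\mathfrak{sl}_{\alpha_i}$ and $M(\lambda)_{\mu}$ is one-dimensional, spanned by the single monomial $\prod_{i\in S}f_i^{c_i}m_{\lambda}$ with $c_i$ the coefficient of $\alpha_i$ in $\lambda-\mu$. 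The second is the \emph{maximal-vector principle}: a maximal vector of weight $\nu$ generates a highest weight submodule (a quotient of $M(\nu)$), and every weight $\nu'$ of it satisfies $\supp(\lambda-\nu')\supseteq\supp(\lambda-\nu)$; moreover $M(\lambda)$ is torsion-free over the domain $U(\mathfrak{n}^-)$, so any nonzero map $M(\nu)\to M(\lambda)$ is injective (no symmetrizability needed), and any proper submodule of $M(\lambda)$ misses the weight $\lambda$, hence all its weights $\mu$ have $\supp(\lambda-\mu)\neq\emptyset$.

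Parts (1), (2), (6) are then membership/non-triviality statements. For (1): if the whole diagram has no edges, then for any nonzero proper $K\subseteq M(\lambda)$ every $\mu\in\wt K$ has $\supp(\lambda-\mu)\neq\emptyset$ and edgeless, so $K$ fails (P2); as the zero module satisfies (P2) vacuously, $N(\lambda)=\{0\}$. For (2): iterate the BGG construction. The vector $f_j^{m_j}m_{\lambda}$ is maximal of weight $s_j\bullet\lambda=\lambda-m_j\alpha_j$ by $\mathfrak{sl}_{\alpha_j}$-theory and $[e_i,f_j]=0$; the $\rho$-shift gives $\langle s_j\bullet\lambda,\alpha_i^{\vee}\rangle=\langle\lambda,\alpha_i^{\vee}\rangle-m_j\langle\alpha_j,\alpha_i^{\vee}\rangle$, a positive integer because $i\in J_{\lambda}$ and $i,j$ are adjacent (so $\langle\alpha_j,\alpha_i^{\vee}\rangle\leq-1$); hence $f_i^{\langle s_j\bullet\lambda,\alpha_i^{\vee}\rangle+1}f_j^{m_j}m_{\lambda}$ is a nonzero maximal vector of weight $(s_is_j)\bullet\lambda$, generating a copy of $M((s_is_j)\bullet\lambda)$ in $M(\lambda)$. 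Since $(s_is_j)\bullet\lambda=\lambda-(\text{positive combination of }\alpha_i,\alpha_j)$, every weight $\mu$ of this submodule has $\{i,j\}\subseteq\supp(\lambda-\mu)$, whose Dynkin subdiagram contains the $i$--$j$ edge, so (P2) holds and $M((s_is_j)\bullet\lambda)\subseteq N(\lambda)$. For (6): if $x\in M(\lambda)_{\mu}$ is maximal and $\supp(\lambda-\mu)$ has an edge, then by the maximal-vector principle every weight $\nu$ of $U(\mathfrak{g})x$ has $\supp(\lambda-\nu)\supseteq\supp(\lambda-\mu)$, hence an edge, so $U(\mathfrak{g})x$ satisfies (P2), and in the $N(\lambda,J)$-case satisfies (P1) vacuously (no weight has edgeless $\lambda$-difference support); therefore $x\in N(\lambda)$, resp.\ $y\in N(\lambda,J)$. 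The hypothesis $\supp(\lambda-\mu')\subseteq J^c$ merely isolates the class of maximal vectors used later and is not needed for the conclusion.

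Parts (3), (4), (5) exploit the \emph{maximality} of $N(\lambda)$, $N(\lambda,J)$ together with the edgeless-support principle. For (3): if $N(\lambda)\subsetneq K$, then $K$ cannot satisfy (P2), so some $\mu\in\wt K$ has edgeless $\supp(\lambda-\mu)$; then $M(\lambda)_{\mu}$ is one-dimensional, so $K_{\mu}=M(\lambda)_{\mu}$ and $\mu\notin\wt\tfrac{M(\lambda)}{K}$. For the stronger assertion, act on the line $M(\lambda)_{\mu}\subseteq K$ by $U(\mathfrak{g}_{\supp(\lambda-\mu)})=U(\bigoplus\mathfrak{sl}_{\alpha_i})$: an $\mathfrak{sl}_2$-string computation (the raising/lowering products through $m_{\lambda}$ vanish precisely at the threshold $\langle\lambda,\alpha_i^{\vee}\rangle+1$) shows that each weight $\lambda-\sum_{i\in\supp(\lambda-\mu)}(\langle\lambda,\alpha_i^{\vee}\rangle+c_i)\alpha_i$, $c_i\in\mathbb{Z}_{>0}$, lies in $U(\mathfrak{g}_{\supp(\lambda-\mu)})M(\lambda)_{\mu}\subseteq K$ and has one-dimensional weight space in $M(\lambda)$, so falls out of the quotient. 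For (4): if $j\in J$, then $U(\mathfrak{g})f_j^{m_j}m_{\lambda}$ satisfies (P1) (every $\mu$ in it has $j\in\supp(\lambda-\mu)\cap J$), so $f_j^{m_j}m_{\lambda}\in N(\lambda,J)$ and $j$ lies in the integrability of $\tfrac{M(\lambda)}{N(\lambda,J)}$; if $j\in J_{\lambda}\setminus J$, then $f_j^{m_j}m_{\lambda}\in N(\lambda,J)$ would put $s_j\bullet\lambda\in\wt N(\lambda,J)$ with $\supp(\lambda-s_j\bullet\lambda)=\{j\}$ edgeless and disjoint from $J$, contradicting (P1); combining with the trivial case $j\notin J_{\lambda}$, the integrability is exactly $J$. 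For (5): given edgeless $\mu\in\wt N(\lambda,J)$, (P1) gives some $j_0\in\supp(\lambda-\mu)\cap J$; if instead every $j'\in\supp(\lambda-\mu)\cap J$ had $\height_{\{j'\}}(\lambda-\mu)<m_{j'}$, then applying $\prod_{j'}e_{j'}^{\,\height_{\{j'\}}(\lambda-\mu)}$ to the spanning vector of $M(\lambda)_{\mu}=N(\lambda,J)_{\mu}$ — nonzero by the $\mathfrak{sl}_2$-string formula since each exponent is $\leq\langle\lambda,\alpha_{j'}^{\vee}\rangle$ — yields a weight $\mu'\in\wt N(\lambda,J)$ with $\supp(\lambda-\mu')=\supp(\lambda-\mu)\setminus J\subseteq J^c$, either empty (forcing $\lambda\in\wt N(\lambda,J)$, impossible) or edgeless and disjoint from $J$ (contradicting (P1)); hence some $j\in\supp(\lambda-\mu)\cap J$ has $\height_{\{j\}}(\lambda-\mu)\geq m_j$.

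The main obstacle I anticipate is the $\mathfrak{sl}_2$-string bookkeeping in parts (3) and (5): one must pin down exactly when the iterated $e$- and $f$-actions through $m_{\lambda}$ are nonzero — they vanish only at the maximal-vector threshold — and, in (3), check that the threshold weights $\lambda-\sum(\langle\lambda,\alpha_i^{\vee}\rangle+c_i)\alpha_i$ are reachable inside $U(\mathfrak{g}_{\supp(\lambda-\mu)})M(\lambda)_{\mu}$ regardless of where $\mu$ sits in the $\alpha_i$-strings (they always are, which is the crux). Everything else — the $\rho$-shift in (2), the Verma-embedding injectivity, and the several (P1)/(P2) verifications — is routine once the edgeless-support and maximal-vector principles are in place.
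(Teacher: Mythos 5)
Your proposal is correct and follows essentially the same route as the paper's own inline justifications: the one-dimensionality of weight spaces over edgeless support, the BGG-type maximal vectors $f_i^{\langle s_j\bullet\lambda,\alpha_i^{\vee}\rangle+1}f_j^{m_j}m_{\lambda}$, the maximality of $N(\lambda)$ and $N(\lambda,J)$ with respect to (P1)/(P2), and the $\mathfrak{sl}_2$-string threshold computations. The only cosmetic deviation is that in part (5) you raise by all $e_{j'}$, $j'\in\supp(\lambda-\mu)\cap J$, simultaneously rather than one at a time, which is an equivalent bookkeeping of the same argument.
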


We now proceed to prove Theorems \ref{thmB} and \ref{thmC}. The key results used in the proof of Theorem \ref{thmB} are the following Lemma \ref{L4.2} and Proposition \ref{P2.7}. Lemma \ref{L4.2} is also the key result in the proof of Theorem \ref{thmC}. So, we first prove Lemma \ref{L4.2} below. For $I\subset \mathcal{I}$, recall from equation \eqref{E2.4} that $\wt_I V:=\wt V\cap (\lambda-\mathbb{Z}_{\geq 0}\Pi_I)$.
\begin{lemma}\label{L4.2}
Let $\lambda\in\mathfrak{h}^*$, and $M(\lambda)\twoheadrightarrow{ }V$. Then 
\[\mu-\sum\limits_{i\in J_{\lambda}^c}c_i\alpha_i\in\wt V\quad \text{ for any } \mu\in \wt_{J_{\lambda}}V \text{ and for any sequence }(c_i)_{i\in J_{\lambda}^c}\in(\mathbb{Z}_{\geq 0})^{|J_{\lambda}^c|}.  \]
\end{lemma}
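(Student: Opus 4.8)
The plan is to exhibit, for the target weight $\mu-\eta$ (where $\eta:=\sum_{i\in J_\lambda^c}c_i\alpha_i$), an \emph{explicit} nonzero weight vector in $V$, obtained from a highest weight vector by applying the $f_i$'s with $i\in J_\lambda^c$ first (``innermost'') and the $f_j$'s with $j\in J_\lambda$ afterwards; nonvanishing is then proved by stripping off the $J_\lambda^c$-letters one block at a time via $\mathfrak{sl}_2$-theory. Concretely, fix a highest weight vector $0\neq v\in V_\lambda$. Since $\mu\in\wt_{J_\lambda}V$ we have $\lambda-\mu\in\mathbb{Z}_{\geq0}\Pi_{J_\lambda}$, so by (F2) together with the linear independence of $\Pi$, the weight space $V_\mu$ is spanned by vectors $f_{j_1}\cdots f_{j_n}v$ with all $j_k\in J_\lambda$ and $\sum_{k}\alpha_{j_k}=\lambda-\mu$. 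Fix one such nonzero vector $w=f_{j_1}\cdots f_{j_n}v$. Enumerating $\{i\in J_\lambda^c:c_i>0\}$ as $i^{(1)},\dots,i^{(p)}$, I would set
\[
w_\eta\ :=\ f_{j_1}\cdots f_{j_n}\,f_{i^{(1)}}^{\,c_{i^{(1)}}}\cdots f_{i^{(p)}}^{\,c_{i^{(p)}}}\,v\ \in\ V_{\mu-\eta},
\]
and the whole problem reduces to showing $w_\eta\neq0$, which gives $\mu-\eta\in\wt V$.

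For the nonvanishing I would induct on $p$, the base case $p=0$ being the choice $w\neq0$. In the inductive step, apply $e_{i^{(p)}}^{\,c_{i^{(p)}}}$ to $w_\eta$. Every index occurring to the left of $f_{i^{(p)}}^{\,c_{i^{(p)}}}$ in $w_\eta$ (namely the $j_k\in J_\lambda$ and the $i^{(l)}\in J_\lambda^c$ with $l<p$) is different from $i^{(p)}$, so by the Chevalley relation $[e_a,f_b]=\delta_{ab}\alpha_a^\vee$ the operator $e_{i^{(p)}}^{\,c_{i^{(p)}}}$ commutes past all of them; moreover $e_{i^{(p)}}v=0$ makes $v$ a highest weight vector for $\mathfrak{sl}_{\alpha_{i^{(p)}}}\cong\mathfrak{sl}_2$ of $\alpha_{i^{(p)}}^\vee$-eigenvalue $\langle\lambda,\alpha_{i^{(p)}}^\vee\rangle$. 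The elementary identity $[e,f^{c}]=c\,f^{c-1}(h-c+1)$ (and an easy induction on $c$) then gives $e_{i^{(p)}}^{\,c_{i^{(p)}}}f_{i^{(p)}}^{\,c_{i^{(p)}}}v=\kappa\,v$ with $\kappa=\prod_{t=1}^{c_{i^{(p)}}}t\big(\langle\lambda,\alpha_{i^{(p)}}^\vee\rangle-t+1\big)$, whence $e_{i^{(p)}}^{\,c_{i^{(p)}}}w_\eta=\kappa\,w_{\eta-c_{i^{(p)}}\alpha_{i^{(p)}}}$. Here $\kappa\neq0$ is the crux: if some factor $\langle\lambda,\alpha_{i^{(p)}}^\vee\rangle-t+1$ vanished, then $\langle\lambda,\alpha_{i^{(p)}}^\vee\rangle=t-1\in\mathbb{Z}_{\geq0}$, contradicting $i^{(p)}\notin J_\lambda$. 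Since $w_{\eta-c_{i^{(p)}}\alpha_{i^{(p)}}}\neq0$ by the induction hypothesis, we get $w_\eta\neq0$, and the induction is complete.

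The one genuinely important point is the ordering in the construction of $w_\eta$ — applying the $J_\lambda^c$-letters before the $J_\lambda$-letters — since this is precisely what turns each deletion of a block $f_i^{\,c_i}$ into the action of $e_i^{\,c_i}$ on a \emph{bona fide} highest weight vector whose $\alpha_i^\vee$-eigenvalue lies outside $\mathbb{Z}_{\geq0}$, so that the resulting $\mathfrak{sl}_2$ scalar is nonzero; everything else is routine bookkeeping of commutators. The only subtlety to be careful about is that one must peel off the \emph{outermost} $J_\lambda^c$-block, so that at the moment $e_{i^{(p)}}^{\,c_{i^{(p)}}}$ is applied, the index $i^{(p)}$ is genuinely absent from every operator standing between it and $v$.
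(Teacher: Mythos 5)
Your proof is correct and follows essentially the same route as the paper's: place the $J_\lambda^c$-blocks $f_i^{c_i}$ innermost (next to the highest weight vector), then detect nonvanishing by applying $e_i^{c_i}$, commuting it past all other letters (which have different indices), and using that $e_i^{c_i}f_i^{c_i}$ acts on $v_\lambda$ by the nonzero $\mathfrak{sl}_2$ scalar forced by $\langle\lambda,\alpha_i^\vee\rangle\notin\mathbb{Z}_{\geq0}$. The only cosmetic difference is that you peel off one block at a time by induction on $p$, whereas the paper applies $\prod_{i\in J_\lambda^c}e_i^{c_i}$ in one step to a general $F\in U(\mathfrak{n}^-)_{\mu-\lambda}$.
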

\begin{proof}
Our approach in the proof of this lemma is very similar to that in the proof of Theorem 5.1 in \cite{Dhillon_arXiv}. Let $v_{\lambda}$ be the highest weight vector in $V$. Let $(c_i)_{i\in J_{\lambda}^c}$ be a sequence of non-negative integers, and $\mu\in \wt_{J_{\lambda}}V$. Fix an element $F$ in the graded piece $U(\mathfrak{n}^-)_{\mu-\lambda}$ such that $0\neq Fv_{\lambda}\in V_{\mu}$. We prove that $(F\prod_{i\in J_{\lambda}^c}f_i^{c_i}\big)v_{\lambda}\neq 0$, which proves the lemma as $\mu$ and $(c_i)_{i\in J_{\lambda}^c}$ are arbitrary. Treat $f_i^{c_i}$, similarly $e_i^{c_i}$, as $1\in U(\mathfrak{g})$ whenever $c_i=0$. Consider $(\prod_{i\in J_{\lambda}^c}e_i^{c_i})(F\prod_{i\in J_{\lambda}^c}f_i^{c_i})v_{\lambda}$. As $e_i$ and $f_j$ commute $\forall$ $i\neq j\in \mathcal{I}$, re-write $(\prod_{i\in J_{\lambda}^c}e_i^{c_i})(F\prod_{i\in J_{\lambda}^c}f_i^{c_i})v_{\lambda}$ as $(F\prod_{i\in J_{\lambda}^c}e_i^{c_i}f_i^{c_i})v_{\lambda}$. Now, as $e_i^{c_i}f_i^{c_i}$ acts by a non-zero scalar on $v_{\lambda}$ (by $\mathfrak{sl}_2$-theory) and $Fv_{\lambda}\neq 0$, observe that $(F\prod_{i\in J_{\lambda}}f_i^{c_i}v_{\lambda})$ must be non-zero in $V$. This finishes the proof. 
\end{proof}
\subsection{Proof of Theorem \ref{thmB}}\label{S4.1}
We begin this subsection with the proof of Proposition \ref{P2.7}. As stated earlier, Proposition \ref{P2.7} gives all the highest weight modules $V$ such that $\wt V=\wt M(\lambda)$, and also proves that $N(\lambda)$ is the largest submodule of $M(\lambda)$ such that the  weights of the corresponding quotient module are the same as those of $M(\lambda)$. Similarly, Theorem \ref{thmB} proves that $N(\lambda,J)$ is the largest submodule of $M(\lambda)$ such that the weights of the corresponding quotient module are the same as those of the parabolic Verma module $M(\lambda,J)$.  
\begin{proof}[\textnormal{\textbf{Proof of Proposition \ref{P2.7}:}}] Let $\lambda\in\mathfrak{h}^*$, $M(\lambda)\twoheadrightarrow{ } V$ and $V=\frac{M(\lambda)}{N_V}$. Suppose $\wt V=\wt M(\lambda)$. Then observe that $\mu'\in\wt M(\lambda)\setminus \wt N_V$ whenever the Dynkin subdiagram on $\supp(\lambda-\mu')$ has no edge, as $\dim(M(\lambda))_{\mu'}=1$ for such $\mu'$. By the definition of $N(\lambda)$, this proves that $N_V\subset N(\lambda)$ whenever $\wt V=\wt M(\lambda)$, which proves the forward implication in the statement of the proposition.\\ 
Now, we prove by induction on $\height(\lambda-\mu)\geq 1$ that $\mu\in\wt M(\lambda)$ implies that $\mu\in \wt \frac{M(\lambda)}{N(\lambda)}$. This proves that $\wt \frac{M(\lambda)}{N(\lambda)}=\wt M(\lambda)$, which immediately proves the reverse implication.\\
Base step: Let $\mu=\lambda-\alpha_t$ for some $t\in\mathcal{I}$. Check by the definition of $N(\lambda)$ that $\mu\notin \wt N(\lambda)$ as the Dynkin subdiagram on $\{t\}$ has no edge. So, $\lambda-\alpha_t\in\wt \frac{M(\lambda)}{N(\lambda)}$.\\
Induction step: Let $\mu\in \wt M(\lambda)$ be such that $\height(\lambda-\mu)>1$. Write $\mu=\lambda-\sum\limits_{l\in \supp(\lambda-\mu)}c_l\alpha_l$ for some $c_l\in\mathbb{Z}_{> 0}$. If the Dynkin subdiagram on $\supp(\lambda-\mu)\cap J_{\lambda}$ has no edge (note that this includes the case where $\supp(\lambda-\mu)\cap J_{\lambda}=\emptyset$), then clearly\[\lambda-\sum\limits_{l\in \supp(\lambda-\mu)\cap J_{\lambda}}c_l\alpha_l \notin \wt N(\lambda)\text{ by the definition of } N(\lambda),\text{ and so } \lambda-\sum\limits_{l\in \supp(\lambda-\mu)\cap J_{\lambda}}c_l\alpha_l\in \wt \frac{M(\lambda)}{N(\lambda)}.\]
Now by Lemma \ref{L4.2}, $\mu\in \wt \frac{M(\lambda)}{N(\lambda)}$. So, we assume for the rest of the proof that the Dynkin subdiagram on $\supp(\lambda-\mu)\cap J_{\lambda}$ has at least one edge. Let $i,j\in \supp(\lambda-\mu)\cap J_{\lambda}$ be two nodes such that there is at least one edge between them in the Dynkin diagram. Consider the numbers $c_i$ and $c_j$. Assume without loss of generality that $c_i\geq c_j$. As $c_j>0$, $\height(\lambda-\mu-c_j\alpha_j)<\height(\lambda-\mu)$, so by the induction hypothesis $\mu+c_j\alpha_j\in \wt \frac{M(\lambda)}{N(\lambda)}$. Now one verifies the following implication.
\[
 \langle\lambda,\alpha_j^{\vee}\rangle\geq 0,\text{ }\langle\alpha_l,\alpha_j^{\vee}\rangle\leq 0\text{ }\forall\text{ }l\in\supp(\lambda-\mu)\setminus\{i,j\}\text{ and }\langle\alpha_i,\alpha_j^{\vee}\rangle\leq -1\implies \langle\mu+c_j\alpha_j,\alpha_j^{\vee}\rangle\geq c_i\geq c_j.\] 
By the $\mathfrak{sl}_{\alpha_j}$-action on $M(\lambda)_{\mu+c_j\alpha_j}$ and by $\mathfrak{sl}_2$-theory, it can be easily seen that $[s_j(\mu+c_j\alpha_j), \mu+c_j\alpha_j]\subset \wt\frac{M(\lambda)}{N(\lambda)}$. Now, as $\langle\mu+c_j\alpha_j,\alpha_j^{\vee}\rangle\geq c_j$, $\mu\in [s_j(\mu+c_j\alpha_j), \mu+c_j\alpha_j]$, and therefore $\mu\in\wt\frac{M(\lambda)}{N(\lambda)}$. Hence, the proof of the proposition is complete.
\end{proof}
We are now able to show Theorem \ref{thmB}.
\begin{proof}[\textnormal{\textbf{Proof of Theorem \ref{thmB}}}] Let $\lambda\in\mathfrak{h}^*$, $\emptyset\neq J\subset J_{\lambda}$, $M(\lambda)\twoheadrightarrow{ } V$ and $V=\frac{M(\lambda)}{N_V}$. For $M(\lambda)\twoheadrightarrow{ }V'$, it can be easily seen by the definition of $M(\lambda,J)$ and by looking at the integrability of $V'$ that $M(\lambda,J)\twoheadrightarrow{ }V'$ if and only if $\wt V'\subset \wt M(\lambda,J)$. Recall, $\wt M(\lambda,J_{\lambda})=\wt L(\lambda)$ by equation \eqref{E2.9}, and so $\wt M(\lambda,J_{\lambda})=\wt V'$ for any $M(\lambda,J_{\lambda})\twoheadrightarrow{ }V'$. Observe that the previous two lines prove the theorem when $J=J_{\lambda}$. So, for the rest of the proof we assume that $J\subsetneq J_{\lambda}$, which implies $J^c\neq \emptyset$. \\
Now, suppose $\wt V=\wt M(\lambda,J)$. Then observe that $\mu'\in\wt V$ and $\mu'\notin \wt N_V$ whenever the Dynkin subdiagram on $\supp(\lambda-\mu')$ has no edge and $\supp(\lambda-\mu')\subset J^c$. The previous holds true as $\dim(M(\lambda))_{\mu'}=1$ for such $\mu'$, and as $\lambda-\mathbb{Z}_{\geq 0}\Pi_{J^c}\subset\wt M(\lambda,J)$ by equation \eqref{E2.7}. By the definition of $N(\lambda,J)$, this proves that $N_V\subset N(\lambda,J)$ whenever $\wt V=\wt M(\lambda,J)$, which proves the forward implication in the statement of the theorem.\\
Conversely, we begin by recalling the integrable slice decomposition for $\wt M(\lambda,J)$.
\begin{equation}\label{E4.1}
\wt M(\lambda,J)=\bigsqcup\limits_{\xi\in\mathbb{Z}_{\geq 0}\Pi_{J^c}}\wt L_J(\lambda-\xi).
\end{equation}
Note that as $\frac{M(\lambda)}{N(\lambda,J)}$ is $\mathfrak{g}_J$-integrable, $M(\lambda,J)\twoheadrightarrow{ }\frac{M(\lambda)}{N(\lambda,J)}$, and therefore $\wt \frac{M(\lambda)}{N(\lambda,J)}\subseteq \wt M(\lambda,J)$. We now show that $\wt \frac{M(\lambda)}{N(\lambda,J)}$ contains every weight on the right hand side of the above equation. This proves that $\wt \frac{M(\lambda)}{N(\lambda,J)}=\wt M(\lambda,J)$, which immediately proves the reverse implication in the statement of the theorem, as $M(\lambda,J)\twoheadrightarrow{ }V$ whenever $\sum_{j\in J}U(\mathfrak{g})f_{j}^{\langle\lambda,\alpha_j^{\vee}\rangle+1}m_{\lambda}\subset N_V$. For this, we consider $\big(\wt \frac{M(\lambda)}{N(\lambda,J)}\big)\cap(\lambda-\mathbb{Z}_{\geq 0}\Pi_{J^c})$, and the $\mathfrak{g}_{J^c}$-module $N(\lambda,J)\cap  U(\mathfrak{g}_{J^c})m_{\lambda}$.

Observe that $U(\mathfrak{g}_{J^c})m_{\lambda}$ is isomorphic to the Verma module over $\mathfrak{g}_{J^c}$ with highest weight $\lambda$ (or $\lambda\big|_{\mathfrak{h}_{J^c}^*}$). Now analogous to $N(\lambda)$, let $N_{J^c}(\lambda)$ be the largest submodule of $U(\mathfrak{g}_{J^c})m_{\lambda}$ with respect to the property (P2) (over $\mathfrak{g}_{J^c}$). By Proposition \ref{P2.7} (with $\mathfrak{g}_{J^c}$ in place of $\mathfrak{g}$), we have $\wt\frac{U(\mathfrak{g}_{J^c})m_{\lambda}}{N_{J^c}(\lambda)}=\lambda-\mathbb{Z}_{\geq0}\Pi_{J^c}$. Now, note by the definition of $N(\lambda,J)$ that if $\mu\in(\wt N(\lambda,J))\cap (\lambda-\mathbb{Z}_{\geq 0}\Pi_{J^c})$, then the Dynkin subdiagram on $\supp(\lambda-\mu)$ has at least one edge. So, the $\mathfrak{g}_{J^c}$-submodule $N(\lambda,J)\cap U(\mathfrak{g}_{J^c})m_{\lambda}$ of $U(\mathfrak{g}_{J^c})m_{\lambda}$ satisfies the property (P2) (over $\mathfrak{g}_{J^c}$). Therefore, \begin{equation}\label{E4.2}
N(\lambda,J)\cap U(\mathfrak{g}_{J^c})m_{\lambda}\subset N_{J^c}(\lambda),\text{ so that }N(\lambda,J)_{\bar{\mu}}\subset N_{J^c}(\lambda)_{\bar{\mu}}\text{ }\forall\text{ }\bar{\mu}\in\lambda-\mathbb{Z}_{\geq 0}\Pi_{J^c}.\end{equation}
Recall for any submodule $K$ of $M(\lambda)$ and any $\mu''\in\wt M(\lambda)$ that $\big(\frac{M(\lambda)}{K}\big)_{\mu''}$ and $\frac{M(\lambda)_{\mu''}}{K_{\mu''}}$ are isomorphic as vector spaces; $K$ being a weight module. Now, in view of equation \eqref{E4.2} and the previous line, for $\bar{\mu}\in\lambda-\mathbb{Z}_{\geq 0}\Pi_{J^c}$ one observes the following implications.  
\[ \bigg[\frac{U(\mathfrak{g}_{J^c})m_{\lambda}}{N_{J^c}(\lambda)}\bigg]_{\bar{\mu}}\neq\{0\}\implies\frac{\big( U(\mathfrak{g}_{J^c})m_{\lambda}\big)_{\bar{\mu}}}{N_{J^c}(\lambda)_{\bar{\mu}}}\neq\{0\}\implies \bigg[\frac{M(\lambda)}{N(\lambda,J)}\bigg]_{\bar{\mu}}\simeq \bigg[\frac{U(\mathfrak{g}_{J^c})m_{\lambda}}{N(\lambda,J)\cap U(\mathfrak{g}_{J^c})m_{\lambda}}\bigg]_{\bar{\mu}}\neq \{0\}.\]
Hence, $\wt \frac{M(\lambda)}{N(\lambda, J)}\supset\lambda-\mathbb{Z}_{\geq 0}\Pi_{J^c}$. Now, note for any $\xi\in\mathbb{Z}_{\geq0}\Pi_{J^c}$ that every non-zero vector in the weight space $\big(\frac{M(\lambda)}{N(\lambda, J)}\big)_{\lambda-\xi}$ is a maximal vector for the action of $\mathfrak{g}_J$. So,
\[\wt\frac{M(\lambda)}{N(\lambda,J)}\supset \wt\Big[U(\mathfrak{g}_J)\Big(\frac{M(\lambda)}{N(\lambda,J)}\Big)_{\lambda-\xi}\Big]\supset \wt L_J(\lambda-\xi)\text{ } \forall\text{ } \xi\in\mathbb{Z}_{\geq0}\Pi_{J^c},\]
and therefore, $\wt\frac{M(\lambda)}{N(\lambda, J)}\supset\bigsqcup_{\xi\in\mathbb{Z}_{\geq 0}\Pi_{J^c}}\wt L_J(\lambda-\xi)$. Hence, the proof of Theorem \ref{thmB} is complete.
\end{proof}
\subsection{Proof of Theorem \ref{thmC}} We begin this subsection with the proof of our next main result. 
\begin{proof}[\textnormal{\textbf{Proof of Theorem \ref{thmC}}}]
Let $\lambda\in\mathfrak{h}^*$ and $M(\lambda)\twoheadrightarrow{ }V$. We begin by proving equation \eqref{E2.11}. Note that the result is obvious in the extreme cases where (i) $J_{\lambda}=\emptyset$ and (ii) $J_{\lambda}=\mathcal{I}$; as $\lambda-\mathbb{Z}_{\geq 0}\Pi=\wt L(\lambda)\subset \wt V$, and respectively $\mathbb{Z}_{\geq 0}\Delta_{J^c_{\lambda},1}=\{0\}$, in the cases (i), and respectively (ii). So, we assume throughout the proof that $\emptyset\neq J_{\lambda}\subsetneq \mathcal{I}$. We prove for $\gamma_1,\ldots, \gamma_n\in\Delta_{J_{\lambda}^c,1}$, $n\in\mathbb{N}$, that 
\[ \wt_{J_{\lambda}}V-\sum\limits_{t=1}^n\gamma_t\subset \wt V\quad
\text{by induction on }\text{ } \height_{J_{\lambda}}\Big(\sum_{t=1}^n\gamma_t\Big)\geq 0.\]
This proves that $\wt_{J_{\lambda}}V-\mathbb{Z}_{\geq 0}\Delta_{J_{\lambda}^c,1}\subset\wt V$. The reverse inclusion in the previous line holds true by Corollary \ref{C3.2} (b), and hence the proof of equation \eqref{E2.11} will be complete.\\
Base step: $\height_{J_{\lambda}}(\sum_{t=1}^n\gamma_t)=0$, and so $\gamma_t\in\Pi_{J_{\lambda}^c}$ $\forall t$. Therefore, the result follows by Lemma \ref{L4.2}.\\
Induction step: Let $\mu\in\wt_{J_{\lambda}}V$, and $\gamma_1,\ldots,\gamma_n\in\Delta_{J_{\lambda}^c,1}$ such that $\height_{J_{\lambda}}(\sum_{t=1}^n\gamma_t)$ $>0$. The result follows once we prove that $\mu-\sum_{t=1}^n\gamma_t\in\wt V$. Without loss of generality we will assume that $\height_{J_{\lambda}}(\gamma_1)>0$. Pick $j\in J_{\lambda}$ such that $\gamma_1-\alpha_j\in\Delta_{J_{\lambda}^c,1}$, which exists by Lemma \ref{L3.4}. Note by the induction hypothesis applied to $\gamma_1-\alpha_j+\sum_{t= 2}^n\gamma_t$ that $\wt_{J_{\lambda}}V-(\gamma_1-\alpha_j+\sum_{t= 2}^n\gamma_t)\subset \wt V$ (when $n=1$, treat the term $\sum_{t=2}^n\gamma_t$ as 0 throughout the proof). Now, if $\mu-\alpha_j\in\wt_{J_{\lambda}} V$, then by the previous sentence $\mu-\sum_{t=1}^n\gamma_t=\mu-\alpha_j-(\gamma_1-\alpha_j+\sum_{t= 2}^n\gamma_t)\in\wt V$, and therefore we are done.\\
So, we assume that $\mu-\alpha_j\notin\wt V$. Note that this forces $f_jV_{\mu}=\{0\}$. So, now one can easily check that the $\mathfrak{g}_{\{j\}}$-module $U(\mathfrak{g}_{\{j\}})V_{\mu}$ is finite-dimensional by $\mathfrak{sl}_2$-theory, and hence is $\mathfrak{g}_{\{j\}}$-integrable. Define $\Tilde{\mu}$ and $\Tilde{\gamma_t}$ as follows.
    \begin{align*}
    \begin{aligned}
    \Tilde{\mu}=
    \begin{cases}
    \mu&\text{if } \langle\mu,\alpha_j^{\vee}\rangle\geq 0\\
    s_j\mu &\text{if }\langle\mu,\alpha_j^{\vee}\rangle<0
    \end{cases}
    \end{aligned}\quad\quad
    \begin{aligned}
    \Tilde{\gamma_1}=
    \begin{cases}
    s_j\gamma_1&\text{if }\langle\gamma_1,\alpha_j^{\vee}\rangle>0\\
    \gamma_1-\alpha_j&\text{if }\langle\gamma_1,\alpha_j^{\vee}\rangle\leq0
    \end{cases}
    \end{aligned}\quad\quad
    \begin{aligned}[t]
    &\Tilde{\gamma_t}=
    \begin{cases}
    \gamma_t &\text{if }\langle\gamma_t,\alpha_j^{\vee}\rangle\leq 0\\
    s_j\gamma_t &\text{if }\langle\gamma_t,\alpha_j^{\vee}\rangle> 0 
    \end{cases}\\
    &\text{ }\text{when }n\geq 2\text{ and }2\leq t\leq n.
    \end{aligned}
\end{align*}
One now immediately checks the following about $\tilde{\mu}$ and $\tilde{\gamma_t}$.
\begin{itemize}
    \item[(a)] $s_j\mu\in\wt V$, by the $\mathfrak{g}_{\{j\}}$-integrability of $U(\mathfrak{g}_{\{j\}})V_{\mu}$. Therefore, $\Tilde{\mu}\in\wt V$.
    \item[(b)] $\Tilde{\gamma_t}\preceq \gamma_t\in\Delta_{J_{\lambda}^c,1}$ $\forall$ $t$, and $\Tilde{\gamma_1}\precneqq\gamma_1$. Therefore, $\height_{J_{\lambda}}(\sum_{t=1}^n\Tilde{\gamma_t})<\height_{J_{\lambda}}(\sum_{t=1}^n\gamma_t)$.
    \item[(c)] $\langle\Tilde{\mu},\alpha_j^{\vee}\rangle\geq 0$, $\langle\Tilde{\gamma_1},\alpha_j^{\vee}\rangle<0$, and $\langle\Tilde{\gamma_t},\alpha_j^{\vee}\rangle\leq 0$ $\forall$ $t\geq 2$. 
    \item[(d)] $\mu\in [s_j\Tilde{\mu},\Tilde{\mu}]$, and $\gamma_t\in[\Tilde{\gamma_t},s_j\Tilde{\gamma_t}]$ $\forall$ $t$.
\end{itemize}
Consider $\Tilde{\mu}-\sum_{t=1}^n\Tilde{\gamma_t}$. In view of points (a) and (b), the induction hypothesis yields $\Tilde{\mu}-\sum_{t=1}^n\tilde{\gamma_t}\in\wt V$. By the $\mathfrak{g}_{\{j\}}$-action on $V_{\Tilde{\mu}-\sum_{t=1}^n\Tilde{\gamma_t}}$, $\mathfrak{sl}_2$-theory and by point (d), it can be checked that \[\mu-\sum_{t=1}^n\gamma_t\in \Big[s_j\Big(\Tilde{\mu}-\sum_{t=1}^n\Tilde{\gamma_t}\Big),\text{ }\Tilde{\mu}-\sum_{t=1}^n\Tilde{\gamma_t}\Big]\subset \wt V.\]
We finally have $\mu-\sum_{t=1}^n\gamma_t\in\wt V$, completing the proof of equation \eqref{E2.11}.

    Observe that $J_{\lambda}$ played no role in the proof of equation \eqref{E2.11}, except in the base step of the proof. The proof in the base step, which shows that $\wt_{J_{\lambda}}V-\mathbb{Z}_{\geq 0}\Pi_{J^c_{\lambda}}\subset\wt V$, just followed using Lemma \ref{L4.2}. Thus, for any $J\subset\mathcal{I}$ the above proof of equation \eqref{E2.11} (with $J$ in the place of $J_{\lambda}$) proves equation \eqref{E2.12}, which says
    \[\wt_JV-\mathbb{Z}_{\geq 0}\Pi_{J^c}\subset \wt V\quad \iff \quad \wt V=\wt_JV-\mathbb{Z}_{\geq 0}\Delta_{J^c,1}.\]
\end{proof}
We conclude this subsection with the following remark which relates the Minkowski difference formula for $\wt V$ in equation \eqref{E2.11} with (similar) Minkowski difference formulas that might exist inside $\wt_{J_{\lambda}}V$.
\begin{remark}
Let $\lambda\in\mathfrak{h}^*$ and $M(\lambda)\twoheadrightarrow{ }V$. Suppose there is a subset $J\subset J_{\lambda}$ such that the set of weights of the $\mathfrak{g}_{J_{\lambda}}$-module $U(\mathfrak{g}_{J_{\lambda}})V_{\lambda}$, which is precisely $\wt_{J_{\lambda}}V$, has the Minkowski difference formula:
\[
\wt_{J_{\lambda}}V= \wt_{J}V-\mathbb{Z}_{\geq 0}\big(\Delta_{J^c,1}\cap \Delta_{J_{\lambda}}\big).
\]
Then by the parabolic-PSP it can be easily seen that $\Delta_{J^c_{\lambda},1}\subset \mathbb{Z}_{\geq0}\Delta_{J^c,1}$, and this immediately results in the following Minkowski difference decomposition for $\wt V$: 
\[
\wt V= \wt_{J}V-\mathbb{Z}_{\geq 0}\Delta_{J^c,1}=\wt_{J}V-\mathbb{Z}_{\geq 0}(\Delta^+\setminus\Delta_J^+).
\]
\end{remark}
\section{Moving between comparable weights of representations}
			\subsection{Proof of Theorem \ref{thmD}}
		    In this subsection, we prove Theorem \ref{thmD} by proving (\ref{thmD}1) and (\ref{thmD}2) separately for a Kac--Moody algebra $\mathfrak{g}=\mathfrak{g}(A)$.\\
		    Throughout this subsection, we use without further mention the following fact.
		    \begin{fact}
		    Let $\mathfrak{g}$ be a Kac--Moody algebra and $M$ be a weight module of $\mathfrak{g}$. Fix a weight $\mu\in\wt M$ and a real root $\alpha$. Suppose $M$ is $\mathfrak{sl}_{\alpha}$-integrable, where $\mathfrak{sl}_{\alpha}=\mathfrak{g}_{-\alpha}\oplus\mathbb{C}\alpha^{\vee}\oplus\mathfrak{g}_{\alpha}$. Then by $\mathfrak{sl}_2$-theory \[\langle\mu,\alpha^{\vee}\rangle>0 \implies \mu-\alpha\in [s_{\alpha}\mu,\mu]\subset\wt M,\quad
		   \text{similarly, } \langle\mu,\alpha^{\vee}\rangle<0 \implies \mu+\alpha\in [\mu,s_{\alpha}\mu]\subset\wt M.\]
		   \end{fact}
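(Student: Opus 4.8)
The plan is to reduce everything to the finite-dimensional representation theory of $\mathfrak{sl}_2(\mathbb C)$, applied to the $\alpha$-string through $\mu$. Since $\alpha$ is a real root, $\dim\mathfrak g_{\pm\alpha}=1$, so choosing $0\neq e_\alpha\in\mathfrak g_\alpha$ and $0\neq f_\alpha\in\mathfrak g_{-\alpha}$ with $[e_\alpha,f_\alpha]=\alpha^\vee$ realises $\mathfrak{sl}_\alpha\simeq\mathfrak{sl}_2(\mathbb C)$ in standard form, with $[\alpha^\vee,e_\alpha]=2e_\alpha$ and $[\alpha^\vee,f_\alpha]=-2f_\alpha$. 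First I would record that $N:=\bigoplus_{j\in\mathbb Z}M_{\mu+j\alpha}$ is an $\mathfrak{sl}_\alpha$-submodule of $M$, since $e_\alpha$ maps $M_{\mu+j\alpha}$ into $M_{\mu+(j+1)\alpha}$, $f_\alpha$ maps it into $M_{\mu+(j-1)\alpha}$, and $\alpha^\vee$ acts on $M_{\mu+j\alpha}$ by the scalar $\langle\mu,\alpha^\vee\rangle+2j$. As these scalars are pairwise distinct, $M_{\mu+j\alpha}$ is precisely the $\mathfrak{sl}_2$-weight space of $N$ of weight $\langle\mu,\alpha^\vee\rangle+2j$.

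Since $M$ is a weight module, $\alpha^\vee\in\mathfrak h$ acts semisimply on $N$; together with the $\mathfrak{sl}_\alpha$-integrability inherited from $M$, this forces $N$ to decompose as a direct sum of finite-dimensional simple $\mathfrak{sl}_2(\mathbb C)$-modules. Write $m:=\langle\mu,\alpha^\vee\rangle$ and fix $0\neq v\in M_\mu$. Its component in at least one simple summand $V$ of $N$ is nonzero; writing $V\simeq V(d)$ for the irreducible of highest weight $d\geq 0$, the fact that this component has $\mathfrak{sl}_2$-weight $m$ forces $m$ to be a weight of $V(d)$, i.e.\ $|m|\leq d$ and $m\equiv d\pmod 2$. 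Consequently every integer $k$ with $|k|\leq|m|$ and $k\equiv m\pmod 2$ is also a weight of $V(d)$, since such $k$ satisfy $|k|\leq|m|\leq d$ and $k\equiv m\equiv d\pmod 2$; hence the corresponding weight space of $N$, and therefore that of $M$, is nonzero.

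Translating this back through the bijection of the first step: if $m>0$, taking $k=m-2j$ for $0\leq j\leq m$ shows $M_{\mu-j\alpha}\neq\{0\}$ for all $0\leq j\leq m$, which is exactly $[s_\alpha\mu,\mu]\subset\wt M$; and since $m\geq 1$ we get $\mu-\alpha\in[s_\alpha\mu,\mu]$. Symmetrically, if $m<0$, taking $k=m+2j$ for $0\leq j\leq -m$ shows $M_{\mu+j\alpha}\neq\{0\}$ for all such $j$, i.e.\ $[\mu,s_\alpha\mu]\subset\wt M$, and $-m\geq 1$ gives $\mu+\alpha\in[\mu,s_\alpha\mu]$. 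This completes the argument.

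The only point that genuinely requires care — everything else being bookkeeping and the classification of finite-dimensional $\mathfrak{sl}_2(\mathbb C)$-modules — is the passage from $M$ to the finite-dimensional picture: one must check that $N$ really is an $\mathfrak{sl}_\alpha$-submodule (this uses $\dim\mathfrak g_{\pm\alpha}=1$, i.e.\ that $\alpha$ is a real root) and that $\mathfrak{sl}_\alpha$-integrability, together with the semisimplicity of the $\alpha^\vee$-action, descends from $M$ to $N$, so that $N$ is in fact a direct sum of finite-dimensional simples.
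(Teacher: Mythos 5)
Your argument is correct and is precisely the $\mathfrak{sl}_2$-theory the paper invokes without proof: restrict to the $\alpha$-string $\bigoplus_j M_{\mu+j\alpha}$, use local nilpotency of $e_\alpha,f_\alpha$ together with semisimplicity of $\alpha^\vee$ to decompose it into finite-dimensional simple $\mathfrak{sl}_2$-modules, and read off the string of weights from the weight structure of $V(d)$. Nothing further is needed.
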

	  \begin{proof}[\normalfont{\textbf{Proof of (\ref{thmD}1) (for adjoint representation)}}]
			Relabel $\mu_0$ and $\mu$ by $\beta_0$ and $\beta$ respectively for convenience. We prove (\ref{thmD}1) by induction on $\height(\beta -\beta_0)\geq 1$. In the base step $\height(\beta -\beta_0)=1$, (\ref{thmD}1) is trivial.\\
			Induction step: Assume $\height(\beta-\beta_0)>1$, and observe that if there exists an $\tilde{\alpha}\in \pm\Pi\sqcup \{0\}$ such that $\beta_0\precneqq \tilde{\alpha}\precneqq\beta$, then we will be immediately done by the induction hypothesis applied to $\beta_0\precneqq \tilde{\alpha}$ and $\tilde{\alpha}\precneqq\beta$.
			So, we assume throughout the proof that both $\beta_0$ and $\beta$ are positive roots (by symmetry, the proof for the leftover case where both $\beta_0$ and $\beta$ are negative just follows from this). We show in steps that there exists an $\alpha \in \Pi$ such that $\alpha \prec \beta-\beta_0$, and either $\beta -\alpha$ or $\beta_0 +\alpha$ is a root. The induction hypothesis then completes the proof.\\
			Write $\beta =\sum\limits_{i\in I}a_i \alpha_i$ and $\beta_0 =\sum\limits_{i'\in I_0}b_{i'} \alpha_{i'}$ for some $I_0 \subseteq I\subseteq \mathcal{I}$, and positive integers $a_i$ and $b_{i'}\leq a_{i'}$, $\forall$ $i\in I$ and $i'\in I_0$. We proceed in several steps below.\newline\newline
			\textbf{Step 1:} If $I_0 \subsetneq I$, then, as the Dynkin subdiagram on $I=\supp(\beta)$ is connected, we must have a simple root $\alpha' \in \Pi_{I}\setminus \Pi_{I_0}$ such that $\langle\beta_0 , \alpha'^{\vee}\rangle<0\implies \beta_0 +\alpha'\prec \beta\in\Delta_I$.\\ So, we assume now that $I_0 =I$. Define $J:=\supp(\beta-\beta_0)\subset I$ and write $\beta-\beta_0 =\sum_{j\in J}(a_j -b_j)\alpha_j$. Note that $b_j <a_j$ $\forall$ $j\in J$.\\
			If $J=I$, then by the PSP there exists an $\alpha''\in\Pi$ such that $\beta -\alpha'' \in \Delta$, and we will be done.\\
			So, we also assume now that $J\subsetneq I$. Write $\beta =\sum\limits_{i\in I\setminus J}a_i \alpha_i +\sum\limits_{j\in J}a_j \alpha_j$ and $\beta_0 =\sum\limits_{i\in I\setminus J}a_i \alpha_i +\sum\limits_{j\in J}b_j \alpha_j$.\newline\newline
			\textbf{Step 2:} Consider the decomposition of the submatrix $A_{J\times J}$ of $A$ into indecomposable blocks. Pick an indecomposable block $A_{K\times K}$ of $A_{J\times J}$ for some $K\subset J$.\\
			If $A_{K\times K}$ (equivalently $A^t_{K\times K}$) is of finite type, then by \cite[Theorem 4.3]{Kac} there exists a vector \[ Y=\sum_{k\in K}p_k\alpha_k^{\vee}\in\mathfrak{h}\text{ such that }p_k\in\mathbb{R}_{>0}\text{ and }\langle\alpha_k,Y\rangle>0 \text{ }\forall\text{ } k\in K.\] 
			Check that $\langle\alpha_j,Y\rangle\geq 0$ $\forall$ $j\in J$, and so $\langle\beta-\beta_0,Y\rangle>0$. By the previous line, there exists some $k_0\in K\subset J$ such that $\langle\beta -\beta_0 ,\alpha_{k_0}^{\vee}\rangle>0$. This implies either $\langle\beta, \alpha_{k_0}^{\vee}\rangle>0$ or $\langle\beta_0 ,\alpha_{k_0}^{\vee}\rangle<0$, yielding $\beta -\alpha_{k_0} \succ\beta_0 \in\Delta$ or respectively $\beta_0 +\alpha_{k_0} \prec \beta \in\Delta$, as required.\\
			So, we assume now that $A_{K\times K}$ is not of finite type. This means $A_{K\times K}$ (equivalently $A^t_{K\times K}$) must be of affine or indefinite type, and once again by \cite[Theorem 4.3]{Kac} there exists a vector
			\[ X=\sum_{k\in K}q_k\alpha_k^{\vee}\in\mathfrak{h}\text{ such that }q_k\in\mathbb{R}_{>0}\text{ and }\langle\alpha_k,X\rangle\leq0\text{ } \forall\text{ } k\in K.\]
			Check that $X$ also satisfies  $\langle\alpha_j,X\rangle\leq 0$ $\forall$ $j\in J$. Note that (i) the Dynkin subdiagram on $I$ is connected, and (ii) whenever $K\subsetneq J$, the subdiagrams on $K$ and $J\setminus K$ are disconnected. Thus, in view of the previous line, there must exist a pair of nodes $t_1\in K$ and $t_2\in I\setminus J$ such that $t_1$ and $t_2$ are connected by at least one edge in the Dynkin diagram, i.e. $\langle\alpha_{t_2},\alpha_{t_1}^{\vee}\rangle<0$. This yields
			\begin{align*}
			\langle\sum\limits_{i\in I\setminus J}a_i \alpha_i ,X\rangle<0&
			\implies \langle\sum\limits_{i\in I\setminus J}a_i \alpha_i + \sum\limits_{j\in J}b_j \alpha_j ,X\rangle= \langle\beta_0 ,X\rangle<0\\
			&\implies \exists\text{ }k'\in K\subset J \text{ such that } \langle\beta_0 ,\alpha_{k'}^{\vee}\rangle<0\implies \beta_0 +\alpha_{k'} \in \Delta.
			\end{align*}
			($\langle\sum_{i\in I\setminus J}a_i \alpha_i ,X\rangle<0$ as $c_i>0$ $\forall$ $i\in I\setminus J$.) Hence, the proof of (\ref{thmD}1) is complete.
			\end{proof}
			Before proving (\ref{thmD}2), we prove the following corollary of (\ref{thmD}1) which is similar to Corollary \ref{C3.5}.\\Recall from equation \eqref{E2.2} that $\Delta_{\alpha ,J}:=\{\beta \in \Delta^+$ $|$ $\supp(\beta -\alpha)\subset J\}$ for $\alpha \in \Delta^+$ and $J\subset \mathcal{I}$.		\begin{cor}\label{C5.1}
				Let $\mathfrak{g}$ be a Kac--Moody algebra. Fix a real root $\alpha \in \Delta^+$ such that $\supp(\alpha)\subsetneq \mathcal{I}$, and let $J$ be a non-empty subset of $\mathcal{I}\setminus \supp(\alpha)$. Suppose $\beta \in \Delta_{\alpha,J}$ is such that $\beta -\alpha \in \Delta_J^+$. Then there exists a sequence of roots $\beta_i \in \Delta_{\alpha,J}$, $0\leq i\leq n=\height(\beta-\alpha)$, such that
				\[
				\alpha=\beta_0 \prec \cdots \prec\beta_i \prec \cdots \prec \beta_n =\beta \in \Delta_{\alpha ,J}\text{ }\forall i\quad \text{and also }\beta_i -\alpha\in \Delta^+_J\text{ }\forall\text{ }i\geq 1.
				\]
				\end{cor}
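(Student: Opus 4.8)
The plan is to deduce this from part (\ref{thmD}1) of Theorem \ref{thmD} applied to the adjoint representation, by interpreting $\Delta_{\alpha,J}$ as (essentially) a weight set that is invariant under the relevant operations. First I would set up the translation: since $\beta \in \Delta_{\alpha,J}$ means $\supp(\beta-\alpha)\subset J$ and here we assume moreover $\beta-\alpha\in\Delta^+_J$, I would observe that $\beta$ and $\alpha$ are comparable positive roots with $\beta-\alpha\in\mathbb{Z}_{\geq0}\Pi_J$, so $\height(\beta-\alpha)=n$ makes sense. Applying (\ref{thmD}1) to $\mathfrak{g}$ (adjoint representation) with $\mu_0=\alpha$, $\mu=\beta$ (note $\alpha\prec\beta$, and if $\alpha=\beta$ there is nothing to prove) produces a chain $\alpha=\beta_0\prec\cdots\prec\beta_n=\beta$ of weights of $\mathfrak{g}$ with consecutive differences in $\Pi$. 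The weights of $\mathfrak{g}$ are $\Delta\sqcup\{0\}$; since each $\beta_i\succeq\alpha\succ 0$ (as $\alpha$ is a positive root and the $\beta_i$ increase from $\alpha$), each $\beta_i$ is a genuine (positive) root, so $\beta_i\in\Delta^+$.

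The remaining content is to check the two membership claims: $\beta_i\in\Delta_{\alpha,J}$ and $\beta_i-\alpha\in\Delta^+_J$ for $i\geq 1$. Here I would use that the chain moves from $\alpha$ up to $\beta$ by adding simple roots, and that $\beta-\alpha\in\mathbb{Z}_{\geq0}\Pi_J$: since $\beta_i-\alpha$ and $\beta-\beta_i$ are both in $\mathbb{Z}_{\geq0}\Pi$ (by $\alpha\preceq\beta_i\preceq\beta$) and sum to $\beta-\alpha\in\mathbb{Z}_{\geq0}\Pi_J$, each $\beta_i-\alpha$ lies in $\mathbb{Z}_{\geq0}\Pi_J$; hence $\supp(\beta_i-\alpha)\subset J$, giving $\beta_i\in\Delta_{\alpha,J}$. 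For $i\geq 1$ we have $\beta_i-\alpha\in\mathbb{Z}_{\geq0}\Pi_J\setminus\{0\}$, and since $\alpha$ is a root with $\supp(\alpha)\cap J=\emptyset$ while $\beta_i-\alpha\in\mathbb{Z}\Pi_J$, the roots $\alpha$ and $\beta_i$ differ by an element of $\mathbb{Z}\Pi_J$; I need $\beta_i-\alpha$ itself to be a root of $\Delta_J$, which does not follow from the support condition alone.

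The main obstacle is precisely this last point — ensuring $\beta_i-\alpha\in\Delta^+_J$ rather than merely $\beta_i-\alpha\in\mathbb{Z}_{\geq0}\Pi_J$. To handle it I would instead run the induction on $\height(\beta-\alpha)$ directly, mirroring the proof of Corollary \ref{C3.5}: in the base case $\height(\beta-\alpha)=1$ the claim is trivial. For the inductive step, I would first produce a single simple root $\alpha_j$ with $j\in J$ such that $\beta-\alpha_j\in\Delta^+$ and $\beta-\alpha_j-\alpha\in\Delta^+_J$ (or $=0$); such a $j$ exists because, $\beta-\alpha$ being a nonzero element of $\Delta^+_J$, the usual PSP inside the sub-root system $\Delta_J$ gives a simple root $\alpha_j$ ($j\in J$) with $(\beta-\alpha)-\alpha_j\in\Delta^+_J\sqcup\{0\}$, and then one argues, using that $\supp(\alpha)\cap J=\emptyset$ so that subtracting $\alpha_j$ "sees only the $J$-part" and using $\mathfrak{sl}_2$/root-string arguments (Observation-type facts or \cite[Lemma 1.5]{Kac}-style reasoning applied inside $\Delta$), that $\beta-\alpha_j$ is still a root. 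Then I would apply the induction hypothesis to $\beta-\alpha_j$ in place of $\beta$, obtaining the chain $\alpha=\beta_0\prec\cdots\prec\beta_{n-1}=\beta-\alpha_j$ inside $\Delta_{\alpha,J}$ with all $\beta_i-\alpha\in\Delta^+_J$, and append $\beta_n=\beta$, noting $\beta-\beta_{n-1}=\alpha_j\in\Pi$ and $\beta-\alpha\in\Delta^+_J$ by hypothesis. The delicate verification — that subtracting the chosen simple root $\alpha_j$ from $\beta$ keeps us among roots while decreasing the $J$-height by one — is where the argument requires care, and is exactly analogous to the corresponding step in the proof of Lemma \ref{L3.4} and Corollary \ref{C3.5}.
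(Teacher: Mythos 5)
Your overall strategy---induction on $\height(\beta-\alpha)$, peeling one simple root off $\beta$ at each step---is the same as the paper's, and your diagnosis of why a direct application of (\ref{thmD}1) to $\alpha\prec\beta$ falls short (it yields $\beta_i-\alpha\in\mathbb{Z}_{\geq0}\Pi_J$ but not $\beta_i-\alpha\in\Delta_J^+$) is correct. However, the key existence step of your inductive argument has a genuine gap. You claim that a simple root $\alpha_j$ with $(\beta-\alpha)-\alpha_j\in\Delta_J^+\sqcup\{0\}$, supplied by the ordinary PSP inside $\Delta_J$, will also satisfy $\beta-\alpha_j\in\Delta$, because ``subtracting $\alpha_j$ sees only the $J$-part''. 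This is false: take $\mathfrak{g}$ of type $A_3$, $\alpha=\alpha_1$, $J=\{2,3\}$, $\beta=\alpha_1+\alpha_2+\alpha_3$, so $\beta-\alpha=\alpha_2+\alpha_3\in\Delta_J^+$. The PSP in $\Delta_J$ permits the choice $\alpha_j=\alpha_2$ (since $(\beta-\alpha)-\alpha_2=\alpha_3\in\Delta_J^+$), yet $\beta-\alpha_2=\alpha_1+\alpha_3\notin\Delta$. So the simultaneous conditions $\beta-\alpha_j\in\Delta$ and $(\beta-\alpha)-\alpha_j\in\Delta_J^+\sqcup\{0\}$ do not follow from the PSP choice plus root-string generalities; producing a $j$ satisfying both is precisely the content that must be proved, and your sketch does not supply an argument that does so.

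The paper closes exactly this gap. Since the Dynkin subdiagram on $\supp(\beta)=\supp(\alpha)\sqcup\supp(\beta-\alpha)$ is connected, there is a node $j\in\supp(\beta-\alpha)\subset J$ with $\langle\alpha,\alpha_j^{\vee}\rangle<0$. Applying (\ref{thmD}1) to the pair $\alpha_j\prec\beta-\alpha$ of positive roots gives the penultimate root $\beta'\in\Delta_J^+$ of a chain from $\alpha_j$ up to $\beta-\alpha$, so that $(\beta-\alpha)-\beta'\in\Pi_J$ and, crucially, $j\in\supp(\beta')$. As $\supp(\beta')\subset J$ is disjoint from $\supp(\alpha)$, this forces $\langle\beta',\alpha^{\vee}\rangle<0$, hence $\beta'+\alpha\in\Delta$; thus $\beta_{n-1}:=\beta'+\alpha$ works, and the simple root actually subtracted is $(\beta-\alpha)-\beta'$, which need not be one you would find by running the PSP downward from $\beta-\alpha$. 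In the $A_3$ example this mechanism selects $\alpha_3$ rather than $\alpha_2$, giving $\beta_1=\alpha_1+\alpha_2$. Your proof becomes correct once the PSP-plus-root-string step is replaced by this argument (or some other proof that a good $\alpha_j$ exists).
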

				\begin{proof}
				We prove this Corollary by induction on $\height(\beta-\alpha)\geq 1$. In the base step $\height(\beta-\alpha)=1$, there is nothing to prove. Induction step: Assume $\height(\beta-\alpha)>1$, pick $j\in \supp(\beta-\alpha)\subset J$ such that $\langle\alpha,\alpha_j^{\vee}\rangle<0$ (such a node $j$ exists as the Dynkin subdiagram on $\supp(\beta)$ is connected). By (\ref{thmD}1) applied to $\alpha_j \prec \beta-\alpha\in\Delta^+$, we get a root $\beta'\in \Delta_{J}^+$ such that $(\beta-\alpha)-\beta' \in \Pi_{J}$ and $\alpha_j \prec \beta'$. Observe that we must have $\langle\beta' ,\alpha^{\vee}\rangle<0$ (as $j\in\supp(\beta')$). This implies $\beta'+\alpha\prec\beta \in \Delta_{\alpha,J}$, and the induction hypothesis applied to $\beta'+\alpha$ now finishes the proof.
			\end{proof} 
		   \begin{proof}[\normalfont{\textbf{Proof of (\ref{thmD}2) (for submodules of $M(\lambda,J)$)}}]
		    Let $V$ be a submodule of $M(\lambda,J)$ and $\mu_0\prec\mu\in\wt V$. We prove (\ref{thmD}2) by induction on $\height(\mu-\mu_0)\geq 1$. In the base step $\height(\mu-\mu_0)=1$, (\ref{thmD}2) is trivial.\\
		    Induction step: Assume $\height(\mu-\mu_0)>1$, and let $J_1:=\supp(\mu-\mu_0)$, $I:=\supp(\lambda-\mu)\setminus J_1$ and $m_{\lambda}$ span $M(\lambda,J)_{\lambda}$. When $I\neq\emptyset$, write $\mu=\lambda-\sum\limits_{i\in I}c_i\alpha_i-\sum\limits_{j\in J_1}c'_j\alpha_j$ for some $c_i\in\mathbb{Z}_{>0}$ and $c'_j\in\mathbb{Z}_{\geq0}$. Firstly, note:
		    \begin{itemize}
		    \item[(a)] Let $\mathfrak{p}_J$ be the parabolic Lie subalgebra of $\mathfrak{g}$ corresponding to $J\subset J_{\lambda}\subset\mathcal{I}$, and let $L^{\max}_J(\lambda)$ be the largest integrable highest weight $\mathfrak{p}_J$-module with highest weight $\lambda$. Then $M(\lambda,J)\simeq U(\bigoplus\limits_{\beta\in\Delta^{-} \setminus\Delta_J^-}\mathfrak{g}_{\beta})\otimes L^{\max}_J(\lambda)$. Note that $\bigoplus\limits_{\beta\in\Delta^{-} \setminus\Delta_J^-}\mathfrak{g}_{\beta}=\bigoplus\limits_{n\in\mathbb{Z}_{<0}}\mathfrak{g}_{J^c,n}$, and $\bigoplus\limits_{n\in\mathbb{Z}_{<0}}\mathfrak{g}_{J^c,n}$ can be easily checked to be a Lie algebra.
		    \item[(b)] $M(\lambda,J)$ is torsion free over $U(\bigoplus\limits_{\beta\in\Delta^-\setminus\Delta^-_J}\mathfrak{g}_{\beta})$, and so is $V$.
		    \end{itemize}
		    We show via several cases that there exists a simple root $\alpha\prec\mu-\mu_0$ such that either $\mu-\alpha$ or $\mu_0+\alpha$ belongs to $\wt V$. The induction hypothesis then completes the proof.\\
		    If $J_1\cap J^c\neq \emptyset$, then for $i\in J_1\cap J^c$ (a) and (b) yield $f_iV_{\mu}\neq \{0\}$, which implies $\mu-\alpha_i\succ\mu_0\in\wt V$.\\
            So, we assume for the rest of the proof that $J_1\subset J$. If $\langle\mu-\mu_0,\alpha_{j'}^{\vee}\rangle>0$ for some $j'\in J_1$, then we must have $\langle\mu,\alpha_{j'}^{\vee}\rangle>0$ or $\langle\mu_0,\alpha_{j'}^{\vee}\rangle<0$, in which case we are done by the existence of the injective mappings $V_{\mu}\xhookrightarrow{f_{j'}}V_{\mu-\alpha_{j'}}$ or respectively $V_{\mu_0}\xhookrightarrow{e_{j'}}V_{\mu_0+\alpha_{j'}}$ (by \cite[Proposition 3.6]{Kac} as $M(\lambda,J)$, and hence $V$, is $\mathfrak{g}_J$-integrable).\\
So, we also assume now that $\langle\mu-\mu_0,\alpha_j^{\vee}\rangle\leq 0$ $\forall$ $j\in J_1$. Fix an indecomposable block $A_{K\times K}$ of $A_{J_1\times J_1}$ for some $K\subset J_1$, and note that (by the assumption in the previous line and \cite[Theorem 4.3]{Kac}) $A_{K\times K}$ (equivalently $A^t_{K\times K}$) must be of either affine or indefinite type. Thus, we have a vector \[X=\sum_{k\in K}p_k\alpha_k^{\vee}\in\mathfrak{h}\text{ such that }p_k\in\mathbb{R}_{>0}\text{ and }\langle\alpha_k,X\rangle\leq0\text{ }\forall\text{ }k\in K.\] Note that $X$ also satisfies $\langle\alpha_j,X\rangle\leq 0$ $\forall$ $j\in J_1$. We now proceed in two cases below. We show in both the cases that $\langle\mu,X\rangle>0$. This implies that $\langle\mu,\alpha_{j_0}^{\vee}\rangle>0$ for some $j_0\in K\subset J_1$, and then we will be done as above.\newline\newline
1) $I\neq \emptyset$, and also $J_1$ and $I$ are connected by at least one edge in the Dynkin diagram: Firstly, without loss of generality, we may assume that the subset $K$ of $J_1$ we started with has the property that the Dynkin subdiagram on $K\sqcup I$ is connected. Thus, $\langle\sum_{i\in I}c_i\alpha_i,X\rangle<0$ as $c_i>0$ $\forall$ $i\in I$. Note that $\langle\lambda,\alpha_j^{\vee}\rangle\geq0$ $\forall$ $j\in J_1\subset J$, and in particular, $\langle\lambda,X\rangle\geq 0$. The observation that $\langle\alpha_j,X\rangle\leq0$ $\forall$ $j\in J_1$ and the previous two lines together imply that $\langle\mu,X\rangle=\langle\lambda-\sum\limits_{i\in I}c_i\alpha_i-\sum\limits_{j\in J_1}c'_j\alpha_j,X\rangle>0$, as required.\newline\newline
2) $J_1$ and $I$ are not connected by any edge in the Dynkin diagram (with $I$ possibly empty): Hence, the Dynkin subdiagrams on $K, J_1\setminus K\text{ and }I$ are pairwise disconnected. So, any vector in $V_{\mu_0}$ can be expressed in the form $\sum\limits_{p=1}^{l}F_pG_pH_p m_{\lambda}$ for some $F_p\in U(\mathfrak{n}_I^-)$, $G_p\in U(\mathfrak{n}_{J_1\setminus K}^-)$, and $H_p\in U(\mathfrak{n}_{K}^-)$, $1\leq p\leq l$. As $\height_{K}(\lambda-\mu_0)>0$, in the previous line each $H_p$ may be assumed to be a non-zero (and also non-scalar) element of the direct sum of the graded pieces $U(\mathfrak{n}^-_K)_{\nu}$ of $U(\mathfrak{n}^-_K)$ where $\nu\in(\mathbb{Z}_{\leq 0}\Pi_K)\setminus\{0\}$.
Next, if $\langle\lambda,\alpha_{j''}^{\vee}\rangle=0$ for some $j''\in J$, then $f_{j''}m_{\lambda}=0$, as $M(\lambda,J)$ is $\mathfrak{g}_J$-integrable.\\
In view of the previous three lines, observe that $\langle\lambda,X\rangle>0$. Finally, $\langle\alpha_j,X\rangle\leq 0$ $\forall$ $j\in J_1$, $\langle\alpha_i,X\rangle=0$ $\forall$ $i\in I$ (when $I\neq\emptyset$) and the previous line together imply $\langle\mu,X\rangle=\langle\lambda-\sum\limits_{i\in I}c_i\alpha_i-\sum\limits_{j\in J_1}c'_j\alpha_j,X\rangle>0$, once again as required. Hence, the proof of (\ref{thmD}2) is complete.
    \end{proof}
    \begin{remark}\label{R5.2}
    With the notation as in (\ref{thmD}2), observe that in the above proof of (\ref{thmD}2) we only made use of the $\mathfrak{g}_J$-integrability of $M(\lambda,J)$, which is a consequence of the $\mathfrak{g}_J$-integrability of $L^{\max}_J(\lambda)$. Recall that it is not known if $L^{\max}_J(\lambda)$ over a general Kac--Moody algebra $\mathfrak{g}_J$ is simple. Let $L'_J(\lambda)$ be an integrable highest weight $\mathfrak{g}_J$-module with highest weight $\lambda$, and define $N:=U(\bigoplus\limits_{\beta\in\Delta^{-} \setminus\Delta_J^-}\mathfrak{g}_{\beta})\otimes L'_J(\lambda)$. Note that $N$ is a $\mathfrak{g}$-module, and $L'_J(\lambda)$ is a quotient of $L^{\max}_J(\lambda)$. Recall by the explicit description of the set of weights of an integrable highest weight module over a Kac--Moody algebra in \cite[Proposition 11.2]{Kac}, that $\wt L'_J(\lambda)=\wt L^{\max}_J(\lambda)$. This implies $\wt N=\wt M(\lambda,J)$. Thus, the above proof of (\ref{thmD}2) also holds for the module $\mathfrak{g}$-module $N$.
    \end{remark}
    It is natural to ask if Theorem \ref{thmD} holds true ``at the level of weight vectors'', made precise below, and similar to the parabolic-PSP. The following remark provides a negative answer. 
    \begin{remark}\label{R5.3}
    A strengthened version of Theorem \ref{thmD} one would naturally expect at the level of weight vectors does not hold true. More precisely, if $\mu_0\precneqq\mu\in\wt V$, then there need not exist a non-zero $\nu\in V_{\mu}$, and $f_{i_j}\in\mathfrak{g}_{-\alpha_{i_j}}$, $1\leq j\leq n=\height(\mu-\mu_0)$, such that \[\mu-\sum\limits_{j=1}^{n}\alpha_{i_j}=\mu_0\text{ and }0\neq\prod\limits_{j=1}^{n}f_{i_{j}}\nu\in V_{\mu_0}.\]
    This can be easily checked for $V$ the adjoint representation when $\mathfrak{g}=\mathfrak{sl}_{4}(\mathbb{C})$ and $\mathcal{I}=\{1,2,3\}$, where the nodes 1 and 3 are the leaves in the Dynkin diagram, for the pair $(-\alpha_1) \prec \alpha_3\in \wt \mathfrak{g}$. \end{remark}
    It will be interesting to investigate if Theorem \ref{thmD} holds true, even in finite type, for: (1) non-integrable highest weight modules whose set of weights are not those of any parabolic Verma module and (2) more general weight modules which are not necessarily highest weight modules, for example modules in the category $\mathcal{O}$ over $\mathfrak{g}$.\\
    For instance, Corollary \ref{C5.5} below proves Theorem \ref{thmD} for any integrable module (which need not be a highest weight module, or need not have finite-dimensional weight spaces) over semisimple $\mathfrak{g}$.
    
    We end this subsection by extending Theorem \ref{thmD} (\ref{thmD}2), which is for the set of weights of highest weight modules, to any saturated subset $U$ over semisimple $\mathfrak{g}$. Let $\Lambda$ be the weight lattice. Recall that $U\subset\Lambda$ is a saturated subset of $\Lambda$ if for every $\mu\in U$ and $\alpha \in \Pi$, $\mu-t\alpha \in U$ $\forall$ $t= 0,\ldots,\langle\mu,\alpha^{\vee}\rangle$, see \cite[\S 13.4]{Hump}. 
		\begin{lemma}\label{L5.4}
			Let $\mathfrak{g}$ be semisimple and $U$ be a saturated subset, and suppose $\mu_0 \prec\mu\in U$. Then there exists a sequence of weights $\mu_i\in U$, $1\leq i\leq n=\height(\mu-\mu_0), $ such that
			\[
			\mu_0 \prec \cdots \prec \mu_i \prec \cdots \prec \mu_n=\mu\in U \quad\text{and}\quad \mu_i-\mu_{i-1}\in \Pi\text{ }\forall i.
			\]
		\end{lemma}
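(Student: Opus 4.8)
The plan is to induct on $n=\height(\mu-\mu_0)$. If $n=0$ there is nothing to prove, and if $n=1$ then $\mu-\mu_0\in\Pi$, so $\mu_0\prec\mu$ is already the desired chain. For the inductive step, with $n\ge 2$, it suffices to exhibit a simple root $\alpha_{j_0}$ with $\alpha_{j_0}\preceq\mu-\mu_0$ such that either $\mu-\alpha_{j_0}\in U$ or $\mu_0+\alpha_{j_0}\in U$. Indeed, in the first case the pair $\mu_0\prec\mu-\alpha_{j_0}$ lies in $U$, is comparable (since $(\mu-\alpha_{j_0})-\mu_0=(\mu-\mu_0)-\alpha_{j_0}\in\mathbb{Z}_{\ge0}\Pi$), and has height $n-1$, so the induction hypothesis yields a chain in $U$ from $\mu_0$ to $\mu-\alpha_{j_0}$ with simple-root steps, to which we append the step from $\mu-\alpha_{j_0}$ to $\mu$; in the second case we argue symmetrically, applying the induction hypothesis to $\mu_0+\alpha_{j_0}\prec\mu$ and prepending the step from $\mu_0$ to $\mu_0+\alpha_{j_0}$.

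To locate $\alpha_{j_0}$, write $\beta:=\mu-\mu_0=\sum_{j\in J}c_j\alpha_j$ with $J:=\supp(\beta)\neq\emptyset$ and all $c_j\in\mathbb{Z}_{>0}$. Since $\mathfrak{g}$ is semisimple, the invariant form $(\cdot,\cdot)$ is positive definite on the real span of the roots, so $0<(\beta,\beta)=\sum_{j\in J}c_j(\alpha_j,\beta)$; as every $c_j$ is positive, some $j_0\in J$ has $(\alpha_{j_0},\beta)>0$, equivalently $\langle\beta,\alpha_{j_0}^\vee\rangle>0$. Because $j_0\in\supp(\beta)$ we have $c_{j_0}\ge 1$ and hence $\alpha_{j_0}\preceq\beta$, and from $\langle\mu,\alpha_{j_0}^\vee\rangle-\langle\mu_0,\alpha_{j_0}^\vee\rangle=\langle\beta,\alpha_{j_0}^\vee\rangle>0$ we get $\langle\mu,\alpha_{j_0}^\vee\rangle>0$ or $\langle\mu_0,\alpha_{j_0}^\vee\rangle<0$. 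If $\langle\mu,\alpha_{j_0}^\vee\rangle>0$, then $\mu-\alpha_{j_0}\in U$ directly from the definition of a saturated set (the case $t=1$). If instead $\langle\mu_0,\alpha_{j_0}^\vee\rangle<0$, recall that a saturated set is stable under $W$ by \cite[\S 13.4]{Hump}: writing $m:=-\langle\mu_0,\alpha_{j_0}^\vee\rangle\ge 1$ we have $s_{j_0}\mu_0=\mu_0+m\alpha_{j_0}\in U$, and since $\langle s_{j_0}\mu_0,\alpha_{j_0}^\vee\rangle=m>0$, the saturation condition (with $t=m-1$) gives $\mu_0+\alpha_{j_0}=s_{j_0}\mu_0-(m-1)\alpha_{j_0}\in U$. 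In either alternative the required simple root has been produced, and the induction closes.

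The only delicate point is the ``going up'' alternative $\langle\mu_0,\alpha_{j_0}^\vee\rangle<0$: here one genuinely needs the full $W$-stability of $U$ (equivalently, that for every $\mu\in U$ and every root $\alpha$ the entire $\alpha$-string through $\mu$ lies in $U$), not merely the one-sided string condition in the direction of $-\Pi$. Choosing $j_0$ inside $\supp(\mu-\mu_0)$ via positive-definiteness of the Killing form, and the bookkeeping of heights under the recursion, are routine.
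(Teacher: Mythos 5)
Your proof is correct and follows essentially the same route as the paper: induction on $\height(\mu-\mu_0)$, using positive definiteness of the invariant form to produce a simple root $\alpha\preceq\mu-\mu_0$ with $\langle\mu,\alpha^\vee\rangle>0$ or $\langle\mu_0,\alpha^\vee\rangle<0$, and then descending (resp.\ ascending) by one simple-root step inside $U$. The only cosmetic difference is in the ``going up'' case, where you derive $\mu_0+\alpha\in U$ from $W$-stability of saturated sets and a string argument, while the paper reads it off directly from the saturation condition (whose index $t$ ranges over the signed interval between $0$ and $\langle\mu_0,\alpha^\vee\rangle$); both are valid.
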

		\begin{proof}
			We prove the lemma by induction on $\height(\mu -\mu_0)\geq 1$. In the base step $\height(\mu-\mu_0)=1$, there is nothing to prove. To show the induction step, recall that when $\mathfrak{g}$ is semisimple the symmetric invariant (Killing) form on $\mathfrak{h}^*$ is positive definite, i.e. $(x,x)>0$ $\forall$ $0\neq x\in\mathfrak{h}^*$. Now let $\mu -\mu_0 = \sum\limits_{i\in \mathcal{I}}c_i \alpha_i$ for some $c_i\in\mathbb{Z}_{\geq0}$ and consider $0<(\mu -\mu_0 ,\mu -\mu_0 )= (\sum_{i\in \mathcal{I}}c_i \alpha_i ,\mu ) - (\sum_{i\in\mathcal{I}}c_i \alpha_i , \mu_0)$. By the previous line, we must have 
			\begin{align*}
			&\text{either } (\sum_{i\in\mathcal{I}}c_i \alpha_i ,\mu )>0\text{ or } (\sum_{i\in\mathcal{I}}c_i\alpha_i,\mu_0)<0\\
			\implies& \langle\mu,\alpha^{\vee}\rangle>0\text{ or resp. } \langle\mu_0,\alpha^{\vee}\rangle<0 \text{ for some simple root }\alpha \prec\mu-\mu_0\\
			\implies &\mu_0\prec\mu -\alpha\in U \text{ or resp. }\mu_0+\alpha\prec \mu\in U \text{ (by the definition of $U$)}.
			\end{align*}
			The induction hypothesis then completes the proof.  
		\end{proof}
		\begin{cor}\label{C5.5}
		\begin{itemize}
			\item[1)] Theorem \ref{thmD} for finite-dimensional simple modules, proved by S. Kumar, now holds true for any integrable module over semisimple $\mathfrak{g}$.
			\item[2)] Let $\mathfrak{g}$ be a Kac--Moody algebra, $J\subset\mathcal{I}$. Suppose $\mathfrak{g}_J$ is semisimple and $M$ is a finite-dimensional $\mathfrak{g}_J$ (or $\mathfrak{p}_J$)-module. Then:
			\begin{itemize}
			\item[(a)] The $\mathfrak{g}$-module $N:=U(\mathfrak{g})\otimes_{U(\mathfrak{p}_J)}M$, defined analogous to $M(\lambda,J)$, is $\mathfrak{g}_J$-integrable.
			\item[(b)] Theorem \ref{thmD} holds true for $\wt N$.
			\end{itemize}
			\end{itemize}
		\end{cor}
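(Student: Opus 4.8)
The plan is to obtain both parts as quick consequences of Lemma \ref{L5.4} and of Theorem \ref{thmD} (\ref{thmD}2) together with standard facts about integrable modules over (semi)simple Lie algebras. For part (1), let $M$ be an integrable $\mathfrak{g}$-module over semisimple $\mathfrak{g}$, and fix $\mu_0 \precneqq \mu \in \wt M$. The key observation is that $\wt M$ is a saturated subset of the weight lattice $\Lambda$: integrability means $e_i, f_i$ act locally nilpotently, so for each $i$ and each weight $\nu$ the $\mathfrak{sl}_{\alpha_i}$-module $U(\mathfrak{sl}_{\alpha_i}) M_\nu$ is a sum of finite-dimensional $\mathfrak{sl}_2$-modules, and $\mathfrak{sl}_2$-theory forces the whole $\alpha_i$-string $\nu - t\alpha_i$, $0 \le t \le \langle \nu, \alpha_i^\vee\rangle$, to lie in $\wt M$ (one need not assume finite-dimensional weight spaces, since nonvanishing of a single $M_\nu$ propagates along the string). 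Once $\wt M$ is known to be saturated, Lemma \ref{L5.4} applied to $U = \wt M$ immediately produces the desired chain $\mu_0 \prec \cdots \prec \mu_n = \mu$ in $\wt M$ with consecutive differences in $\Pi$, which is exactly the conclusion of Theorem \ref{thmD} for $M$. This subsumes S. Kumar's case of finite-dimensional simple modules.

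For part (2)(a), recall $N := U(\mathfrak{g}) \otimes_{U(\mathfrak{p}_J)} M$ where $M$ is a finite-dimensional $\mathfrak{g}_J$- (equivalently $\mathfrak{p}_J$-) module and $\mathfrak{g}_J$ is semisimple. As in Definition \ref{D2.1}(3) and point (a) of the proof of (\ref{thmD}2), the PBW theorem gives $N \simeq U\big(\bigoplus_{\beta \in \Delta^- \setminus \Delta_J^-} \mathfrak{g}_\beta\big) \otimes M$ as a $\mathfrak{g}_J$-module, and $\bigoplus_{\beta \in \Delta^- \setminus \Delta_J^-}\mathfrak{g}_\beta$ is a $\mathfrak{g}_J$-submodule (under the adjoint action) which is a sum of finite-dimensional $\mathfrak{g}_J$-modules — indeed each graded piece $\mathfrak{g}_{J^c,n}$, $n < 0$, is $\mathfrak{g}_J$-stable and $\wt_J$-finite in each $\mathfrak{h}_J^*$-eigenspace because $\mathfrak{g}_J$ is semisimple, so $e_j, f_j$ act locally nilpotently on it. Hence $N$, being (as a $\mathfrak{g}_J$-module) a tensor product of two $\mathfrak{g}_J$-integrable modules, is itself $\mathfrak{g}_J$-integrable: $e_j$ and $f_j$ act locally nilpotently on $N$ for all $j \in J$. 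For part (2)(b), one reruns the proof of Theorem \ref{thmD} (\ref{thmD}2) verbatim: as Remark \ref{R5.2} emphasizes, that argument used only the $\mathfrak{g}_J$-integrability of $M(\lambda,J)$ (and the PBW/torsion-freeness facts (a),(b)), all of which hold here for $N$. So the same induction on $\height(\mu - \mu_0)$ — splitting off a simple root $\alpha \prec \mu - \mu_0$ with either $\mu - \alpha \in \wt N$ or $\mu_0 + \alpha \in \wt N$, using $\mathfrak{sl}_2$-theory via $\mathfrak{g}_J$-integrability when the support of $\mu - \mu_0$ lies in $J$, and using $U(\mathfrak{n}^-_{J^c})$-torsion-freeness otherwise — goes through unchanged.

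The only mildly delicate point, and the one I would write out with care, is the claim in part (1) that $\wt M$ is saturated \emph{without} any finite-dimensionality hypothesis on weight spaces: one must check that local nilpotence of $f_i$ on all of $M$ (not just on a highest weight space) already guarantees that for $0 \neq v \in M_\nu$ with $\langle \nu, \alpha_i^\vee \rangle > 0$ the vector $f_i v$ is nonzero, equivalently that $M_\nu$ contains a vector not killed by $f_i$; this follows because $U(\mathfrak{sl}_{\alpha_i}) v$ is a nonzero finite-dimensional $\mathfrak{sl}_2$-module whose $\nu$-weight space is nonzero, and in a finite-dimensional $\mathfrak{sl}_2$-module every weight vector of strictly positive weight is not annihilated by the lowering operator $f$. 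Everything else is bookkeeping with PBW and the already-proved results, so I expect no real obstacle beyond stating these reductions cleanly.
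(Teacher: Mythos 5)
Your proposal is correct and follows essentially the same route as the paper, which disposes of part (1) by observing that the weights of an integrable module form a saturated set and invoking Lemma \ref{L5.4}, calls 2(a) a trivial check, and proves 2(b) by ``combining the proofs of (\ref{thmD}2) and Lemma \ref{L5.4}.'' The one place where your write-up is slightly too optimistic is the claim that the proof of (\ref{thmD}2) reruns \emph{verbatim} for 2(b): that proof is not purely a consequence of $\mathfrak{g}_J$-integrability plus torsion-freeness, since in the sub-case where $\supp(\mu-\mu_0)\subset J$ and $\langle\mu-\mu_0,\alpha_j^{\vee}\rangle\leq 0$ for all $j$ it expresses vectors of $V_{\mu_0}$ as $\sum_p F_pG_pH_p m_{\lambda}$ and uses $\langle\lambda,X\rangle>0$, i.e.\ the highest weight vector $m_{\lambda}$, which does not exist for a general finite-dimensional $M$. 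The correct fix — and the reason the paper says ``combining'' — is that for semisimple $\mathfrak{g}_J$ this sub-case is vacuous: positive-definiteness of the invariant form on $\mathbb{R}\Pi_J$ gives $(\mu-\mu_0,\mu-\mu_0)>0$, hence some $j\in\supp(\mu-\mu_0)$ with $\langle\mu,\alpha_j^{\vee}\rangle>0$ or $\langle\mu_0,\alpha_j^{\vee}\rangle<0$, exactly as in Lemma \ref{L5.4}; with that substitution your argument (torsion-freeness when $\supp(\mu-\mu_0)\cap J^c\neq\emptyset$, integrability plus the form otherwise) goes through. Your careful treatment of saturatedness without finite-dimensional weight spaces in part (1) is a welcome addition that the paper leaves implicit.
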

		\begin{proof}
		1) holds by Lemma \ref{L5.4}, as the set of weights of an integrable module is a saturated subset. 2) (a) is a trivial check. 2) (b) can be proved by combining the proofs of (\ref{thmD}2) and Lemma \ref{L5.4}.
		\end{proof}
		  	  \begin{remark}\label{R5.6}
		  	  	Observe that in this paper we have answered Question \ref{Q2} of Khare for a large class of highest weight modules, which contains all simple highest weight modules, over general Kac--Moody algebras. However, Question \ref{Q2} still stands open for general highest weight modules even in the semisimple case. In this paper, we essentially worked with the integrability and the formulas for the set of weights of (parabolic Verma and) simple highest weight modules given by \cite{Dhillon_arXiv}. Working with the integrabilities of the Jordan--H{\"o}lder factors of a highest weight module, it might be possible to extend Theorem \ref{thmD} for all/more highest weight modules at least in the semisimple case. 
		  	  \end{remark} 
	\subsection{Moving between comparable weights in steps of $\Delta_{I,1}$ in the semisimple case}\label{S5.2}
	In this subsection, we discuss the ``parabolic'' generalizations of parts (\ref{thmD}1) and (\ref{thmD}2) of Theorem \ref{thmD}---i.e. moving between comparable roots and weights in steps of $\Delta_{I,1}$. See Proposition \ref{P5.8} below. As a ``warmup'' to our main result in this subsection, we first note the following immediate consequence of Corollary \ref{C3.2} applied to $U(\mathfrak{n}^-)$.
		\begin{prop}\label{P5.7}
			Let $\mathfrak{g}$ be a Kac--Moody algebra, $\lambda\in\mathfrak{h}^*$, $\emptyset\neq I\subset\mathcal{I}$, and $M(\lambda)\twoheadrightarrow V$. Suppose $\mu\prec\lambda\in\wt V$ such that $n=\height_I(\lambda-\mu)>0$. Then there exist sequences of weights $\mu_i$ and $\mu'_i\in\wt V$ such that 
	       \[ \text{a) }\mu=\mu_0\prec\cdots\prec\mu_i\prec\cdots\prec\mu_n\preceq \lambda\text{ and }\mu_i-\mu_{i-1}\in\Delta_{I,1} \text{ }\forall\text{ }1\leq i\leq n.\]
		    \[\text{b) }\mu\preceq \mu'_0\prec\cdots\prec\mu'_i\prec\cdots\prec\mu'_n=\lambda\text{ and }\mu'_i-\mu'_{i-1}\in\Delta_{I,1}\text{ }\forall\text{ }1\leq i\leq n.\]
		  \end{prop}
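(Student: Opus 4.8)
The plan is to deduce both chains directly from Corollary~\ref{C3.2}(a), applied to the Lie algebra $\mathfrak n^-$ regarded as $\mathbb Z_{\ge0}\Pi'$-graded with $\Pi'=\{-\alpha_i\mid i\in\mathcal I\}$, together with (F2). First I would dispose of the degenerate case $I=\mathcal I$: there $\Delta_{I,1}=\Pi$ and $I^c=\emptyset$, so the assertion reduces to the statement that the weight spaces of $V$ are spanned by monomials $f_{i_1}\cdots f_{i_n}v_\lambda$ acting on a highest weight vector $v_\lambda\in V_\lambda$, which is (F2); the two chains are then both obtained from the final segments of such a monomial of weight $\mu-\lambda$ acting nonzero on $v_\lambda$. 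So for the rest assume $\emptyset\ne I\subsetneq\mathcal I$, which is exactly the hypothesis under which Corollary~\ref{C3.2}(a) applies.

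Under the above identification, Corollary~\ref{C3.2}(a) says that $U(\mathfrak n^-)$ is spanned by words of root vectors $x_\eta\in\mathfrak g_\eta$, $\eta\in\Delta^-$, in which all factors with $\eta\in\Delta^-_{I^c}:=\Delta_{I,0}\cap\Delta^-$ are grouped on one side of all factors with $\eta\in\Delta_{I,-1}$; crucially, \emph{both} relative orderings of these two groups furnish such a spanning set. I would use the ordering ``all $\Delta_{I,-1}$-factors to the left of all $\Delta^-_{I^c}$-factors'' for part~(a) and the opposite ordering for part~(b).

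For part~(a): fix $v_\lambda\in V_\lambda$. Since $\mu\in\wt V$ and $V_\mu=\big(U(\mathfrak n^-)v_\lambda\big)_\mu\ne\{0\}$, among the spanning words of weight $\mu-\lambda$ in this ordering there is one, say $M$, with $Mv_\lambda\ne0$; comparing $I$-heights, $M$ has exactly $n=\height_I(\lambda-\mu)$ factors from $\Delta_{I,-1}$ and some number of factors from $\Delta^-_{I^c}$. Every final segment of $M$ still acts nonzero on $v_\lambda$, so the weights of all these vectors lie in $\wt V$: the $\Delta^-_{I^c}$-factors first carry $v_\lambda$ to a weight $\mu_n\preceq\lambda$ with $\height_I(\lambda-\mu_n)=0$, and then each of the $n$ factors from $\Delta_{I,-1}$ lowers the current weight by a root in $\Delta_{I,1}$, ending at $\mu$. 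Listing these weights from $\mu$ upward yields $\mu=\mu_0\prec\cdots\prec\mu_n\preceq\lambda$ with $\mu_i-\mu_{i-1}\in\Delta_{I,1}$, as required. Part~(b) is the mirror image: with the opposite ordering the $\Delta_{I,-1}$-factors act first, producing a chain of weights in $\wt V$ that descends from $\lambda$ to a weight $\mu'_0$ with $\height_I(\lambda-\mu'_0)=n$ in $n$ steps of $\Delta_{I,1}$, after which the $\Delta^-_{I^c}$-factors carry $\mu'_0$ down to $\mu$, whence $\mu\preceq\mu'_0$; this gives $\mu\preceq\mu'_0\prec\cdots\prec\mu'_n=\lambda$ with $\mu'_i-\mu'_{i-1}\in\Delta_{I,1}$.

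I do not expect a genuine obstacle: the statement is essentially a repackaging of Corollary~\ref{C3.2}(a). The only points needing care are bookkeeping ones — translating the $\Pi'$-grading of $\mathfrak n^-$ and its sign conventions into statements about roots of $\mathfrak g$ (so grade-one pieces correspond to $\Delta_{I,-1}$ and grade-zero pieces to $\Delta^-_{I^c}$), selecting the correct one of the two orderings in Corollary~\ref{C3.2}(a) for each of (a) and (b), and noting that a word from the spanning set — not merely some element of $U(\mathfrak n^-)$ — can be taken to act nontrivially on $v_\lambda$, which holds because the spanning words of a fixed weight already span the corresponding weight space of $V$.
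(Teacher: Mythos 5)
Your proof is correct and is essentially the paper's own argument: the paper derives Proposition \ref{P5.7} as an immediate consequence of Corollary \ref{C3.2}(a) applied to $U(\mathfrak{n}^-)$, exactly as you do, with the two orderings of the $\Delta_{I,-1}$- and $\Delta^-_{I^c}$-factors yielding parts (a) and (b) respectively. Your explicit handling of the case $I=\mathcal{I}$ via (F2) and your remark that a nonvanishing spanning monomial of the given weight must exist are the right bookkeeping points and are consistent with what the paper leaves implicit.
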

		  	Notice that Proposition \ref{P5.7} discusses ``moving between comparable weights in steps of $\Delta_{I,1}$'' for arbitrary for $M(\lambda)\twoheadrightarrow V$ over Kac--Moody $\mathfrak{g}$ for the pair $\mu_0\prec\lambda\in \wt V$, and this cannot be generalized to $\mu_0\prec\mu\in\wt V$ for $\mu\precneqq \lambda$. Importantly, note the inequalities in the starting/ending of the chains in the statement of Proposition \ref{P5.7} and also that of all the results of this subsection. These two lines will be justified in parts (ii) and (i) of Remark \ref{R5.11} (1), respectively.
		
		We now state the main result of this subsection, which is also an improvement of the above proposition in the semisimple case.
		\begin{prop}\label{P5.8}
		Let $\mathfrak{g}$ be semisimple, $\emptyset\neq I\subset\mathcal{I}$, and $\beta \prec \beta' \in \wt\mathfrak{g}=\Delta \sqcup \{0\}$ such that $n=\height_{I}(\beta'-\beta) >0$.
		\begin{itemize}
		\item[a)] If either $\beta\succeq 0$, or $\beta\prec 0$ and $\height_{I}(\beta)< 0$, then there exists $\beta_{i}\in \wt\mathfrak{g}$ such that \[\beta=\beta_0\prec \cdots\prec\beta_i\prec\cdots\prec\beta_n\preceq \beta' \in \wt\mathfrak{g}\quad \text{and}\quad \beta_i-\beta_{i-1}\in\Delta_{I,1}\text{ } \forall\text{ } 1\leq i\leq n.\] 
		\item[b)] If either $\beta'\preceq 0$, or $\beta'\succ 0$ and $\height_{I}(\beta')> 0$, then there exists $\beta_{i}' \in \wt\mathfrak{g}$ such that \[\beta\preceq\beta'_0\prec\cdots\prec\beta'_i\prec\cdots\prec\beta_n'=\beta' \in \wt\mathfrak{g}\quad\text{and}\quad \beta'_i-\beta'_{i-1}\in\Delta_{I,1}\text{ }\forall\text{ } 1\leq i \leq n.\]
		\end{itemize} 
	\end{prop}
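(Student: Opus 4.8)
The plan is to derive (b) from (a) via the order- and sign-reversing symmetry $(\beta,\beta')\mapsto(-\beta',-\beta)$: since $-\Delta=\Delta$, this swaps the two hypotheses in (a) and (b) (the bookkeeping being $\height_I(-\beta')=-\height_I(\beta')$), carries a $\Delta_{I,1}$-chain for $(-\beta',-\beta)$ into a $\Delta_{I,1}$-chain for $(\beta,\beta')$ read in reverse order, and leaves $n$ unchanged. One may also assume $\mathfrak g$ is simple, treating each indecomposable component of $\Delta$ separately as in Remark \ref{R3.10}(2); then $\mathfrak g\cong L(\theta)$ with $\theta$ the highest root. For (a) I would induct on $n=\height_I(\beta'-\beta)$; the inductive step reduces to producing one root $\gamma\in\Delta_{I,1}$ with $\gamma\preceq\beta'-\beta$, $\beta+\gamma\in\wt\mathfrak g$, and --- when $n\geq 2$ --- with $\beta+\gamma$ again satisfying one of the two hypotheses of (a), since then $\height_I(\beta'-(\beta+\gamma))=n-1$ and we recurse. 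When $n=1$, any such $\gamma$ already finishes the chain, because $\beta+\gamma\preceq\beta+(\beta'-\beta)=\beta'$. Note the hypotheses of (a) partition $\beta$ into the four cases $\beta=0$, $\beta\succ 0$, $\beta\prec 0$ with $\height_I(\beta)\leq -2$, and $\beta\prec 0$ with $\height_I(\beta)=-1$.

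The case $\beta=0$ is immediate: $\beta'\in\Delta^+$, and the parabolic-PSP (Theorem \ref{thmA} part (\ref{thmA}1)) writes $\beta'=\gamma_1+\cdots+\gamma_n$ with $\gamma_i\in\Delta_{I,1}$ and every partial sum a root, which is exactly the desired chain. In the regimes $\beta\succ 0$ (where any admissible $\gamma$ keeps $\beta+\gamma\in\Delta^+$) and $\beta\prec 0$ with $\height_I(\beta)\leq -2$ (where any admissible $\gamma$ keeps $\beta+\gamma\prec 0$ with $\height_I<0$), the hypothesis propagates automatically, so only the existence of $\gamma$ is at stake. For this I would begin from Theorem \ref{thmD} part (\ref{thmD}1) applied to the adjoint module $\mathfrak g$: it supplies a chain $\beta=\delta_0\prec\cdots\prec\delta_m=\beta'$ in $\wt\mathfrak g$ with $\delta_k-\delta_{k-1}\in\Pi$, and a quick sign check shows this chain stays in $\Delta^+$ when $\beta\succ 0$ and in $\Delta^-$ when $\beta'\prec 0$ (a $\delta_k$ equal to $0$, or of the wrong sign, would have to lie above $\beta'$ or below $\beta$). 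Locating in it the first step that belongs to $\Pi_I$, say $\delta_{k_1}=\delta_{k_1-1}+\alpha_{i^*}$ with $\delta_{k_1-1}-\beta\in\mathbb Z_{\geq 0}\Pi_{I^c}$, one must then replace the difference $\delta_{k_1}-\beta$ --- which has $\height_I$ equal to $1$ but need not itself be a root --- by a genuine root $\gamma\in\Delta_{I,1}$ with $\gamma\preceq\delta_{k_1}-\beta$ and $\beta+\gamma\in\wt\mathfrak g$. This ``coarsening'' is the technical core, and I would carry it out by a secondary induction on $\height(\beta'-\beta)$ together with a Killing-form/$\mathfrak{sl}_2$ argument in the spirit of Step 2 of the proof of (\ref{thmD}1) and of the proof of Proposition \ref{P3.9}: pass to the finite-type subsystem on $\supp(\delta_{k_1}-\beta)$, use positive-definiteness of the form and Kac's classification \cite[Theorem 4.3]{Kac} to pick out, inside the $\Pi_{I^c}$-block, a node hooked onto $\alpha_{i^*}$, and build $\gamma=\alpha_{i^*}+(\text{connected }I^c\text{-tail})$ while certifying $\beta+\gamma\in\wt\mathfrak g$ along the relevant root strings, where Observation \ref{O3.8}-type bounds control the Cartan pairings.

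The expected main obstacle --- apart from that coarsening step itself --- is the remaining regime $\height_I(\beta)=-1$ (equivalently $-\beta\in\Delta_{I,1}$): here adding a root of $\Delta_{I,1}$ can land on a negative root supported entirely in $I^c$, precisely the configuration excluded from the hypotheses of (a), so a naive induction stalls. The remedy is to use instead the special step $\gamma=-\beta\in\Delta_{I,1}$, landing at $0$, which keeps us inside the hypothesis; this is allowed exactly when $0\preceq\beta'$, and one observes that this holds automatically whenever $n\geq 2$ (then $\height_I(\beta')=n-1\geq 1$ forces $\beta'\succeq 0$) and whenever $n=1$ with $\beta'\succeq 0$. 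The only truly delicate residual configuration is $n=1$, $\height_I(\beta)=-1$, $\beta'\in\Delta^-\cap\mathbb Z\Pi_{I^c}$; writing $\beta'-\beta=\alpha_{i^*}+\xi$ with $\xi\in\mathbb Z_{\geq 0}\Pi_{I^c}$, one handles it directly: if $\langle\beta,\alpha_{i^*}^\vee\rangle<0$ then $\gamma=\alpha_{i^*}$ works, since the $\alpha_{i^*}$-string through $\beta$ is unbroken in finite type and $\beta+\alpha_{i^*}\preceq\beta'$; if $\langle\beta,\alpha_{i^*}^\vee\rangle\geq 0$ one enlarges $\alpha_{i^*}$ by an $I^c$-tail exactly as in the previous paragraph. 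Assembling the four regimes, concatenating with the inductive hypothesis, and then passing to (b) via the symmetry, completes the proof.
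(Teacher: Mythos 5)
Your reduction of (b) to (a) via $(\beta,\beta')\mapsto(-\beta',-\beta)$ is correct (the hypotheses and the chains transform exactly as you say), and your case analysis on the sign of $\beta$ --- including the special step $\gamma=-\beta$ when $\height_I(\beta)=-1$, the observation that $n\geq 2$ forces $\beta'\succ 0$ there, and the use of the parabolic-PSP when an endpoint is $0$ --- reproduces faithfully what the paper does in cases 1), 3) and 4) of its proof. The architecture is the same as the paper's: everything funnels into a single-step existence statement for a pair of comparable roots of the same sign.

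The genuine gap is that single-step statement itself: given $\beta\prec\beta'\in\Delta^+$ with $\height_I(\beta'-\beta)>0$, produce $\gamma\in\Delta_{I,1}$ with $\gamma\preceq\beta'-\beta$ and $\beta+\gamma\in\Delta$ (and its mirror, subtracting from $\beta'$). This is precisely the content of the paper's Lemmas \ref{L5.9} and \ref{L5.10}, which constitute the bulk of the section; they are proved by induction on $\height(\beta'-\beta)$, splitting on the sign of $(\beta'-\beta,\beta')$ versus $(\beta'-\beta,\beta)$, and --- in the degenerate subcase where $\height_I(\beta'-\beta)=1$ and no simple root can be added to $\beta$ or peeled off $\beta'$ --- by a Jacobi-identity computation with Chevalley basis vectors showing that a certain iterated bracket is non-zero, hence that $\beta'-\beta$ itself lies in $\Delta_{I,1}$ and $[\mathfrak g_{\beta'-\beta},e_\beta]\neq 0$. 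Your proposal defers exactly this to a sketch (``build $\gamma=\alpha_{i^*}+(\text{connected }I^c\text{-tail})$ \ldots certifying $\beta+\gamma\in\wt\mathfrak g$ along the relevant root strings''), and the sketch is not a proof: it is not clear that a root of that form exists with $\beta+\gamma\in\Delta$, the paper's argument does not construct $\gamma$ this way, and Observation \ref{O3.8} (which bounds pairings involving \emph{short} or \emph{long} roots and is used only for Proposition \ref{P3.9}) does not supply the needed control here. Until the coarsening step is actually carried out --- essentially, until you prove Lemma \ref{L5.9}/\ref{L5.10} --- the inductive step of (a) in the regimes $\beta\succ 0$ and $\beta\prec 0$ with $\height_I(\beta)\leq -2$, as well as your ``delicate residual configuration'' with $\langle\beta,\alpha_{i^*}^\vee\rangle\geq 0$, remains unproved.
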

	We first prove two preliminary lemmas needed in the proof of Proposition \ref{P5.8}. In the rest of this subsection, when $\mathfrak{g}$ is semisimple, we fix $\{e_{\alpha}\in\mathfrak{g}_{\alpha}$ $|$ $e_{\alpha}\neq 0$ and $\alpha\in\Delta\}\sqcup \{\alpha_i^{\vee}\}_{i\in\mathcal{I}}$ to be the Chevalley basis of $\mathfrak{g}$, and also we repeatedly use without mention the following fact. 
		\begin{fact} Let $\mathfrak{g}$ be semisimple,
		$\alpha\in\Pi$ and $\gamma\in\Delta$. If $\alpha+\gamma\in\Delta$, then $[e_{\alpha},\mathfrak{g}_{\gamma}]=\mathfrak{g}_{\alpha+\gamma}$.
		\end{fact}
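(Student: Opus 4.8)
The plan is to reduce the assertion to $\mathfrak{sl}_2$-theory applied to a single $\alpha$-root string. Since $\mathfrak{g}$ is semisimple, hence finite-dimensional, every root space is one-dimensional; fixing nonzero vectors $e_\gamma\in\mathfrak{g}_\gamma$ and $e_{\alpha+\gamma}\in\mathfrak{g}_{\alpha+\gamma}$, we have $[e_\alpha,\mathfrak{g}_\gamma]=\mathbb{C}[e_\alpha,e_\gamma]$, so the claim is equivalent to $[e_\alpha,e_\gamma]\neq 0$. First I would record that the hypothesis $\alpha+\gamma\in\Delta$ forces $\gamma\neq\pm\alpha$ (as neither $0$ nor $2\alpha$ lies in $\Delta$), so the $\alpha$-string through $\gamma$, namely $\{\,j\in\mathbb{Z}\,:\,\gamma+j\alpha\in\Delta\,\}=\{-p,\dots,q\}$ with $p,q\in\mathbb{Z}_{\ge 0}$, is an unbroken string of roots by the classical root-string lemma (e.g. \cite[\S 9.4]{Hump}), and moreover $q\ge 1$ since $\gamma+\alpha\in\Delta$.

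Next I would view $V:=\bigoplus_{j=-p}^{q}\mathfrak{g}_{\gamma+j\alpha}$ as a module over $\mathfrak{sl}_\alpha=\mathfrak{g}_{-\alpha}\oplus\mathbb{C}\alpha^\vee\oplus\mathfrak{g}_\alpha\simeq\mathfrak{sl}_2(\mathbb{C})$ under the adjoint action. Each summand $\mathfrak{g}_{\gamma+j\alpha}$ is one-dimensional of $\alpha^\vee$-weight $\langle\gamma,\alpha^\vee\rangle+2j$, so the weight multiset of $V$ is a contiguous arithmetic progression of step $2$ with all multiplicities equal to $1$, and it is symmetric about $0$ (being the weight multiset of a finite-dimensional $\mathfrak{sl}_2$-module); by $\mathfrak{sl}_2$-theory $V$ is therefore the irreducible module with these weights. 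On that module $\mathrm{ad}\,e_\alpha$ restricts to a linear isomorphism from each weight space onto the next higher one, except that it annihilates the top weight space $\mathfrak{g}_{\gamma+q\alpha}$. Since $q\ge 1$, the summand $\mathfrak{g}_\gamma$ is not the top one, so $\mathrm{ad}\,e_\alpha$ carries $\mathfrak{g}_\gamma$ isomorphically onto $\mathfrak{g}_{\gamma+\alpha}$; in particular $[e_\alpha,e_\gamma]\neq 0$ and $[e_\alpha,\mathfrak{g}_\gamma]=\mathfrak{g}_{\alpha+\gamma}$, as desired.

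There is no genuine obstacle here: the only non-routine input is the classical fact that root strings in finite type are unbroken (equivalently, that the $\mathfrak{sl}_\alpha$-module $V$ above is irreducible), and the sole point requiring care is excluding the degenerate cases $\gamma=\pm\alpha$, which the hypothesis $\alpha+\gamma\in\Delta$ rules out automatically. Alternatively, one may bypass the module picture entirely and simply invoke the Chevalley-basis structure-constant formula $[e_\alpha,e_\gamma]=\pm(p+1)\,e_{\alpha+\gamma}$, with $p\ge 0$ the largest integer such that $\gamma-p\alpha\in\Delta$ (see e.g. \cite[\S 25]{Hump}), which is manifestly nonzero precisely when $\alpha+\gamma\in\Delta$.
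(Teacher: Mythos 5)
Your argument is correct, and it is essentially the paper's proof written out in full: the paper simply cites \cite[Proposition 3.6 (iv)]{Kac} (that $e_i V_\lambda\neq 0$ whenever $\lambda,\lambda+\alpha_i$ are both weights of an integrable module, here the adjoint representation) together with one-dimensionality of root spaces, and that cited result is proved by exactly the unbroken-root-string / $\mathfrak{sl}_2$-theory argument you give. Your care in excluding $\gamma=\pm\alpha$ and your Chevalley-basis alternative are both fine but add nothing beyond the cited fact.
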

		Observe that this fact holds true by \cite[Proposition 3.6 (iv)]{Kac} and the basic fact that the root spaces of $\mathfrak{g}$ are one-dimensional.
	\begin{lemma}\label{L5.9}
		Let $\mathfrak{g}$ be semisimple, and $\emptyset \neq I \subset \mathcal{I}$. Suppose $\beta \prec \beta' \in \Delta^+$ such that $\height_{I}(\beta' -\beta)>0$. Then there exists a sequence of roots $\gamma_i \in \Delta_{I,1}$, $1\leq i\leq n=\height_{I}(\beta' -\beta)$, such that
		\[
			\Big[e_{\gamma_n},\big[\cdots,[e_{\gamma_1},e_{\beta}]\cdots\big]\Big] \neq 0\text{  and
			 }\beta \prec \cdots\prec \beta + \sum\limits_{j=1}^{i}\gamma_{j}\prec \cdots \prec \beta +\sum\limits_{j=1}^{n} \gamma_j \preceq \beta' \in \Delta\text{ }\forall i.
		\]
		\end{lemma}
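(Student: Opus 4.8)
The plan is to induct on $\height(\beta'-\beta) \geq 1$, with a base case that is essentially vacuous: when $\height(\beta'-\beta)=1$ the hypothesis $\height_I(\beta'-\beta)>0$ forces $\beta'-\beta \in \Pi_I$, hence $\beta'-\beta \in \Delta_{I,1}$, and we take $\gamma_1 = \beta'-\beta$; nonvanishing of $[e_{\gamma_1},e_\beta]$ follows from the displayed \textbf{Fact} that $[e_\alpha,\mathfrak{g}_\gamma]=\mathfrak{g}_{\alpha+\gamma}$ whenever $\alpha \in \Pi$ and $\alpha+\gamma \in \Delta$. For the inductive step, the key move is to produce a \emph{first} simple root to peel off. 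Since the Dynkin subdiagram on $\supp(\beta'-\beta)$ is connected and $\beta \prec \beta'$ are both positive roots, a counting/bilinear-form argument (exactly of the type used in the proof of (\ref{thmD}1), using $(\beta'-\beta,\beta'-\beta)>0$ in the semisimple case, or alternatively Corollary \ref{C5.1}/Corollary \ref{C3.5}) yields a simple root $\alpha \prec \beta'-\beta$ with $\beta+\alpha \in \Delta$ and $\alpha \in \supp(\beta'-\beta)$; moreover, by choosing $\alpha$ appropriately we can arrange that $\height_I\big(\beta' - (\beta+\alpha)\big) > 0$ still holds \emph{unless} $\alpha$ is the unique node of $I$-height contributing, a boundary case I will handle separately (see below). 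By the \textbf{Fact}, $\mathfrak{g}_{\beta+\alpha} = [e_\alpha, \mathfrak{g}_\beta]$, so $[e_\alpha,e_\beta]$ is a nonzero multiple of $e_{\beta+\alpha}$.

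With $\beta_1 := \beta+\alpha$ in hand, the induction hypothesis applied to $\beta_1 \prec \beta'$ furnishes roots $\delta_1,\dots,\delta_m \in \Delta_{I,1}$ ($m = \height_I(\beta'-\beta_1)$) with the telescoping chain property and with $\big[e_{\delta_m},[\cdots,[e_{\delta_1},e_{\beta_1}]\cdots]\big] \neq 0$. Now I need to reassemble these into a chain starting from $\beta$ in steps of $\Delta_{I,1}$. There are two sub-cases. If $\alpha \in \Pi_I$, then $\height_I(\beta'-\beta_1) = \height_I(\beta'-\beta) - 1 = n-1$, but $\alpha$ alone has $I$-height $1$, so I would like to absorb $\alpha$ by combining it with $\delta_1$: I claim one can choose the induction so that $\alpha + \delta_1 \in \Delta_{I,1}$ or, failing that, reorder. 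The cleaner route — and the one I would actually take — is to peel off a root of $\Delta_{I,1}$ directly rather than a simple root: reduce Lemma \ref{L5.9} to the statement ``there exists $\gamma \in \Delta_{I,1}$ with $\beta+\gamma \in \Delta$, $\beta+\gamma \preceq \beta'$, $[e_\gamma,e_\beta]\neq 0$, and $\height_I(\beta'-\beta-\gamma) = n-1$'', and then induct on $n = \height_I(\beta'-\beta)$ directly. To get such a $\gamma$: first use the finite-type PSP / Corollary \ref{C5.1} to write $\beta'-\beta$ as an ordered sum whose first summand $\gamma$ lies in $\Delta_{I,1}$ and satisfies $\beta+\gamma \in \Delta$; the nonvanishing $[e_\gamma,e_\beta] \neq 0$ is then automatic from the \textbf{Fact} (applied iteratively along a simple-root decomposition of $\gamma$, since the Lie word computing $e_{\beta+\gamma}$ from $e_\beta$ can be taken to end in $e_\beta$), and we recurse.

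The main obstacle is the \emph{nonvanishing of the full iterated Lie word}, not just of each individual bracket: a priori the composite $\ad(e_{\gamma_n})\circ\cdots\circ\ad(e_{\gamma_1})$ applied to $e_\beta$ could vanish even though each partial sum is a root. In finite type this is controlled because every root space is one-dimensional, so once we know the \emph{target} $\beta+\gamma_1+\cdots+\gamma_i$ is a root at every stage, we must verify that at no stage does the bracket land in the zero space — equivalently, that $[e_{\gamma_i}, e_{\beta+\gamma_1+\cdots+\gamma_{i-1}}] \neq 0$, which by the \textbf{Fact} (after decomposing $\gamma_i$ into simple roots and checking $\beta+\gamma_1+\cdots+\gamma_{i-1}+(\text{partial sum of }\gamma_i) \in \Delta$ throughout, using the parabolic-PSP inside $\Delta_{I,1}$, i.e.\ Lemma \ref{L3.4}/Corollary \ref{C3.5}) reduces to a chain of one-dimensional nonvanishing statements. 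So the real work is bookkeeping: choosing the $\gamma_i$ so that \emph{both} the outer chain (partial sums of the $\gamma_i$ added to $\beta$ stay roots and stay $\preceq \beta'$) \emph{and} the inner fine chain (each $\gamma_i$ itself, added simple-root-by-simple-root, keeps things roots so the Chevalley-basis brackets survive) are simultaneously valid — this is where Corollary \ref{C5.1} is the right tool, since it was designed to control exactly $\Delta_{\alpha,J}$-chains of this form.
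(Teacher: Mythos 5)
There is a genuine gap, and it sits exactly where the lemma's difficulty lies. Your final strategy reduces everything to the claim that one can ``use the finite-type PSP / Corollary \ref{C5.1} to write $\beta'-\beta$ as an ordered sum whose first summand $\gamma$ lies in $\Delta_{I,1}$ and satisfies $\beta+\gamma\in\Delta$.'' Neither cited result gives this: $\beta'-\beta$ is a difference of two roots and is in general \emph{not} a root, so the parabolic-PSP (Theorem \ref{T3.1}/\ref{thmA}) does not apply to it; and Corollary \ref{C5.1} concerns chains inside $\Delta_{\alpha,J}$ with $J\cap\supp(\alpha)=\emptyset$, which is not the configuration here. Indeed, $\beta'-\beta$ need not even be expressible as a sum of $n=\height_I(\beta'-\beta)$ elements of $\Delta_{I,1}$ (the $I^c$-part need not distribute over roots), which is precisely why the lemma only asserts $\beta+\sum_j\gamma_j\preceq\beta'$ rather than equality. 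Producing a single $\gamma\in\Delta_{I,1}$ with $\beta+\gamma\in\Delta$ and $\beta+\gamma\preceq\beta'$ \emph{is} the content of the lemma for $n=1$; it cannot be quoted as an input.

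Your first strategy has the same hole in a different guise: from $0<(\beta'-\beta,\beta'-\beta)$ you only get \emph{either} a simple $\alpha\prec\beta'-\beta$ with $(\alpha,\beta)<0$ (so $\beta+\alpha\in\Delta$, the easy case, which recurses cleanly) \emph{or} a simple $\alpha'\prec\beta'-\beta$ with $(\alpha',\beta')>0$ (so only $\beta'-\alpha'\in\Delta$ is guaranteed). You assume the first alternative throughout; the ``boundary case'' you flag is never handled, and the pivot to the $\Delta_{I,1}$-peeling argument does not resolve it. The second alternative is where the paper does its real work: when descending from $\beta'$ eventually forces $\height_I(\beta'-\alpha')=\height_I(\beta)$, i.e.\ $\height_I(\beta'-\beta)=1$ with $\alpha'\in\Pi_I$, the paper invokes Theorem \ref{thmD} (\ref{thmD}1) to build a nonzero Lie word from $e_\beta$ to $\mathfrak{g}_{\beta'}$ through simple roots in $\Pi_{I^c}$ and then uses repeated Jacobi identities to conclude that $\beta'-\beta$ itself lies in $\Delta_{I,1}$ with $[\mathfrak{g}_{\beta'-\beta},e_\beta]\neq\{0\}$. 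Nothing in your proposal substitutes for that step. (Incidentally, your closing worry about the nonvanishing of the full iterated Lie word is the \emph{least} problematic part: in finite type, $[\mathfrak{g}_\alpha,\mathfrak{g}_\gamma]=\mathfrak{g}_{\alpha+\gamma}$ whenever $\alpha,\gamma,\alpha+\gamma\in\Delta$, so once every partial sum is a root the word is automatically nonzero by one-dimensionality of root spaces. The hard part is constructing the chain of roots, and that is what remains unproved here.)
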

	\begin{proof}
		We prove the lemma by induction on $\height(\beta' -\beta)\geq 1$. Base step: $\height(\beta' -\beta)=1$ forces $\beta' -\beta\in \Pi_{I}$, and we have $[e_{\beta'-\beta}, \mathfrak{g}_{\beta}] = \mathfrak{g}_{\beta'}$ as desired.\\
		Induction step: Assume $\height(\beta'-\beta)>1$, and consider $0<(\beta' -\beta ,\beta' -\beta)=(\beta' -\beta ,\beta')-(\beta' -\beta ,\beta)$, giving the following two cases.\newline\newline
		(1) $(\beta' -\beta,\beta' )>0$: There must exist a simple root $\alpha' \prec \beta'-\beta$ such that $(\alpha' ,\beta')>0$, implying $\beta' -\alpha' \in \Delta^+$.\\
		If $\height_{I}(\beta' -\alpha')>\height_{I}(\beta)$, then apply the induction hypothesis to the pair $\beta\prec \beta'-\alpha'$ to get a chain of roots $\beta\prec\cdots\prec\beta^{(1)}\preceq\beta'-\alpha'\prec\beta'$ as in the statement. Now, we are done if $\alpha'\notin\Pi_I$. Otherwise, by applying the induction hypothesis further to the pair $\beta^{(1)}\prec\beta'$, we will be done.\\
		So, we assume now that $\height_{I}(\beta' -\alpha')=\height_I(\beta)$, which implies that $\height_I(\beta'-\beta)=1$ and $\alpha'\in\Pi_I$. Consider $\beta \prec \beta' -\alpha'\in\Delta$. By Theorem \ref{thmD} (\ref{thmD}1) and \cite[Proposition 3.6]{Kac}, we get a sequence of simple roots $\alpha_1,\ldots,\alpha_m\in \Pi_{I^c}$, $m=\height(\beta' -\beta)-1$, such that 
		\[ y:= \Big[e_{\alpha'},\big[e_{\alpha_m},\big[\cdots,[e_{\alpha_1},e_{\beta}]\cdots\big]\big]\Big] \in \mathfrak{g}_{\beta'}\text{ and }y\neq 0.\]
		If $\beta' -\alpha_j \in \Delta$ for some $j\in [m]$, then we are done by the induction hypothesis applied to the pair $\beta \prec \beta'-\alpha_j$. Else, observe by the Jacobi identity that
		\begin{align*}
			\bigg[e_{\alpha'},\Big[e_{\alpha_m},\big[\cdots,[e_{\alpha_1},e_{\beta}]\cdots\big]\Big]\bigg]=& -\Big[[e_{\alpha_m} ,e_{\alpha'}], \big[\cdots ,[e_{\alpha_1},e_{\beta}]\cdots\big]\Big]\\&+\underbrace{\Big[e_{\alpha_m},\Big[e_{\alpha'},\big[\cdots, [e_{\alpha_1},e_{\beta}]\cdots\big]\Big]\Big]}_{0}\\=& +\Big[\big[e_{\alpha_{m-1}},[e_{\alpha_m}, e_{\alpha'}]\big],\big[\cdots,[e_{\alpha_1},e_{\beta}]\cdots\big]\Big]+0\\
			\vdots\\=& (-1)^{m}\bigg[\underbrace{\Big[e_{\alpha_1},\big[\cdots,[e_{\alpha_m},e_{\alpha'}]\cdots\big]\Big]}_{=:z},e_{\beta}\bigg]+0. 
		\end{align*}
		By the assumption $\beta-\alpha_j\notin\Delta$ $\forall$ $j\in [m]$, all the second terms in each line on the right hand side of the above equations are zero. Observe that $z$ defined in the last line of the above equation belongs to $\mathfrak{g}_{\beta'-\beta}$. As $y\neq0$, we must have $[z,e_{\beta}]\neq0\implies z\neq0\implies \beta'-\beta\in\Delta$. Now, $\height_{I}(\beta'-\beta)=1$ implies $\beta'-\beta\in\Delta_{I,1}$. Putting all of this together, we have $\beta'-\beta\in\Delta_{I,1}$ and $[\mathfrak{g}_{\beta'-\beta},e_{\beta}]\neq \{0\}$ as desired, completing the proof in this case. \newline\newline
		(2) $(\beta' -\beta ,\beta' )\leq 0$: In this case we must have $(\beta' -\beta ,\beta)<0$. This implies there exists a simple root $\alpha \prec \beta' -\beta$ such that $(\alpha ,\beta)<0\implies\beta+\alpha\in\Delta\implies [e_{\alpha},e_{\beta}]\neq0$.\\
		If $\alpha\in\Pi_I$ and $\height_I(\beta'-\beta)=1$, then we are done by $[e_{\alpha},e_{\beta}]\neq0$. Else if $\alpha\in\Pi_I$ and  $\height_I(\beta'-\beta)>1$, then we are done by $[e_{\alpha},e_{\beta}]\neq0$, and by the induction hypothesis applied to the pair $\beta+\alpha\prec\beta'$.\\
		So, we assume now that $\alpha\notin\Pi_I$. By the
		induction hypothesis applied to the pair
		$\beta+\alpha\prec\beta'$, we get a root $\gamma \in
		\Delta_{I,1}$ such that
		$\gamma\prec\beta'-(\beta+\alpha)$ and
		$\big[e_{\gamma},[e_{\alpha},e_{\beta}]\big]\neq 0$. By
		the Jacobi {\color{blue}identity,}
		\[
		0\neq\big[e_{\gamma},[e_{\alpha},e_{\beta}]\big]= \big[\underbrace{[e_{\gamma},e_{\alpha}]}_{=:z_1},e_{\beta}\big]+ \big[e_{\alpha},\underbrace{[e_{\gamma},e_{\beta}]}_{=:z_2}\big].\]
		 If $\height_I(\beta'-\beta)=1$, then we are done as either $[z_1,e_{\beta}]$ or $z_2$ in the above equation must be non-zero. Else, for the same reason, we are done by the induction hypothesis applied to either $\beta+\alpha+\gamma\prec \beta'$ or respectively $\beta+\gamma\prec\beta'$.
		\end{proof}
	\begin{lemma}\label{L5.10}
		Let $\mathfrak{g}$ be semisimple and $\emptyset \neq I \subset \mathcal{I}$. Suppose $\beta \prec \beta' \in \Delta^+$ such that $\height_{I}(\beta' -\beta)>0$. Then there exists a sequence of roots $\gamma_i' \in \Delta_{I,1}$, $1\leq i\leq n= \height_{I}(\beta' -\beta)$, such that  
		\begin{equation*}
		\begin{aligned}
		\beta''&:=\beta'-\sum\limits_{j=1}^{n}\gamma_j'\in \Delta, \quad \Big[e_{\gamma^{'}_{n}},\big[\cdots , [e_{\gamma^{'}_{1}},e_{\beta''}]\cdots \big]\Big]\neq 0\\ &\text{and}\quad \beta \preceq \beta''\prec \cdots \prec\beta' -\sum\limits_{j=1}^{i}\gamma'_j\prec\cdots \prec \beta' \in \Delta\text{ }\forall i.	\end{aligned}
		\end{equation*}
	\end{lemma}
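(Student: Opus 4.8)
The plan is to mirror the strategy of Lemma \ref{L5.9}, but with the two roots swapped in role: instead of building a chain \emph{upward} from $\beta$ by adding roots of $\Delta_{I,1}$, here we build a chain \emph{downward} from $\beta'$ by subtracting roots of $\Delta_{I,1}$, while simultaneously tracking the nonvanishing of a right-normed Lie word landing in $\mathfrak{g}_{\beta'}$. The induction is again on $\height(\beta'-\beta)\geq 1$, the base case $\height(\beta'-\beta)=1$ being immediate since then $\beta'-\beta\in\Pi_I$ and $[e_{\beta'-\beta},\mathfrak{g}_\beta]=\mathfrak{g}_{\beta'}$, so we may take $n=1$, $\gamma_1'=\beta'-\beta$ and $\beta''=\beta$.

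For the induction step, I would again split via the positivity of the Killing form: $0<(\beta'-\beta,\beta'-\beta)=(\beta'-\beta,\beta')-(\beta'-\beta,\beta)$, so either $(\beta'-\beta,\beta')>0$ or $(\beta'-\beta,\beta)<0$. In the first case there is a simple root $\alpha'\prec\beta'-\beta$ with $(\alpha',\beta')>0$, hence $\beta'-\alpha'\in\Delta^+$ and $[e_{\alpha'},\mathfrak{g}_{\beta'-\alpha'}]=\mathfrak{g}_{\beta'}$; whether $\alpha'\in\Pi_I$ or not, one applies the induction hypothesis to the pair $\beta\prec\beta'-\alpha'$ (noting $\height_I$ either drops by $1$ or stays, which forces $\height_I(\beta'-\beta)=1$ and $\alpha'\in\Pi_I$, exactly the subcase handled in Lemma \ref{L5.9} by the telescoping Jacobi-identity computation that shows $\beta'-\beta\in\Delta_{I,1}$ with $[\mathfrak{g}_{\beta'-\beta},e_\beta]\neq\{0\}$). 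In the second case there is a simple root $\alpha\prec\beta'-\beta$ with $(\alpha,\beta)<0$, so $\beta+\alpha\in\Delta$ and $[e_\alpha,e_\beta]\neq 0$; if $\alpha\in\Pi_I$ we either conclude directly (when $\height_I(\beta'-\beta)=1$, taking $\beta''=\beta$) or feed the pair $\beta+\alpha\prec\beta'$ to the induction hypothesis, and if $\alpha\notin\Pi_I$ we feed $\beta+\alpha\prec\beta'$ to the induction hypothesis to obtain $\gamma\in\Delta_{I,1}$ with $\gamma\prec\beta'-(\beta+\alpha)$ and $[e_\gamma,[e_\alpha,e_\beta]]\neq 0$, then expand by Jacobi:
\[
0\neq\big[e_\gamma,[e_\alpha,e_\beta]\big]=\big[[e_\gamma,e_\alpha],e_\beta\big]+\big[e_\alpha,[e_\gamma,e_\beta]\big],
\]
and branch on which summand is nonzero ($[e_\gamma,e_\alpha]$ lies in $\mathfrak{g}_{\alpha+\gamma}$ with $\alpha+\gamma\in\Delta_{I,1}$ whenever nonzero, while $[e_\gamma,e_\beta]\in\mathfrak{g}_{\beta+\gamma}$ with $\beta+\gamma\succeq\beta$), recursing on $\beta+\alpha+\gamma\prec\beta'$ or on $\beta+\gamma\prec\beta'$ as appropriate. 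Throughout, the weight $\beta''$ in the statement is produced as the terminal weight $\beta+(\text{whatever was added so far})$ of the downward chain from $\beta'$, which satisfies $\beta\preceq\beta''$ automatically; the nonvanishing of the full right-normed word $[e_{\gamma_n'},[\cdots,[e_{\gamma_1'},e_{\beta''}]\cdots]]$ is carried along by the recursion since at each stage we only branch into configurations where the shorter iterated bracket is nonzero and the outermost bracket with the new root vector (or its image under $\mathrm{ad}$) remains nonzero.

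The main obstacle, as in Lemma \ref{L5.9}, is the boundary subcase $\height_I(\beta'-\beta)=1$ with the relevant simple root already lying in $\Pi_I$: there the naive induction does not decrease $\height_I$, and one must instead invoke Theorem \ref{thmD} (\ref{thmD}1) together with \cite[Proposition 3.6]{Kac} to write $e_{\beta'}$ as an iterated bracket $[e_{\alpha'},[e_{\alpha_m},[\cdots,[e_{\alpha_1},e_\beta]\cdots]]]$ with $\alpha_1,\dots,\alpha_m\in\Pi_{I^c}$ and $\alpha'\in\Pi_I$, and then run the telescoping Jacobi-identity reduction (exactly the displayed chain of equalities in the proof of Lemma \ref{L5.9}) to deduce $\beta'-\beta\in\Delta$, hence $\beta'-\beta\in\Delta_{I,1}$, with $[\mathfrak{g}_{\beta'-\beta},e_\beta]\neq\{0\}$. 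This makes $n=1$, $\gamma_1'=\beta'-\beta$, $\beta''=\beta$ valid and closes the induction. A minor bookkeeping point is to check that in every branch the inequalities in the chain $\beta\preceq\beta''\prec\cdots\prec\beta'$ are the weak one only at the left end (as required by the statement) and strict thereafter, which follows since each subtracted $\gamma_j'\in\Delta_{I,1}$ has $\height_I(\gamma_j')=1>0$.
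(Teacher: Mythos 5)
Your plan is, in outline, the same proof as the paper's: induction on $\height(\beta'-\beta)$, the dichotomy $0<(\beta'-\beta,\beta'-\beta)=(\beta'-\beta,\beta')-(\beta'-\beta,\beta)$, recursion on $\beta\prec\beta'-\alpha'$ or $\beta+\alpha\prec\beta'$ with the Jacobi-identity branching when the chosen simple root lies in $\Pi_{I^c}$, and Theorem \ref{thmD} (\ref{thmD}1) plus the telescoping Jacobi computation in the boundary subcase. One step, however, would fail as literally written. In the case $(\beta'-\beta,\beta)<0$ with $\alpha\in\Pi_I$ and $\height_I(\beta'-\beta)=1$, you claim one can ``conclude directly, taking $\beta''=\beta$,'' and in your final paragraph that the telescoping ``deduces $\beta'-\beta\in\Delta$.'' This is not unconditionally true: take $\mathfrak{g}=\mathfrak{sl}_4$, $I=\{1\}$, $\beta=\alpha_2$, $\beta'=\alpha_1+\alpha_2+\alpha_3$, $\alpha=\alpha_1$; then $(\beta'-\beta,\beta)=-2<0$ and $\alpha_1\in\Pi_I$, yet $\beta'-\beta=\alpha_1+\alpha_3\notin\Delta$, and the only valid terminal weight is $\beta''=\alpha_2+\alpha_3\succneqq\beta$. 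The telescoping identity only closes the argument in the sub-branch where $\beta+\alpha_j\notin\Delta$ for every $\alpha_j\in\Pi_{I^c}$ appearing in the word produced by Theorem \ref{thmD} (\ref{thmD}1); when some $\beta+\alpha_j\in\Delta$ one must instead recurse on $\beta+\alpha_j\prec\beta'$ (smaller ordinary height, same $I$-height $1$), and then $\beta''$ ends up strictly above $\beta$. This branch is present in the proof of Lemma \ref{L5.9} that you cite, so the repair is only to carry it along rather than collapse the conclusion to $\beta'-\beta\in\Delta$; but as stated the plan asserts something false.

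Two smaller points of divergence from the paper, neither fatal. First, in the case $(\beta'-\beta,\beta')>0$ the boundary subcase needs no telescoping at all: since $\alpha'\in\Pi_I$, $\beta'-\alpha'\in\Delta$ and $\beta\preceq\beta'-\alpha'$, one simply takes $n=1$, $\gamma_1'=\alpha'$, $\beta''=\beta'-\alpha'$ (the weak inequality $\beta\preceq\beta''$ in the statement exists precisely to allow this); routing it through the telescoping is both unnecessary and subject to the same over-claim as above. Second, in the case $(\beta'-\beta,\beta)<0$ the iterated bracket supplied by Theorem \ref{thmD} (\ref{thmD}1) has the $\Pi_I$-root \emph{innermost}, i.e.\ $\bigl[e_{\alpha_m},\bigl[\cdots,[e_{\alpha_1},[e_{\alpha},e_{\beta}]]\cdots\bigr]\bigr]$, because what is known there is $\beta+\alpha\in\Delta$ rather than $\beta'-\alpha\in\Delta$; so the telescoping runs in the opposite direction from the displayed computation in Lemma \ref{L5.9}, using $[e_{\alpha_j},e_{\beta}]=0$ to collect the $e_{\alpha_j}$ with $e_{\alpha}$ into an element of $\mathfrak{g}_{\beta'-\beta}$. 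This is exactly the mirror-image computation the paper performs, and is worth writing out since your final paragraph only describes the outermost version.
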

	\begin{proof}
		We prove the lemma by induction on $\height(\beta'-\beta)\geq 1$. Base step: $\height(\beta'-\beta)=1$ forces $\beta'-\beta \in \Pi_{I}$, and we have $[e_{\beta'-\beta},\mathfrak{g}_{\beta}]=\mathfrak{g}_{\beta'}$ as desired.\\
		Induction step: Assume that $\height(\beta'-\beta)>1$. Now, $0<(\beta' -\beta ,\beta' -\beta)=(\beta'-\beta,\beta')-(\beta'-\beta,\beta)$, which leads to the following two cases.\newline\newline
		(1) $(\beta' -\beta, \beta)<0$: There must exist a simple root $\alpha\prec \beta'-\beta $ such that $(\beta ,\alpha)<0$. This implies $[e_{\alpha},e_{\beta}]\neq0$, implying $\beta +\alpha \in \Delta$. If $\height_{I}(\beta +\alpha)<\height_{I}(\beta')$, then apply the induction hypothesis to the pair $\beta+\alpha\prec \beta'$ to get a chain of roots $\beta+\alpha\preceq \beta^{(1)}\prec\cdots\prec\beta'$ as in the statement. We are done if $\alpha\notin\Pi_{I}$. Otherwise, by applying the induction hypothesis further to the pair $\beta\prec \beta^{(1)}$, we will be done.\\
		So, we assume now that $\height_{I}(\beta +\alpha)=\height_{I}(\beta')$, which implies that $\height_I(\beta'-\beta)=1$ and $\alpha\in\Pi_I$. Consider $\beta+\alpha\prec\beta'$. By Theorem \ref{thmD} (\ref{thmD}1) and \cite[Proposition 3.6]{Kac}, we get a sequence of simple roots $\alpha_1,\ldots,\alpha_{m} \in \Pi_{I^c}$, where $m=\height(\beta' -\beta)-1$, such that 
		\[ y:=\Big[e_{\alpha_m},\big[\cdots ,\big[e_{\alpha_1},[e_{\alpha},e_{\beta}]\big]\cdots\big] \Big]\in\mathfrak{g}_{\beta'} \text{ and }y\neq 0.\]
		If $\beta+\alpha_j\in\Delta$ for some $j\in [m]$, then we are done by the induction hypothesis applied to the pair $\beta+\alpha_j\prec\beta'$. Else if $\beta+\alpha_j\notin\Delta$ $\forall$ $j\in [m]$, then $[e_{\alpha_j},e_{\beta}]=0$ $\forall$ $j\in[m]$, and by the Jacobi identity we have
		\begin{align*}
			\bigg[e_{\alpha_m},\Big[\cdots ,\big[e_{\alpha_1},[e_{\alpha},e_{\beta}]\big]\cdots \Big]\bigg]=& \bigg[e_{\alpha_m},\Big[\cdots,\big[[e_{\alpha_1},e_{\alpha}],e_{\beta}\big]\cdots\Big]\bigg]\\
			\vdots\\ =&\bigg[\underbrace{\Big[e_{\alpha_m},\big[\cdots ,[e_{\alpha_1},e_{\alpha}]\cdots\big]\Big]}_{=:z},e_{\beta}\bigg].
		\end{align*}
		Observe that $z$ (defined in the last line of the above equation) belongs to $\mathfrak{g}_{\beta'-\beta}$. As $y\neq0$, we must have $[z,e_{\beta}]\neq0\implies z\neq0\implies \beta'-\beta\in\Delta$. Moreover, $\height_{I}(\beta'-\beta)=1\implies\beta'-\beta\in\Delta_{I,1}$. Thus, we have $\beta'-\beta\in\Delta_{I,1}$ and $[\mathfrak{g}_{\beta'-\beta},e_{\beta}]\neq \{0\}$ as desired, completing the proof in this case.\newline\newline
		(2) $(\beta' -\beta, \beta)\geq 0$: In this case we must have $(\beta' -\beta ,\beta')>0$. Thus, there exists a simple root $\alpha' \prec \beta'-\beta$ such that $(\beta',\alpha')>0$. This implies $\beta'-\alpha' \in \Delta$, implying $[e_{\alpha'},e_{\beta'-\alpha'}]\neq0$.\\
		If $\alpha' \in \Pi_{I}$ and $\height_I(\beta'-\beta)=1$, then we are done by $[e_{\alpha'},e_{\beta'-\alpha'}]\neq0$ once we set $\beta''=\beta'-\alpha'$. Else if $\alpha'\in\Pi_I$ and $\height_I(\beta'-\beta)>1$, then we are done by $[e_{\alpha'},e_{\beta'-\beta}]\neq0$, and by the induction hypothesis applied to the pair $\beta\prec \beta'-\alpha'$.\\
		So, we assume now that $\alpha'\notin \Pi_I$. By the
		induction hypothesis applied to the pair
		$\beta\prec\beta'-\alpha'$ we get a root $\gamma' \in
		\Delta_{I,1}$ such that
		$\beta\prec\beta'-\alpha'-\gamma'\in\Delta^+$ and
		$[e_{\gamma'},e_{\beta'-\alpha'-\gamma'}]\neq 0$. By the
		Jacobi {\color{blue}identity,}
		\[0\neq \big[e_{\alpha'},[e_{\gamma'},e_{\beta'-\alpha'-\gamma'}]\big]=\big[\underbrace{[e_{\alpha'},e_{\gamma'}]}_{=:z_1},e_{\beta'-\alpha'-\gamma'}\big]+ \big[e_{\gamma'},\underbrace{[e_{\alpha'},e_{\beta'-\alpha'-\gamma'}]}_{=:z_2}\big].\]
		If $\height_I(\beta'-\beta)=1$, then as either $[z_1,e_{\beta'-\alpha'-\gamma'}]$ or $[e_{\gamma'},z_2]$ in the above equation must be non-zero, we will be done once we set $\beta''=\beta'-\alpha'-\gamma'$ or respectively $\beta'-\gamma'$. Else, as either $[z_1,e_{\beta'-\alpha'-\gamma'}]$ or $[e_{\gamma'},z_2]$ is non-zero, we will be done by the induction hypothesis applied to either $\beta\prec \beta'-\alpha'-\gamma'$ or respectively $\beta\prec \beta'-\gamma'$.
	\end{proof}
	We are now able to discuss ``moving between comparable roots in steps of $\Delta_{I,1}$'' when both the roots are not necessarily positive. 	\begin{proof}[\textnormal{\textbf{Proof of Proposition \ref{P5.8}}}]
		We prove the proposition in cases below.\\
	1) $\beta=0$ or $\beta'=0$: In this case the proposition follows by the parabolic-PSP applied to $\beta'$ or $\beta$ respectively.\\
	2) $\beta\succneqq 0$: In this case a) and b) directly follow by Lemmas \ref{L5.9} and \ref{L5.10} respectively.\\
    3) $\beta'\precneqq0$: In this case a) and b) once again directly follow by respectively applying Lemmas \ref{L5.10} and \ref{L5.9} to the pair $-\beta' \prec -\beta \in \Delta^+$.\\
	4) $\beta\precneqq0\precneqq\beta'$ and $\height_{I}(\beta)<0$: We prove a); the proof of b) is similar. By the parabolic-PSP we get a chain of roots between $\beta\prec 0$. When $\height_I(\beta')=0$, we are done by the previous line. Else when $\height_{I}(\beta')>0$, by the parabolic-PSP further applied to $0\prec \beta'$ we will be done. Notice that only in this case do we use the assumption made in part a), and this assumption is made in view of Remark \ref{R5.11} (1) part (ii) below.
	\end{proof}
	  	    In view of Remark \ref{R5.11} below, observe that Propositions \ref{P5.8} and \ref{P5.7} prove the parabolic-generalizations of (\ref{thmD}1) and (\ref{thmD}2) to the ``best possible'' extent.
		\begin{remark}\label{R5.11}
		(1) Let $\mathfrak{g}$ be of type $A_{3}$, and $\mathcal{I}=\{ 1,2,3\}$ where the nodes 1 and 3 are the leaves in the Dynkin diagram. Note that $\mathfrak{g}\simeq L(\alpha_1+\alpha_2+\alpha_3)$ as $\mathfrak{g}$-modules.
		\begin{itemize}
			\item[(i)] Let $I=\{1\}$ and consider $\alpha_1+\alpha_2+\alpha_3\succ\alpha_2$. Check that there does not exist a root $\gamma\in\Delta_{I,1}$ such that $\alpha_1+\alpha_2+\alpha_3-\gamma= \alpha_2\in\wt\mathfrak{g}$ or $\alpha_1+\alpha_2+\alpha_3=\alpha_2+\gamma\in\wt\mathfrak{g}$.
			\item[(ii)] Let $I_1=\{ 1,2\}$ and $I_2=\{2,3\}$, and consider $\alpha_3 \succ -\alpha_1$. Check that there does not exist a root $\gamma_1\in\Delta_{I_1,1}$ such that $\alpha_3-\gamma_1\succeq-\alpha_1\in\wt\mathfrak{g}$. Similarly, check that there does not exist a root $\gamma_2\in\Delta_{I_2,1}$ such that $\alpha_3\succeq-\alpha_1+\gamma_2\in\wt\mathfrak{g}$.
			\end{itemize} 
		(2) Observe that one can construct similar examples as in point (1) when $\mathfrak{g}$ is any semisimple Lie algebra of rank $\geq 3$, and even when $\mathfrak{g}$ is of type $A_1\times A_1$.\\ 
		(3) Let $\mathfrak{g}$ be an affine Kac--Moody algebra of type $X_{\ell}^{(r)}$ with rank $\ell\geq 3$, $r\in\{1,2,3\}$ and $\mathcal{I}=\{0,1,\cdots,\ell\}$, see \cite[Table Aff 1--3]{Kac}. Let $i,j\in\mathcal{I}\setminus\{0\}$ such that $i$ and $j$ are not connected by any edge in the Dynkin diagram. Set $I_1=\{j\}$ and $I_2=\{i\}$.  Let $\delta$ be the smallest positive imaginary root. Consider $r\delta+\alpha_i\succ r\delta-\alpha_j\in\Delta^+$, note both of these roots are real by \cite[Proposition 6.3]{Kac}. This result also implies that there does not exist a root $\gamma_1\in\Delta_{I_1,1}$ such that $r\delta+\alpha_i-\gamma_1\succ r\delta-\alpha_j\in\Delta$. Similarly, check that there does not exist a root $\gamma_2\in\Delta_{I_2,1}$ such that $r\delta+\alpha_i\succ r\delta-\alpha_j+\gamma_2\in\Delta$.\\
		Thus, Proposition \ref{P5.8} cannot be extended to affine root systems, even to move between two comparable positive roots.
		\end{remark} \section{Further observations on the sets $\Delta_{I,1}$}
	    In this section, we answer some questions related to the finiteness of the sets $\Delta_{I,1}$, and also the minimal generators of $\conv_{\mathbb{R}}\Delta_{I,1}$ for $\emptyset\neq I\subset \mathcal{I}$.
	 \subsection{Minimal generators of $\conv_{\mathbb{R}}\Delta_{I,1}$}\label{S6.1}
	  	   Let $C$ be a convex subset of a real vector space, and $B\subset C$. Recall, $B$ is said to generate $C$ if $\conv_{\mathbb{R}}B=C$. Similarly, $B$ is said to be a minimal generating set of $C$ if there does not exists $B_1\subsetneq B$ such that $\conv_{\mathbb{R}}B_1=C$. Note that these notions can as well be thought over an arbitrary subfield $F$ of $\mathbb{R}$ (with $F_{\geq 0}$ in the place of $\mathbb{R}_{\geq 0}$).\\
	  	   In this subsection, we define for $I\subset\mathcal{I}$ and $n\in\mathbb{Z}_{>0}$,
	    \begin{equation}\label{E6.1}
	   S_{I,n}:=\{\beta\in\Delta_{I,n}\text{ }|\text{ } \nexists\text{ } \alpha\in\Delta_{I,n}\text{ such that }\alpha\precneqq\beta\}.
	    \end{equation}
	   We call the elements of $S_{I,n}$ the minimal elements of $\Delta_{I,n}$. The goal of this subsection is to prove the following Lemma, which is also needed in the proof of Proposition \ref{P2.11} in the Appendix. \begin{lemma}\label{L6.1}
	    	Let $\mathfrak{g}$ be a Kac--Moody algebra, and fix $\emptyset\neq I\subset \mathcal{I}$ and $n\in\mathbb{Z}_{>0}$. Then:
	    	\begin{itemize}
	    	\item[(a)] $\conv_{\mathbb{R}}\Delta_{I, n}$ is generated by $W_{I^c}S_{I, n}$. 
	      	\item[(b)] For $\alpha\in\Delta^{+}$ and $J\subset \mathcal{I}\setminus \supp(\alpha)$, $\conv_{\mathbb{R}}\Delta_{\alpha ,J}$ is minimally generated by $W_{J}\alpha$.
	      	\item[(c)] In particular, $\conv_{\mathbb{R}}\Delta_{I, 1}$ is minimally generated by $ W_{I^c}\Pi_{I}$.
	    	\end{itemize}
	    \end{lemma}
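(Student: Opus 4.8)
The plan is to identify, for each of these statements, the relevant combinatorial set with the set of $\mathfrak{h}$-weights of an integrable submodule of the adjoint representation, and then to combine the standard convexity of weights of integrable highest/lowest weight modules with an elementary pointed-cone argument for the minimality clauses.

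\emph{Part (b).} Set $\mathfrak{g}_{\alpha,J}:=\bigoplus_{\gamma\in\Delta_{\alpha,J}}\mathfrak{g}_{\gamma}$. Since $\supp(\alpha)\cap J=\emptyset$, any root $\gamma$ with $\gamma-\alpha\in\mathbb{Z}\Pi_J$ automatically lies in $\Delta^+$ with $\gamma-\alpha\in\mathbb{Z}_{\geq0}\Pi_J$, so $\mathfrak{g}_{\alpha,J}$ is a $\mathfrak{g}_J$-submodule of $\mathfrak{g}$; it is $\mathfrak{g}_J$-integrable because $\ad e_j,\ad f_j$ act locally nilpotently on $\mathfrak{g}$; all its weights are $\succeq\alpha$; and $\mathfrak{g}_{\alpha}$ is a lowest weight vector, as $\alpha-\alpha_j\notin\Delta$ for $j\in J$. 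Writing $\alpha=\sum_k a_k\alpha_k$, the condition $\supp(\alpha)\cap J=\emptyset$ also gives $\langle\alpha,\alpha_j^{\vee}\rangle=\sum_k a_k\langle\alpha_k,\alpha_j^{\vee}\rangle\leq0$ for every $j\in J$ (all the $\langle\alpha_k,\alpha_j^{\vee}\rangle\leq0$ since $k\neq j$ on the support), so $\alpha$ is $J$-antidominant. Using this, the $W_J$-invariance of $\Delta_{\alpha,J}$, and that every $\gamma\in\Delta_{\alpha,J}$ is $\succeq\alpha$, I would invoke the explicit description of the weights of an integrable lowest weight module --- the lowest weight analogue of \cite[Proposition 11.2]{Kac}, already used in Remark \ref{R5.2}, together with the reachability statement Corollary \ref{C5.1} for the real roots --- to conclude $\Delta_{\alpha,J}=\wt L^{\mathrm{low}}_J(\alpha)$, where $L^{\mathrm{low}}_J(\alpha)$ is the maximal integrable lowest weight $\mathfrak{g}_J$-module of lowest weight $\alpha$ (the inclusion $\supseteq$ being the content there; $\subseteq$ follows since $U(\mathfrak{g}_J)\mathfrak{g}_{\alpha}\subseteq\mathfrak{g}_{\alpha,J}$). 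Then convexity of weights of integrable lowest weight modules (cf.\ the convex-hull references in the introduction) gives $\conv_{\mathbb{R}}\Delta_{\alpha,J}\subseteq\conv_{\mathbb{R}}W_J\alpha$, while $W_J\alpha\subseteq\Delta_{\alpha,J}$ is immediate (again from $\supp(\alpha)\cap J=\emptyset$), so $\conv_{\mathbb{R}}\Delta_{\alpha,J}=\conv_{\mathbb{R}}W_J\alpha$. For minimality, by $W_J$-invariance of $\conv_{\mathbb{R}}W_J\alpha$ it is enough to show $\alpha\notin\conv_{\mathbb{R}}(W_J\alpha\setminus\{\alpha\})$; and indeed $W_J\alpha\setminus\{\alpha\}\subseteq\alpha+(\mathbb{Z}_{\geq0}\Pi_J\setminus\{0\})$, while linear independence of $\Pi_J$ supplies a functional positive on $\mathbb{R}_{>0}\Pi_J$ and hence bounded below by a positive constant on $\mathbb{Z}_{\geq0}\Pi_J\setminus\{0\}$, so $0\notin\conv_{\mathbb{R}}(\mathbb{Z}_{\geq0}\Pi_J\setminus\{0\})$.

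\emph{Part (a).} Apply the same construction to $\mathfrak{g}_{I,n}=\bigoplus_{\beta\in\Delta_{I,n}}\mathfrak{g}_{\beta}$, a $\mathfrak{g}_{I^c}$-integrable submodule of $\mathfrak{g}$ with weight set $\Delta_{I,n}$. A positive root of $I$-height $n$ has its $I$-coordinates forming a composition of $n$, so $\mathfrak{g}_{I,n}$ is bounded below; slicing by the finitely many possible $I$-parts and applying Dickson's lemma inside $\mathbb{Z}_{\geq0}\Pi_{I^c}$ shows $S_{I,n}$ is finite and is exactly the set of lowest weights of $\mathfrak{g}_{I,n}$. Since $\Delta_{I,n}$ is $W_{I^c}$-invariant and saturated from below in the $\Pi_{I^c}$-directions (if $i\in I^c$ and $\langle\mu,\alpha_i^{\vee}\rangle<0$ then $\mu+\alpha_i\in\Delta_{I,n}$, by $\mathfrak{sl}_2$-theory), the convexity argument of part (b) applied to the finitely many lowest weight constituents yields $\conv_{\mathbb{R}}\Delta_{I,n}=\conv_{\mathbb{R}}(W_{I^c}S_{I,n})$.

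\emph{Part (c).} A positive root of $I$-height $1$ has a single $I$-node in its support, with coefficient $1$, so $\Delta_{I,1}=\bigsqcup_{i\in I}\Delta_{\alpha_i,I^c}$; combined with part (b) this gives $\conv_{\mathbb{R}}\Delta_{I,1}=\conv_{\mathbb{R}}\bigcup_{i\in I}W_{I^c}\alpha_i=\conv_{\mathbb{R}}W_{I^c}\Pi_I$. For minimality, let $\psi_i\colon\mathbb{C}\Pi\to\mathbb{C}$ be the linear functional reading off the coefficient of $\alpha_i$; it equals $1$ on $\Delta_{\alpha_i,I^c}$ and $0$ on $\Delta_{\alpha_k,I^c}$ for $k\neq i$, hence attains its maximum $1$ over $\conv_{\mathbb{R}}\Delta_{I,1}$ exactly on the face $\conv_{\mathbb{R}}\Delta_{\alpha_i,I^c}$; the extreme points of this face are $W_{I^c}\alpha_i$ by the minimality in part (b), and extreme points of a face are extreme in the whole convex set, so no element of $W_{I^c}\Pi_I=\bigcup_{i\in I}W_{I^c}\alpha_i$ can be dropped. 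The step I expect to be delicate is the identification $\Delta_{\alpha,J}=\wt L^{\mathrm{low}}_J(\alpha)$ in part (b): it hinges on $\alpha$ being $J$-antidominant and on the general (non-symmetrizable, lowest weight) weight description, and one must confirm that the cited convexity of weights of integrable lowest weight modules holds over arbitrary Kac--Moody $\mathfrak{g}_J$; the remaining ingredients --- the inclusion $W_J\alpha\subseteq\Delta_{\alpha,J}$, the decomposition $\Delta_{I,1}=\bigsqcup_i\Delta_{\alpha_i,I^c}$, Dickson's lemma, and the pointed-cone and face arguments --- are routine.
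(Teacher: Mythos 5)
Your route is genuinely different from the paper's. The paper proves (a) and (b) by a self-contained induction on $\height$: it takes a putative $\beta\in\Delta_{I,n}\setminus\conv_{\mathbb{R}}W_{I^c}S_{I,n}$ of least height, uses Theorem \ref{thmD}(\ref{thmD}1) to produce $j_0\in I^c$ with $\beta-\alpha_{j_0}\in\Delta_{I,n}$, and then writes $\beta$ explicitly as the convex combination $\frac{\epsilon'}{\epsilon'+1}(\beta-\alpha_{j_0})+\frac{1}{\epsilon'+1}s_{j_0}(\beta-\alpha_{j_0})$ when $\langle\beta,\alpha_{j_0}^{\vee}\rangle\leq 0$ (and uses $W_{I^c}$-invariance otherwise); minimality in (b) is a short support argument. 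You instead identify $\Delta_{\alpha,J}$ with the weight set of an integrable lowest weight $\mathfrak{g}_J$-module and import the known description and convexity of such weight sets. Your approach buys a conceptual explanation (these sets \emph{are} weight sets of integrable modules) and a cleaner extremality statement via the pointed-cone functional; the paper's approach buys independence from the representation-theoretic machinery and works uniformly for (a) without having to organize $\Delta_{I,n}$ into lowest-weight constituents.

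That said, the step you yourself flag --- $\Delta_{\alpha,J}=\wt L^{\mathrm{low}}_J(\alpha)$ --- is where your write-up has a real gap as stated. The inclusion you need is $\Delta_{\alpha,J}\subseteq\wt L^{\mathrm{low}}_J(\alpha)$, and \cite[Proposition 11.2]{Kac} does \emph{not} say that every $J$-antidominant $\mu$ with $\mu-\alpha\in\mathbb{Z}_{\geq0}\Pi_J$ is a weight: it requires $\mu$ to be \emph{non-degenerate} with respect to the lowest weight (every connected component of $\supp(\mu-\alpha)$ must meet $\{j:\langle\alpha,\alpha_j^{\vee}\rangle\neq 0\}$); without this the statement is false (e.g.\ $-\delta$ is antidominant and $\preceq 0$ but is not a weight of the trivial module). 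Here non-degeneracy does hold, but for a reason you must supply: $\mu$ is a root, so $\supp(\mu)=\supp(\alpha)\sqcup\supp(\mu-\alpha)$ is connected, hence each component of $\supp(\mu-\alpha)$ contains a node $j$ adjacent to $\supp(\alpha)$, forcing $\langle\alpha,\alpha_j^{\vee}\rangle<0$. Two further points: the convexity $\wt L^{\mathrm{low}}_J(\alpha)\subseteq\conv_{\mathbb{R}}W_J\alpha$ over a possibly non-symmetrizable $\mathfrak{g}_J$ should be cited from \cite{Dhillon_arXiv, Khare_Ad} (Theorem \ref{T2.3}(b)) rather than from Kac's Chapter 11; and in part (a) the phrase ``lowest weight constituents'' must be read at the level of weight sets only ($\mathfrak{g}_{I,n}$ need not decompose as a module), with the analogous non-degeneracy check for each antidominant $\mu\in\Delta_{I,n}$ against the minimal element $\sigma\in S_{I,n}$ chosen below it. With these repairs your argument goes through.
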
	
	    \begin{proof}
	    	To prove (a), we show that $\Delta_{I,n}\subset \conv_{\mathbb{R}}W_{I^c}S_{I,n}$ by the method of contradiction. This proves that $\conv_{\mathbb{R}}\Delta_{I,n}=\conv_{\mathbb{R}}W_{I^c}S_{I,n}$.\\
	    	Suppose $\Delta_{I,n}\not\subset \conv_{\mathbb{R}}W_{I^c}S_{I,n}$. Then there must exist a root $\beta\in\Delta_{I,n}$ such that $\beta\notin \conv_{\mathbb{R}}W_{I^c}S_{I,n}$. Without loss of generality assume that $\beta$ is of least height such that $\beta\in\Delta_{I,n}\setminus \conv_{\mathbb{R}}W_{I^c}S_{I,n}$. Note that $\beta$ cannot be a minimal element of $\Delta_{I,n}$ (i.e. an element of $S_{I,n}$) by the choice. This implies that there exists a root $\eta\in\Delta_{I,n}$ such that $\eta\precneqq\beta$. As $\height_I(\beta)=\height_I(\eta)=n$ and $\eta\prec\beta$ observe that $\supp(\beta-\eta)\subset I^c$. Now, by Theorem \ref{thmD} (\ref{thmD}1) applied to $\eta \precneqq \beta$, we have $\beta-\alpha_{j_0}\in\Delta_{I,n}$ for some $j_0\in\supp(\beta-\eta)\subset I^c$.\\
	    	If $\langle \beta ,\alpha_{j_0}^{\vee}\rangle>0$, then as $\height(s_{j_0}\beta)<\height(\beta)$ we have $s_{j_0}\beta\in \conv_{\mathbb{R}}W_{I^c}S_{I,n}$. Now, the $W_{I^c}$-invariance of $\conv_{\mathbb{R}}W_{I^c}S_{I,n}$ implies that $\beta=s_{j_0}(s_{j_0}\beta)\in \conv_{\mathbb{R}}W_{I^c}S_{I,n}$, contradicting the choice of $\beta$.\\
	    	So, we assume now that $\langle\beta,\alpha^{\vee}_{j_0}\rangle\leq 0$. This implies 
	    	\[
	    	\epsilon':= -1-\langle \beta-\alpha_{j_0} , \alpha_{j_0}^{\vee}\rangle \geq 1.
	    	\]
	    	As $\height(\beta-\alpha_{j_0})<\height(\beta)$ we have \[\beta-\alpha_{j_0}\in \conv_{\mathbb{R}}W_{I^c}S_{I,n} \implies s_{j_0}(\beta-\alpha_{j_0})\in \conv_{\mathbb{R}}W_{I^c}S_{I,n}.
	    	\]
	    	Observe then that we can write 
	    	\[
	    	\beta = \frac{\epsilon'}{\epsilon' + 1}(\beta -\alpha_{j_0})+\frac{1}{\epsilon' +1}s_{j_0}(\beta -\alpha_{j_0}).
	    	\]
	    	This implies that $\beta\in \conv_{\mathbb{R}}W_{I^c}S_{I,n}$, once again contradicting the choice of $\beta$. Thus, $\conv_{\mathbb{R}}\Delta_{I,n}=\conv_{\mathbb{R}}W_{I^c}S_{I,n}$.\\
	    	We now prove (b). In view of Theorem \ref{thmD} part (\ref{thmD}1) note that $\alpha$ is the only minimal element in $\Delta_{\alpha,J}$. It can be proved very similar to (a) that $W_J\alpha$ generates $\conv_{\mathbb{R}}\Delta_{\alpha,J}$. So, we only have to show that $W_J\alpha$ is minimal (in generating $\conv_{\mathbb{R}}\Delta_{\alpha,J}$).\\ 
	    	Suppose 
	    	\[ \omega \alpha = r_1 \omega_1 \alpha + \cdots + r_k \omega_k \alpha \text{ for some }r_1 ,\ldots ,r_k \in \mathbb{R}_{>0}\text{ and }\omega ,\omega_1, \ldots ,\omega_k \in W_J.\] Then we have 
	    	\[\alpha = r_1\omega^{-1}\omega_1 \alpha + \cdots+ r_k \omega^{-1}\omega_{k}\alpha \implies \supp(\omega^{-1}\omega_i \alpha)\subset \supp(\alpha)\text{ }\forall\text{ }i\in [k], \text{ as } \omega^{-1}\omega_i\alpha\in\Delta_{\alpha,J}.\] 
	    	Observe then that, as $\omega^{-1}\omega_i\alpha\succeq\alpha$, we must have $\omega \alpha = \omega_i \alpha$ $\forall$ $i\in [k]$, which proves the minimality.\\
	        (c) follows from (b) by noting: (i) $\Delta_{I,1}=\bigsqcup\limits_{i\in I}\Delta_{\alpha_i, I^c}$, and (ii) if \[
	        \omega' \alpha_{i_0}= r_1 \omega'_1 \alpha_{i_1}+\cdots +r_l \omega'_l \alpha_{i_l}\text{ for some }r_1 ,\ldots ,r_l \in \mathbb{R}_{>0},\text{ } i_0 ,\ldots ,i_l \in I\text{ and }\omega' ,\omega'_1 ,\ldots ,\omega'_l \in W_{I^c},\] 
	        then $i_t = i_0$ and $\omega'_t\alpha_{i_t} =\omega'\alpha_{i_0}$ $\forall$ $1\leq t\leq l$.\\
	        This also proves that the cone $\mathbb{R}_{\geq 0}(\Delta^+\setminus\Delta^+_{I^c})=\mathbb{R}_{\geq0}\Delta_{I,1}$ is minimally generated by $W_{I^c}\Pi_I$. 
	    \end{proof}
	  \begin{remark}  Observe that the proof of Lemma \ref{L6.1} above holds true even if we replace $\mathbb{R}$ everywhere in the proof by an arbitrary subfield of $\mathbb{R}$. Thus, the assertions on convex hulls in Lemma \ref{L6.1} are more generally true over any subfield of $\mathbb{R}$. 
	   \end{remark} \subsection{Finiteness of $\Delta_{I,1}$}\label{S6.2}
	    	 In this subsection, we give necessary and sufficient conditions for the sets $\Delta_{\alpha,J}$ and $\Delta_{I,1}$ to be finite, see Proposition \ref{P6.3} and Corollary \ref{C6.4} below. In this we invoke an interesting result of Deodhar \cite{VVD}, which proves the equivalence of the finiteness of a Coxeter group (in our situation, a parabolic subgroup of the Weyl group of the Kac--Moody algebra $\mathfrak{g}$) and that of the quotients by its parabolic subgroups.\\
	    	 We first note the following observation which motivates, and proves (independently), Proposition \ref{P6.3} below in the special case when $\mathfrak{g}$ is an affine Kac--Moody algebra.
	    	 \begin{observation}\label{O6.2}
	    	 Let $\mathfrak{g}$ be an affine Kac-Moody algebra, and let $\alpha\in\Delta^+$ be such that $\supp(\alpha)\subsetneq \mathcal{I}$. Fix $\emptyset\neq J\subset \mathcal{I}\setminus\supp(\alpha)$, and $\emptyset\neq I\subsetneq\mathcal{I}$. Then:  \begin{itemize} 
	    	 \item[(1)] $\mathfrak{g}_{I^c}$ and $\mathfrak{g}_J$ are semisimple, or equivalently every connected component of the Dynkin subdiagram on $I^c$ and respectively on $J$ is of finite type.
	    	 \item[(2)] It can be easily checked by the explicit description of $\Delta$ in Proposition 6.3 of Kac's book \cite{Kac} that $\Delta_{I,1}$ and $\Delta_{\alpha,J}$ are finite.
	    	 \end{itemize}
	    	 \end{observation}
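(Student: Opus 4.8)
The assertion decomposes into two essentially independent parts, and both are short consequences of the structure theory of affine GCMs. The plan is to deduce (1) from the trichotomy of indecomposable generalized Cartan matrices applied to the null vector, and (2) from the explicit shape of the affine root system, reduced to a one‑line statement about linear functionals.

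\textbf{Part (1).} Since $\mathfrak{g}$ is affine, its Dynkin diagram is connected, so every connected component of the subdiagram on $I^c$ (proper, as $I\neq\emptyset$) and on $J$ (proper, as $\supp(\alpha)\neq\emptyset$) is a \emph{proper} connected subdiagram of $S(A)$; it therefore suffices to show that any such subdiagram is of finite type, whence $\mathfrak{g}_{I^c}$ and $\mathfrak{g}_{J}$ are direct sums of finite‑type simple Lie algebras, i.e.\ semisimple. To prove this, let $A$ be the affine GCM and $\delta=\sum_i a_i\alpha_i$ its null vector, so $A\delta=0$ with all $a_i\in\mathbb{Z}_{>0}$ (\cite[Theorems 4.3 and 4.8]{Kac}). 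For a connected component $S\subsetneq\mathcal{I}$, restricting the equations $(A\delta)_i=0$ to $i\in S$ and using $a_{ij}\le 0$ for $i\neq j$ gives $A_{S\times S}\,\delta_S\ge 0$ with $\delta_S:=(a_j)_{j\in S}>0$; and $A_{S\times S}\delta_S\neq 0$, because $S$ is proper in the connected diagram $S(A)$, so some $i_0\in S$ has a neighbour outside $S$ and then $(A_{S\times S}\delta_S)_{i_0}=-\sum_{j\notin S}a_{i_0 j}a_j>0$. By the trichotomy \cite[Theorem 4.3]{Kac}, an indecomposable GCM $B$ admitting $v>0$ with $Bv\ge 0$, $Bv\neq 0$ is of finite type (it is not affine, which would force $Bv=0$, and not indefinite, for which no $v>0$ satisfies $Bv\ge 0$). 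Hence $A_{S\times S}$ is of finite type.

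\textbf{Part (2).} The key input is the explicit description \cite[Proposition 6.3]{Kac}: for every affine type there is a finite set $\{\gamma_1,\dots,\gamma_m\}\subset\mathbb{Z}\Pi$ (with $\gamma_1:=0$, whose ``family'' accounts for the imaginary roots $\mathbb{Z}_{\neq 0}\delta$) such that $\Delta\subset\bigcup_{t=1}^{m}(\gamma_t+\mathbb{Z}\delta)$. Consequently, for any $\mathbb{Z}$‑linear functional $\phi$ on $\mathbb{Z}\Pi$ with $\phi(\delta)\neq 0$ and any $c$, the set $\{\beta\in\Delta\mid\phi(\beta)=c\}$ is finite, since it lies in $\bigcup_{t}\{\gamma_t+n\delta\mid\phi(\gamma_t)+n\,\phi(\delta)=c\}$ and for each $t$ there is at most one admissible $n$. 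I now apply this twice. For $\Delta_{I,1}$, take $\phi=\height_I$; as $I\neq\emptyset$ and all $a_i>0$, $\phi(\delta)=\sum_{i\in I}a_i\ge 1\neq 0$, so $\Delta_{I,1}\subseteq\{\beta\in\Delta\mid\height_I(\beta)=1\}$ is finite. For $\Delta_{\alpha,J}$, fix $i_0\in\supp(\alpha)$ (nonempty) and let $\phi$ be the coordinate functional $\sum_i c_i\alpha_i\mapsto c_{i_0}$, so $\phi(\delta)=a_{i_0}>0$; if $\beta\in\Delta_{\alpha,J}$ then $\supp(\beta-\alpha)\subseteq J\not\ni i_0$ forces $\phi(\beta)=\phi(\alpha)$, a fixed positive integer, so $\Delta_{\alpha,J}\subseteq\{\beta\in\Delta\mid\phi(\beta)=\phi(\alpha)\}$ is finite.

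\textbf{Main obstacle.} The only delicate point is verifying the uniform description $\Delta\subset\bigcup_t(\gamma_t+\mathbb{Z}\delta)$ with a \emph{finite} base set $\{\gamma_t\}\subset\mathbb{Z}\Pi$ in the twisted types $X_N^{(r)}$, where \cite[Proposition 6.3]{Kac} presents the real roots as a union over two classical subsystems with translation lattices $\mathbb{Z}\delta$ and $r\mathbb{Z}\delta$, together with an extra family of the form $\tfrac12(\beta+(2n-1)\delta)$ in type $A_{2\ell}^{(2)}$. In each case one rewrites these as $\gamma+n\delta$ with $\gamma$ in a fixed finite subset of $\mathbb{Z}\Pi$ (e.g.\ $\tfrac12(\beta+(2n-1)\delta)=\tfrac12(\beta+\delta)+(n-1)\delta$), after which the functional argument applies verbatim; everything else is immediate, which is why the observation is stated as ``easily checked.''
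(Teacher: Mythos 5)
Your proof is correct and follows exactly the route the paper indicates: part (1) is the standard trichotomy argument showing proper connected subdiagrams of an affine diagram are of finite type, and part (2) is precisely the "easy check" from \cite[Proposition 6.3]{Kac} that the paper leaves to the reader, packaged cleanly via a linear functional $\phi$ with $\phi(\delta)\neq 0$ whose level sets meet each translate $\gamma_t+\mathbb{Z}\delta$ at most once. Your handling of the twisted types (rewriting $\tfrac12(\beta+(2n-1)\delta)$ as $\tfrac12(\beta+\delta)+(n-1)\delta$) correctly disposes of the only delicate case.
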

	    	\begin{prop}\label{P6.3}
	    Let $\mathfrak{g}$ be a Kac--Moody algebra, $\alpha\in\Delta^+$, and $J\subset\mathcal{I}\setminus \supp(\alpha)$. Then the following are equivalent:
	    \begin{itemize}
	          \item[(1)] $\Delta_{\alpha,J}$ is finite.
	          \item[(2)] $W_J\alpha$ is finite.
	          \item[(3)] The quotient $W_J$ modulo the parabolic subgroup $\big\langle s_j\text{ }\big|\text{ }j\in J\text{ and }\langle\alpha,\alpha_j^{\vee}\rangle=0\big\rangle$ is finite.
	          \item[(4)] When $J\neq\emptyset$, there exists $J'\subset J$ such that $\mathfrak{g}_{J'}$ is semisimple and $\Delta_{\alpha,J}=\Delta_{\alpha,J'}$.	        \end{itemize}
            When the Dynkin subdiagram on $J$ is connected and when $\Delta_{\alpha,J}\supsetneq \{\alpha\}$, $J'$ in (4) is equal to $J$. 
	    \end{prop}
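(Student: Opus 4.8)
The plan is to establish the equivalences (1)$\iff$(2)$\iff$(3) and (1)$\iff$(4) separately, and then deduce the final assertion. First I would prove (2)$\iff$(3): the stabilizer of $\alpha$ in $W_J$ is exactly the parabolic subgroup $W_J^{\alpha}:=\langle s_j \mid j\in J,\ \langle\alpha,\alpha_j^\vee\rangle=0\rangle$ — this is the standard fact that the isotropy group of a weight under a parabolic Weyl group is generated by the simple reflections fixing it — so $W_J\alpha$ is in bijection with $W_J/W_J^{\alpha}$, and finiteness of one is equivalent to finiteness of the other. Next I would prove (2)$\implies$(1): by Lemma \ref{L6.1}(b), $\conv_{\mathbb{R}}\Delta_{\alpha,J}$ is (minimally) generated by the finite set $W_J\alpha$, so it is a polytope; but every element $\beta\in\Delta_{\alpha,J}$ satisfies $\beta\succeq\alpha$ with $\supp(\beta-\alpha)\subset J$ and $\beta$ lies in this polytope, hence only finitely many lattice points are available. (Alternatively, and more cleanly: each $\beta\in\Delta_{\alpha,J}$ with $\beta\succneqq\alpha$ dominates $\alpha$, and by Corollary \ref{C5.1} one can climb from $\alpha$ to $\beta$ within $\Delta_{\alpha,J}$; boundedness of the climb follows from boundedness of $\conv_{\mathbb{R}}W_J\alpha$.) For (1)$\implies$(2) I would argue contrapositively: if $W_J\alpha$ is infinite, then since $W_J\alpha\subseteq\alpha+\mathbb{Z}\Pi_J$ and each $w\alpha$ is a real root dominating... no — here one must be careful, as $w\alpha$ need not dominate $\alpha$. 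Instead, infiniteness of $W_J/W_J^{\alpha}$ forces, by Deodhar's theorem \cite{VVD}, some connected component $J_0$ of $J$ to be of indefinite or affine type with $\langle\alpha,\alpha_j^\vee\rangle\ne 0$ for some $j\in J_0$; then using the $W_{J_0}$-action one produces infinitely many roots in $\Delta_{\alpha,J}$ (e.g.\ $s_j$-strings and imaginary-type directions inside $\mathbb{Z}\Pi_{J_0}$ added to $\alpha$). This is the step I expect to be the main obstacle: converting "the parabolic quotient is infinite" into "infinitely many roots of the form $\beta\succeq\alpha$, $\supp(\beta-\alpha)\subset J$" requires a genuine case analysis on the type of each component of $J$, and it is where Deodhar's criterion is essential — the finite-type components contribute only a bounded set, so infiniteness must come from a non-finite-type component, and there one exhibits an explicit infinite family.

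For (1)$\iff$(4): the implication (4)$\implies$(1) is immediate since $\mathfrak{g}_{J'}$ semisimple makes $W_{J'}$ finite, hence $W_{J'}\alpha$ finite, hence $\Delta_{\alpha,J'}=\Delta_{\alpha,J}$ finite by the already-proven (2)$\implies$(1). For (1)$\implies$(4), given that $\Delta_{\alpha,J}$ is finite, let $J'$ be the union of $\supp(\beta-\alpha)$ over all $\beta\in\Delta_{\alpha,J}$; then $J'\subset J$ is finite, $\Delta_{\alpha,J}=\Delta_{\alpha,J'}$ by construction, and one must check $\mathfrak{g}_{J'}$ is semisimple, i.e.\ every connected component of the Dynkin subdiagram on $J'$ is of finite type. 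If some component $J_0'\subset J'$ were not of finite type, then — again by Deodhar together with the fact that by definition of $J'$ some node of $J_0'$ has nonzero pairing with $\alpha$ (otherwise that component could be dropped) — the orbit $W_{J_0'}\alpha$ would be infinite, and all its elements lie in $\Delta_{\alpha,J}$, contradicting finiteness. So $\mathfrak{g}_{J'}$ is semisimple.

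Finally, the concluding statement: assume the Dynkin subdiagram on $J$ is connected and $\Delta_{\alpha,J}\supsetneq\{\alpha\}$. By (4) there is $J'\subseteq J$ with $\mathfrak{g}_{J'}$ semisimple and $\Delta_{\alpha,J'}=\Delta_{\alpha,J}$; I claim $J'=J$. Since $\Delta_{\alpha,J}\supsetneq\{\alpha\}$, pick $\beta\in\Delta_{\alpha,J}$ with $\beta\succneqq\alpha$; as $\supp(\alpha)$ is already connected (it is the support of a root) and $\supp(\beta)=\supp(\alpha)\sqcup(\supp(\beta)\cap J)$ is connected, every node of $J$ lies in the connected diagram on $J$, and the construction of $J'$ in (1)$\implies$(4) as $\bigcup_\beta \supp(\beta-\alpha)$ shows $J'$ is exactly the set of nodes of $J$ reachable from $\supp(\alpha)$ through roots in $\Delta_{\alpha,J}$. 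Here I would argue: for any $j\in J$, connectivity of $J$ gives a path in the Dynkin diagram from $j$ into $\supp(\alpha)$ staying in $J\cup\supp(\alpha)$, and one shows inductively along this path — using Corollary \ref{C5.1} / Theorem \ref{thmD}(D1) to add one simple root at a time — that $j\in\supp(\beta-\alpha)$ for some $\beta\in\Delta_{\alpha,J}$, whence $j\in J'$. Therefore $J'=J$, completing the proof.
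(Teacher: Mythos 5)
Your proposal is correct in substance and relies on the same three ingredients as the paper's proof: Lemma \ref{L6.1}(b) for (2)$\implies$(1), the identification of the stabilizer $(W_J)_{\alpha}$ with the parabolic subgroup $\langle s_j \mid j\in J,\ \langle\alpha,\alpha_j^{\vee}\rangle=0\rangle$ for (2)$\iff$(3), and Deodhar's theorem \cite{VVD} for the semisimplicity of $\mathfrak{g}_{J'}$. Two remarks. First, you badly overcomplicate (1)$\implies$(2) and even flag it as ``the main obstacle'': it is in fact the trivial direction, because $W_J\alpha\subseteq\Delta_{\alpha,J}$. Indeed, for $w\in W_J$ one has $w\alpha-\alpha\in\mathbb{Z}\Pi_J$, so $\supp(w\alpha-\alpha)\subset J$, and $w\alpha\in\Delta^+$ since $W_J$ permutes $\Delta^+\setminus\Delta_J^+$; membership in $\Delta_{\alpha,J}$ requires nothing more. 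Your worry that ``$w\alpha$ need not dominate $\alpha$'' is both unfounded (a positive root lying in $\alpha+\mathbb{Z}\Pi_J$ automatically has nonnegative coefficients along $\Pi_J$) and irrelevant, since the definition of $\Delta_{\alpha,J}$ only constrains $\supp(\beta-\alpha)$. The contrapositive detour through Deodhar and ``$s_j$-strings and imaginary-type directions'' is the one genuinely vague step in your writeup; replace it with the inclusion above, which is exactly what the paper does. Second, where the paper proves (3)$\implies$(4) by decomposing $J$ into connected components $J_1\sqcup\cdots\sqcup J_l$, writing $(W_J)_{\alpha}=W_K$, and taking $J'$ to be the union of those $J_t$ with $K\cap J_t\subsetneq J_t$ (each forced to be of finite type by \cite[Proposition 4.2]{VVD}), you prove (1)$\implies$(4) directly with $J':=\bigcup_{\beta}\supp(\beta-\alpha)$. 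The two constructions invoke Deodhar identically; yours additionally needs the observation, which you correctly supply, that a component of $J'$ pairing trivially with $\alpha$ would disconnect the support of some root. The paper's route has the advantage that the final assertion ($J'=J$ when $J$ is connected and $\Delta_{\alpha,J}\supsetneq\{\alpha\}$) falls out immediately, since with a single component and $K\subsetneq J$ the union is all of $J$; you instead need the extra path-induction, which does work (at each step the next node $j'$ satisfies $\langle\beta,\alpha_{j'}^{\vee}\rangle<0$, so $\beta+\alpha_{j'}\in\Delta_{\alpha,J}$) but is more labor than necessary. One small point to make explicit in (2)$\iff$(3): the ``standard fact'' about stabilizers applies because $-\alpha$ is $J$-dominant ($\langle\alpha,\alpha_j^{\vee}\rangle\leq 0$ for all $j\in J$ since $J\cap\supp(\alpha)=\emptyset$), which is precisely how the paper invokes \cite[Proposition 3.12]{Kac}.
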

	    \begin{proof}
	    Firstly, when $J=\emptyset$, note that the equivalence of (1), (2), (3) is trivial as $\Delta_{\alpha,J}=\{\alpha\}$. Similarly, when $J\neq \emptyset$ and $\Delta_{\alpha,J}=\{\alpha\}$---which happens if and only if $\big\langle s_j\text{ }\big|\text{ }j\in J\text{ and }\langle\alpha,\alpha_j^{\vee}\rangle=0\big\rangle=W_J$---by choosing $J'$ to be any singleton subset of $J$ it can be easily checked that (1), (2), (3), (4) are all equivalent. 
	    
	    So, we assume for the rest of the proof that $\Delta_{\alpha,J}\supsetneq \{\alpha\}$. Now, (1) $\implies$ (2) is obvious as $W_J\alpha\subset \Delta_{\alpha,J}$. It can be easily checked that (2) $\implies$ (1) follows by Lemma \ref{L6.1} part (b) and the fact that $\Delta_{\alpha,J}$ is a discrete subset of $\conv_{\mathbb{R}}W_J\alpha$.
	    	For (2) $\iff$ (3), consider the action of $W_J$ on $-\alpha$. Observe that $-\alpha$ belongs to the fundamental chamber of the (dual) $J$-Tits cone---i.e. $\langle-\alpha,\alpha_j^{\vee}\rangle\geq0$ $\forall$ $j\in J$. So, by \cite[Proposition 3.12]{Kac} we must have that the isotropy group $(W_J)_{-\alpha}:=\{w\in W_J\text{ }|\text{ }w(-\alpha)=-\alpha\}$ (which is same as $(W_J)_{\alpha}$) equals the parabolic subgroup $\big\langle s_j\text{ }\big|\text{ }j\in J\text{ and }\langle\alpha,\alpha_j^{\vee}\rangle=0\big\rangle$. By the bijection which exists between the set of left cosets $W_J\big/(W_J)_{\alpha}$ and the orbit $W_J\alpha$, (2) $\iff$ (3) follows.\\
	    	For (3) $\implies$ (4), let $J=J_1\sqcup \cdots \sqcup J_l$ give the decomposition of the Dynkin subdiagram on $J$ into connected components ($l=1$ when the Dynkin subdiagram on $J$ is connected). Let $K$ be the subset of $J$ such that $W_K=(W_J)_{\alpha}$. Observe by the assumption $\Delta_{\alpha,J}\supsetneq \{\alpha\}$ that $K\subsetneq J$. Note that $W_J\simeq W_{J_1}\times\cdots\times W_{J_l}$ and $W_K\simeq W_{J_1\cap K}\times\cdots\times W_{J_l\cap K}$ (direct products of groups). Thus, there exists a bijection between $W_J\big/ (W_J)_{\alpha}=W_J\big/ W_K$ and the Cartesian product $\prod\limits_{t=1}^{l}\big(W_{J_t}\big/W_{J_t\cap K}\big)$ of the sets of left cosets $W_{J_t}\big/W_{J_t\cap K}$ $\forall$ $1\leq t\leq l$. The assumption in (3) that $W_J/W_K$ is finite, and the previous line together imply that $W_{J_t}\big/W_{J_t\cap K}$ is finite $\forall$ $1\leq t\leq l$. Now, by \cite[Proposition 4.2]{VVD} observe that the Dynkin subdiagram on $J_t$ must be of finite type whenever $K\cap J_t\subsetneq J_t$. Now, set $J'$ to be the union of all the subsets $J_t$ such that $K\cap J_t\subsetneq J_t$. Observe that $J'\neq \emptyset$ as $K\subsetneq J$, and also $\mathfrak{g}_{J'}$ is semisimple by the previous two sentences. Moreover, it can be easily checked that $W_J\alpha= W_{J'}\alpha$. Now, Lemma \ref{L6.1} (b) yields $\Delta_{\alpha,J}=\Delta_{\alpha,J'}$. \\
	    	As in (4), suppose $\Delta_{\alpha,J}=\Delta_{\alpha,J'}$ for some $J'\subset J$ such that $\mathfrak{g}_{J'}$ is semisimple. Then $W_{J'}$ is finite as $\mathfrak{g}_{J'}$ is semisimple. This implies $W_{J'}\alpha$ is finite. By (2) $\implies$ (1) (for $J'$ in the place of $J$) we get that $\Delta_{\alpha,J'}$ is finite. Thus, $\Delta_{\alpha,J}$ is finite. This proves (4) $\implies$ (1), completing the proof of the proposition.  \end{proof}
	    	\begin{cor}\label{C6.4}
	    	    Let $\mathfrak{g}$ be a Kac--Moody algebra, and $\emptyset\neq I\subsetneq\mathcal{I}$. Then the following are equivalent: \begin{itemize}
	    	       \item[(1)] $\Delta_{I,1}$ is finite
	         \item[(2)] $W_{I^c}\Pi_I$ is finite.
	         \item[(3)] For each $i\in I$ the quotient $W_{I^c}$ modulo the parabolic subgroup $\big\langle s_t\text{ }\big|\text{ }t\in I^c\text{ and }\langle\alpha_i,\alpha_t^{\vee}\rangle=0\big\rangle$ is finite.
	         \item[(4)] For each $i\in I$ there exists $J_i\subset I^c$ such that $\mathfrak{g}_{J_i}$ is semisimple and $\Delta_{\alpha_i,I^c}=\Delta_{\alpha_i,J_i}$. 
	        \end{itemize}
	    	When the Dynkin subdiagram on $I^c$ is connected and $\Delta_{I,1}\supsetneq \Pi_I$, $J_i$ in (4) is equal to $I^c$ $\forall$ $i\in I$.
	    	\end{cor}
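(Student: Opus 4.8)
The plan is to deduce Corollary~\ref{C6.4} directly from Proposition~\ref{P6.3} by applying the latter one node at a time. First I would record the decomposition
\[
\Delta_{I,1}=\bigsqcup_{i\in I}\Delta_{\alpha_i,I^c},
\]
which was already observed in the proof of Lemma~\ref{L6.1}(c); here $\supp(\alpha_i)=\{i\}$, so $I^c\subset\mathcal{I}\setminus\supp(\alpha_i)$ and each term $\Delta_{\alpha_i,I^c}$ is a legitimate instance of the sets treated in Proposition~\ref{P6.3}. Since $\mathcal{I}$, and hence $I$, is finite, $\Delta_{I,1}$ is finite if and only if $\Delta_{\alpha_i,I^c}$ is finite for every $i\in I$. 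Moreover the displayed union is genuinely disjoint (if $\beta-\alpha_i$ and $\beta-\alpha_j$ were both supported in $I^c$ with $i\neq j\in I$, then so would be $\alpha_j-\alpha_i$, which is impossible) and it refines $\Pi_I=\bigsqcup_{i\in I}\{\alpha_i\}$ with $\alpha_i\in\Delta_{\alpha_i,I^c}$ for each $i$; this last remark is needed for the final assertion.

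Next I would invoke Proposition~\ref{P6.3} with $\alpha=\alpha_i$ and $J=I^c$, for each fixed $i\in I$. This yields, for every $i$, the chain: $\Delta_{\alpha_i,I^c}$ finite $\iff$ $W_{I^c}\alpha_i$ finite $\iff$ $W_{I^c}$ modulo $\bigl\langle s_t\mid t\in I^c,\ \langle\alpha_i,\alpha_t^\vee\rangle=0\bigr\rangle$ finite $\iff$ there exists $J_i\subset I^c$ with $\mathfrak{g}_{J_i}$ semisimple and $\Delta_{\alpha_i,I^c}=\Delta_{\alpha_i,J_i}$. Taking the conjunction over all $i\in I$ of each of these four properties and combining with the finiteness reduction from the first paragraph gives exactly the equivalence of (1)--(4) in the Corollary (noting $I\neq\emptyset$, $I\subsetneq\mathcal{I}$, so $I^c\neq\emptyset$, and the degenerate cases are irrelevant).

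For the last sentence, suppose the Dynkin subdiagram on $I^c$ is connected and $\Delta_{I,1}\supsetneq\Pi_I$. Since $\Pi_I=\bigsqcup_i\{\alpha_i\}$ sits inside $\Delta_{I,1}=\bigsqcup_i\Delta_{\alpha_i,I^c}$ with $\alpha_i\in\Delta_{\alpha_i,I^c}$, the strict inclusion forces $\Delta_{\alpha_{i_0},I^c}\supsetneq\{\alpha_{i_0}\}$ for at least one $i_0\in I$. Applying the concluding clause of Proposition~\ref{P6.3} to $\alpha=\alpha_{i_0}$ and the connected set $J=I^c$ shows that the choice $J'=I^c$ works there, and in particular that $\mathfrak{g}_{I^c}$ is semisimple. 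But then, for \emph{every} $i\in I$, the choice $J_i=I^c$ already satisfies condition (4), since $\mathfrak{g}_{I^c}$ is semisimple and trivially $\Delta_{\alpha_i,I^c}=\Delta_{\alpha_i,I^c}$. This proves the final claim.

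Since Proposition~\ref{P6.3} carries all the weight (Deodhar's theorem on finiteness of Coxeter quotients, the isotropy-group computation via \cite[Proposition 3.12]{Kac}, and Lemma~\ref{L6.1}(b)), the reduction itself is routine; the only genuinely delicate point is the last assertion, where one must notice that the semisimplicity of $\mathfrak{g}_{I^c}$---extracted from a single ``non-trivial'' node $i_0$---automatically propagates to license the uniform choice $J_i=I^c$ over all of $I$, rather than the node-dependent sets produced by the proof of Proposition~\ref{P6.3}.
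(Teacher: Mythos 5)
Your proposal is correct and follows exactly the paper's route: the paper's entire proof is the one-line observation that $\Delta_{I,1}=\bigsqcup_{i\in I}\Delta_{\alpha_i,I^c}$, after which everything is read off from Proposition \ref{P6.3} applied to each $\alpha=\alpha_i$, $J=I^c$. Your write-up merely makes explicit the details the paper leaves implicit (finiteness of the union versus finiteness of each piece, and the propagation of the semisimplicity of $\mathfrak{g}_{I^c}$ from a single node $i_0$ with $\Delta_{\alpha_{i_0},I^c}\supsetneq\{\alpha_{i_0}\}$ to the uniform choice $J_i=I^c$), all of which is accurate.
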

	    	\begin{proof}
	    	The proof immediately follows from Proposition \ref{P6.3} by noting that $\Delta_{I,1}=\bigsqcup\limits_{i\in I}\Delta_{\alpha_i,I^c}$.
	    \end{proof}  
	    	Let $\emptyset\neq I\subset \mathcal{I}$. Observe that if $\Delta_{I,1}$ is finite, then by the parabolic-PSP $\Delta_{I,n}$ is finite for any $n\in\mathbb{Z}\setminus\{0\}$. It might be interesting to check if the finiteness of $\Delta_{I,m}$ for some $m\in\mathbb{Z}\setminus \{0\}$ implies the finiteness of $\Delta_{I,1}$ and hence the finiteness of $\Delta_{I,n}$ for every $n\in\mathbb{Z}\setminus\{0\}$.
	    	
	    	In the rest of this subsection, we look at the lengths of the roots in $\Delta_{\alpha,J}$ when $\mathfrak{g}$ is symmetrizable.	    \begin{lemma}\label{L6.5}
	    	Let $\mathfrak{g}$ be a symmetrizable Kac--Moody algebra with symmetric invariant form $(.,.)$. Fix a real root $\alpha\in \Delta^+$ and $J\subset \mathcal{I}\setminus \supp(\alpha)$. Suppose $\beta \in \Delta_{\alpha,J}$. Then:
	      		\begin{itemize}
	      		\item[(a)] $(\beta ,\beta)\leq(\alpha ,\alpha)$.
	      		\item[(b)] $(\beta,\beta)=(\alpha,\alpha)\implies\exists$ $\omega \in W_{\supp(\beta -\alpha)}$ such that $\omega \alpha =\beta$.
	      	\end{itemize}
	      	\end{lemma}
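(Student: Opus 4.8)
The plan is to prove (a) and (b) simultaneously by induction on $\height(\beta-\alpha)\geq 0$. Write $\delta:=\beta-\alpha$ and $K:=\supp(\beta-\alpha)$. First I would record the elementary facts that, since $\beta\in\Delta^+$ and $K\cap\supp(\alpha)=\emptyset$, one has $\beta\succeq\alpha$, $\delta\in\mathbb{Z}_{\geq0}\Pi_K$ with $K\subseteq J$, and for every $j\in K$ (as $j\notin\supp(\alpha)$) $\langle\alpha,\alpha_j^\vee\rangle\leq 0$. The base case $\beta=\alpha$ is immediate: $(\beta,\beta)=(\alpha,\alpha)$, and $\omega=e\in W_\emptyset=W_{\supp(\beta-\alpha)}$ works. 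For the inductive step I would assume $\beta\neq\alpha$ (so $K\neq\emptyset$) and distinguish according to whether there exists $j\in K$ with $\langle\beta,\alpha_j^\vee\rangle\geq 1$.

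Suppose first that such a $j$ exists. If $\langle\beta,\alpha_j^\vee\rangle=1$, then the $\alpha_j$-string through $\beta$ (recall $\alpha_j$ is a real root) gives $\gamma:=\beta-\alpha_j\in\Delta^+$, and one checks that $\gamma\in\Delta_{\alpha,J}$ with $\height(\gamma-\alpha)<\height(\beta-\alpha)$. Since $2(\beta,\alpha_j)=(\alpha_j,\alpha_j)\langle\beta,\alpha_j^\vee\rangle=(\alpha_j,\alpha_j)$, we get $(\gamma,\gamma)=(\beta,\beta)$, so the inductive hypothesis applied to $\gamma$ yields $(\beta,\beta)=(\gamma,\gamma)\leq(\alpha,\alpha)$; and in the equality case it yields $\omega'\in W_{\supp(\gamma-\alpha)}\subseteq W_K$ with $\omega'\alpha=\gamma$, whence $s_j\omega'\in W_K$ and $s_j\omega'\alpha=s_j\gamma=\gamma+\alpha_j=\beta$ (using $\langle\gamma,\alpha_j^\vee\rangle=-1$). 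If instead $\langle\beta,\alpha_j^\vee\rangle\geq 2$, I would use $s_j\beta=\beta-\langle\beta,\alpha_j^\vee\rangle\alpha_j$ in place of $\gamma$: it is a positive root (as $\beta\neq\alpha_j$), it lies in $\Delta_{\alpha,J}$ with $\height(s_j\beta-\alpha)<\height(\beta-\alpha)$, and $(s_j\beta,s_j\beta)=(\beta,\beta)$; the inductive hypothesis for $s_j\beta$ then gives $(\beta,\beta)\leq(\alpha,\alpha)$, and in the equality case an $\omega'\in W_{\supp(s_j\beta-\alpha)}\subseteq W_K$ with $\omega'\alpha=s_j\beta$, so that $s_j\omega'\in W_K$ and $s_j\omega'\alpha=\beta$.

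In the remaining case, $\langle\beta,\alpha_j^\vee\rangle\leq 0$ for all $j\in K$, I would argue directly from the form. Both $(\beta,\delta)=\sum_{j\in K}c_j\frac{(\alpha_j,\alpha_j)}{2}\langle\beta,\alpha_j^\vee\rangle$ and $(\alpha,\delta)=\sum_{j\in K}c_j\frac{(\alpha_j,\alpha_j)}{2}\langle\alpha,\alpha_j^\vee\rangle$ are $\leq 0$ (here $c_j>0$ and $(\alpha_j,\alpha_j)>0$), so $(\beta,\beta)=(\alpha,\beta)+(\delta,\beta)\leq(\alpha,\beta)=(\alpha,\alpha)+(\alpha,\delta)\leq(\alpha,\alpha)$, which proves (a). For (b), equality forces $(\alpha,\delta)=0$; since $(\alpha,\delta)$ is a sum of the nonpositive terms $a_ic_j(\alpha_i,\alpha_j)$ over $i\in\supp(\alpha)$, $j\in K$ with $a_i,c_j>0$, we get $(\alpha_i,\alpha_j)=0$ for all such $i,j$, i.e. no node of $\supp(\alpha)$ is adjacent to any node of $K$ in the Dynkin diagram. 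But $\supp(\beta)=\supp(\alpha)\sqcup K$ is connected (being the support of the root $\beta$) with both parts nonempty, a contradiction; hence this configuration cannot occur under the hypotheses of (b), and there is nothing to prove.

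The root-string and bilinear-form computations above are routine; the one step I expect to need genuine (though short) care is the last one — ruling out, in part (b), the case where every $\langle\beta,\alpha_j^\vee\rangle$ with $j\in K$ is nonpositive — which I would handle via the connectedness of the support of a root. I do not expect to need Corollary \ref{C5.1} or Theorem \ref{thmD} here; the argument is a self-contained induction.
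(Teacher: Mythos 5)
Your proof is correct, and it takes a genuinely different route from the paper's. The paper proves only part (a) by induction (with the same basic descent: reflect by a simple root $\alpha_{j}$ with $\langle\beta,\alpha_j^{\vee}\rangle>0$ and recurse), and in the terminal case where $\langle\beta,\alpha_j^{\vee}\rangle\leq 0$ for all $j$ it first shows $\langle\beta,\alpha^{\vee}\rangle=1$, so that $\beta-\alpha=s_{\alpha}\beta$ is a root, and then runs a two-line computation with $(\beta-\alpha,\beta-\alpha)$; for part (b) the paper simply cites Carbone et al.\ \cite[Proposition 6.4]{svis}. You instead run a single induction proving (a) and (b) together: the descent step correctly tracks the Weyl group element ($\omega=s_j\omega'$, with $\supp(s_j\beta-\alpha)\subseteq\supp(\beta-\alpha)$ so that $\omega\in W_{\supp(\beta-\alpha)}$ as required), and your terminal case replaces the paper's computation by the cleaner chain $(\beta,\beta)\leq(\alpha,\beta)\leq(\alpha,\alpha)$, which needs neither $\beta$ real nor $\beta-\alpha\in\Delta$ (it also absorbs the paper's separate treatment of imaginary $\beta$). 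The one genuinely new ingredient is your disposal of the terminal case under equality: $(\alpha,\delta)=0$ forces $(\alpha_i,\alpha_j)=0$ for all $i\in\supp(\alpha)$, $j\in\supp(\delta)$ (each summand being nonpositive with positive coefficients, and $(\alpha_i,\alpha_i)>0$ so vanishing of $(\alpha_i,\alpha_j)$ is equivalent to absence of an edge), contradicting connectedness of $\supp(\beta)=\supp(\alpha)\sqcup\supp(\delta)$. That argument is sound, and the net effect is a self-contained proof of both parts where the paper outsources (b) to the literature.
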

	      	\begin{proof}
	      		We prove (a) by induction on $\height(\beta-\alpha)\geq 0$. (b) was proved by Carbone et al in \cite[Proposition 6.4]{svis}. In the base step $\height(\beta-\alpha)=0$, (a) is trivial as $\beta=\alpha$.\\
	      		  Induction step: Assume $\height(\beta-\alpha)\geq 1$, and let $\beta =\alpha+\sum_{j\in J}c_j\alpha_j$ for some $c_j\in\mathbb{Z}_{\geq0}$. If $\beta$ is imaginary, then, as $\alpha$ is real, (a) just follows by $(\beta,\beta)\leq0<(\alpha,\alpha)$. So, we assume throughout the proof that $\beta$ is real. If there exists some $j'\in J$ such that $\langle\beta,\alpha_{j'}^{\vee}\rangle>0$, then $s_{j'}\beta\precneqq\beta\in\Delta_{\alpha,J}$. Now, as $\height(s_{j'}\beta-\alpha)<\height(\beta-\alpha)$ and $(s_{j'}\beta,s_{j'}\beta)=(\beta,\beta)$, we will be done by the induction hypothesis applied to $s_{j'}\beta$. So, we also assume now that $\langle\beta,\alpha_j^{\vee}\rangle\leq0$ $\forall$ $j\in J$. This assumption and $(\beta,\beta)>0$ (as $\beta$ is real) together force $\langle\beta ,\alpha^{\vee}\rangle>0$. Now, $\beta\in\Delta_{\alpha,J}$ and $s_{\alpha}\beta\in\Delta^+$ further force $\langle \beta ,\alpha^{\vee}\rangle =1$. Thus, $\beta-\alpha= s_{\alpha}(\beta)$, and so $(\beta ,\beta)=(\beta -\alpha ,\beta -\alpha)>0$. Observe now
	      		  \[(\beta -\alpha ,\beta -\alpha)+(\beta -\alpha ,\alpha)= \big(\sum_{j\in J}c_j\alpha_j,\beta\big)\leq 0\qquad \text{and}\qquad (\alpha ,\alpha)+(\alpha ,\beta -\alpha)= (\alpha ,\beta)>0\] 
	      		  together imply that $(\alpha ,\alpha)>(\beta -\alpha ,\beta -\alpha)=(\beta,\beta)$, completing the proof of (a).
	      	\end{proof} 
	      	\begin{cor}\label{C6.6}
	      		Let $\mathfrak{g}$, $\alpha$ and $J$ be as in Lemma \ref{L6.5}. Then $\Delta_{\alpha,J}=W_{J}\alpha$ if and only if all the roots in $\Delta_{\alpha,J}$ are real and $(\alpha,\alpha)$ is least among the lengths of all the roots in $\Delta_{\alpha,J}$.
	      	\end{cor}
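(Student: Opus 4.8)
The plan is to prove both directions by combining Lemma \ref{L6.5} with the parabolic-PSP (Theorem \ref{thmA}, part (A1), together with the structural refinement in Theorem \ref{T3.1}), and with the fact that $W_J\alpha$ minimally generates $\conv_{\mathbb{R}}\Delta_{\alpha,J}$ (Lemma \ref{L6.1}(b)). Recall that since $\alpha$ is a real root, $(\alpha,\alpha)>0$, and for $w\in W_J$ the element $w\alpha$ is again real with $(w\alpha,w\alpha)=(\alpha,\alpha)$, because $W$ preserves the symmetric invariant form $(.,.)$. So the condition ``all roots in $\Delta_{\alpha,J}$ are real and $(\alpha,\alpha)$ is least among their lengths'' is clearly \emph{necessary} once we know $\Delta_{\alpha,J}=W_J\alpha$: every $\beta\in\Delta_{\alpha,J}=W_J\alpha$ has $(\beta,\beta)=(\alpha,\alpha)$, so all are real and all lengths coincide. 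This handles the forward (only if) direction essentially for free.

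For the converse, suppose every $\beta\in\Delta_{\alpha,J}$ is real and $(\beta,\beta)\geq(\alpha,\alpha)$ for all $\beta\in\Delta_{\alpha,J}$. Combined with Lemma \ref{L6.5}(a), which gives $(\beta,\beta)\leq(\alpha,\alpha)$ unconditionally, we deduce $(\beta,\beta)=(\alpha,\alpha)$ for \emph{every} $\beta\in\Delta_{\alpha,J}$. Now invoke Lemma \ref{L6.5}(b): for each such $\beta$ there exists $\omega\in W_{\supp(\beta-\alpha)}\subseteq W_J$ with $\omega\alpha=\beta$. Hence $\Delta_{\alpha,J}\subseteq W_J\alpha$. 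The reverse inclusion $W_J\alpha\subseteq\Delta_{\alpha,J}$ is immediate: for $w\in W_J$, $w\alpha$ is a root (it is in the $W$-orbit of $\alpha$), and $w\alpha-\alpha\in\mathbb{Z}\Pi_J$ so $\supp(w\alpha-\alpha)\subseteq J$; moreover $w\alpha\in\Delta^+$ since $\height_{J^c}(w\alpha)=\height_{J^c}(\alpha)>0$ forces positivity (as $J\subseteq\mathcal{I}\setminus\supp(\alpha)$ means $\alpha$ has strictly positive support outside $J$). Therefore $w\alpha\in\Delta_{\alpha,J}$, and we conclude $\Delta_{\alpha,J}=W_J\alpha$.

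I expect the \textbf{main subtlety} to be the small bookkeeping in the converse: verifying carefully that $w\alpha\in\Delta^+$ for $w\in W_J$ (using $\supp(\alpha)\cap J=\emptyset$, so the $J^c$-component of $\alpha$ is nonzero and unchanged under $W_J$, which pins down the sign), and making sure the hypothesis ``$(\alpha,\alpha)$ is least among the lengths in $\Delta_{\alpha,J}$'' is used in the correct direction to pair with Lemma \ref{L6.5}(a). Everything else is a direct assembly of the cited lemmas; no new induction or case analysis beyond what is already in Lemma \ref{L6.5} should be required. One could alternatively phrase the converse via Lemma \ref{L6.1}(b) (noting $\Delta_{\alpha,J}$ is a discrete subset of $\conv_{\mathbb{R}}W_J\alpha$ all of whose elements have the extremal length $(\alpha,\alpha)$, hence must be vertices, i.e.\ in $W_J\alpha$), but the route through Lemma \ref{L6.5}(b) is cleaner and avoids convexity arguments.
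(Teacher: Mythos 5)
Your proof is correct and is exactly the intended derivation: the paper states Corollary \ref{C6.6} without a separate argument precisely because it follows from Lemma \ref{L6.5} in the way you describe (the hypothesis plus part (a) forces $(\beta,\beta)=(\alpha,\alpha)$ for all $\beta\in\Delta_{\alpha,J}$, part (b) then gives $\Delta_{\alpha,J}\subseteq W_J\alpha$, and the reverse inclusion is the routine check that $w\alpha\in\Delta^+$ with $\supp(w\alpha-\alpha)\subseteq J$). Your bookkeeping for the reverse inclusion and the trivial forward direction are both sound, so there is nothing to add.
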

	 \subsection*{Acknowledgements} My deepest thanks and gratitude to my Ph.D. advisor, Apoorva Khare, for introducing me to the two generalizations of the partial sum property studied in this paper, and for his invaluable encouragement and discussions which helped to refine the results and also improve the exposition. I sincerely thank R. Venkatesh for valuable discussions, including the suggestion to explore the parabolic-PSP at the level of Lie words; as well as Gurbir Dhillon for his invaluable comments on the paper, including suggesting the references to the results in the Appendix of the paper. I also thank Amritanshu Prasad for valuable discussions; as well as Sankaran Viswanath for pointing me to some of the references in this paper which helped me to improve the results of subsection \ref{S6.2} of this paper. This work is supported by a scholarship from the National Board for Higher Mathematics (Ref. No. 2/39(2)/2016/NBHM/R{\&}D-II/11431).        
    \address{(G Krishna Teja) \textsc{R-21, Department of Mathematics, Indian Institute of Science, Bangalore 560012, India}}
    
    \textit{E-mail address}: \email{\texttt{tejag@iisc.ac.in}}   \appendix  \section{A maximal property, and the extremal rays, of $\mathcal{P}(\lambda,J)$}
    
	       In the Appendix, we re-define the partial order on $\mathfrak{h}^*$ as follows:
	       \[\text{for }x,y\in\mathfrak{h}^*,\quad x\prec y\iff y-x\in\mathbb{R}_{\geq 0}\Pi.    \]
	       We now use the above analysis to prove a ``maximal property'' satisfied by $\mathcal{P}(\lambda,J)$ under some finiteness conditions, see Maximal property \ref{MA.2} below. Our proof for this result is different to the proofs in \cite{Dhillon_arXiv, Khare_Ad, Khare_Trans}. Recall the definitions of $J_{\lambda}, J'_{\lambda}, \mathcal{P}(\lambda,J)$ from equations \eqref{E2.4} and \eqref{E2.13}. 
	    \begin{prop}\label{PA.1}
	      	Let $\mathfrak{g}$ be a Kac--Moody algebra, and fix $\lambda \in \mathfrak{h}^*$ and $J\subset J_{\lambda}'$. Suppose $W_J$ is finite. Then $\mu \in \mathcal{P}(\lambda,J)\iff\omega \mu \prec \lambda$ $\forall$ $\omega \in W_{J}$.
	      \end{prop}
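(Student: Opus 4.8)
The plan is to prove both directions by using the definition $\mathcal{P}(\lambda,J) = \conv_{\mathbb{R}}W_J\lambda - \mathbb{R}_{\geq 0}(\Delta^+\setminus\Delta_J^+)$ together with the finiteness of $W_J$, which is what makes $\conv_{\mathbb{R}}W_J\lambda$ a genuine (compact) polytope. For the forward implication, suppose $\mu\in\mathcal{P}(\lambda,J)$, so $\mu = \nu - \eta$ with $\nu\in\conv_{\mathbb{R}}W_J\lambda$ and $\eta\in\mathbb{R}_{\geq 0}(\Delta^+\setminus\Delta_J^+)$. Fix $\omega\in W_J$. I would first observe that $\omega\eta \in \mathbb{R}_{\geq 0}\Pi$: indeed $\eta = \sum c_i\alpha_i$ with $c_i\geq 0$, and for a root $\beta\in\Delta^+\setminus\Delta_J^+$ we have $\supp(\beta)\not\subset J$, so $\omega\beta$ (for $\omega\in W_J$, which only acts on the $\Pi_J$-coordinates) is again a positive root — the $J^c$-support coefficients are unchanged and stay positive. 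Hence $\omega\eta \succeq 0$. Next, $\omega\nu\in\conv_{\mathbb{R}}W_J\lambda \subset \lambda - \mathbb{R}_{\geq 0}\Pi_J$ because $W_J\lambda \subset \lambda - \mathbb{Z}_{\geq 0}\Pi_J$ when $J\subset J'_\lambda$ (each $s_j\lambda = \lambda - \langle\lambda,\alpha_j^\vee\rangle\alpha_j$ with $\langle\lambda,\alpha_j^\vee\rangle\geq 0$, and one iterates using $W_J$-stability of the dominant chamber intersection). Combining, $\omega\mu = \omega\nu - \omega\eta \preceq \lambda$.

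For the reverse implication, suppose $\omega\mu\prec\lambda$ for all $\omega\in W_J$; I want $\mu\in\mathcal{P}(\lambda,J)$. The strategy is: (i) write $\mu$ relative to $\lambda$ as $\mu = \lambda - \sum_{i\in\mathcal{I}}d_i\alpha_i$ — taking $\omega = e$ gives $d_i\geq 0$ for all $i$; (ii) reduce to the case where the $\Pi_{J^c}$-part is zero, i.e. first subtract off $\sum_{i\in J^c}d_i\alpha_i$, which lies in $\mathbb{R}_{\geq 0}(\Delta^+\setminus\Delta_J^+)$ by Theorem~\ref{thmA}(A2) (since $\mathbb{R}_{\geq 0}\Pi_{J^c}\subset\mathbb{R}_{\geq 0}(\Delta^+\setminus\Delta_J^+)$, as $\Pi_{J^c}\subset\Delta_{J^c,1}$), so it suffices to treat $\mu' := \lambda - \sum_{j\in J}d_j\alpha_j$ and show $\mu'\in\conv_{\mathbb{R}}W_J\lambda$; (iii) for $\mu'$, note the hypothesis still gives $\omega\mu'\prec\lambda$ for $\omega\in W_J$ (since $\omega$ fixes the $\alpha_i$, $i\in J^c$, directions and $W_J$ permutes the roots outside as above — actually one should check $\omega(\mu-\mu') = \mu - \mu'$ modulo $\mathbb{R}_{\geq0}\Pi$, which follows from step (i)'s positivity). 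Then $\mu'$ lies in the $\mathfrak{h}_J^*$-translate of the problem for the finite root system $\Delta_J$, where $\conv_{\mathbb{R}}W_J\lambda$ is exactly $\{x : \omega x \preceq_J \lambda\ \forall\omega\in W_J\}$ — this is the standard characterization of the convex hull of a $W_J$-orbit of a (rational-)dominant weight in finite type, which I would invoke or prove by a short induction on the height of $\lambda - \mu'$ in $\mathbb{R}_{\geq 0}\Pi_J$ using $\mathfrak{sl}_2$-type reflection arguments exactly as in the proof of Lemma~\ref{L6.1}(a).

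The main obstacle I expect is step (ii)–(iii) of the reverse direction: cleanly separating the $\Pi_J$ and $\Pi_{J^c}$ contributions and justifying that the "finite-type slice" characterization of $\conv_{\mathbb{R}}W_J\lambda$ really applies. The subtlety is that $\lambda$ need only satisfy $\langle\lambda,\alpha_j^\vee\rangle\in\mathbb{R}_{\geq 0}$ (real, not integral) for $j\in J$, so one is working with $W_J$ acting on a non-integral weight; however, since $W_J$ is assumed finite this is fine — the convex-hull-of-orbit description holds for any point in the closure of the dominant chamber of a finite Coxeter group. A secondary technical point is verifying that $\omega\big(\mathbb{R}_{\geq 0}(\Delta^+\setminus\Delta_J^+)\big)\subset\mathbb{R}_{\geq 0}\Pi$ for all $\omega\in W_J$, i.e. that $W_J$ does not move any $\beta\in\Delta^+\setminus\Delta_J^+$ to a negative root; this is the standard fact that $w(\Delta^+\setminus\Delta_J^+) = \Delta^+\setminus\Delta_J^+$ for $w\in W_J$, which I would cite or deduce from $w(\Delta_J^+) \subset \Delta_J^+\cup(-\Delta_J^+)$ together with support considerations. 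Once these are in place, the two implications close up with the induction sketched above.
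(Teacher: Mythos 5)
Your forward implication is fine and matches the paper's (the paper dismisses it in one line as a consequence of the $W_J$-invariance of $\mathcal{P}(\lambda,J)$; you supply the details, correctly). The reverse implication, however, has a genuine gap at step (ii): the reduction to $\mu':=\lambda-\sum_{j\in J}d_j\alpha_j$ loses generality and in fact fails. The point is that the cone $\mathbb{R}_{\geq 0}(\Delta^+\setminus\Delta_J^+)$ is strictly larger than $\mathbb{R}_{\geq 0}\Pi_{J^c}$: its generators have nontrivial $\Pi_J$-components, so the correct split of $\lambda-\mu$ into a ``$\conv_{\mathbb{R}}W_J\lambda$ part'' and a ``$\mathbb{R}_{\geq0}(\Delta^+\setminus\Delta_J^+)$ part'' is \emph{not} the coordinate-wise split into $\Pi_J$- and $\Pi_{J^c}$-parts. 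Concretely, take $\mathfrak{g}=\mathfrak{sl}_3$, $\mathcal{I}=\{1,2\}$, $J=\{1\}$, and $\lambda$ with $\langle\lambda,\alpha_1^{\vee}\rangle=0$. Then $\conv_{\mathbb{R}}W_J\lambda=\{\lambda\}$ and $\mathcal{P}(\lambda,J)=\lambda-\mathbb{R}_{\geq0}\{\alpha_2,\alpha_1+\alpha_2\}$. The weight $\mu=\lambda-(\alpha_1+\alpha_2)$ lies in $\mathcal{P}(\lambda,J)$ and satisfies $\omega\mu\prec\lambda$ for both $\omega\in W_J$ (indeed $s_1\mu=\lambda-\alpha_2$), but $\mu'=\lambda-\alpha_1$ is \emph{not} in $\conv_{\mathbb{R}}W_J\lambda=\{\lambda\}$, and $s_1\mu'=\lambda+\alpha_1\not\prec\lambda$, so the hypothesis you need in step (iii) is simply false for $\mu'$. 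Hence steps (ii)--(iii) cannot be carried out as written; the finite-type orbit-hull characterization is correct but is being applied to the wrong point.

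The paper's proof takes a different route that avoids this decomposition problem. Using the $W_J$-invariance of both the hypothesis and $\mathcal{P}(\lambda,J)$, it first replaces $\mu$ by its $J$-dominant representative. It then sets $\epsilon:=\inf\{\height(\lambda'-\mu)\mid \mu\prec\lambda'\in\mathcal{P}(\lambda,J)\}$, observes that one may restrict to $\lambda'$ with $\lambda'-\mu\in\mathbb{R}_{\geq0}\Pi_J$ (here is where the $\Pi_{J^c}$-directions are harmlessly absorbed, since $\mathcal{P}(\lambda,J)-\mathbb{R}_{\geq0}\Pi_{J^c}\subset\mathcal{P}(\lambda,J)$), uses the closedness of $\mathcal{P}(\lambda,J)$ (which needs the finiteness of $W_J$ and of $\Delta_{J^c,1}$) to attain the infimum at some $\xi$, and shows $\epsilon=0$ by a reflection argument: writing $\xi-\mu=\sum_{j\in J}c_j\alpha_j$ and pairing with $(\xi-\mu)^{\vee}$, the $J$-dominance of $\mu$ forces $\langle\xi,\alpha_{j_0}^{\vee}\rangle>0$ for some $j_0$ with $c_{j_0}>0$, and then a convex combination of $\xi$ and $s_{j_0}\xi$ strictly decreases $\height(\xi-\mu)$ while staying in $\mathcal{P}(\lambda,J)$ and above $\mu$. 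If you want to salvage your approach, the statement you should aim for is that every $J$-dominant $\mu\prec\lambda$ lies in $\mathcal{P}(\lambda,J)$ (not that the $\Pi_J$-truncation of $\mu$ lies in $\conv_{\mathbb{R}}W_J\lambda$); proving that directly essentially forces an argument of the paper's kind.
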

	      \begin{proof}
	     The proposition holds true if $J=\emptyset$, as $\mathcal{P}(\lambda,\emptyset)=\lambda-\mathbb{R}_{\geq0}\Delta^+=\lambda-\mathbb{R}_{\geq0}\Pi$. So, we assume that $J\neq \emptyset$. Observe by Corollary \ref{C6.4} that the finiteness of $W_J$ implies that $\Delta_{J^c,1}$ is finite. It can be easily seen that the finiteness of $W_J$ and $\Delta_{J^c,1}$, and the Minkowski difference formula for $\mathcal{P}(\lambda,J)$ together imply the closedness of $\mathcal{P}(\lambda,J)$ in the usual Euclidean topology on $\mathfrak{h}^*$. \\
	      	Fix $\mu\precneqq\lambda \in\mathfrak{h}^*$. As $\mathfrak{g}_J$ is semisimple, assume without loss of generality that $\mu$ is the $J$-dominant element in $W_J\mu$, i.e. $\langle\mu,\alpha^{\vee}_{j}\rangle\geq 0$ $\forall$ $j\in J$. Observe that the forward implication of the proposition is obvious, as $\mathcal{P}(\lambda,J)$ is $W_J$-invariant. To show the reverse implication, consider 
	      	 \[ \epsilon: =\inf\big\{\height(\lambda'- \mu) \text{ }|\text{ }\mu\prec\lambda'\in \mathcal{P}(\lambda,J)\big\}\geq0.\]
	      	 As $\eta\in \mathcal{P}(\lambda,J)\implies \eta-\sum_{i\in J^c}b_i \alpha_i\in \mathcal{P}(\lambda,J)\text{ }\forall\text{ } b_i \in \mathbb{R}_{\geq0}$, we also have \[\epsilon=\inf\big\{\height(\lambda'-\mu)\text{ }|\text{ }\mu\prec\lambda'\in \mathcal{P}(\lambda,J)\text{ and }\lambda'-\mu \in \mathbb{R}_{\geq0}\Pi_{J}\}.\]
	      	 Fix a sequence $\xi_n\succ\mu \in \mathcal{P}(\lambda,J)$ such that $\xi_n -\mu \in \mathbb{R}_{\geq0}\Pi_{J}$ and $\height(\xi_n -\mu)\longrightarrow \epsilon$. As $\xi_n$ lie in the compact set $\big(\lambda-[0,\height(\lambda-\mu)]\Pi\big)\cap \mathcal{P}(\lambda,J)$, we get a subsequence $\xi_{n_k}$ and a point $\xi\in \mathcal{P}(\lambda,J)$ such that $\xi_{n_k} \longrightarrow\xi\in \mathcal{P}(\lambda,J)$. Check that $\xi\succ\mu$ and $\height(\xi-\mu)=\epsilon$. Now, the result follows once we prove $\epsilon=0$. So, suppose to the contrary that $\epsilon>0$.\\
	           Let (.,.) denote the positive definite (Killing) form on $\mathfrak{h}^*_J\times \mathfrak{h}^*_J$. Write $\xi-\mu=\sum\limits_{j\in J}c_j\alpha_j$, and define $(\xi-\mu)^{\vee}:=\sum\limits_{j\in J}c'_j\alpha_j^{\vee}$, where $c_j$ and $c'_j=\frac{(\alpha_j,\alpha_j)}{(\xi-\mu,\xi-\mu)}c_j\in\mathbb{R}_{\geq0}$ $\forall$ $j\in J$. Note that \[ \text{for each }j\in J\text{ } c'_j= 0\iff c_j= 0 \text{ },\qquad 2=\langle\xi-\mu,(\xi-\mu)^{\vee}\rangle=\langle\xi,(\xi-\mu)^{\vee}\rangle-\langle
	      	\mu,(\xi-\mu)^{\vee}\rangle.\]
	      	As $\mu$ is $J$-dominant, we have $\langle\mu,(\xi-\mu)^{\vee}\rangle\geq0$, forcing $\langle\xi,(\xi-\mu)^{\vee}\rangle>0$. This implies that there exists $j_0\in J$ such that $c_{j_0}>0$ (equivalently $c'_{j_0}>0$) and $\langle\xi,\alpha_{j_0}^{\vee}\rangle>0$. Now, define $r:=\min\{ 1,\frac{{c_{j_0}}}{\langle\xi,\alpha_{j_0}^{\vee}\rangle}\}$. Observe that 
	      	\[
	      	(1-r)\xi+rs_{j_0}(\xi)\in\mathcal{P}(\lambda,J)\quad\text{ and }\quad0<\height\big([(1-r)\xi+rs_{j_0}(\xi)]-\mu\big)<\height(\xi-\mu)=\epsilon.
	      	\]
	      	This contradicts the minimality of $\epsilon$. Hence, $\epsilon$ must be 0. 
	      \end{proof}
	      \begin{mxlp}\label{MA.2}
	      	Let $\mathfrak{g},\lambda,J$ and $\mathcal{P}(\lambda,J)$ be as in Proposition \ref{PA.1}. Suppose $X\subset\mathfrak{h}^*$ is such that $ X\subset \lambda-\mathbb{R}_{\geq0}\Pi$ and $W_JX=X$. Then $X\subset \mathcal{P}(\lambda,J)$.
	      \end{mxlp}
	      \begin{note}\label{NA.3}
	       Notice that when either (i) $\mathfrak{g}$ is of affine type and $|J^c|\geq1$ or (ii) $\mathfrak{g}$ is hyperbolic and $|J^c|\geq2$, $\mathcal{P}(\lambda,J)$ defined over $\mathfrak{g}$ has the above maximal property.
	       \end{note}
	      It will be interesting to check the extent to which the above result may be extended. We now proceed to prove Proposition \ref{P2.11}---which says that the extremal rays of the shape $\mathcal{P}(\lambda,J)$ for $J\subset J'_{\lambda}$ are $\bigsqcup_{i\in J^c}W_J(\lambda-\mathbb{R}_{\geq 0}\alpha_i)$---using Theorems \ref{thmA} and \ref{thmD}. Recall that this was shown in \cite{Dhillon_arXiv, Khare_Ad, Khare_Trans}, and our proof is different to the proofs therein. Observe also that our proof works well even when $\mathbb{R}$ everywhere in the proof is replaced by an arbitrary subfield $F$ of $\mathbb{R}$ (with $F_{\geq 0}$ in the place of $\mathbb{R}_{\geq 0}$). Thus, Proposition \ref{P2.11} holds true more generally over $F$ for $\mathcal{P}(\lambda,J)$ defined over $F$ (i.e. with $F$ in the place of $\mathbb{R}$, and $J\subset\{j\in \mathcal{I}$ $|$ $\langle\lambda,\alpha_j^{\vee}\rangle\in F_{\geq 0}\}$ in the definition of $\mathcal{P}(\lambda,J)$).
	   \begin{proof}[\textnormal{\textbf{Proof of Proposition \ref{P2.11}:}}]
	    	We assume that $\emptyset\neq J\subsetneq \mathcal{I}$, as (i) the proposition holds true trivially when $J=\emptyset$, and (ii) there is nothing to prove in view of Lemma \ref{LB.1} part (b) below when $J=\mathcal{I}$. Throughout the proof, we only deal with the extremal rays of $\mathcal{P}(\lambda,J)$ at $\lambda$, as all other extremal rays are $W_J$-conjugates of these.\\
	    	Observe that each ray $\lambda -\mathbb{R}_{\geq 0}\alpha_i$ is clearly an extremal ray $\forall$ $i\in \mathcal{I}\setminus J$. We first prove the following useful lemma about the orbits under the parabolic subgroups of Weyl groups. For $\lambda\in\mathfrak{h}^*$ and $J\subset J'_{\lambda}$, recall the definition of $J_0$ in the statement of Proposition \ref{P2.11}, $J_0:=\{j_0\in J$ $|$ $\langle\lambda,\alpha_{j_0}^{\vee}\rangle=0\}$.	       	\begin{lemma}\label{LB.1}
	       	Let $\mu \in \conv_{\mathbb{R}}W_{J}\lambda$ and $\mu\neq\lambda$. Then:
	       	\begin{itemize} \item[(a)]  $\height_{J\setminus J_0}(\lambda -\mu)>0$.
	       	\item[(b)] The ray originating from $\lambda$ and containing $\mu$, which is precisely $\lambda-\mathbb{R}_{\geq0}(\lambda-\mu)$, cannot be an extremal ray of $\conv_{\mathbb{R}}W_J\lambda$.
	       	\end{itemize}	
	       	\end{lemma}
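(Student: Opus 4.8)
The plan is to reduce everything to the orbit $W_J\lambda$ and then combine the classical fact that $\lambda$ is the unique $J$-dominant point of $W_J\lambda$ (Kac, \cite[Proposition 3.12]{Kac}) with a short induction on lengths in $W_J$. First I would write $\mu=\sum_{l=1}^{n}d_l\,w_l\lambda$ as a convex combination of orbit points, with $d_l>0$ and $w_l\in W_J$. Since $J\subseteq J'_{\lambda}$, the weight $\lambda$ is $J$-dominant, so by \cite[Proposition 3.12]{Kac} each $\lambda-w_l\lambda\in\mathbb{R}_{\geq 0}\Pi_J$; hence $\lambda-\mu=\sum_l d_l(\lambda-w_l\lambda)$ is a non-negative combination of the $\alpha_i$, $i\in J$, and as no cancellation among non-negative coefficients is possible, $\supp(\lambda-\mu)=\bigcup_l\supp(\lambda-w_l\lambda)\subseteq J$.

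For (a), I would extract the sublemma: \emph{if $w\in W_J$ satisfies $\supp(\lambda-w\lambda)\subseteq J_0$, then $w\lambda=\lambda$.} Granting it: if $\height_{J\setminus J_0}(\lambda-\mu)=0$ then $\supp(\lambda-\mu)\subseteq J_0$, so each $\supp(\lambda-w_l\lambda)\subseteq J_0$, so each $w_l\lambda=\lambda$, whence $\mu=\lambda$, contradicting $\mu\neq\lambda$; thus $\height_{J\setminus J_0}(\lambda-\mu)>0$. To prove the sublemma I would induct on $\ell(w)$. If $w\lambda\neq\lambda$ then $w\lambda$ is not $J$-dominant (by uniqueness of the $J$-dominant orbit point), so $\langle w\lambda,\alpha_k^{\vee}\rangle<0$ for some $k\in J$; pairing $\lambda-w\lambda=\sum_{i\in J_0}m_i\alpha_i$ against $\alpha_j^{\vee}$ for $j\in J\setminus J_0$ and using $\langle\alpha_i,\alpha_j^{\vee}\rangle\leq 0$ for $i\neq j$ forces $\langle w\lambda,\alpha_j^{\vee}\rangle\geq 0$ for all $j\in J\setminus J_0$, so in fact $k\in J_0$. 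Then $\langle w\lambda,\alpha_k^{\vee}\rangle=\langle\lambda,(w^{-1}\alpha_k)^{\vee}\rangle<0$ forces $w^{-1}\alpha_k\in\Delta^-$, i.e. $\ell(s_kw)=\ell(w)-1$, while $\lambda-s_k(w\lambda)=(\lambda-w\lambda)-|\langle w\lambda,\alpha_k^{\vee}\rangle|\,\alpha_k$ still has support contained in $J_0$ (its coefficients remain $\geq 0$ by \cite[Proposition 3.12]{Kac} applied to $s_kw$). By the inductive hypothesis $(s_kw)\lambda=\lambda$, hence $w\lambda=s_k\lambda=\lambda$ since $k\in J_0$, a contradiction.

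For (b), suppose the ray $R:=\lambda-\mathbb{R}_{\geq 0}(\lambda-\mu)$ were an extremal ray of $C:=\conv_{\mathbb{R}}W_J\lambda$. Picking any $h\in\mathfrak{h}$ with $\langle\alpha_k,h\rangle>0$ for all $k\in J$, the non-negativity above gives $\langle\lambda,h\rangle>\langle\nu,h\rangle$ for every $\nu\in C\setminus\{\lambda\}$, so $\lambda$ is a vertex of $C$; since $C$ is $W_J$-invariant, every point of $W_J\lambda$ is a vertex of $C$. Now $\mu\in R\cap C$, and writing $\mu=\sum_l d_l\,w_l\lambda$ as above, Definition \ref{D2.4} (applied to $R$) forces each $w_l\lambda\in R$; as $\mu\neq\lambda$, some $w_l\lambda\neq\lambda$, so $R$ contains two distinct vertices of $C$, namely $\lambda$ and this $w_l\lambda$. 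This is impossible, since a ray has a single extreme point. Hence $R$ is not an extremal ray of $\conv_{\mathbb{R}}W_J\lambda$.

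The main obstacle is the sublemma underlying (a): because $\lambda$ need not be integral one cannot induct on heights valued in $\mathbb{Z}$, and one must ensure that the chosen reflection $s_k$ both strictly shortens $w$ and preserves the containment $\supp(\lambda-w\lambda)\subseteq J_0$ — the length-induction above is arranged precisely to handle both points, and the constraint $k\in J_0$ is what makes the support condition stable. Once (a) is in hand, part (b) is essentially formal, relying only on the $W_J$-invariance of $\conv_{\mathbb{R}}W_J\lambda$ and the fact that a ray has exactly one extreme point.
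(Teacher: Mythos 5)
Your proof is correct, and both halves take a route that differs in substance from the paper's. For (a), the paper fixes a reduced word for $\omega\in W_J\setminus W_{J_0}$ and inducts by peeling off the last letter (after asserting ``WLOG $j_k\in J\setminus J_0$''), whereas you prove the logically equivalent sublemma ``$\supp(\lambda-w\lambda)\subseteq J_0\Rightarrow w\lambda=\lambda$'' by locating a descent: non-dominance of $w\lambda$ gives some $\langle w\lambda,\alpha_k^\vee\rangle<0$, the pairing computation forces $k\in J_0$, and $w^{-1}\alpha_k\in\Delta^-$ gives $\ell(s_kw)<\ell(w)$ while preserving the support condition. Your version is arguably the more careful one, since it never needs to rearrange a reduced expression. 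For (b), the paper reduces to $\mu=w\lambda$, writes $w\lambda=\tfrac12\lambda+\tfrac12(\lambda-2\gamma)$, applies $w^{-1}$, and uses $x\prec\lambda$ on $\conv_{\mathbb{R}}W_J\lambda$ to force $w\lambda=\lambda$; you instead exhibit $\lambda$ (hence every point of $W_J\lambda$) as an extreme point of $C=\conv_{\mathbb{R}}W_J\lambda$ via a separating functional and count extreme points on the ray. One point deserves to be made explicit in your write-up: the final step ``a ray has a single extreme point'' rules out a second vertex of $C$ on $R$ only because an extreme point of $C$ lying in a convex subset $R\subseteq C$ must be an extreme point of $R$ — so you are tacitly using that the extremal ray is contained in $C$. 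This containment is not literally part of Definition \ref{D2.4}, but it must be read into it for the lemma to be true (the paper's own proof uses it in exactly the same implicit way, to place $\lambda-2\gamma$ in $\conv_{\mathbb{R}}W_J\lambda$), so this is a shared convention rather than a gap peculiar to your argument.
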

	    	\begin{proof} For (a), we prove that $\omega\in W_J\setminus W_{J_0}\implies \height_{J\setminus J_0}(\lambda - \omega\lambda)>0$ by induction on $\ell(\omega)$. Observe that this proves (a). Base step: $\ell(\omega)=1$, and we must have $\omega=s_{\hat{j}}$ for some $\hat{j}\in J\setminus J_0$. By the definition of $J_0$, the assertion is immediate.\\
	    	Induction step: Let $k:=\ell(\omega)>1$, and let $s_{j_1}\cdots s_{j_k}=\prod_{t=1}^k s_{j_t}$ be a reduced expression of $\omega$ for some $j_1 ,\ldots ,j_k \in J$ not necessarily distinct. As $\omega\in W_J\setminus W_{J_0}$, there must exist some $t\in[k]$ such that $j_t\in J\setminus J_0$. Assume without loss of generality that $j_k \in J\setminus J_0$ and write $\omega \lambda =(\prod_{t=1}^{k-1} s_{j_t})\lambda - \langle \lambda , \alpha_{j_k}^{\vee}\rangle(\prod_{t=1}^{k-1} s_{j_t})\alpha_{j_k}$. If $j_1 ,\ldots , j_{k-1} \in J_0$, then $\omega\lambda=\lambda- (\prod_{t=1}^{k-1}s_{j_t})\alpha_{j_k} \in\lambda-\Delta_{\{j_k\},1}$, and we are done. Else, by the induction hypothesis applied to $\ell(\prod_{t=1}^{k-1}s_{j_t})=\ell(\omega)-1$, we have $\height_{J\setminus J_0}(\lambda-(\prod_{t=1}^{k-1}s_{j_t})\lambda)>0$. As $\prod_{t=1}^{k}s_{j_t}$ is reduced, by \cite[Lemma 3.11]{Kac}, we also have $(\prod_{t=1}^{k-1}s_{j_t})\alpha_{j_k}\succ0$. Hence, $\height_{J\setminus J_0}(\lambda-w\lambda)>0$, completing the proof of (a). 
	    		
	    	Let $\mu=\lambda-\gamma$ for some $0\neq\gamma\in\mathbb{R}_{\geq0}\Pi_J$. For (b), we assume $\lambda-\mathbb{R}_{\geq0}\gamma$ to be an extremal ray of $\conv_{\mathbb{R}}W_J\lambda$ and exhibit a contradiction. If $\mu=c_1w_1\lambda+\cdots+c_rw_r\lambda\text{ for some }w_i\in W_J \text{ and }c_i\in\mathbb{R}_{>0}\text{ such that }\sum\limits_{i=1}^{r}c_i=1,$	then $w_i\lambda\in\lambda-\mathbb{R}_{\geq0}\gamma$ $\forall$ $1\leq i\leq r$ (by the definition of an extremal ray). By the previous line, we may assume that the element $\mu$ we started with lies in $W_J\lambda$. Let $\mu=w\lambda$ for some $w\in W_J\lambda$ and consider $\lambda-2\gamma\in\lambda-\mathbb{R}_{\geq0}\gamma\subset \conv_{\mathbb{R}}W_J\lambda$. Observe that \begin{equation}\label{E5.2}
	    	\mu=w\lambda=\frac{1}{2}(\lambda)+\frac{1}{2}(\lambda-2\gamma)\implies \lambda=\frac{1}{2}(w^{-1}\lambda)+\frac{1}{2}w^{-1}(\lambda-2\gamma).\end{equation} 
	    	Recall that $x\prec \lambda$ $\forall$ $x\in \conv_{\mathbb{R}}W_J\lambda$, and $\conv_{\mathbb{R}}W_J\lambda$ is $W_J$-invariant. Therefore, equation \eqref{E5.2} forces $w^{-1}\lambda=w^{-1}(\lambda-2\gamma)=\lambda$. This implies $\mu=\lambda$ which is a contradiction.
	    	\end{proof}
	    	We now continue with the proof of Proposition \ref{P2.11} in steps below. Let $I:=\mathcal{I}\setminus J$.\newline
	    	 \textbf{Step 1.} Observe firstly that, by the minimal description $\mathbb{Z}_{\geq 0}(\Delta^+ \setminus \Delta_{J}^+) =\mathbb{Z}_{\geq 0}\Delta_{I ,1}$ (by Theorem \ref{thmA} (\ref{thmA}2)) and by the proof of Lemma \ref{L6.1} (c), the extremal rays of $\lambda - \mathbb{R}_{\geq 0}(\Delta^+ \setminus \Delta_{J}^+)$ are precisely $\lambda -\mathbb{R}_{\geq 0}W_{J}\Pi_{I}$. Let $\mu = \mu_1 -\mu_2\in \mathcal{P}(\lambda,J)$ for some $\mu_1\in \conv_{\mathbb{R}}W_J\lambda$ and $\mu_2 \in \mathbb{R}_{\geq 0}\Delta_{I,1}$. We show that if $\mu_1\neq\lambda$, then $\lambda - \mathbb{R}_{\geq 0}(\lambda -\mu)$ cannot be an extremal ray. Note that if $\mu_1 \neq \lambda$ and $\mu_2=0$, this just  follows by Lemma \ref{LB.1} (b). If $\mu_1\neq\lambda$ and $\mu_2\neq0$, then by the non-trivial relation \[\lambda-\frac{1}{2}(\lambda-\mu_1+\mu_2)=\frac{1}{2}(\mu_1)+\frac{1}{2}(\lambda-\mu_2),\] observe that $\lambda - \mathbb{R}_{\geq 0}(\lambda -\mu)$ cannot be an extremal ray. So, all we are left is to check and cut down the further redundancies in $\lambda -\mathbb{R}_{\geq0}W_{J}\Pi_{I}$ in $\mathcal{P}(\lambda ,J)$.	    	 \newline\newline  
	    	 \textbf{Step 2.} Fix $\beta \in \Delta_{I,1}$ such that $\height_{J\setminus J_0}(\beta)>0$, we will show that $\lambda-\mathbb{R}_{\geq0}\beta$ is not an extremal ray. Define $I_0 := I \sqcup (J\setminus J_0)$, and note that $\height_{I_0}(\beta)\geq 2$. By the parabolic-PSP (with $I_0\subset\mathcal{I}$), we can write 
	    	 \[\beta =\gamma_1 +\cdots +\gamma_m\text{ for some }\gamma_1,\ldots,\gamma_m\in\Delta_{I_0,1}\text{, where }m= \height_{I_0}(\beta).\]
	    	 Here, we do not need the partial sums of $\sum_{t=1}^{m}\gamma_t$ to be roots. So, we may assume that $\gamma_1 \in \Delta_{I,1}$ and $\gamma_2,\ldots,\gamma_m\in\Delta_{J\setminus J_0,1}\cap\Delta_J^+$. Note that $\gamma_2,\ldots,\gamma_m \in \conv_{\mathbb{R}}(W_{J_0}\Pi_{J\setminus J_0})$ by Lemma \ref{L6.1} (c). So, we get a system of equations \begin{equation*}
	    	 \gamma_t = \sum\limits_{r=1}^{r(t)} \epsilon(r,t)\omega(r,t)\alpha(r,t)
	       	\end{equation*}
	       	$\text{where }2\leq t\leq m,\text{ } r(t)\in \mathbb{N},\text{ } r(t) \text{ depends on }t,(r,t)
	    	 \text{ vary in a finite set }\Omega:=\bigsqcup_{t=2}^{m}[r(t)]\times\{t\}\subset \mathbb{N}\times\mathbb{N}$, $\epsilon(r,t)>0\text{ and }\sum_{r=1}^{r(t)}\epsilon(r,t)=1\text{ for each }2\leq t\leq m,  \omega(r,t)\in W_{J_0},\text{ and }\alpha(r,t)\in \Pi_{J\setminus J_0}\text{ } \forall\text{ }(r,t)\in\Omega$. Pick $\delta>0$ such that $ \delta<\langle \lambda $, $ \omega(r,t)\alpha(r,t)^{\vee}\rangle=\langle\lambda,\text{ }\alpha(r,t)^{\vee}\rangle$ $\forall$ $(r,t)\in\Omega$. Such a positive $\delta$ exists as $\alpha(r,t)\in\Pi_{J\setminus J_0}$ and $\Pi_{J\setminus J_0}$ is finite. Now, note that \[\lambda - \delta \omega(r,t)\alpha(r,t)\in \conv_{\mathbb{R}}W_{J}\lambda\text{ }\forall\text{ }(r,t)\in \Omega,\]
	    	\[\text{ as } \lambda-\delta\omega(r,t)\alpha(r,t)=\frac{\langle\lambda,\text{ }\omega(r,t)\alpha(r,t)^{\vee}\rangle-\delta}{\langle\lambda,\text{ }\omega(r,t)\alpha(r,t)^{\vee}\rangle}\text{ }(\lambda)+\frac{\delta}{\langle\lambda,\text{ }\omega(r,t)\alpha(r,t)^{\vee}\rangle}\text{ }(s_{\omega(r,t)\alpha(r,t)}\lambda).\]
	    	Similarly, note that $\lambda-\delta\epsilon(r,t)\omega(r,t)\alpha(r,t)\in \conv_{\mathbb{R}}W_J\lambda\text{ }\forall\text{ }(r,t)\in\Omega\text{ as }0<\epsilon(r,t)<1\text{ }\forall \text{ }(r,t)\in\Omega$. Notice also that $\lambda-\mathbb{R}_{\geq0}\gamma_1\subset\mathcal{P}(\lambda,J)$. Now, by the non-trivial convex combination
	    	\[\lambda - \frac{\delta}{|\Omega |+1}\beta = \sum\limits_{(r,t)\in \Omega}\frac{1}{|\Omega|+1}\big(\lambda - \delta \epsilon(r,t)\omega(r,t)\alpha(r,t)\big) + \frac{1}{|\Omega|+1}(\lambda -\delta \gamma_1),\]
	    	 observe that $\lambda -\mathbb{R}_{\geq 0}\beta$ cannot be an extremal ray.\newline\newline 
	    	 \textbf{Step 3.} We now show that if $\lambda-\mathbb{R}_{\geq0}\gamma'$ is an extremal ray of $\mathcal{P}(\lambda,J)$ for some $\gamma'\in W_J\Pi_I$, then $\gamma'\in W_{J_0}\Pi_I$. Fix an $i\in I$, and let $\gamma'=w\alpha_i$ for some $w\in W_J$. Assume that $\lambda-\mathbb{R}_{\geq0}\gamma'$ is an extremal ray of $\mathcal{P}(\lambda,J)$. By step 2 we must have $\height_{J\setminus J_0}(\gamma')=0$ or equivalently $\gamma'\in \Delta_{\alpha_i,J_0}$. By Lemma \ref{L6.1} (b) applied to $\conv_{\mathbb{R}}\Delta_{\alpha_i,J_0}$, there exist $u_1,\ldots,u_M\in W_{J_0}$ such that  \[\gamma'=w\alpha_i=h_1u_1\alpha_i+\cdots+h_Mu_M\alpha_i\quad\text{ for some }h_1,\ldots,h_M\in\mathbb{R}_{>0}\text{ summing up to }1, \text{ }M\in\mathbb{N}.\]
	    	 This gives $\alpha_i=h_1w^{-1}u_1\alpha_i+\cdots+h_M w^{-1}u_M\alpha_i$. Now, 
	    	 \[ w^{-1}u_x\alpha_i\in \Delta_{\alpha_i,J}\text{ }\forall\text{ }x\in [M]\implies\supp(w^{-1}u_x\alpha_i)\subset\{i\}\text{ }\forall\text{ }x\in [M]\implies w^{-1}u_x\alpha_i=\alpha_i\text{ }\forall\text{ }x\in [M].\]
	    	 This implies $\gamma'\in W_{J_0}\alpha_i$. \newline\newline
	    	 \textbf{Step 4.} Finally, we show that if $\xi'\in W_{J_0}\Pi_I$, then $\lambda-\mathbb{R}_{\geq0}\xi'$ is an extremal ray of $\mathcal{P}(\lambda,J)$. Fix $i'\in I$, $\xi\in W_{J_0}\alpha_{i'}$ and $r\in\mathbb{R}_{>0}$. Suppose
	    	 \begin{equation*}
	    	\begin{aligned} \lambda-r\xi=\sum\limits_{l'=1}^{p}c_{l'}w_{l'}\lambda-\sum\limits_{l=1}^{q}t_l\xi_l
	    	\end{aligned}\qquad
	    	\begin{aligned}
	    	&\text{for some }c_{l'}, t_l\in\mathbb{R}_{>0}\text{ such that }\sum\limits_{l'=1}^{p}c_{l'}=1,\text{ }w_{l'}\in W_{J},\\ &\xi_l\in\Delta_{I,1}\text{ }\forall\text{ }l'\in [p]\text{ and }l\in [q].
	    	\end{aligned}
	    	\end{equation*}
	    	Note that $\supp_{J\setminus J_0}(\xi)=\emptyset$ and $\supp_I(\xi)=\{i'\}$. Note also by the proof of Lemma \ref{LB.1} that if $w_{l'}\in W_J\setminus W_{J_0}$, then $\height_{J\setminus J_0}(\lambda-w_{l'}\lambda)>0$. In view of the previous line, as $ht_{J\setminus J_0}(\xi)=0$, we must have $w_{l'}\in W_{J_0}$ $\forall$ $l'$. This implies $w_{l'}\lambda=\lambda$ and $\xi_l\in \Delta_{\alpha_{i'},J_0}$ $\forall$ $l'$ and $l$. So, we have $\xi=\sum\limits_{l=1}^{q}\frac{t_l}{r}\xi_l$. By Lemma \ref{L6.1} (c), we may assume that $\xi_l\in W_{J_0}\alpha_{i'}$. Observe then by a similar argument as at the end of the proof of Lemma \ref{L6.1} (b) (which proves the minimality of $W_J\alpha$ in Lemma \ref{L6.1} (b)), that we must have $\xi_l=\xi$ $\forall$ $l$. This proves that $\lambda-\mathbb{R}_{\geq0}\xi$ is an extremal ray of $\mathcal{P}(\lambda,J)$ at $\lambda$.\\
	    	 Hence, the proof of Proposition \ref{P2.11} is complete. 
	    	 \end{proof}


\begin{thebibliography}{16}
	\bibitem{Venkatesh} G. Arunkumar, D. Kus and R. Venkatesh, Root multiplicities for Borcherds algebras and graph coloring, J. Algebra, 499 (2018) 538--569.
	\bibitem{Borel} A. Borel and J. Tits, Groupes r{\'e}ductifs. Inst. Hautes {\'E}tudes Sci. Publ. Math., 27 (1) (1965) 55--150.
	\bibitem{svis} L. Carbone, K.N. Raghavan, B. Ransingh, K. Roy and S. Viswanath, $\pi$-systems of symmetrizable Kac--Moody algebras, arXiv:1902.06413v4.
	\bibitem{Casselman} W. A. Casselman, Geometric rationality of Satake compactifications. In Algebraic groups and Lie groups, volume 9 of Austral. Math. Soc. Lect. Ser., pages 81--103. Cambridge University Press, Cambridge, 1997.
	\bibitem{Cellini} P. Cellini and M. Marietti, Root polytopes and Borel subalgebras, Int. Math. Res. Not. IMRN, (12) (2015) 4392--4420.
	\bibitem{Chari_contm} V. Chari, R.J. Dolbin and T. Ridenour, Ideals in parabolic subalgebras of simple Lie algebras, Contemp. Math. 490 (2009) 47--60.
	\bibitem{Chari_Adv} V. Chari and J. Greenstein, A family of Koszul algebras arising from finite-dimensional representations of simple Lie algebras, Adv. Math. 220 (4) (2009) 1193--1221.
	\bibitem{Chari_JGeom} V. Chari and J. Greenstein, Minimal affinizations as projective objects, J. Geom. Phys. 61 (3) (2011) 594--609.  
	\bibitem{Chari_JPAA} V. Chari, A. Khare and T. Ridenour, Faces of polytopes and Koszul algebras, J. Pure Appl. Algebra 216 (7) (2012) 1611--1625.
	\bibitem{VVD} V. V. Deodhar, On the root system of a coxeter group, Communications in Algebra, 10:6 (1982) 611--630.
	\bibitem{Dhillon_arXiv}
	G. Dhillon and A. Khare, The weights of simple modules in Category $\mathcal{O}$ for Kac--Moody algebras, arXiv:1606.09640v4.
	\bibitem{Khare_Ad}
    G. Dhillon and A. Khare, Faces of highest weight modules and the universal Weyl polyhedron, Adv. Math. 319 (2017) 111--152.
	\bibitem{Hump} 
	J.E. Humphreys, Introduction to Lie algebras and representation theory, Graduate Texts in Mathematics,
	no. 9, Springer-Verlag, Berlin-New York, 1972.
	\bibitem{Hump_BGG}
	J.E. Humphreys, Representations of  semisimple Lie Algebras in the BGG Category $\mathcal{O}$, Graduate Studies in Mathematics, vol. 94, American Mathematical Society, Providence, RI, 2008.
	\bibitem{Kac}
	V. G. Kac, Infinite-dimensional Lie algebras, Cambridge University Press, Cambridge, third edition,
	1990.
	\bibitem{Khare_JA}
	A. Khare, Faces and maximizer subsets of highest weight modules, J. Algebra, 455 (2016) 32--76.
	\bibitem{Khare_Trans}
	A. Khare, Standard parabolic subsets of highest weight modules. Trans. Amer. Math. Soc., 369 (4) (2017) 2363--2394.
	\bibitem{Khare_AR}
	A. Khare and T. Ridenour, Faces of weight polytopes and a generalization of a theorem of Vinberg, Algebras
	and Representation Theory 15 no. 3 (2012) 593–611.
	\bibitem{L_JA} J. Lepowsky, A generalization of the Bernstein-Gelfand-Gelfand resolution, J. Algebra 49 (2) (1977) 496--511.
	\bibitem{Satake} I. Satake, On representations and compactifications of symmetric Riemannian spaces, Ann. of Math. 71 (2) (1960) 77--110.
	\bibitem{Vinberg} E. B. Vinberg, Some commutative subalgebras of a universal enveloping algebra, Izv. Akad. Nauk SSSR Ser. Mat., 54 (1) (1990) 3--25, 221.
\end{thebibliography}
\end{document}